\documentclass[10pt,compress]{article}

\usepackage{amsfonts,amssymb,amsmath,amsthm,indentfirst}
\usepackage{fontenc,enumerate}
\usepackage{graphics,cite}
\usepackage{color,graphicx}
\usepackage[dvipsnames]{xcolor}
\usepackage[colorlinks, linkcolor=red, citecolor=Green, urlcolor=OliveGreen, pagebackref, hypertexnames=false]{hyperref}

\usepackage{latexsym,mathrsfs,enumerate,verbatim,bm}

\usepackage[light]{antpolt}    \usepackage[T1]{fontenc}

\usepackage{wasysym}
\usepackage{graphics}

\usepackage{color,graphicx}
\newcommand{\Sch}{\mathcal{S}}
\newcommand{\cA}{\mathcal{A}}
\newcommand{\cC}{\mathcal{C}}

\DeclareMathAlphabet{\mathpzc}{OT1}{pzc}{m}{it}

\newcommand{\sett}[1]{\left\{   #1   \right\}}
\newcommand{\norm}[1]{\left\|   #1   \right\|}

\usepackage{xcolor}
\usepackage[pagewise,mathlines]{lineno}

\newcommand{\vertiii}[1]{{\left\vert\kern-0.25ex\left\vert\kern-0.25ex\left\vert #1 
		\right\vert\kern-0.25ex\right\vert\kern-0.25ex\right\vert}}
\numberwithin{equation}{section}
\newtheorem{theorem}{\quad Theorem}[section]
\newtheorem{lemma}[theorem]{\quad Lemma}
\newtheorem{corollary}[theorem]{\quad Corollary}
\newtheorem{remark}[theorem]{\quad Remark}

\newtheorem{proposition}[theorem]{\quad Proposition}

\newcommand{\R}{\mathbb{R}}

\newcommand{\Nn}{\mathcal N}   % L^B-mass functional
\newcommand{\G}{\mathcal G} 
\newcommand{\dd}{{\rm d}}

\newcommand{\N}{\mathbb{N}}

\newcommand{\bb}{\mathbb{B}}
\allowdisplaybreaks
\author{  Amin Esfahani\footnote{Department of Mathematics, Nazarbayev University, Astana 010000, Kazakhstan\newline   E-mail: amin.esfahani@nu.edu.kz, achenef.tesfahun@nu.edu.kz}~     }

\title{ Weighted Gagliardo–Nirenberg Inequalities of Grushin Type: Symmetries, Compactness, and Optimizers
	\footnotetext{2020 Mathematical subject classification: 35A23,     35J70, 46E35  }
	\footnotetext{Keywords:  Weighted Gagliardo–Nirenberg Inequalities of Grushin Type: Symmetries, Compactness, and Optimizers}}

\date{}

\begin{document}
 
	\maketitle
	\begin{abstract}
		
 In this paper, we explore a family of weighted Gagliardo–Nirenberg interpolation inequalities associated with Grushin-type operators on $\mathbb{R}^n \times \mathbb{R}^m$. Specifically, we examine how to control the weighted norm $\||x|^\beta u \|_{L^B}$ by balancing fractional derivatives $D_x^s u$ with weighted derivatives $|x|^b D_y^r u$. Using two-parameter scaling, spatial translations, and Littlewood–Paley decompositions, we map out exact parameter conditions and demonstrate that the scaling exponents are both necessary and optimal. For multi-parameter weighted extensions, we leverage Khintchine-type random sums to show why $Q \ge 2$ is strictly required.When $x$ and $y$ are radially symmetric, two key advantages emerge: the allowable parameter range for the weights expands, and compact embeddings of the energy space into $L^p$, $2 < p < B$, are completely restored. We establish this compactness through two complementary tools—geometric covering arguments and a generalized three-norm version of Strauss's lemma that yields explicit pointwise weighted decay estimates. Finally, in the scale-invariant case  $\beta=0$, a concentration-compactness analysis proves that the inequality's sharp constant is achieved by an optimal function, with minimizing sequences remaining precompact up to the natural three-parameter group of symmetries.
			\end{abstract}

	%\end{frontmatter}
 
%\tableofcontents
 \section{Introduction}
 
 Interpolation inequalities of Gagliardo--Nirenberg and
 Caffarelli--Kohn--Nirenberg type are basic tools in the analysis of
 nonlinear PDEs and in the study of sharp constants and extremal functions
 \cite{Gagliardo,Nirenberg,CKN,Weinstein83,ChouChu93,CatrinaWang01,FelliSchneider03,DEL16}.
 Weighted $L^2$-type interpolation (Hardy--Sobolev, Stein--Weiss, Pitt and
 Heisenberg--Pauli--Weyl inequalities) is closely tied to the
 uncertainty principle  \cite{SteinWeiss58,Herbst77,Beckner95,Beckner08}.
 In this paper we study an inequality in which the \emph{anisotropy} of the
 variables is built in: for $(x,y)\in\mathbb R^n\times\mathbb R^m$ we control a
 weighted $L^B$-norm of $u$ by a derivative of order $s$ in $x$ and a
 \emph{weighted} derivative of order $r$ in $y$,
 \begin{equation}\label{intro:main}
 	\bigl\||x|^{\beta}u\bigr\|_{L^B(\mathbb R^{n+m})}
 	\le C\,\bigl\||x|^{b}D_y^{r}u\bigr\|_{L^2}^{\tau}\,
 	\|D_x^{s}u\|_{L^2}^{1-\tau}.
 \end{equation}

 See also \cite{liyan}. For $s=r=1$ and $b=\alpha\ge 0$ the right-hand side is the interpolation
 of the two terms of the Dirichlet form
 $\int|\nabla_xu|^2+|x|^{2\alpha}|\nabla_yu|^2$ of the Grushin operator
 $-\Delta_x-|x|^{2\alpha}\Delta_y$, which degenerates on $\{x=0\}$
 \cite{Grushin70,Grushin71}. This operator and its sub-Riemannian
 relatives have been studied extensively: hypoellipticity and regularity
 \cite{Grushin70,FranchiLanconelli83,Garofalo93}, Sobolev inequalities,
 sharp constants and critical equations
 \cite{Beckner01,GarofaloVassilev01,MontiMorbidelli06,Monti06,DSWZ22},
 semilinear problems \cite{KogojLanconelli12,AlvesHolanda23}, and
 control/observability \cite{BurqSun22}. Replacing $\nabla_x$ and
 $|x|^b\nabla_y$ by $D_x^s$ and $|x|^bD_y^r$ gives a fractional,
 two-parameter family which contains the Grushin Sobolev inequality as
 the case $\beta=0$ and exhibits the genuinely anisotropic
 scaling $u\mapsto u(\lambda x,\mu y)$ of the problem.

 Theorem~\ref{thm:main-w} proves \eqref{intro:main} under explicit
 conditions \textup{(H0)--(H4)} on $(m,n,s,r,b,\beta)$. The exponents are
 \[
 B=\frac{2\,(rn+m(b+s))}{m(b+s)+r(n-2s-2\beta)},\qquad
 \tau=\frac{m(s+\beta)}{rn+m(b+s)},
 \]
 and for $\beta=0$ one recovers the Sobolev-type inequality with the
 ``homogeneous dimension'' $Q_0=n+m(b+s)/r$; for $s=r=1$, $b=\alpha$,
 this is the classical Grushin exponent $2Q_0/(Q_0-2)$ with
 $Q_0=n+(1+\alpha)m$. Proposition~\ref{prop:necessity} shows that $B$ and
 $\tau$ are forced by the two-parameter scaling and that the condition
 $\beta\le\tau b$ and the strict inequality $\tau<1$ are necessary, so
 the parameter range cannot be improved without symmetry. The proof is
 short and modular: a Gagliardo--Nirenberg inequality in $y$ on each
 slice $\{x\}\times\mathbb R^m$, H\"older's inequality in $x$, and a
 Hardy--Sobolev inequality in $x$ applied to the profile
 $U_2(x)=\|u(x,\cdot)\|_{L^2_y}$, whose $\dot H^s$-norm is controlled by
 $\|D^s_xu\|_{L^2}$ (Lemma~\ref{lem:diam}). For $\beta=0$ we also give
 a second proof (Section \ref{sec-secondproof}) based on a Littlewood--Paley decomposition
 in $y$, which isolates the scale-invariant estimate on each frequency
 shell.

 In Section \ref{sec-extension} we consider a more general multi-parameter family
 $\|u\|_{L^Q}\le C\prod_jA_j(u)^{\theta_j}$ with coordinate-wise weights
 and orders. We do not prove sufficiency; instead we determine all
 necessary conditions that follow from scaling, modulation, translation
 to infinity and a Khintchine-type averaging argument (Theorem~\ref{s2:thm:main}).
 This identifies, in particular, the forced values of $Q$ and $\theta$
 and shows that $Q\ge2$.

 Radial symmetry typically enlarges the admissible range of weights
 in Hardy--Sobolev and Caffarelli--Kohn--Nirenberg type inequalities
 \cite{Strauss77,Lions82,SuWangWillem07,DrelichmanDuran11,DeNapoli14,DDD12}.
 In Section \ref{sec-radialgrushin} we show that radial symmetry \emph{in $x$} replaces the
 translation-obstruction condition $\beta\le\tau b$ by the weaker
 condition \textup{(R1)}--\textup{(R2)}, using the radial weighted
 Sobolev inequality of Drelichman and Dur\'an in the $x$-step of the
 proof above, and that this is sharp for $s<\tfrac12$. Symmetry in $y$
 alone gives no improvement, and adding it to symmetry in $x$ gives none
 either (Corollary~\ref{s3cor:opt}).

 Radial symmetry also restores compactness, in the spirit of Strauss and
 Lions \cite{Strauss77,Lions82,BerestyckiLions83,SickelSkrzypczak00,ChoOzawa09}.
 We prove that the embedding of the energy space
 $X=\{u:\ \|u\|_2+\|D_x^su\|_2+\||x|^bD_y^ru\|_2<\infty\}$ into $L^p$,
 $2<p<B$, is compact on functions radial in $x$ and $y$
 (Sections \ref{sec-compact} and \ref{sec-compact-struss}), by two different mechanisms: a covering of annuli by
 rotated pieces (Section \ref{sec-compact}), and a three-norm version of Strauss' lemma
 (Sections \ref{sec-strauss} and \ref{sec-compact-struss}) that yields in addition pointwise weighted decay
 estimates (Theorem~\ref{thm:main-r}). These embeddings are the natural
 input for existence of ground states and standing waves for
 Grushin-type nonlinear Schr\"odinger equations
 \cite{BerestyckiLions83,AlvesHolanda23}.

 Finally, for $\beta=0$ the inequality is invariant under a
 three-parameter group of dilations and amplitude changes, and the
 embedding is not compact for any symmetry that fails to break this
 group. In Section \ref{sec-minimizers} we prove that the sharp constant is attained, and
 that minimizing sequences are precompact modulo this group, via a
 concentration-compactness argument
 \cite{Lions84a,Lions84b,Lions85,BrezisLieb83}. The main difficulties
 are: (i) the anisotropic dilations leave a one-parameter subgroup
 of the triple of norms fixed, which is used to normalize the
 concentration scale; (ii) the weight $|x|^b$ breaks translation
 invariance in $x$, so the concentration point cannot be moved to the
 origin and must be shown to stay bounded; (iii) for fractional $s,r$
 the energies are nonlocal, so we localize using a quantitative
 splitting (Lemma~\ref{em-lem:split}) in the spirit of
 \cite{DietzeNam25}.
 
 \paragraph{Organization.}
 Section \ref{sec:statement} contains the main inequality and its optimality. Section \ref{sec-secondproof}
 gives a second proof for $\beta=0$. Section \ref{sec-extension} treats the multi-weight
 problem. Sections \ref{sec-compact}-\ref{sec-compact-struss} prove compact embeddings. Section \ref{sec-radialgrushin}
 establishes the radially improved inequality and its sharpness.
 Section \ref{sec-minimizers} proves existence of optimizers.
 \medskip
 
 \noindent\textbf{Notation.} Fourier transform, $D^\sigma$, $D^s_x$, $D^r_y$
 and function classes are as in Section \ref{sec:statement}; $C$ denotes constants that may
 change from line to line.
	
	\section{Main inequality}\label{sec:statement}
	%==========================================================================
	
	Throughout this section, $m,n\ge 1$ are integers, the variable in $(x,y)\in\R^{n}\times\R^{m}$ is written $(x,y)$, and the exponents $s\in(0,1]$, $r>0$ and the weights $b,\beta\in\R$ are fixed.   For a function $u$ of $(x,y)$ the operators $D_x^{s}$ and $D_y^{r}$ are the multipliers with symbols $|\xi|^s$ and $|\eta|^r$ with respect to the partial Fourier transform in $x$ and in $y$, respectively.

The following theorem establishes the main mixed-norm fractional inequality, provided the dimensional parameters and weights satisfy a specific compatibility regime.	
	\begin{theorem}\label{thm:main-w}
		Let $m,n\ge1$, $s\in(0,1]$, $r>0$ and $b,\beta\in\R$, and assume the following five conditions:
		\begin{gather}
			2s<n, \tag{H0}\label{H0}\\
			s+\beta>0, \tag{H1}\label{H1}\\
			rn+m(b+s)>m(s+\beta)
			%\quad\big(\text{equivalently } \beta<b+\tfrac{rn}{m}\big), 
			\tag{H2}\label{H2}\\
			rn+m(b+s)>2r(s+\beta), \tag{H3}\label{H3}\\
			\beta\,(rn+ms)\le ms b%\quad\big(\text{equivalently } \beta\le\tau b\big). 
			.\tag{H4}\label{H4}
		\end{gather}
		Then, there is a constant $C=C(m,n,s,r,b,\beta)>0$ such that
		\begin{equation}\label{eq:main}
			\norm{|x|^{\beta}u}_{L^{B}(\R^{n+m})} \leq  C\norm{|x|^{b}D_y^{r}u}_{L^2(\R^{n+m})}^{\tau}\norm{D_x^{s}u}_{L^2(\R^{n+m})}^{1-\tau}
			\qquad\text{for all }u\in\Sch(\R^{n+m}),
		\end{equation}
		where
		\[
		B=\dfrac{2\,(rn+m(b+s))}{m(b+s)+r(n-2s-2\beta)},\qquad \tau=\frac{m(s+\beta)}{rn+m(b+s)}
		\]
	\end{theorem}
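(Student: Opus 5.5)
The plan is the slice-wise argument sketched in the introduction: a Gagliardo--Nirenberg inequality in $y$ on each slice $\{x\}\times\R^m$, then H\"older in $x$, then a Hardy--Sobolev inequality in $x$ for the profile $U_2(x)=\|u(x,\cdot)\|_{L^2_y}$, with Lemma~\ref{lem:diam} controlling $\|U_2\|_{\dot H^s(\R^n)}\le\|D_x^su\|_{L^2}$. This is the step where $s\in(0,1]$ is used. Throughout, write $F(x)=\|D_y^ru(x,\cdot)\|_{L^2_y}$.

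\emph{Step 1 (slices).} For fixed $x$, apply the fractional Gagliardo--Nirenberg inequality on $\R^m$:
\[
\|u(x,\cdot)\|_{L^B_y}\le C\,F(x)^{\theta}\,U_2(x)^{1-\theta},\qquad \theta=\frac{m}{r}\Big(\frac12-\frac1B\Big).
\]
This requires $B\ge2$ and $0\le\theta<1$, i.e.\ $B<2m/(m-2r)$ when $2r<m$. A direct computation with the given $B$ should give $\theta=\tau$. I will use \eqref{H1} and \eqref{H3} to get $B>2$ and finite, and \eqref{H2} to get $\tau<1$.

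\emph{Step 2 (H\"older in $x$).} Split the weight as $|x|^{\beta B}=|x|^{b\tau B}\,|x|^{-a(1-\tau)B}$, where
\[
a=\frac{b\tau-\beta}{1-\tau}.
\]
Step 1 then gives
\[
\int|x|^{\beta B}\|u(x,\cdot)\|_{L^B_y}^B\,dx\le C\int\big(|x|^bF\big)^{\tau B}\big(|x|^{-a}U_2\big)^{(1-\tau)B}dx .
\]
Apply H\"older with exponents $2/(\tau B)$ and its conjugate; this needs $\tau B<2$, which should again follow from \eqref{H3}. The result is
\[
\||x|^\beta u\|_{L^B}\le C\,\||x|^bD_y^ru\|_{L^2}^{\tau}\,\||x|^{-a}U_2\|_{L^q(\R^n)}^{1-\tau},\qquad q=\frac{(1-\tau)B}{1-\tau B/2}.
\]

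\emph{Step 3 (Hardy--Sobolev in $x$).} Invoke the Hardy--Sobolev (Stein--Weiss) inequality $\||x|^{-a}f\|_{L^q(\R^n)}\le C\|f\|_{\dot H^s(\R^n)}$. It holds for $0\le a\le s<n/2$ with $\frac1q=\frac12-\frac{s-a}{n}$. Combined with Lemma~\ref{lem:diam}, this closes the estimate. The remaining conditions match the hypotheses as follows:
\begin{itemize}
\item \eqref{H0} gives $2s<n$;
\item $a\ge0$ is equivalent to $\beta\le\tau b$, which unwinds exactly to \eqref{H4};
\item $a\le s$ should reduce to \eqref{H1} and \eqref{H2};
\item the identity between $q$ and $\frac12-\frac{s-a}{n}$ should be automatic, since $B$ and $\tau$ were chosen by two-parameter scaling.
\end{itemize}
The endpoint $a=s$ (so $q=2$, the Hardy inequality) is also allowed.

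\emph{Main obstacle.} The analysis itself is routine; the real work is the exponent bookkeeping. I must verify that $\theta=\tau$ and that the scaling relation for $q$ holds identically. I must also show that \eqref{H0}--\eqref{H4} imply every side condition of the two auxiliary inequalities: $2<B$, $\tau<1$, $\tau B<2$, $0\le a\le s$, and the GN range in $y$. I would first do this identity check symbolically, writing everything in terms of $N:=rn+m(b+s)$. Only then would I treat degenerate cases separately, such as $\tau B$ near $2$ or $a=s$.
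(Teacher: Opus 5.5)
Your proposal is correct and follows the paper's proof essentially step for step: a slice-wise Gagliardo--Nirenberg inequality in $y$ with exponent $\tau$, then H\"older in $x$ with the same $a=\frac{\tau b-\beta}{1-\tau}$ and the same $q$ (the paper's $g$), then Lemma~\ref{lem:HS} applied to $U_2$ via Lemma~\ref{lem:diam}. One bookkeeping correction: $\tau B<2$, i.e.\ $q<\infty$, does not follow from \eqref{H3} (for $m=1$, $n=3$, $s=r=1$, $b=0$, $\beta\in[\tfrac13,1)$, conditions \eqref{H0}--\eqref{H3} hold but $\tau B\ge2$); as in the paper, it comes from $a\ge0$, i.e.\ \eqref{H4}, together with \eqref{H0}, since $\frac1q=\frac12-\frac{s-a}{n}\ge\frac12-\frac sn>0$.
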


	To clarify the scope and natural boundaries of this result, it is helpful to unpack the algebraic relationships governing these parameters.
	\begin{remark}\label{rem:hyp}
		Before discussing the hypotheses, we record the elementary consequences of \eqref{H1}--\eqref{H4} for the parameters. Let, $\Gamma= rn+m(b+s)$ and $\delta=s+\beta$. Then, $\tau=\frac{m\varrho} \Gamma =\frac{m\delta}{r}$  and $B=\frac{2\Gamma}{\Gamma-2r\varrho}=\frac{2}{1-2\delta}.$
		By \eqref{H1} and \eqref{H2} we have $\Gamma>m\varrho>0$, hence $\Gamma>0$ and $\tau=m\varrho/\Gamma\in(0,1)$. Since $\delta=r\tau/m>0$ and \eqref{H3} is equivalent to $\delta<\tfrac12$, we get $0<\delta<\tfrac12$ and $B=2/(1-2\delta)\in(2,\infty)$. Finally, $\beta\le\tau b$ means $\beta\Gamma\le m(s+\beta)b$, that is $\beta(rn+mb+ms)\le msb+m\beta b$, which is exactly \eqref{H4}.
		Condition \eqref{H3} is automatic from \eqref{H2} as soon as $m\ge 2r$, in particular whenever $m\ge2$ and $r\le1$; it matters only for $m=1$ and $r>\tfrac12$. For $\beta=0$ the conditions reduce to $b\ge0$ together with \eqref{H0} and $\Gamma>2rs$, and the theorem gives $\|u\|_{B}\lesssim\||x|^bD_y^ru\|_2^{\tau}\|D_x^su\|_2^{1-\tau}$ with $\tau=ms/(rn+m(b+s))$. Condition \eqref{H0} is only used in Lemma~\ref{lem:HS} below; it holds for every $s\in(0,1]$ when $n\ge3$, for $s<1$ when $n=2$, and for $s<\tfrac12$ when $n=1$. The proof uses only the algebraic facts $B\in(2,\infty)$, $\tau\in(0,1)$, $\beta\le\tau b$ and \eqref{H0}; it therefore also applies in any other sign configuration of $(\varrho,\Gamma)$ for which these four facts hold. If $b\le -n/2$ then the first term on the RHS of \eqref{eq:main} becomes unbounded for generic $u$, and \eqref{eq:main} is trivially true for such $u$; the proof below does not require $b>-n/2$.
	\end{remark}
	
	The conditions imposed above are not merely artifacts of the proof. The exponents $B,\tau$ and the weight constraint \eqref{H4} cannot be improved, as formalized in the following necessity result.
	
	\begin{proposition}[Necessity]\label{prop:necessity}
		Let $B\in[1,\infty)$ and $\tau>0$ , and suppose that
		\begin{equation}\label{eq:ineqK}
			\||x|^\beta u\|_{B}\lesssim  \cA(u)^{\tau}\,\norm{D_x^{s}u}_{L^2(\R^{n+m})}^{1-\tau}
		\end{equation}
		holds for all $u\in C_c^\infty(\R^{n+m})$ whose support is contained in $\{x\ne0\}$. Then the following three statements hold.
		\begin{gather}
			\frac{m}{B}=\frac m2-\tau r \quad\text{and}\quad \frac nB=\frac n2-\big(s+\beta-\tau(b+s)\big); \label{eq:scal}\\
			\beta\le\tau b; \label{eq:transl}\\
			\tau<1. \label{eq:tau1}
		\end{gather}
	\end{proposition}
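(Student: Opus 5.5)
The plan is to test \eqref{eq:ineqK} on three families built from one fixed profile $\varphi\in C_c^\infty(\R^{n+m})$ supported in $\{x\ne0\}$, here with $\cA(u)=\||x|^bD_y^ru\|_{L^2}$. The families are anisotropic dilations, translations in $x$ to infinity, and a degeneration that separates the two right-hand factors. Each test yields one of \eqref{eq:scal}, \eqref{eq:transl}, \eqref{eq:tau1}. Throughout I use that for a nonzero $\varphi$ supported away from $x=0$ all three quantities $\||x|^\beta\varphi\|_B$, $\cA(\varphi)$, $\|D_x^s\varphi\|_2$ are finite and positive. In particular $D_y^r\varphi\not\equiv0$, since a nonzero compactly supported function cannot be annihilated in $y$.

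\textbf{Scaling.} Put $u_{\lambda,\mu}(x,y)=\varphi(\lambda x,\mu y)$; its support stays in $\{x\ne0\}$. A change of variables gives
\[
\||x|^\beta u_{\lambda,\mu}\|_B=\lambda^{-\beta-n/B}\mu^{-m/B}\||x|^\beta\varphi\|_B,\qquad
\|D_x^su_{\lambda,\mu}\|_2=\lambda^{s-n/2}\mu^{-m/2}\|D_x^s\varphi\|_2,
\]
and $\cA(u_{\lambda,\mu})=\lambda^{-b-n/2}\mu^{r-m/2}\cA(\varphi)$. Inserting these into \eqref{eq:ineqK} and letting $\lambda$ and $\mu$ range independently over $(0,\infty)$ makes both sides power functions of $\lambda$ and of $\mu$. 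The inequality can hold for all $\lambda,\mu$ only if the exponents match. This gives $-m/B=\tau(r-m/2)-(1-\tau)m/2$, i.e. $m/B=m/2-\tau r$. It also gives $-\beta-n/B=-\tau(b+n/2)+(1-\tau)(s-n/2)$, which rearranges to the second identity in \eqref{eq:scal}.

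\textbf{Translation.} Take $u_R(x,y)=\varphi(x-Re_1,y)$ with $R\to\infty$. The $D_x^s$ norm is invariant. On the support $|x|\sim R$, so $\||x|^\beta u_R\|_B\sim R^\beta$ and $\cA(u_R)\sim R^{b}$, with constants independent of $R$ (both $\beta$ and $b$ may have either sign; the comparison still holds because $|x|/R\in[1/2,2]$). Then \eqref{eq:ineqK} reads $R^\beta\lesssim R^{\tau b}$, and letting $R\to\infty$ forces $\beta\le\tau b$.

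\textbf{$\tau<1$.} This is the step I expect to be the main obstacle, since scaling alone only relates exponents. Suppose $\tau\ge1$. The case $\tau>1$ I would rule out by scaling as well: the exponent $1-\tau$ is then negative, and combining one-parameter dilations with translation should produce a contradiction (or \eqref{eq:scal} already gives an inconsistency with $B\ge1$ when $\tau r>m/2$). For $\tau=1$ the inequality reads $\||x|^\beta u\|_B\lesssim\cA(u)$, with no $x$-derivative. I would test it with $u_\varepsilon(x,y)=\psi(x)\chi_\varepsilon(x)\phi(y)$, where $\chi_\varepsilon$ is the indicator of an annulus mollified at scale $\varepsilon$, or better a highly oscillating $x$-factor. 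The point is to keep the left side and $\cA$ comparable while detuning in $x$. Alternatively, by \eqref{eq:scal} with $\tau=1$, one has $n/B=n/2-(\beta-b)$ and $m/B=m/2-r$. Then dilating only in $x$ at the matched homogeneity and taking the $x$-profile to be $\psi_k$ with $\|\psi_k\|_{L^2}$ fixed but $\|\psi_k\|_{L^B}\to\infty$ (concentrating bumps inside a fixed annulus, which is possible since $B>2$ is forced) gives a contradiction: $\cA$ involves only $L^2$ in $x$ on a fixed annulus, while the left side involves $L^B$ in $x$. This choice concentrates in $x$ with no cost to $\cA$, which is the mechanism I would make rigorous. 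If $B\le2$ arises, I would instead use spreading of the $y$-profile.
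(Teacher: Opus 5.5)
Your scaling and translation steps are correct and match the paper's. For $\tau=1$, your second idea is exactly the paper's concentration argument: take an $x$-profile of fixed $L^2$ norm concentrating inside a fixed annulus, so that its $L^B$ norm blows up, with $B>2$ forced by \eqref{eq:scal}.

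\textbf{The gap: the case $\tau>1$ of \eqref{eq:tau1} is not proved.} You only assert that combining dilations with translations ``should'' give a contradiction. Your parenthetical fallback covers only $\tau r\ge m/2$: \eqref{eq:scal} with $B<\infty$ merely requires $\tau r<m/2$. So the range $1<\tau<m/(2r)$, which is nonempty whenever $m>2r$, is untouched. Further dilations cannot help either: once \eqref{eq:scal} holds, the quotient $\||x|^\beta u\|_B/(\cA(u)^\tau\cC(u)^{1-\tau})$ is exactly invariant under $u\mapsto u(\lambda x,\mu y)$.

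The paper closes this case with the modulation $e^{i\lambda x_1}\phi(x)\psi(y)$, $\lambda\to\infty$. This leaves $|u|$ and $\cA(u)$ unchanged but sends $\cC(u)\to\infty$, so $\cC^{1-\tau}\to0$. You did mention a ``highly oscillating $x$-factor'', but you placed it in the case $\tau=1$. There it is useless, because $\cC$ does not appear on the right-hand side at all and $\cA$ is unchanged by the modulation.

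\textbf{A shorter repair uses only what you already have.} Your instinct that no new test is needed is right, but the mechanism is algebraic. Your translation argument never uses $\tau\le1$, because $\cC$ is exactly translation invariant. Hence $\beta\le\tau b$ holds for every $\tau>0$. Eliminating $B$ between the two identities of \eqref{eq:scal} gives
\[
(1-\tau)\,s=\frac{n\tau r}{m}+(\tau b-\beta),
\]
and the right-hand side is strictly positive because $\tau,r>0$ and $\beta\le\tau b$. Since $s>0$, this gives $\tau<1$ directly. It needs no case distinction and no extra test functions, so it is simpler than both your plan and the paper's two-case argument.
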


	    To build the proof of Theorem~\ref{thm:main-w}, we isolate three key ingredients. The first is a fractional Gagliardo-Nirenberg interpolation inequality that will be applied to the $y$-variable.
	\begin{lemma} \label{lem:GN}
		Let $d\ge1$, $\sigma>0$ and $p\in(2,\infty)$ with $\frac1p>\frac12-\frac\sigma d$. Put $\nu=\frac d\sigma\big(\frac12-\frac1p\big)$. Then $\nu\in(0,1)$ and there is a constant $C$ such that
		\[
		\|f\|_{L^p(\R^d)}\le C\,\|D^{\sigma}f\|_{L^2(\R^d)}^{\nu}\,\|f\|_{L^2(\R^d)}^{1-\nu}\qquad\text{for all }f\in\Sch(\R^d).
		\]
	\end{lemma}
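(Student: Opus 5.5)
First, $\nu\in(0,1)$ is immediate. Since $p>2$ we have $\frac12-\frac1p>0$, so $\nu>0$. The assumption $\frac1p>\frac12-\frac\sigma d$ says exactly that $\frac12-\frac1p<\frac\sigma d$, which gives $\nu<1$. For the inequality itself I will first prove the homogeneous Sobolev bound $\|f\|_{L^p}\le C\|D^{\nu\sigma}f\|_{L^2}$ and then interpolate on the Fourier side.

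\textbf{Step 1 (Sobolev).} Set $\theta=\nu\sigma=d\big(\frac12-\frac1p\big)\in(0,\sigma)$. Then $0<\theta<d/2$ and $\frac1p=\frac12-\frac\theta d$. Write $f=I_\theta g$ with $g=D^\theta f$, where $I_\theta$ is the Riesz potential with kernel $c|x|^{\theta-d}$. The Hardy--Littlewood--Sobolev inequality then gives
\[
\|f\|_{L^p}\le C\|D^\theta f\|_{L^2}\qquad\text{for all } f\in\Sch(\R^d).
\]
The identity $f=I_\theta D^\theta f$ for Schwartz $f$ holds because $|\xi|^{-\theta}$ is locally integrable ($\theta<d$), so $\widehat f=|\xi|^{-\theta}\widehat{D^\theta f}$ holds in $L^1_{loc}\cap\mathcal S'$.

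\textbf{Step 2 (interpolation in frequency).} By Plancherel, $\|D^\theta f\|_2^2=\int|\xi|^{2\nu\sigma}|\widehat f|^2\,d\xi$. Apply Hölder's inequality with exponents $1/\nu$ and $1/(1-\nu)$ to the factorization $|\xi|^{2\nu\sigma}|\widehat f|^{2\nu}\cdot|\widehat f|^{2(1-\nu)}$:
\[
\|D^\theta f\|_2^2\le\|D^\sigma f\|_2^{2\nu}\,\|f\|_2^{2(1-\nu)}.
\]
Combining this with Step 1 yields the lemma.

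The only delicate point is Step 1 when $\theta$ is not an integer. This is just the standard fractional Sobolev embedding $\dot H^\theta\subset L^{2d/(d-2\theta)}$, which I would quote, or derive from HLS as above. Everything else is elementary.
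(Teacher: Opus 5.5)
Your proof is correct. The paper gives no proof of this lemma: it is quoted as the standard fractional Gagliardo--Nirenberg inequality, and invoked again as ``known'' in the second proof of the main inequality. Your argument is exactly the standard derivation being assumed there: the homogeneous embedding $\dot H^{\theta}(\R^d)\subset L^p(\R^d)$ with $\theta=\nu\sigma=d(\frac12-\frac1p)\in(0,\frac d2)$, followed by H\"older in $\xi$ with exponents $1/\nu$ and $1/(1-\nu)$.

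The one step worth stating more carefully is $f=I_\theta D^\theta f$. What is actually needed is $\widehat{I_\theta g}=c\,|\xi|^{-\theta}\widehat g$ for $g=D^\theta f$, which lies in $L^2$ but is not Schwartz. This follows from the Schwartz case by density, using HLS continuity $L^2\to L^p$ on one side and $|\xi|^{-\theta}\in L^2_{\rm loc}$ (since $\theta<\frac d2$) on the other. Alternatively, simply quote the embedding, as you say.
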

	
	Our second tool is a fractional Hardy-Sobolev inequality, which will manage the singular weights appearing in the $x$-variable.

	\begin{lemma} \label{lem:HS}
		Let $m\ge1$, $0<r<\frac m2$, $0\le a\le r$ and $\frac1g=\frac12-\frac{r-a}{m}$. Then there is a constant $C$ such that
		\[
		\big\||x|^{-a}f\big\|_{L^{g}(\R^m)}\le C\,\|D^{r}f\|_{L^2(\R^m)}\qquad\text{for all }f\in H^{r}(\R^m).
		\]
	\end{lemma}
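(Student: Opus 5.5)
The estimate is the classical fractional Stein--Weiss (Hardy--Rellich--Sobolev) inequality in $\R^m$ with a power weight, and I will obtain it by interpolating between its two endpoint cases, $a=0$ and $a=r$.

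\emph{Endpoint $a=0$.} Here $1/g=1/2-r/m$, and the claim is the homogeneous fractional Sobolev inequality $\|f\|_{L^{2m/(m-2r)}}\le C\|D^rf\|_{L^2}$. This holds because $0<r<m/2$. I will prove it by writing $f=I_r(D^rf)$, where $I_r$ is the Riesz potential with kernel $c|z|^{r-m}$, and then applying Hardy--Littlewood--Sobolev with $1/g=1/2-r/m$.

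\emph{Endpoint $a=r$.} Here $g=2$, and the claim is the fractional Hardy (Herbst) inequality $\||x|^{-r}f\|_{L^2}\le C\|D^rf\|_{L^2}$, again valid since $r<m/2$. I would set $h=D^rf$ and show that the operator $h\mapsto |x|^{-r}I_rh$, with kernel $|x|^{-r}|x-z|^{r-m}$, is bounded on $L^2$. For this I use Schur's test with test function $|x|^{-m/2}$. The two Schur integrals reduce to convergent Beta-type integrals exactly when $0<r<m/2$. Alternatively one can quote Stein--Weiss directly.

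\emph{Intermediate $0<a<r$.} I will apply H\"older's inequality with the splitting $\theta=a/r$:
\[
\int |x|^{-ag}|f|^g\,dx=\int \big(|x|^{-r}|f|\big)^{\theta g}|f|^{(1-\theta)g}\,dx .
\]
The exponents are chosen so that the first factor lands in $L^2$ and the second in $L^{2m/(m-2r)}$. This requires
\[
\frac1g=\frac{\theta}{2}+(1-\theta)\Big(\frac12-\frac rm\Big)=\frac12-\frac{r-a}{m},
\]
which is exactly the prescribed $g$. Hence
\[
\||x|^{-a}f\|_{L^g}\le\||x|^{-r}f\|_{L^2}^{\theta}\,\|f\|_{L^{2m/(m-2r)}}^{1-\theta},
\]
and the two endpoints bound the right-hand side by $C\|D^rf\|_{L^2}$.

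\emph{Extension to $H^r$.} The inequality is first proved for $f\in\Sch(\R^m)$. For general $f\in H^r$ I approximate by Schwartz functions and use Fatou's lemma on the left-hand side.

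\emph{Main obstacle.} The only nontrivial step is the endpoint $a=r$, specifically verifying the Schur integrals for the Riesz-potential kernel, or equivalently justifying the appeal to Herbst/Stein--Weiss. The remaining steps are routine H\"older bookkeeping.
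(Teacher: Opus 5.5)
The paper gives no proof of Lemma~\ref{lem:HS}. It uses it as a black box, namely the classical fractional Hardy--Sobolev (Stein--Weiss) inequality, so the comparison is between quoting and proving, and your argument is a correct self-contained proof.

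Your Schur test does close. For the kernel $|x|^{-r}|x-z|^{r-m}$ with $p=q=|x|^{-m/2}$, the $z$-integral converges at infinity and the $x$-integral converges at the origin exactly when $r<m/2$; the other singularities are harmless. Homogeneity then returns $C|x|^{-m/2}$ and $C|z|^{-m/2}$. Your H\"older splitting with $\theta=a/r$ produces exactly $\frac1g=\frac12-\frac{r-a}{m}$.

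Quoting Stein--Weiss costs one line and gives more: general $L^p\to L^q$ bounds with weights on both sides. Your route instead shows three things. The only analytic inputs are Hardy--Littlewood--Sobolev at $a=0$ and Herbst's inequality at $a=r$. The hypothesis $r<m/2$ is used precisely there. The intermediate range needs no weighted interpolation theory, because both endpoints have the same right-hand side $\|D^rf\|_2$. This is the same multiplicative splitting the paper itself uses in the proof of Theorem~\ref{thm:yrad}.

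Your splitting is also flexible. In the variables $(n,s)$, replace the Sobolev endpoint by the radial bound $|x|^{\frac n2-s}|f(x)|\lesssim\|D^sf\|_2$ of Lemma~\ref{lem:strauss}\,(i), which is the $g=\infty$ endpoint. The same H\"older argument then yields Corollary~\ref{s3cor:HSrad} in the range $\frac12<s<\frac n2$, without the Drelichman--Dur\'an inequality.

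One detail should be written out: the identity $f=I_r(D^rf)$ for $f\in\Sch$. Since $D^rf$ is not a Schwartz function, this needs a short density argument. It goes through because $|\xi|^{-r}$ is locally square integrable exactly when $r<m/2$.
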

	Finally, we require a structural lemma ensuring that taking the $L^2$-norm in the $y$-variable preserves fractional Sobolev regularity in the $x$-variable.

	\begin{lemma}\label{lem:diam}
		Let $0<s\le1$, $u\in\Sch(\R^{n+m})$, and $U_2(x)=\|u(x,\cdot)\|_{L^2(\R^m)}$. Then $U_2\in H^s(\R^n)$ and $\|D^sU_2\|_{L^2(\R^n)}\le\|D_x^su\|_{L^2(\R^{n+m})}$.
	\end{lemma}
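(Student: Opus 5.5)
The plan is to reduce the statement to the scalar fact that, for $0<s\le1$, the modulus map $f\mapsto|f|$ does not increase the $\dot H^s$ seminorm, applied to Hilbert-space-valued functions. For $s\in(0,1)$ I would use the Gagliardo form of the homogeneous seminorm,
\[
\|D^s F\|_{L^2(\R^n)}^2 = c_{n,s}\iint_{\R^n\times\R^n}\frac{|F(x)-F(x')|^2}{|x-x'|^{n+2s}}\,\dd x\,\dd x',
\]
which holds verbatim for $F:\R^n\to\mathcal H$ with $\mathcal H=L^2(\R^m)$. Plancherel in $y$ gives
\[
\|D^s_x u\|_{L^2(\R^{n+m})}^2 = c_{n,s}\iint \frac{\|u(x,\cdot)-u(x',\cdot)\|_{L^2_y}^2}{|x-x'|^{n+2s}}\,\dd x\,\dd x'.
\]
This identity is justified by applying the partial Fourier transform in $y$ and then using the scalar Gagliardo identity for each fixed frequency $\eta$, with Fubini.

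The key pointwise step is the reverse triangle inequality in $L^2_y$:
\[
|U_2(x)-U_2(x')| = \big|\|u(x,\cdot)\|_{L^2_y}-\|u(x',\cdot)\|_{L^2_y}\big| \le \|u(x,\cdot)-u(x',\cdot)\|_{L^2_y}.
\]
Inserting this into the Gagliardo integral immediately gives $\|D^sU_2\|_{L^2}\le\|D^s_xu\|_{L^2}$, with the same constant $c_{n,s}$ on both sides.

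For the endpoint $s=1$, I would argue separately. The function $U_2$ is Lipschitz on compact sets, since it is a composition of the norm with the smooth map $x\mapsto u(x,\cdot)\in L^2_y$. Where $U_2(x)>0$ one computes
\[
\nabla U_2 = \frac{\mathrm{Re}\langle u,\nabla_x u\rangle_{L^2_y}}{U_2},
\]
so Cauchy--Schwarz gives $|\nabla U_2|\le\|\nabla_x u(x,\cdot)\|_{L^2_y}$. On the set $\{U_2=0\}$ the gradient vanishes almost everywhere. Integrating then yields $\|\nabla U_2\|_{L^2}\le\|\nabla_x u\|_{L^2}$. Alternatively, $s=1$ follows from the $s<1$ case by letting $s\to1$, using $(1-s)c_{n,s}^{-1}\to$ const; the direct computation is cleaner.

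Membership $U_2\in L^2$ is immediate, since $\|U_2\|_{L^2(\R^n)}=\|u\|_{L^2(\R^{n+m})}$. Combined with the finiteness of the seminorm, this shows $U_2\in H^s(\R^n)$.

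The main obstacle is justifying that the Gagliardo integral of $U_2$ actually equals $\|D^sU_2\|_2^2$, since $U_2$ is not smooth. I would handle this with the standard fact that for $F\in L^2$ the finiteness of the Gagliardo double integral is equivalent to $F\in H^s$, with equality of the two expressions. This equivalence is proved via Plancherel applied to the difference quotients $F(\cdot+h)-F(\cdot)$, whose squared $L^2$ norm is $\int|e^{ih\xi}-1|^2|\hat F(\xi)|^2\,\dd\xi$. That computation needs only $F\in L^2$, so no smoothness of $U_2$ is required.
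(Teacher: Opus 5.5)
Your proposal is correct and follows essentially the same route as the paper. You use the Plancherel/Gagliardo identity for $0<s<1$ (the paper fixes $y$ and integrates, rather than passing through the partial Fourier transform in $y$, which is immaterial), the reverse triangle inequality in $L^2_y$, and a separate Lipschitz argument for $s=1$. In that last case, your explicit formula for $\nabla U_2$ on $\{U_2>0\}$, combined with the fact that $\nabla U_2=0$ at every point of differentiability in $\{U_2=0\}$ (such points are minima), is an equally valid substitute for the paper's difference-quotient bound at Rademacher points.
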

	
	\begin{proof}
		Note irst that $U_2\in L^2(\R^n)$ with $\|U_2\|_2=\|u\|_2$.
		
		Assume first $0<s<1$. For $f\in L^2(\R^n)$ put $\Phi_s(f)=\iint|f(x+h)-f(x)|^2|h|^{-n-2s}\,dx\,dh\in[0,\infty]$. By the Plancherel identity and the Tonelli theorem,
		\[
		\Phi_s(f)=\int|\widehat f(\xi)|^2\Big(\int|e^{ih\cdot\xi}-1|^2|h|^{-n-2s}dh\Big)d\xi=\kappa_{n,s}\int|\xi|^{2s}|\widehat f(\xi)|^2d\xi=\kappa_{n,s}\|D^sf\|_2^2,
		\]
		where the inner integral equals $\kappa_{n,s}|\xi|^{2s}$ by rotation invariance and the substitution $h\mapsto h/|\xi|$, and $\kappa_{n,s}=\int|e^{ih_1}-1|^2|h|^{-n-2s}dh\in(0,\infty)$ is finite because $|e^{ih_1}-1|^2\lesssim\min(|h|^2,1)$ and $0<s<1$. Applying this identity to $u(\cdot,y)$ for each fixed $y$, integrating in $y$, and using the Plancherel identity in the $x$-variable (for each fixed $y$) together with the Tonelli theorem, we get
		\[
		\kappa_{n,s}\|D_x^su\|_2^2=\iint\big\|u(x+h,\cdot)-u(x,\cdot)\big\|_{L^2(\R^m)}^2|h|^{-n-2s}\,dx\,dh.
		\]
		By the triangle inequality in $L^2(\R^m)$, $|U_2(x+h)-U_2(x)|\le\|u(x+h,\cdot)-u(x,\cdot)\|_{L^2(\R^m)}$, hence $\kappa_{n,s}\|D^sU_2\|_2^2=\Phi_s(U_2)\le\kappa_{n,s}\|D_x^su\|_2^2$.
		
		Assume now $s=1$. Since $u\in\Sch$, the map $x\mapsto u(x,\cdot)\in L^2(\R^m)$ is $C^1$ with bounded derivative. By the triangle inequality, $U_2$ is Lipschitz, so it is differentiable almost everywhere by Rademacher's theorem. At a point of differentiability and for a unit vector $e$,
		\[
		\begin{split}
			|\partial_eU_2(x)|&=
			\lim_{t\to0}\frac{|U_2(x+te)-U_2(x)|}{|t|}\le\lim_{t\to0}\norm{\frac{u(x+te,\cdot)-u(x,\cdot)}{t}}_{L^2(\R^m)}\\&
			=\norm{\partial_eu(x,\cdot)}_{L^2(\R^m)}\le\norm{|\nabla_xu (x,\cdot)}_{L^2(\R^m)}.
		\end{split}
		\]
		Taking the supremum over $e$ gives $|\nabla U_2(x)|\le\||\nabla_xu|(x,\cdot)\|_{L^2(\R^m)}$, and integration in $x$ yields $\|\nabla U_2\|_2\le\|\nabla_xu\|_2$. A Lipschitz function in $L^2$ with gradient in $L^2$ lies in $H^1$, and $\|D^1f\|_2=\|\nabla f\|_2$ by the Plancherel identity.
	\end{proof}
	
	%==========================================================================
	
	%==========================================================================

 Equipped with these three lemmas, we are now ready to assemble the proof of the main inequality.
 
	\begin{proof}[Proof of Theorem \ref{thm:main-w}]
		
		By the discussion after Theorem~\ref{thm:main-w} we have $0<\delta<\frac12$, $0<\tau<1$ and $\beta\le\tau b$. Define
		\begin{equation}\label{eq:defag}
			a=\frac{\tau b-\beta}{1-\tau},\qquad \frac1g=\frac12-\frac{\delta}{1-\tau}.
		\end{equation}
		We claim that
		\begin{equation}\label{eq:claims}
			\begin{gathered}
				\beta=\tau b-(1-\tau)a,\qquad s-a=\frac{n\delta}{1-\tau},\qquad \frac1B=\frac\tau2+\frac{1-\tau}{g},\\
				\frac1g=\frac12-\frac{s-a}{n},\qquad 0\le a<s,\qquad 2<g<\infty.
			\end{gathered}
		\end{equation}
		The first identity is the definition of $a$. For the second, using $\varrho=s+\beta$, $\Gamma-m(b+s)=rn$ and $\delta=r\varrho/\Gamma$,
		\[
		(1-\tau)(s-a)=s(1-\tau)-\tau b+\beta=\varrho-\tau(b+s)=\varrho\,\frac{\Gamma-m(b+s)}{\Gamma}=\frac{\varrho\, rn}{\Gamma}=n\delta .
		\]
		For the third, recall $\frac1B=\frac12-\delta$; then
		$\frac\tau2+\frac{1-\tau}g=\frac\tau2+\frac{1-\tau}2-\delta=\frac12-\delta=\frac1B .$
		The fourth identity is the second one inserted in \eqref{eq:defag}. Finally $a\ge0$ is the hypothesis $\beta\le\tau b$ divided by $1-\tau>0$; $s-a=n\delta/(1-\tau)>0$ gives $a<s$ and $g>2$; and $a\ge 0$ together with \eqref{H0} gives $s-a\le s<\frac n2$, hence $\frac1g>0$ and $g<\infty$. In particular $(a,g)$ satisfy the hypotheses of Lemma~\ref{lem:HS} with the present $s$ and $n$, because $0\le a\le s<\frac n2$.
		
		Now, for $x\in\R^n$ put %\cA(u)=\norm{|x|^{b}D_y^{r}u}_{L^2(\R^{n+m})},\qquad
		% \cC(u)=\norm{D_x^{s}u}_{L^2(\R^{n+m})} .
		\[
		F(x)=\|D_y^ru(x,\cdot)\|_{L^2(\R^m)},\qquad U_2(x)=\|u(x,\cdot)\|_{L^2(\R^m)},\qquad U_B(x)=\|u(x,\cdot)\|_{L^B(\R^m)} .
		\]
		By the Tonelli theorem, $\||x|^bF\|_{L^2(\R^n)}=\norm{|x|^{b}D_y^{r}u}_{L^2(\R^{n+m})}$ and $\||x|^\beta U_B\|_{L^B(\R^n)}=\||x|^\beta u\|_{L^B(\R^{n+m})}$. For fixed $x$ the function $u(x,\cdot)$ belongs to $\Sch(\R^m)$, and we apply Lemma~\ref{lem:GN} with $d=m$, $\sigma=r$ and $p=B$. The hypotheses are satisfied: $B\in(2,\infty)$, and $\frac1B>\frac12-\frac rm$ is equivalent to $\delta<\frac rm$, that is to $\tau=\frac{m\delta}{r}<1$. The corresponding exponent is $\nu=\frac mr(\frac12-\frac1B)=\frac{m\delta}r=\tau$. Hence
		\begin{equation}\label{eq:step1}
			U_B(x)\le C\,F(x)^{\tau}\,U_2(x)^{1-\tau}\qquad\text{for every }x\in\R^n .
		\end{equation}
		
		\noindent Multiplying \eqref{eq:step1} by $|x|^\beta$ and using $\beta=\tau b-(1-\tau)a$ from \eqref{eq:claims}, we obtain for $x\ne0$
		\[
		|x|^{\beta}U_B(x)\le C\,\big(|x|^{b}F(x)\big)^{\tau}\,\big(|x|^{-a}U_2(x)\big)^{1-\tau}.
		\]
		By the third identity in \eqref{eq:claims}, $\frac1B=\frac{\tau}{2}+\frac{1-\tau}{g}$, so Hölder's inequality with exponents $\frac2\tau$ and $\frac{g}{1-\tau}$ gives
		\begin{equation}\label{eq:step2}
			\||x|^{\beta}u\|_{L^B(\R^{n+m})}\le C\,\||x|^bF\|_{L^2(\R^n)}^{\tau}\,\||x|^{-a}U_2\|_{L^g(\R^n)}^{1-\tau}=C\,\cA^{\tau}\,\||x|^{-a}U_2\|_{L^g(\R^n)}^{1-\tau}.
		\end{equation}
		
		\noindent By Lemma~\ref{lem:diam}, $U_2\in H^s(\R^n)$ and $\|D^sU_2\|_2\le\norm{D_x^{s}u}_{L^2(\R^{n+m})}$. By Lemma~\ref{lem:HS}, applicable by \eqref{eq:claims} and \eqref{H0},
		\begin{equation}\label{eq:step3}
			\||x|^{-a}U_2\|_{L^g(\R^n)}\le C\,\|D^sU_2\|_{L^2(\R^n)}\lesssim  \norm{D_x^{s}u}_{L^2(\R^{n+m})} .
		\end{equation}
		Inserting \eqref{eq:step3} into \eqref{eq:step2} gives   \eqref{eq:main}. \end{proof}

	%==========================================================================
	\medskip
	    We conclude this section by demonstrating that the parameter constraints are strictly necessary, relying on standard scaling, translation, and modulation arguments.
	
	\begin{proof}[Proof of Proposition~\ref{prop:necessity}]%\label{sec:nec}
		%==========================================================================
		
		Let $u\in C_c^\infty(\R^{n+m})$, $u\ne0$, with $\operatorname{supp}u\subset\{x\ne0\}$ and  set
		\begin{equation}\label{eq:defAC}
			\cA(u)=\big\||x|^{b}D_y^{r}u\big\|_{L^2(\R^{n+m})},\qquad
			\cC(u)=\big\|D_x^{s}u\big\|_{L^2(\R^{n+m})},
		\end{equation}
		so that $\||x|^\beta u\|_B$, $\cA(u)$ and $\cC(u)$ are finite and positive for all real $b,\beta$.
		%\norm{|x|^{b}D_y^{r}u}_{L^2(\R^{n+m})}
		
		{The scaling relations \eqref{eq:scal}.}
		For $\lambda,\mu>0$ let $u_{\lambda,\mu}(x,y)=u(\lambda x,\mu y)$; its support is again contained in $\{x\ne0\}$. A change of variables gives
		\begin{gather*}
			\||x|^\beta u_{\lambda,\mu}\|_B=\lambda^{-\beta-\frac nB}\mu^{-\frac mB}\||x|^\beta u\|_B,\qquad
			\norm{|x|^{b}D_y^{r}u_{\lambda,\mu}}_{L^2(\R^{n+m})}
			=\lambda^{-b-\frac n2}\mu^{r-\frac m2}\norm{|x|^{b}D_y^{r}u}_{L^2(\R^{n+m})},\\
			\norm{D_x^{s}u_{\lambda,\mu}}_{L^2(\R^{n+m})}=\lambda^{s-\frac n2}\mu^{-\frac m2}\norm{D_x^{s}u}_{L^2(\R^{n+m})}.
		\end{gather*}
		Applying \eqref{eq:ineqK} to $u_{\lambda,\mu}$ and dividing by the left-hand side we obtain, for all $\lambda,\mu>0$,
		\[
		1\le K'\,\lambda^{\,e_1+\beta+\frac nB}\,\mu^{\,e_2+\frac mB},\qquad e_1=-\tau\big(b+\tfrac n2\big)+(1-\tau)\big(s-\tfrac n2\big),\quad e_2=\tau r-\tfrac m2,
		\]
		with $K'=K\norm{|x|^{b}D_y^{r}u}_{L^2(\R^{n+m})}^\tau\norm{D_x^{s}u}_{L^2(\R^{n+m})}^{1-\tau}/\||x|^\beta u\|_B$ independent of $\lambda,\mu$. Letting $\lambda\to0$ and $\lambda\to\infty$ (with $\mu=1$) forces $e_1+\beta+\frac nB=0$, and letting $\mu\to0$ and $\mu\to\infty$ forces $e_2+\frac mB=0$. The second equation is $\frac mB=\frac m2-\tau r$. Using $e_1=-\tau b+(1-\tau)s-\frac n2$, the first equation reads $\frac nB=\frac n2-\big(s+\beta-\tau(b+s)\big)$, which is \eqref{eq:scal}.
		
		If $\Gamma\ne0$, set $\delta'=\frac12-\frac1B$. The two relations say $\delta'=\tau r/m$ and $n\delta'=\varrho-\tau(b+s)$. Substituting the first into the second gives $\tau(rn/m+b+s)=\varrho$, that is $\tau=m\varrho/\Gamma$, and then $\delta'=r\varrho/\Gamma=\delta$, so $B$ and $\tau$ are derived. The converse is the same computation read backwards.
		
		{The condition $\beta\le\tau b$.}
		Let $\rho_0>0$ be such that $\operatorname{supp}u\subset\{|x|\le\rho_0\}$ and, for $x_0\in\R^n$ with $|x_0|\ge2\rho_0$, let $u^{x_0}(x,y)=u(x-x_0,y)$. On the support of $u^{x_0}$ we have $\frac12|x_0|\le|x|\le\frac32|x_0|$, so that $\||x|^\beta u^{x_0}\|_B\ge c_1|x_0|^\beta\|u\|_B$ and $\cA(u^{x_0})\le c_2|x_0|^b\norm{|x|^{b}D_y^{r}u}_{L^2(\R^{n+m})}$ with constants $c_1,c_2>0$ depending only on $b,\beta$. Since $D_x^s$ commutes with translations, $\cC(u^{x_0})=\norm{D_x^{s}u}_{L^2(\R^{n+m})}$. Because $\tau>0$, inequality \eqref{eq:ineqK} gives $|x_0|^{\beta}\le C'|x_0|^{\tau b}$ for all $|x_0|\ge2\rho_0$, which is possible only if $\beta\le\tau b$.
		
		{The condition $\tau<1$.}
		We argue by contradiction, assuming $\tau\ge1$, and use product functions $u(x,y)=\phi(x)\psi(y)$ with $\phi,\psi$ smooth and compactly supported, $\operatorname{supp}\phi\subset\{x\ne0\}$.
		
		Suppose first $\tau>1$ and let $u_\lambda(x,y)=e^{i\lambda x_1}\phi(x)\psi(y)$ for $\lambda\ge1$. Then $|u_\lambda|=|u_1|$ and $D_y^ru_\lambda=e^{i\lambda x_1}\phi\,D_y^r\psi$, so $\||x|^\beta u_\lambda\|_B$ and $\cA(u_\lambda)$ do not depend on $\lambda$, whereas
		\[
		\cC(u_\lambda)^2=\|\psi\|_2^2\int|\xi+\lambda e_1|^{2s}|\widehat\phi(\xi)|^2d\xi\ \ge\ \|\psi\|_2^2\Big(\frac\lambda2\Big)^{2s}\int_{|\xi|\le\lambda/2}|\widehat\phi|^2d\xi\ \longrightarrow\ \infty\quad(\lambda\to\infty).
		\]
		Since $1-\tau<0$, the right-hand side of \eqref{eq:ineqK} tends to $0$ while the left-hand side is a fixed positive number, a contradiction.
		
		Suppose now $\tau=1$. By \eqref{eq:scal}, $\frac m B=\frac m2-r$, so $B>2$. Fix $x_0\ne0$, let $\phi_\varepsilon(x)=\phi((x-x_0)/\varepsilon)$ with $\phi\in C_c^\infty(B(0,1))$, $\phi\ne0$, and $\varepsilon<|x_0|/2$, and put $u_\varepsilon=\phi_\varepsilon\otimes\psi$. Then, for $\varepsilon\to0$,
		\[
		\||x|^\beta u_\varepsilon\|_B\ \ge\ c\,\varepsilon^{n/B}\|\phi\|_B\|\psi\|_B,\qquad \cA(u_\varepsilon)\le c'\,\varepsilon^{n/2}\|\phi\|_2\|D^r\psi\|_2,
		\]
		and for $\tau=1$ inequality \eqref{eq:ineqK} would give $\varepsilon^{n/B}\lesssim\varepsilon^{n/2}$ for all small $\varepsilon$, which is false because $B>2$.
	\end{proof}

	%%%%%%%%%%%%%%%%%%%%%%%%%%%%%%%%
 In this section, we present an alternative proof of the main inequality. While the previous approach relied on Gagliardo--Nirenberg and Hardy--Sobolev inequalities, this method is based on a Littlewood--Paley decomposition in the $y$-variable. This provides an independent and constructive perspective on the fractional bounds.
 
	\section{ {Second proof}}\label{sec-secondproof}
	
	\begin{theorem}
		Let $x\in\mathbb R^n$, $y\in\mathbb R^m$ ($n,m\ge1$), and $N=n+m$.   For $u\in\mathcal S(\mathbb R^N)$,
		$$\|u\|_{L^q(\mathbb R^N)}\le C\,\|D_x^su\|_{L^2}^{1-\tau}\||x|^\gamma D_y^ru\|_{L^2}^{\tau},$$
		where
		$$ \tau:= \frac{ms}{nr+m(\gamma+s)}\in(0,1).$$
		and
		$$q =\frac{2\,(nr+m(\gamma+s))}{m(\gamma+s)+nr-2rs}.$$
	\end{theorem}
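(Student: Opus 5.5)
We prove the $\beta=0$ case of \eqref{eq:main} (with $b=\gamma\ge0$, $2s<n$, and $\Gamma:=nr+m(\gamma+s)>2rs$ as in Remark~\ref{rem:hyp}) through a Littlewood--Paley decomposition in $y$. Fix a homogeneous dyadic decomposition $u=\sum_{k\in\mathbb Z}P_ku$, where $P_k$ localizes the partial Fourier variable $\eta$ to $|\eta|\sim2^k$. The plan has three steps: a scale-invariant estimate on each frequency shell, a summation over shells, and an interpolation back to the two norms.

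\emph{Step 1: the shell estimate.} For $u_k=P_ku$ and fixed $x$, Bernstein's inequality in $y$ gives $\|u_k(x,\cdot)\|_{L^q_y}\lesssim 2^{km(\frac12-\frac1q)}\|u_k(x,\cdot)\|_{L^2_y}$. The function $U_{2,k}(x)=\|u_k(x,\cdot)\|_{L^2_y}$ satisfies $\|D^sU_{2,k}\|_2\le\|D^s_xu_k\|_2$ by Lemma~\ref{lem:diam}.

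On $\{|x|\le R\}$ we use Sobolev (Lemma~\ref{lem:HS} with $a=0$) together with H\"older in $x$, for $q<2^*_s$. On $\{|x|>R\}$ we use $\|u_k\|_{L^2}\lesssim R^{-\gamma}2^{-kr}\||x|^\gamma D^r_yu_k\|_2$, valid because $|\eta|\sim2^k$. An alternative for this region is a Hardy--Sobolev bound with negative weight, interpolating $\||x|^{-a}U\|_{L^g}$. We then optimize in $R$ at each $k$. By the scaling computation of Proposition~\ref{prop:necessity} (the $\lambda,\mu$ exponents), the optimized bound takes the form
\[
\|u_k\|_{L^q}\lesssim \|D^s_xu_k\|_2^{1-\tau}\,\||x|^\gamma D^r_yu_k\|_2^{\tau},
\]
and it is exactly scale-invariant. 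This reproduces the exponents $\tau$ and $q$ of the statement. The cleaner route is to write the shell estimate as the $\beta=0$ instance of the Theorem~\ref{thm:main-w} argument restricted to $u_k$: by Bernstein, the GN step of Lemma~\ref{lem:GN} holds with the frequency $2^k$ made explicit. This is the concrete version I would carry out.

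\emph{Step 2: summation over shells (expected obstacle).} Because $q>2$, the square-function estimate in $y$ and Minkowski's inequality ($q/2\ge1$) give
\[
\|u\|_{L^q}\lesssim\Big\|\big(\textstyle\sum_k|u_k|^2\big)^{1/2}\Big\|_{L^q}\le\Big(\sum_k\|u_k\|_{L^q}^2\Big)^{1/2}.
\]
Inserting the shell bound and applying H\"older in $k$ with exponents $\frac1{1-\tau}$ and $\frac1\tau$, we get
\[
\Big(\sum_k\|D_x^su_k\|_2^{2(1-\tau)}\||x|^\gamma D_y^ru_k\|_2^{2\tau}\Big)^{1/2}\le\Big(\sum_k\|D^s_xu_k\|_2^2\Big)^{\frac{1-\tau}2}\Big(\sum_k\||x|^\gamma D^r_yu_k\|_2^2\Big)^{\frac\tau2}.
\]
Orthogonality in $\eta$ identifies both sums with $\|D_x^su\|_2^2$ and $\||x|^\gamma D_y^ru\|_2^2$, respectively. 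This holds because the weight depends only on $x$ and therefore commutes with $P_k$. That commutation is the point to check carefully.

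The main obstacle is Step 1: showing that the per-shell estimate holds with a constant uniform in $k$. If the direct $R$-optimization loses, the fallback is to rescale $y\mapsto2^{-k}y$, which reduces every shell to $k=0$. Since $P_0$ is a bounded convolution in $y$, on that shell we have $\|D^r_yu_0\|\sim\|u_0\|$, and the estimate becomes the weighted inequality $\|u_0\|_q\lesssim\|D^s_xu_0\|^{1-\tau}\||x|^\gamma u_0\|^\tau$. This is a Heisenberg/CKN-type inequality in $x$ with values in $L^2_y\cap L^q_y$, which follows from Lemma~\ref{lem:HS}, Lemma~\ref{lem:diam}, and the splitting at radius $R$.
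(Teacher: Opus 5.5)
Your proposal is correct and follows essentially the paper's route: Littlewood--Paley in $y$, Bernstein on each shell, passage to the profile $\|u_k(x,\cdot)\|_{L^2_y}$ via Lemma~\ref{lem:diam}, and a per-shell bound from splitting at radius $R$ in which the powers of $2^k$ cancel. This per-shell bound is the paper's Lemma~\ref{lem2.2}, a Heisenberg-type bound for $\|g\|_2$ inserted into Gagliardo--Nirenberg in $x$; both arguments then finish with the square-function estimate, Minkowski in $L^{q/2}$, H\"older in $k$ and orthogonality in $\eta$. The differences are minor: the paper treats the inner region with the inhomogeneous Gagliardo--Nirenberg inequality plus an absorption step, so it never uses Lemma~\ref{lem:HS} or $2s<n$, whereas your Sobolev step and your ``cleaner route'' both depend on \eqref{H0}, and that cleaner route (the Theorem~\ref{thm:main-w} argument on each shell) would already prove the inequality for $u$ directly and make the decomposition superfluous.
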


	\begin{proof}
		Set $A=\|D_x^su\|_{L^2}$, $B=\||x|^\gamma D_y^ru\|_{L^2}$, $\alpha:=\frac nQ\in(0,1)$ and
		$$  Q = \frac{nr+m(\gamma+s)}{r}.$$
		Step 1.  Choose $\varphi\in C_c^\infty(\mathbb R^m)$ with $0\le\varphi\le1$, $\operatorname{supp}\varphi\subset\{\frac12<|\eta|<2\}$, and $\sum_{j\in\mathbb Z}\varphi(2^{-j}\eta)=1$ for $\eta\ne0$. Let $u_j$ be the multiplier in $y$ with symbol $\varphi(2^{-j}\eta)$. Then $u_j\in\mathcal S(\mathbb R^N)$, and for $2\le q<\infty$ and $f\in\mathcal S(\mathbb R^m)$ the Littlewood–Paley theorem gives

		$$
		\|f\|_{L^q(\mathbb R^m)}\le C\Big\|\Big(\sum_j|P_jf|^2\Big)^{1/2}\Big\|_{L^q(\mathbb R^m)}.
		$$
		Also $\sum_j\varphi^2\le\sum_j\varphi=1$.
		Take $\tilde\varphi\in C_c^\infty$ with $\tilde\varphi=1$ on $\operatorname{supp}\varphi$, and let $k=\mathcal F^{-1}\tilde\varphi$, $k_j=2^{jm}k(2^j\cdot)$. Then $u_j(x,\cdot)=k_j*_yu_j(x,\cdot)$. Young's inequality with $1+\frac1q=\frac1\rho+\frac12$, together with $\|k_j\|_\rho=2^{jm(1-1/\rho)}\|k\|_\rho$, gives

		$$\|u_j(x,\cdot)\|_{L^q_y}\le C\,2^{jm(\frac12-\frac1q)}\,g_j(x),\qquad g_j(x):=\|u_j(x,\cdot)\|_{L^2_y}.$$
		
		Integrating the $q$-th power in $x$ and squaring:

		\begin{equation}
			\|u_j\|_{L^q}^2\le C\,2^{jm(1-\frac2q)}\|g_j\|_{L^q(\mathbb R^n)}^2. \label{eq3.1}
		\end{equation}
		
		For $f\in L^1\cap L^2(\mathbb R^n)$ with $\|D^sf\|_2<\infty$,
		it is known that
		
		\begin{equation}
			\|f\|_{L^q}\le C_0\|D^sf\|_2^{\alpha}\|f\|_2^{1-\alpha}. \label{eq3.2}
		\end{equation}
		
		Step 2. We show that $\|D^sg\|_2\le\|D_x^su\|_2$.  Let $u\in\mathcal S(\mathbb R^N)$ and $g(x)=\|u(x,\cdot)\|_{L^2_y}$. Then $g$ is continuous, rapidly decaying, and

		$$|g(x)-g(x')|\le\|u(x,\cdot)-u(x',\cdot)\|_{L^2_y}.$$
		
		Case $0<s<1$. By the Plancherel identity in $x$, $\int|e^{ih\cdot\xi}-1|^2|h|^{-n-2s}dh=c_{n,s}|\xi|^{2s}$ with $c_{n,s}\in(0,\infty)$. Hence for scalar $f$,

		$$\iint\frac{|f(x)-f(x')|^2}{|x-x'|^{n+2s}}dxdx'=c_{n,s}\|D^sf\|_2^2.$$
		
		Applying this to $g$, then to $u(\cdot,y)$ for each $y$ and integrating in $y$ (Tonelli's theorem):

		$$c_{n,s}\|D^sg\|_2^2\le\iint\frac{\|u(x,\cdot)-u(x',\cdot)\|_{L^2_y}^2}{|x-x'|^{n+2s}}dxdx'=c_{n,s}\|D_x^su\|_{L^2(\mathbb R^N)}^2 .$$
		
		Case $s=1$.  $g$ is locally Lipschitz. At a point of differentiability, take $e=\nabla g/|\nabla g|$. Then

		$$|\nabla g|=|\partial_eg|\le\|\partial_eu(x,\cdot)\|_{L^2_y}\le\||\nabla_xu(x,\cdot)|\|_{L^2_y},$$
		
		and so $\|\nabla g\|_2\le\|\nabla_xu\|_2$.
		
		To successfully bound the individual pieces resulting from the Littlewood--Paley decomposition, we require an intermediate interpolation inequality. The following lemma achieves this by controlling the $L^q$-norm of a function in terms of its fractional Sobolev norm and a weighted $L^2$-norm, which will subsequently allow us to piece the decomposition back together.
		
		\begin{lemma}\label{lem2.2}
			Let $g\ge0$ be continuous and rapidly decaying on $\mathbb R^n$, with $\|D^sg\|_2<\infty$. Put $\mathsf N=\|D^sg\|_2^2$ and $\mathsf P=\int|x|^{2\gamma}g^2dx$. Then
					$ \|g\|_{L^q}^2\le C\,\mathsf N^{1-\tau}\,\mathsf P^{\tau}.$ 
		\end{lemma}
		
		\begin{proof}[Proof of Lemma]
			If $\mathsf N=0$ or $\mathsf P=0$, then $g=0$. Assume both are positive.
			
			For $R>0$, the Hölder inequality on $\bb_R$ and \eqref{eq3.2} give

			$$\int_{|x|<R}g^2\le|\bb_R|^{1-\frac2q}\|g\|_q^2\le C\,R^{n(1-\frac2q)}\,\mathsf N^{\alpha}\|g\|_2^{2(1-\alpha)}.$$
			
			By the assumptions, $n(1-\frac2q)=2\alpha s$, so this equals $C(R^{2s}\mathsf N)^\alpha(\|g\|_2^2)^{1-\alpha}$. On $\{|x|\ge R\}$ we have $\int g^2\le R^{-2\gamma}\mathsf P$. Hence

			$$\|g\|_2^2\le C(R^{2s}\mathsf N)^\alpha(\|g\|_2^2)^{1-\alpha}+R^{-2\gamma}\mathsf P .$$
			
			By weighted Young, $C X^\alpha Y^{1-\alpha}\le\frac12Y+C'X$. Since $\|g\|_2<\infty$, we can absorb:

			$$\|g\|_2^2\le C'' R^{2s}\mathsf N+2R^{-2\gamma}\mathsf P.$$
			
			\noindent By choose $R^{2s+2\gamma}=\mathsf P/\mathsf N$, we obtain

			\begin{equation}
				\|g\|_2^2\le C\,\mathsf P^{\frac s{s+\gamma}}\mathsf N^{\frac \gamma{s+\gamma}}.\label{eq4.1}
			\end{equation}
			Insert \eqref{eq4.1} into \eqref{eq3.2} squared:

			$$\|g\|_q^2\le C\,\mathsf N^{\alpha}\Big(\mathsf P^{\frac s{s+\gamma}}\mathsf N^{\frac \gamma{s+\gamma}}\Big)^{1-\alpha}=C\,\mathsf N^{\alpha+\frac{\gamma(1-\alpha)}{s+\gamma}}\mathsf P^{\frac{s(1-\alpha)}{s+\gamma}}=C\,\mathsf N^{1-\tau}\mathsf P^{\tau}$$.
			
		\end{proof}

		Step 3. Now, let $A_j=\|D_x^su_j\|_2$ and $B_j=\||x|^\gamma D_y^ru_j\|_2$. We apply   Lemma \ref{lem2.2} to $g_j$.
		From the above considerations, $\mathsf N_j:=\|D^sg_j\|_2^2\le A_j^2$.
		By the Plancherel identity in $y$ and $|\eta|>2^{j-1}$ on $\operatorname{supp}\hat u_j$,
		
		$$\mathsf P_j:=\int|x|^{2\gamma}g_j^2=\||x|^\gamma u_j\|_2^2\le 2^{2r}2^{-2jr}B_j^2.$$
		By \eqref{eq3.1},

		$$\|u_j\|_q^2\le C\,2^{jm(1-\frac2q)}\,(A_j^2)^{1-\tau}\big(2^{-2jr}B_j^2\big)^{\tau}.$$
		
		\noindent
		The power of $2^j$ is $m(1-\frac2q)-2r\tau=\frac{2ms}{Q}-\frac{2ms}{Q}=0$   the definition of $\tau$. So

		\begin{equation}
			\ \|u_j\|_{L^q}^2\le C\,(A_j^2)^{1-\tau}(B_j^2)^{\tau}\  \label{eq5.1}
		\end{equation}
		that is the scale-invariant bound for each piece.

		By Step 1 in $y$ for each fixed $x$, integrating in $x$, and then the Minkowski inequality in $L^{q/2}$ ($q/2\ge1$):

		$$\|u\|_q^2\le C\Big\|\sum_j|u_j|^2\Big\|_{L^{q/2}}\le C\sum_j\|u_j\|_q^2.$$
		
		By \eqref{eq5.1} and the Hölder inequality  in $j$ with exponents $\frac1{1-\tau},\frac1\tau$, we obtain:

		$$\|u\|_q^2\le C\sum_j(A_j^2)^{1-\tau}(B_j^2)^\tau\le C\Big(\sum_jA_j^2\Big)^{1-\tau}\Big(\sum_jB_j^2\Big)^{\tau}.$$
		
		Since $\sum_j\varphi(2^{-j}\eta)^2\le1$, the Plancherel identity gives

		$$\sum_jA_j^2=\frac1{(2\pi)^N}\iint|\xi|^{2s}\sum_j\varphi^2(2^{-j}\eta)\,|\mathcal Fu|^2\le A^2,$$
		
		and likewise $\sum_jB_j^2\le B^2$. Therefore

		$$\|u\|_{L^q(\mathbb R^N)}\le C\,\|D_x^su\|_2^{\,1-\tau}\,\||x|^\gamma D_y^ru\|_2^{\,\tau}\le C\big(\|D_x^su\|_2+\||x|^\gamma D_y^ru\|_2\big).$$

	\end{proof}

	\section{An extension}\label{sec-extension}

	Let $n\ge 1$. We are given exponents $s_1,\dots,s_n>0$ and vectors $b_j=(b_{j1},\dots,b_{jn})\in\R^n$ for $1\le j\le n$, and we write $B=(b_{jk})_{j,k=1}^n$. For $x\in\R^n$ and $1\le j\le n$ put
	\[
	w_j(x):=|x^{b_j}|=\prod_{l=1}^n|x_l|^{b_{jl}} .
	\]
	Our Fourier transform is $\widehat u(\xi)=\int_{\R^n}u(x)e^{-ix\cdot\xi}\,dx$. The operator $D_{x_j}^{s_j}$ is the Fourier multiplier with symbol $|\xi_j|^{s_j}$, that is, $\widehat{D^{s_j}_{x_j}u}(\xi)=|\xi_j|^{s_j}\widehat u(\xi)$. For a function $u$ we set
	\[
	A_j(u):=\big\|\,w_j\,D_{x_j}^{s_j}u\,\big\|_{L^2(\R^n)},\qquad 1\le j\le n .
	\]
	The inequality under study is
	\begin{equation}\label{s2:eq:main}
		\|u\|_{L^Q(\R^n)}\le C\prod_{j=1}^n A_j(u)^{\theta_j},\qquad \theta_1+\dots+\theta_n=1,
	\end{equation}
	with $0<Q<\infty$ and $\theta=(\theta_1,\dots,\theta_n)\in\R^n$, where $C$ does not depend on $u$. We use the abbreviations
	\[
	\delta:=\frac12-\frac1Q,\qquad c_k:=\sum_{j=1}^n\theta_jb_{jk}\quad(1\le k\le n),
	\]
	and the matrix $M:={\rm diag}(s_1,\dots,s_n)-B^{\mathsf T}$, so that $M_{kk}=s_k-b_{kk}$ and $M_{kj}=-b_{jk}$ for $k\neq j$.
	
	\medskip
	\noindent\textbf{Standing assumption (H).} $s_j>0$ and $b_{jk}>-\tfrac12$ for all $j,k$.
	
	\medskip
	Inequality \eqref{s2:eq:main} has to be understood on a class of functions for which both sides make sense. We take the following class, which is stable under all the transformations used below. Let $\Omega:=\{\xi\in\R^n:\ \xi_1\xi_2\cdots\xi_n\neq0\}$ and
	\[
	\mathcal{S}_0:=\{u\in\mathcal{S}(\R^n):\ \widehat u\in C_c^\infty(\Omega)\}.
	\]
	We say that \eqref{s2:eq:main} \emph{holds} if it holds for every $u\in\mathcal{S}_0$.
	
	\begin{remark}
		If \eqref{s2:eq:main} is assumed on any larger class (for instance on all Schwartz functions for which the right-hand side is finite), then it holds in particular on $\mathcal{S}_0$. Hence every necessary condition proved below for $\mathcal{S}_0$ is a necessary condition for \eqref{s2:eq:main} on that larger class as well.
	\end{remark}
	
	For $g\in\mathcal{S}(\R^n)$ and an integer $M_0\ge0$ put $\|g\|_{(M_0)}:=\sup_{x}\prod_{l=1}^n(1+|x_l|)^{M_0}|g(x)|$; this is finite, and it is dominated by a Schwartz seminorm.

	To ensure the operators are well-defined, we first establish a weighted $L^2$ bound for Schwartz functions subject to a decay condition.

	\begin{lemma}[Weighted $L^2$ bound]\label{s2:lem:weighted}
		Let $\beta\in\R^n$ with $\beta_l>-\frac12$ for all $l$, and let $M_0$ be an integer with $M_0>\frac12+\max_l\beta_l$. There is a constant $C_{\beta,M_0}$ such that for all $g\in\mathcal{S}(\R^n)$
		\[
		\Big\|\prod_{l=1}^n|x_l|^{\beta_l}\,g\Big\|_{L^2(\R^n)}\le C_{\beta,M_0}\,\|g\|_{(M_0)} .
		\]
	\end{lemma}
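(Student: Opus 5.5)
The plan is to bound $|g|$ pointwise by the weight-adapted seminorm and then integrate the product of weights one coordinate at a time. By definition of $\|g\|_{(M_0)}$ we have, for every $x\in\R^n$,
\[
|g(x)|\le \|g\|_{(M_0)}\prod_{l=1}^n(1+|x_l|)^{-M_0},
\]
so that
\[
\Big\|\prod_{l=1}^n|x_l|^{\beta_l}g\Big\|_{L^2(\R^n)}^2\le \|g\|_{(M_0)}^2\int_{\R^n}\prod_{l=1}^n|x_l|^{2\beta_l}(1+|x_l|)^{-2M_0}\,dx .
\]
The integrand on the right is a product of functions of the single variables $x_l$. By Tonelli's theorem the integral therefore factorizes as $\prod_{l=1}^n I_l$, where
\[
I_l=\int_{\R}|t|^{2\beta_l}(1+|t|)^{-2M_0}\,dt .
\]

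It remains to check that each $I_l$ is finite. I split $I_l$ into the ranges $|t|\le1$ and $|t|>1$.

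On $|t|\le1$ the integrand is at most $|t|^{2\beta_l}$. This is integrable near $0$ precisely because $2\beta_l>-1$, which is the hypothesis $\beta_l>-\tfrac12$.

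On $|t|>1$ the integrand is at most $|t|^{2\beta_l-2M_0}$. This is integrable at infinity precisely when $2M_0-2\beta_l>1$, which follows from $M_0>\tfrac12+\max_l\beta_l$.

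Hence the constant $C_{\beta,M_0}:=\big(\prod_l I_l\big)^{1/2}$ is finite. It depends only on $\beta$ and $M_0$, and the inequality follows by taking square roots.

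The only point needing any care is the local integrability of the weight near the coordinate hyperplanes $\{x_l=0\}$. Note that the weight is singular on each hyperplane separately, not just at the origin, so a radial argument would not work. The product structure is what handles this: each singularity is one-dimensional and is governed exactly by $\beta_l>-\tfrac12$. One should also note that $\|g\|_{(M_0)}$ is finite for Schwartz $g$. This holds since $\prod_l(1+|x_l|)^{M_0}\le (1+|x|)^{nM_0}$, so the seminorm is dominated by a Schwartz seminorm, as already remarked before the lemma.
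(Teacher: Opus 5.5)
Your proposal is correct and follows essentially the same route as the paper: a pointwise bound by $\|g\|_{(M_0)}$, factorization of the weighted integral via Tonelli, and finiteness of each one-dimensional factor $\int_{\R}|t|^{2\beta_l}(1+|t|)^{-2M_0}\,dt$ from $2\beta_l>-1$ near $0$ and $2M_0-2\beta_l>1$ at infinity. Your added details are valid: the split at $|t|=1$ and the bound $(1+|t|)^{-2M_0}\le|t|^{-2M_0}$ for $|t|>1$, which holds because the hypotheses force $M_0\ge1$.
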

	
	\begin{proof}
		By the definition of $\|g\|_{(M_0)}$ and Tonelli's theorem,
		\[
		\int_{\R^n}\prod_l|x_l|^{2\beta_l}|g(x)|^2dx
		\le
		\|g\|_{(M_0)}^2
		\prod_{l=1}^n
		\int_{\R}|t|^{2\beta_l}(1+|t|)^{-2M_0}\,dt .
		\]
		Each factor is finite, because $2\beta_l>-1$ makes the singularity at $t=0$ integrable and $2M_0-2\beta_l>1$ makes the tail at infinity integrable.
	\end{proof}

	Using this bound, we can verify that the test class $\mathcal{S}_0$ is stable under both dilations and translations, making it a suitable space for our analysis.

	\begin{lemma}[Properties of $\mathcal{S}_0$]\label{s2:lem:class}
		Assume (H) and let $u\in\mathcal{S}_0$.
		\begin{enumerate}
			\item[(a)] $D_{x_j}^{s_j}u\in\mathcal{S}(\R^n)$ for every $j$, and $0\le A_j(u)<\infty$. If $u\ne0$, then $A_j(u)>0$ for every $j$.
			
			\item[(b)] $\mathcal{S}_0$ is a complex vector space, and it is invariant under the dilations
			\[
			u_\lambda(x):=u(\lambda_1x_1,\dots,\lambda_nx_n),
			\qquad
			\lambda\in(0,\infty)^n,
			\]
			and under translations $u(\cdot-a)$, $a\in\R^n$.
		\end{enumerate}
	\end{lemma}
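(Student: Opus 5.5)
For (a), I would work on the Fourier side. Since $\widehat u\in C_c^\infty(\Omega)$, the support $K=\operatorname{supp}\widehat u$ is a compact subset of $\Omega$. Hence there is an $\epsilon>0$ with $|\xi_j|\ge\epsilon$ on $K$ for every $j$. On a neighbourhood of $K$ the symbol $|\xi_j|^{s_j}$ is therefore $C^\infty$, so $|\xi_j|^{s_j}\widehat u\in C_c^\infty(\Omega)$. Its inverse Fourier transform $D_{x_j}^{s_j}u$ then lies in $\mathcal S(\R^n)$, and in fact again in $\mathcal S_0$.

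Finiteness of $A_j(u)$ follows by applying Lemma~\ref{s2:lem:weighted} with $\beta=b_j$ and $g=D_{x_j}^{s_j}u$. This is allowed because (H) gives $b_{jl}>-\tfrac12$ for all $l$; I take $M_0$ to be any integer exceeding $\tfrac12+\max_l b_{jl}$. The lemma gives $A_j(u)\le C\|g\|_{(M_0)}<\infty$, and $\|g\|_{(M_0)}$ is controlled by a Schwartz seminorm.

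For positivity, suppose $u\neq0$. Then $\widehat u\ne0$ on $K$, and $|\xi_j|^{s_j}>0$ there, so $g=D_{x_j}^{s_j}u\neq0$ in $L^2$. Being Schwartz, $g$ is continuous, so $g\neq0$ on a nonempty open set. The weight $w_j$ vanishes or blows up only on the null set $\bigcup_l\{x_l=0\}$, so $w_jg\neq0$ on a set of positive measure. Hence $A_j(u)>0$.

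For (b), the vector space property is immediate: a linear combination of elements of $C_c^\infty(\Omega)$ is again in $C_c^\infty(\Omega)$. For dilations I compute
\[
\widehat{u_\lambda}(\xi)=\Big(\prod_l\lambda_l^{-1}\Big)\widehat u(\xi_1/\lambda_1,\dots,\xi_n/\lambda_n).
\]
This function is smooth, and its support is the image of $K$ under the diagonal linear map $\xi\mapsto(\lambda_l\xi_l)_l$. That map preserves $\Omega$, since each coordinate is multiplied by a nonzero factor, so the support is a compact subset of $\Omega$ and $u_\lambda\in\mathcal S_0$. For translations, $\widehat{u(\cdot-a)}(\xi)=e^{-ia\cdot\xi}\widehat u(\xi)$ is smooth with the same support $K$, so $u(\cdot-a)\in\mathcal S_0$. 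Both operations clearly preserve $\mathcal S(\R^n)$.

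The only point requiring care is the positivity claim in (a). There one must note that a nonzero continuous function cannot be supported in the null set where the weight degenerates. Everything else is routine.
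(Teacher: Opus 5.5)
Your proposal is correct and follows essentially the same route as the paper. The symbol $|\xi_j|^{s_j}$ is smooth on $\Omega$, so $D_{x_j}^{s_j}u$ has Fourier transform in $C_c^\infty(\Omega)$; finiteness of $A_j(u)$ comes from Lemma~\ref{s2:lem:weighted} with $\beta=b_j$, positivity from $w_j>0$ almost everywhere, and (b) from the explicit Fourier transforms of dilations and translations, with your extra details (the positive lower bound for $|\xi_j|$ on $K$ and the continuity argument giving $w_jg\neq0$ on a set of positive measure) simply making the paper's brief remarks explicit.
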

	
	\begin{proof}
		(a) The symbol $|\xi_j|^{s_j}$ is smooth on $\Omega$ and $\widehat u$ has compact support in $\Omega$. Hence $|\xi_j|^{s_j}\widehat u\in C_c^\infty(\Omega)$, and so $D^{s_j}_{x_j}u\in\mathcal{S}(\R^n)$. Finiteness of $A_j(u)$ follows from Lemma~\ref{s2:lem:weighted} with $\beta=b_j$, which is allowed by (H). If $u\neq0$ then $\widehat u\not\equiv0$ on a set where $|\xi_j|^{s_j}>0$, so $D_{x_j}^{s_j}u\ne0$. Since $w_j>0$ almost everywhere, $A_j(u)>0$.
		
		(b) Sums of functions in $C_c^\infty(\Omega)$ stay in $C_c^\infty(\Omega)$. We have
		\[
		\widehat{u_\lambda}(\xi)
		=
		\big(\prod_l\lambda_l\big)^{-1}
		\widehat u(\xi_1/\lambda_1,\dots,\xi_n/\lambda_n),
		\]
		whose support is the image of $\operatorname{supp}\widehat u$ under a diagonal map with positive entries and thus again a compact subset of $\Omega$. Finally, translation by $a$ multiplies $\widehat u$ by the smooth unimodular function $e^{-ia\cdot\xi}$ and does not change the support.
	\end{proof}
	
	\subsection{Scaling and the relation between the parameters}\label{s2:sec:scaling}

	We begin our parameter analysis by recording how the relevant quantities scale under anisotropic dilations.

	\begin{lemma}[Behaviour under anisotropic dilations]\label{s2:lem:dilation}
		Let $u\in\mathcal{S}_0$, $\lambda\in(0,\infty)^n$, $\Lambda:=\lambda_1\cdots\lambda_n$, and $u_\lambda(x)=u(\lambda_1x_1,\dots,\lambda_nx_n)$. Then
		\[
		\|u_\lambda\|_{L^Q}
		=
		\Lambda^{-1/Q}\|u\|_{L^Q},
		\qquad
		A_j(u_\lambda)
		=
		\lambda_j^{s_j}
		\prod_{k=1}^n\lambda_k^{-b_{jk}}
		\Lambda^{-1/2}A_j(u)
		\quad(1\le j\le n).
		\]
	\end{lemma}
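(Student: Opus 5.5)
The plan is a direct change of variables, one for each of the two quantities in Lemma~\ref{s2:lem:dilation}. For the $L^Q$-norm, substitute $z_l=\lambda_lx_l$ in $\int|u(\lambda_1x_1,\dots,\lambda_nx_n)|^Q\,dx$. Since $dx=\Lambda^{-1}dz$, taking the $Q$-th root gives $\|u_\lambda\|_{L^Q}=\Lambda^{-1/Q}\|u\|_{L^Q}$. This works for every $0<Q<\infty$.

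For $A_j$, I first show that the multiplier $D^{s_j}_{x_j}$ commutes with dilations up to a factor:
\[
D^{s_j}_{x_j}(u_\lambda)=\lambda_j^{s_j}\,\big(D^{s_j}_{x_j}u\big)_\lambda .
\]
Lemma~\ref{s2:lem:class}(b) gives $\widehat{u_\lambda}(\xi)=\Lambda^{-1}\widehat u(\xi_1/\lambda_1,\dots,\xi_n/\lambda_n)$. Multiplying by $|\xi_j|^{s_j}=\lambda_j^{s_j}|\xi_j/\lambda_j|^{s_j}$ shows that $\widehat{D^{s_j}_{x_j}u_\lambda}$ equals $\lambda_j^{s_j}$ times the Fourier transform of $(D^{s_j}_{x_j}u)_\lambda$. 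Both functions are Schwartz by Lemma~\ref{s2:lem:class}(a), so Fourier inversion yields the identity.

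Next comes the weight. Write $x_l=z_l/\lambda_l$, so that
\[
w_j(x)=\prod_l|x_l|^{b_{jl}}=\prod_k\lambda_k^{-b_{jk}}\,w_j(z).
\]
Then in
\[
A_j(u_\lambda)^2=\lambda_j^{2s_j}\int w_j(x)^2\,\big|(D^{s_j}_{x_j}u)(\lambda_1x_1,\dots,\lambda_nx_n)\big|^2dx
\]
the same substitution $z=(\lambda_lx_l)_l$ produces a factor $\prod_k\lambda_k^{-2b_{jk}}$ from the weight and $\Lambda^{-1}$ from the Jacobian. Taking square roots gives the claimed formula for $A_j(u_\lambda)$.

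Finiteness of all the integrals involved is guaranteed by Lemma~\ref{s2:lem:class}(a) under (H). The change of variables is therefore legitimate even when some $b_{jk}<0$, since the resulting singularity on the coordinate hyperplanes is integrable and has measure zero. The only step needing any care is the multiplier identity, and it reduces to the one-line symbol computation above. I do not expect a genuine obstacle.
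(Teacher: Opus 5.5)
Your proposal is correct and follows the paper's proof essentially step for step: the change of variables $z=(\lambda_lx_l)_l$ for the $L^Q$-norm, the symbol computation giving $D^{s_j}_{x_j}u_\lambda=\lambda_j^{s_j}(D^{s_j}_{x_j}u)(\lambda\,\cdot)$, and the homogeneity $w_j(x)=\prod_k\lambda_k^{-b_{jk}}w_j(z)$ combined with the Jacobian factor $\Lambda^{-1}$. Your remark about negative $b_{jk}$ is harmless but not needed, since the change of variables is valid for any nonnegative measurable integrand.
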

	
	\begin{proof}
		The first identity is the change of variables $y=\lambda x$, $dx=\Lambda^{-1}dy$. For the second,
		\[
		\widehat{u_\lambda}(\xi)=\Lambda^{-1}\widehat u(\xi/\lambda),
		\]
		so the multiplier $|\xi_j|^{s_j}$ gives
		$
		D_{x_j}^{s_j}u_\lambda
		=
		\lambda_j^{s_j}\,(D_{x_j}^{s_j}u)(\lambda\,\cdot).
		$
		Moreover
		\[
		w_j(x)
		=
		\prod_k|y_k/\lambda_k|^{b_{jk}}
		=
		\prod_k\lambda_k^{-b_{jk}}\,w_j(y)
		\]
		for $y=\lambda x$. Therefore
		\[
		A_j(u_\lambda)^2
		=
		\lambda_j^{2s_j}
		\prod_k\lambda_k^{-2b_{jk}}
		\int
		w_j(y)^2\,
		|(D_{x_j}^{s_j}u)(y)|^2\,
		\Lambda^{-1}dy ,
		\]
		which is the claim.
	\end{proof}

	By applying the main inequality to these dilated functions, we obtain the fundamental scaling identity connecting the parameters.
	\begin{proposition} \label{s2:prop:scaling}
		Assume (H) and that \eqref{s2:eq:main} holds. Then for every $k=1,\dots,n$
		\begin{equation}\label{s2:eq:scaling}
			\theta_ks_k-\sum_{j=1}^n\theta_jb_{jk}
			=
			\frac12-\frac1Q,
			\qquad\text{i.e.}\qquad
			\theta_ks_k-c_k=\delta .
		\end{equation}
		In matrix form,
		\[
		M\theta=\delta\,\mathbf{1}
		\]
		with $\mathbf{1}=(1,\dots,1)^{\mathsf T}$, and
		\[
		\mathbf{1}^{\mathsf T}\theta=1.
		\]
	\end{proposition}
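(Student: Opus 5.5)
We test \eqref{s2:eq:main} on the dilated functions $u_\lambda$, in the same spirit as the scaling part of the proof of Proposition~\ref{prop:necessity}. Fix $u\in\mathcal{S}_0$ with $u\ne0$. By Lemma~\ref{s2:lem:class}(b), $u_\lambda\in\mathcal{S}_0$ for every $\lambda\in(0,\infty)^n$. By Lemma~\ref{s2:lem:class}(a) the quantities $\|u\|_{L^Q}$ and $A_j(u)$ are finite and positive; the $L^Q$ norm is finite because $u$ is Schwartz, and positive because $u\ne0$. Inserting $u_\lambda$ into \eqref{s2:eq:main} and using Lemma~\ref{s2:lem:dilation} gives
\[
\Lambda^{-1/Q}\|u\|_{L^Q}\le C\prod_{j=1}^n\Big(\lambda_j^{s_j}\prod_{k}\lambda_k^{-b_{jk}}\Lambda^{-1/2}\Big)^{\theta_j}A_j(u)^{\theta_j}.
\]
Dividing by the left side and using $\sum_j\theta_j=1$, so that $\prod_j\Lambda^{-\theta_j/2}=\Lambda^{-1/2}$, we get
\[
1\le K\prod_{k=1}^n\lambda_k^{\,e_k},\qquad e_k=\theta_ks_k-\sum_j\theta_jb_{jk}-\tfrac12+\tfrac1Q=\theta_ks_k-c_k-\delta .
\]
Here $K=C\prod_jA_j(u)^{\theta_j}/\|u\|_{L^Q}\in(0,\infty)$ does not depend on $\lambda$.

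Now fix $k$ and set $\lambda_l=1$ for $l\ne k$, so the inequality reads $1\le K\lambda_k^{e_k}$ for all $\lambda_k>0$. If $e_k>0$, letting $\lambda_k\to0$ makes the right side tend to $0$, a contradiction. If $e_k<0$, letting $\lambda_k\to\infty$ gives the same contradiction. Hence $e_k=0$, which is \eqref{s2:eq:scaling}.

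The matrix form is a rewriting. By definition of $M$,
\[
(M\theta)_k=s_k\theta_k-\sum_jb_{jk}\theta_j=\theta_ks_k-c_k=\delta,
\]
so $M\theta=\delta\mathbf 1$. The relation $\mathbf 1^{\mathsf T}\theta=1$ is part of the normalization $\theta_1+\dots+\theta_n=1$ in \eqref{s2:eq:main}.

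The only point needing care is that $\theta_j$ may be negative, so factors like $A_j^{\theta_j}$ require $A_j(u)>0$. This is exactly what Lemma~\ref{s2:lem:class}(a) provides under (H), and it is why the argument is run in the class $\mathcal{S}_0$. Otherwise the proof is a direct computation.
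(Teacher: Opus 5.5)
Your proof is correct and follows the paper's argument: you dilate in the single coordinate $x_k$, use Lemma~\ref{s2:lem:dilation} together with $\sum_j\theta_j=1$ to reduce to $1\le K\lambda_k^{e_k}$ with $e_k=\theta_ks_k-c_k-\delta$, and send $\lambda_k\to0$ or $\lambda_k\to\infty$ according to the sign of $e_k$. Your remark that $A_j(u)>0$ is needed because some $\theta_j$ could a priori be negative is exactly the role Lemma~\ref{s2:lem:class}(a) plays in the paper's proof as well.
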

	
	\begin{proof}
		Fix $k$ and $u\in\mathcal{S}_0\setminus\{0\}$. For $t>0$ apply \eqref{s2:eq:main} to $u_\lambda$ with
	$
		\lambda=(1,\dots,1,t,1,\dots,1)
		$. Then $\Lambda=t$ and, by Lemma~\ref{s2:lem:dilation},
		$
		A_j(u_\lambda)
		=
		t^{s_j\delta_{jk}-b_{jk}-1/2}A_j(u),
	$
		where $\delta_{jk}$ is the Kronecker symbol. Using $\sum_j\theta_j=1$ we get
		\[
		t^{-1/Q}\|u\|_{L^Q}
		\le
		C\,t^{\theta_ks_k-c_k-\frac12}
		\prod_jA_j(u)^{\theta_j},
		\]
		that is,
		\[
		t^{\delta-(\theta_ks_k-c_k)}
		\,\|u\|_{L^Q}
		\le
		C\prod_jA_j(u)^{\theta_j}
		\qquad\text{for all }t>0 .
		\]
		By Lemma~\ref{s2:lem:class}, the right-hand side is a finite constant independent of $t$, and $\|u\|_{L^Q}>0$. If the exponent of $t$ were positive, letting $t\to\infty$ would give a contradiction; if it were negative, letting $t\to0^+$ would. Hence the exponent vanishes, which is \eqref{s2:eq:scaling}.
	\end{proof}

	Whenever the associated matrix $M$ is invertible, this scaling identity uniquely determines both the exponent $Q$ and the vector $\theta$.

	\begin{corollary} \label{s2:cor:Q}
		Assume (H) and \eqref{s2:eq:main}, and suppose that $M$ is invertible. Put
		\[
		z:=M^{-1}\mathbf{1}.
		\]
		Then $\mathbf{1}^{\mathsf T}z\neq0$ and
		\[
		\frac1Q
		=
		\frac12-\frac1{\mathbf{1}^{\mathsf T}M^{-1}\mathbf{1}},
		\qquad
		\theta
		=
		\frac{M^{-1}\mathbf{1}}{\mathbf{1}^{\mathsf T}M^{-1}\mathbf{1}},
		\qquad
		\text{i.e. }
		\theta_k=\frac{z_k}{z_1+\dots+z_n}.
		\]
	\end{corollary}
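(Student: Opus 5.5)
Since $M$ is invertible, Proposition~\ref{s2:prop:scaling} reduces the statement to linear algebra. Proposition~\ref{s2:prop:scaling} gives the two relations
\[
M\theta=\delta\,\mathbf 1,\qquad \mathbf 1^{\mathsf T}\theta=1,\qquad \delta=\tfrac12-\tfrac1Q .
\]
Applying $M^{-1}$ to the first relation yields $\theta=\delta\,M^{-1}\mathbf 1=\delta z$. So $\theta$ is determined up to the scalar $\delta$, and the normalization $\sum_j\theta_j=1$ will then fix $\delta$.

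The first step is to show $\mathbf 1^{\mathsf T}z\neq0$. Pairing $\theta=\delta z$ with $\mathbf 1$ gives
\[
1=\mathbf 1^{\mathsf T}\theta=\delta\,(\mathbf 1^{\mathsf T}z).
\]
If $\mathbf 1^{\mathsf T}z$ were $0$, the right-hand side would vanish, which contradicts $1\neq0$. Hence $\mathbf 1^{\mathsf T}z\neq0$, and this also shows $\delta\neq0$.

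The same identity gives $\delta=1/(\mathbf 1^{\mathsf T}M^{-1}\mathbf 1)$. Since $\delta=\frac12-\frac1Q$, rearranging gives the stated formula for $1/Q$. Substituting this value of $\delta$ back into $\theta=\delta z$ gives
\[
\theta=\frac{M^{-1}\mathbf 1}{\mathbf 1^{\mathsf T}M^{-1}\mathbf 1},
\]
which is the componentwise formula $\theta_k=z_k/(z_1+\dots+z_n)$.

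There is no real obstacle here. The only point to check is that the nonvanishing of $\mathbf 1^{\mathsf T}z$ is derived from the normalization $\sum_j\theta_j=1$ and not assumed. This is legitimate because Proposition~\ref{s2:prop:scaling} already encodes both the scaling relations and the constraint $\sum_j\theta_j=1$ built into \eqref{s2:eq:main}.
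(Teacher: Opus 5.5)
Your proposal is correct and matches the paper's proof: apply $M^{-1}$ to the scaling relation $M\theta=\delta\mathbf{1}$ to get $\theta=\delta z$, then pair with $\mathbf{1}$ using $\sum_j\theta_j=1$ to obtain $1=\delta\,\mathbf{1}^{\mathsf T}z$. This identity forces $\mathbf{1}^{\mathsf T}z\neq0$ and fixes $\delta=1/\mathbf{1}^{\mathsf T}z$, which gives the stated formulas for $1/Q$ and $\theta$.
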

	
	\begin{proof}
		By \eqref{s2:eq:scaling},
	$
		\theta=\delta M^{-1}\mathbf{1}=\delta z.
		$
		Then
		$
		1=\mathbf{1}^{\mathsf T}\theta
		=
		\delta\,\mathbf{1}^{\mathsf T}z,
		$
		which forces $\mathbf{1}^{\mathsf T}z\ne0$ and
	$
		\delta=\frac1{\mathbf{1}^{\mathsf T}z}.
		$
	\end{proof}

	\begin{remark}[Special cases]
		\emph{(1) No weights} ($B=0$). Then $M={\rm diag}(s_k)$ and
		\[
		\frac1Q=\frac12-\big(\sum_k s_k^{-1}\big)^{-1},
		\qquad
		\theta_k=s_k^{-1}\big/\sum_ls_l^{-1},
		\]
		the anisotropic Sobolev embedding.
		
		\emph{(2) Diagonal weights} ($b_{jk}=b_j\delta_{jk}$, so $w_j=|x_j|^{b_j}$). Then $M={\rm diag}(s_k-b_k)$ and, if $s_k\ne b_k$ for all $k$,
		\[
		\theta_k=\frac{(s_k-b_k)^{-1}}{\sum_l(s_l-b_l)^{-1}},
		\qquad
		\frac1Q=\frac12-\Big(\sum_{l}(s_l-b_l)^{-1}\Big)^{-1}.
		\]
		
		\emph{(3) The same weight in all factors} ($b_j=b$ for all $j$). Then $c_k=b_k$ and \eqref{s2:eq:scaling} reads
		$
		\theta_ks_k=\delta+b_k.
		$
		Summing $\theta_k=(\delta+b_k)/s_k$ over $k$ gives
		\[
		\delta=\frac{1-\sum_kb_k/s_k}{\sum_k1/s_k}.
		\]
		
		\emph{(4) $n=1$.} Then $\theta_1=1$ and $\frac1Q=\frac12-s_1+b_1$.
	\end{remark}
	
	\subsection{Further necessary conditions.}\label{s2:sec:necessary}
	
Next, we demonstrate that the interpolation exponents $\theta_k$ must be non-negative by shifting the frequency support of our test functions.
	
	\begin{lemma}\label{s2:lem:theta}
		Assume (H) and \eqref{s2:eq:main}. Then $\theta_k\ge0$ for every $k$.
	\end{lemma}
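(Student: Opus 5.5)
We argue by contradiction: suppose $\theta_k<0$ for some $k$. The idea, announced before the statement, is to shift the frequency support of a fixed test function far out in the $\xi_k$ direction. This modulation leaves $\|u\|_{L^Q}$ and all $A_j$ with $j\neq k$ essentially unchanged, while $A_k$ blows up. Since $A_k$ carries the negative exponent $\theta_k$, the right-hand side of \eqref{s2:eq:main} then tends to $0$ while the left-hand side stays fixed. This is the analogue of the $\tau>1$ argument in the proof of Proposition~\ref{prop:necessity}.

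Fix $u\in\mathcal{S}_0\setminus\{0\}$ and set $u_N(x):=e^{iNx_k}u(x)$ for $N>0$. Then $\widehat{u_N}(\xi)=\widehat u(\xi-Ne_k)$. Let $K=\operatorname{supp}\widehat u\subset\Omega$; then $K$ satisfies $\inf_K|\xi_k|>0$. Hence for $N$ large, with $N> 2\sup_K|\xi_k|$ (so that $\xi_k+N\ge N/2>0$ on $K$), the shifted support $K+Ne_k$ stays compact in $\Omega$. So $u_N\in\mathcal{S}_0$ and \eqref{s2:eq:main} applies to it. Clearly $|u_N|=|u|$, so $\|u_N\|_{L^Q}=\|u\|_{L^Q}>0$.

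For $j\neq k$, the symbol $|\xi_j|^{s_j}$ does not involve $\xi_k$. Therefore $D^{s_j}_{x_j}u_N=e^{iNx_k}D^{s_j}_{x_j}u$, and since $w_j$ is independent of the modulation, $A_j(u_N)=A_j(u)$, which is finite and positive by Lemma~\ref{s2:lem:class}(a). For $j=k$ we have $D^{s_k}_{x_k}u_N=e^{iNx_k}v_N$ with $\widehat{v_N}(\xi)=|\xi_k+N|^{s_k}\widehat u(\xi)$, and we need a lower bound $A_k(u_N)\gtrsim N^{s_k}$.

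This lower bound is the main obstacle, because the weight $w_k$ prevents a direct use of Plancherel. I would write $|\xi_k+N|^{s_k}=N^{s_k}\,m_N(\xi)$ with $m_N(\xi)=(1+\xi_k/N)^{s_k}$. On $K$, $m_N$ converges to $1$ in $C^\infty$, uniformly with all derivatives. Since $K$ is a fixed compact set, $N^{-s_k}v_N$ converges to $u$ in every Schwartz seminorm. By Lemma~\ref{s2:lem:weighted} with $\beta=b_k$, which is allowed by (H), $\|w_k(N^{-s_k}v_N-u)\|_{L^2}\lesssim\|N^{-s_k}v_N-u\|_{(M_0)}\to0$. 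Hence
\[
N^{-s_k}A_k(u_N)=\|w_kN^{-s_k}v_N\|_{L^2}\longrightarrow\|w_ku\|_{L^2}>0,
\]
where positivity holds because $w_k>0$ almost everywhere and $u\neq0$. Thus $A_k(u_N)\sim N^{s_k}\to\infty$.

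Inserting these facts into \eqref{s2:eq:main} gives
\[
0<\|u\|_{L^Q}\le C\,A_k(u_N)^{\theta_k}\prod_{j\neq k}A_j(u)^{\theta_j}\lesssim N^{s_k\theta_k}\longrightarrow0\qquad(N\to\infty),
\]
because $s_k>0$ and $\theta_k<0$. This is a contradiction, so $\theta_k\ge0$.
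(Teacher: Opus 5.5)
Your proof is correct and is essentially the paper's argument: modulate by $e^{iNx_k}$, observe that $A_j$ for $j\neq k$ and the $L^Q$ norm are unchanged, and show $N^{-s_k}A_k(u_N)\to\|w_ku\|_{L^2}>0$ via Schwartz convergence and Lemma~\ref{s2:lem:weighted}. The only difference is that the paper fixes a test function with $\operatorname{supp}\widehat u_0\subset(1,2)^n$, so the shifted support stays in $\Omega$ and $\xi_k+\tau>0$ for every $\tau>0$, whereas you take an arbitrary $u\in\mathcal{S}_0$ and require $N$ to be large.
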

	
	\begin{proof}
		Fix $k$ and choose $u_0\in\mathcal{S}_0\setminus\{0\}$ with
		$
		\operatorname{supp}\widehat u_0\subset(1,2)^n.
		$
		For $\tau>0$ let
	$
		u_\tau(x):=e^{i\tau x_k}u_0(x).
	$
		Then
		$
		\widehat{u_\tau}(\xi)=\widehat u_0(\xi-\tau e_k)
	$
		is supported in
	$
		\{\xi\in(1,2)^n+\tau e_k\}\subset\Omega,
		$
		so $u_\tau\in\mathcal{S}_0$, and $|u_\tau|=|u_0|$. In particular
	$
		\|u_\tau\|_{L^Q}=\|u_0\|_{L^Q}>0.
	$
				Let $j\neq k$. The multiplier $|\xi_j|^{s_j}$ does not see the shift in the $\xi_k$ variable, so
	$
		D_{x_j}^{s_j}u_\tau=e^{i\tau x_k}D_{x_j}^{s_j}u_0,
	$
		and
	$
		A_j(u_\tau)=A_j(u_0)\in(0,\infty).
	$
				For $j=k$ we have
	$
		D_{x_k}^{s_k}u_\tau=e^{i\tau x_k}h_\tau
	$
		with
		$
		\widehat{h_\tau}(\xi)=(\tau+\xi_k)^{s_k}\widehat u_0(\xi),
	$
		hence
		\[
		\tau^{-s_k}\widehat{h_\tau}(\xi)
		=
		(1+\xi_k/\tau)^{s_k}\widehat u_0(\xi)
		\longrightarrow
		\widehat u_0(\xi)
		\qquad
		(\tau\to\infty)
		\]
		in $C^\infty$, with supports in a fixed compact subset of $\Omega$. Consequently
	$
		\tau^{-s_k}h_\tau\to u_0
	$
		in $\mathcal{S}(\R^n)$, and in particular
	$
		\|\tau^{-s_k}h_\tau-u_0\|_{(M_0)}\to0.
	$
		By Lemma~\ref{s2:lem:weighted},
		\[
		\tau^{-s_k}A_k(u_\tau)
		=
		\|w_k\,\tau^{-s_k}h_\tau\|_{L^2}
		\longrightarrow
		\|w_ku_0\|_{L^2}
		=:a_*
		\in(0,\infty).
		\]
				Suppose now $\theta_k<0$. Applying \eqref{s2:eq:main} to $u_\tau$ gives
	$
		0<\|u_0\|_{L^Q}
		\le
		C\prod_{j\ne k}A_j(u_0)^{\theta_j}\cdot A_k(u_\tau)^{\theta_k},
		$
		and the right-hand side behaves like a constant times
	$
		\tau^{s_k\theta_k}a_*^{\theta_k}\to0,
	$
		a contradiction. Hence $\theta_k\ge0$.
	\end{proof}
	
	\subsection{Translation to infinity}
	
	To extract further constraints on the parameters, we require a uniform estimate for how the weighted norms behave under spatial translations.
	
	\begin{lemma} \label{s2:lem:translation}
		Assume (H). Let $u_0\in\mathcal{S}_0$, $1\le j,k\le n$, and
		$
		u_R:=u_0(\cdot-Re_k)
	$
		for $R\ge1$. Then there is $C_*$, independent of $R$, with
	$
		A_j(u_R)\le C_*\,R^{\,b_{jk}} .
	$
	\end{lemma}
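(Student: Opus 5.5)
Translation commutes with Fourier multipliers, so $D_{x_j}^{s_j}u_R = g(\cdot-Re_k)$ with $g:=D_{x_j}^{s_j}u_0\in\mathcal{S}(\R^n)$ by Lemma~\ref{s2:lem:class}(a). The change of variables $x=y+Re_k$ then gives
\[
A_j(u_R)^2=\int_{\R^n}|y_k+R|^{2b_{jk}}\prod_{l\ne k}|y_l|^{2b_{jl}}\,|g(y)|^2\,dy .
\]
Only the $k$-th factor of the weight depends on $R$, so the task reduces to a one-dimensional estimate in $y_k$, uniform in $R\ge1$, multiplied by the other factors. These other factors are controlled exactly as in Lemma~\ref{s2:lem:weighted}. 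I fix an integer $M_0$, to be chosen large, and use $|g(y)|\le\|g\|_{(M_0)}\prod_l(1+|y_l|)^{-M_0}$. Tonelli then bounds $A_j(u_R)^2$ by $\|g\|_{(M_0)}^2$ times $\prod_{l\ne k}\int_\R|t|^{2b_{jl}}(1+|t|)^{-2M_0}dt$ times
\[
I(R):=\int_\R |t+R|^{2b_{jk}}(1+|t|)^{-2M_0}\,dt .
\]
The factors with $l\ne k$ are finite by (H) once $M_0>\tfrac12+\max_l b_{jl}$. It therefore suffices to show $I(R)\le C R^{2b_{jk}}$ for $R\ge1$.

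Write $\beta:=b_{jk}>-\tfrac12$ and split $\R$ into three regions.

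\emph{Region $|t|\le R/2$.} Here $R/2\le|t+R|\le 3R/2$, so $|t+R|^{2\beta}\le C_\beta R^{2\beta}$ whatever the sign of $\beta$. The remaining integral $\int(1+|t|)^{-2M_0}$ is finite.

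\emph{Region $|t+R|\le R/2$.} Here $|t|\ge R/2$, so $(1+|t|)^{-2M_0}\le C R^{-2M_0}$. Since $2\beta>-1$, the local singularity is integrable: $\int_{|t+R|\le R/2}|t+R|^{2\beta}dt = C R^{2\beta+1}$. This region therefore contributes at most $C R^{2\beta+1-2M_0}\le CR^{2\beta}$ once $M_0\ge1$.

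\emph{Remaining region $|t|>R/2$, $|t+R|>R/2$.} Here $|t+R|\le 3|t|$. If $\beta\ge0$, then $|t+R|^{2\beta}\le 3^{2\beta}|t|^{2\beta}$, giving $\int_{|t|>R/2}|t|^{2\beta}(1+|t|)^{-2M_0}\le CR^{2\beta+1-2M_0}\le CR^{2\beta}$ for $M_0>\beta+\tfrac12$. If $\beta<0$, then directly $|t+R|^{2\beta}\le (R/2)^{2\beta}$, and the remaining integral is finite.

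Summing the three regions gives $I(R)\le C R^{2\beta}$, hence $A_j(u_R)\le C_*R^{b_{jk}}$ with $C_*$ depending only on $\|g\|_{(M_0)}$, $b_j$ and $M_0$, not on $R$.

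The main point needing care is the near-singularity region $|t+R|\le R/2$ when $\beta<0$. The weight blows up there, and the argument relies on two facts working together: the integrability coming from $b_{jk}>-\tfrac12$ in (H), and the rapid decay of $g$ far from the origin.
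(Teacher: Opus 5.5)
Your proof is correct and follows essentially the same route as the paper. Both arguments use the same change of variables, split the $k$-th variable at $|t|=R/2$, use $b_{jk}>-\tfrac12$ to integrate the singularity at $t=-R$, and use the Schwartz decay of $g=D_{x_j}^{s_j}u_0$ to make that region negligible. The differences are cosmetic: you apply the pointwise decay bound globally and treat the neighbourhood $|t+R|\le R/2$ as a separate third region, whereas the paper uses Lemma~\ref{s2:lem:weighted} directly where it can and splits the outer region into the cases $\beta\ge0$ and $\beta<0$.
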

	
	\begin{proof}
		Write
		\[
		g:=D_{x_j}^{s_j}u_0\in\mathcal{S}(\R^n),
		\qquad
		\beta:=b_{jk},
		\qquad
		\omega(x'):=\prod_{l\ne k}|x_l|^{2b_{jl}}.
		\]
		Since $D_{x_j}^{s_j}$ commutes with translations, a change of variables gives
		\[
		A_j(u_R)^2
		=
		\int_{\R^n}|x_k+R|^{2\beta}\,\omega(x')\,|g(x)|^2\,dx .
		\]
				We split the integral according to whether $|x_k|\le R/2$ or $|x_k|>R/2$.
		
		On $\{|x_k|\le R/2\}$ we have
		\[
		\frac R2\le|x_k+R|\le\frac{3R}2,
		\]
		hence
	$
		|x_k+R|^{2\beta}\le 2^{2|\beta|}R^{2\beta}.
	$
		The contribution is at most
	$
		2^{2|\beta|}R^{2\beta}\int\omega|g|^2,
	$
		and
	$
		\int\omega|g|^2<\infty
	$
		by Lemma~\ref{s2:lem:weighted}.
				On $\{|x_k|>R/2\}$ we distinguish two cases. If $\beta\ge0$, then
		$
		|x_k+R|\le3|x_k|,
	$
		so
		\[
		|x_k+R|^{2\beta}\le3^{2\beta}|x_k|^{2\beta}.
		\]
		The contribution is at most
		$
		3^{2\beta}\int|x_k|^{2\beta}\omega|g|^2,
		$
		a finite constant by Lemma~\ref{s2:lem:weighted}, and it is $\le C R^{2\beta}$ because $R\ge1$.
		
		If $\beta<0$, we use
		\[
		|g(x)|
		\le
		\|g\|_{(M_0)}
		\prod_l(1+|x_l|)^{-M_0}
		\]
		with $M_0\ge 2|\beta|$ large. Then the contribution is bounded by
		\[
		C\int_{|x_k|>R/2}
		|x_k+R|^{-2|\beta|}
		(1+|x_k|)^{-2M_0}
		\,dx_k
		\]
		\[
		\le
		C(1+R/2)^{-M_0}
		\int_{\R}
		|t+R|^{-2|\beta|}
		(1+|t|)^{-M_0}
		\,dt ,
		\]
		where we also integrated the remaining factors
		$
		\prod_{l\ne k}
		|x_l|^{2b_{jl}}
		(1+|x_l|)^{-2M_0},
	$
		which is finite.
		
		The last $t$-integral is bounded uniformly in $R$: the region $|t+R|\le1$ contributes at most
		$
		\int_{|s|\le1}|s|^{-2|\beta|}ds<\infty
	$
		because $2|\beta|<1$, and the region $|t+R|>1$ contributes at most
		$
		\int(1+|t|)^{-M_0}dt .
	$
		Since
		\[
		(1+R/2)^{-M_0}
		\le
		2^{M_0}R^{-M_0}
		\le
		2^{M_0}R^{2\beta}
		\]
		for $M_0\ge2|\beta|$ and $R\ge1$, the contribution is $\le CR^{2\beta}$.
		
		In all cases
	$
		A_j(u_R)^2\le C_*^2R^{2\beta}.
	$
	\end{proof}

	Testing the main inequality against these translated functions immediately yields a lower bound for the scaling variables.

	\begin{proposition} \label{s2:prop:translation}
		Assume (H) and \eqref{s2:eq:main}. Then
		$
		c_k=\sum_j\theta_jb_{jk}\ge0
	$
		for every $k$. By \eqref{s2:eq:scaling} this is equivalent to
		\[
		\theta_ks_k\ge\delta=\frac12-\frac1Q.
		\]
	\end{proposition}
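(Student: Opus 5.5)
We test \eqref{s2:eq:main} against translates of a fixed test function, as in the proof of \eqref{eq:transl} in Proposition~\ref{prop:necessity}, now using Lemma~\ref{s2:lem:translation} to control the weighted factors. Fix $k$ and choose $u_0\in\mathcal{S}_0\setminus\{0\}$. For $R\ge1$ put $u_R:=u_0(\cdot-Re_k)$. By Lemma~\ref{s2:lem:class}(b), $u_R\in\mathcal{S}_0$, and translation invariance of Lebesgue measure gives $\|u_R\|_{L^Q}=\|u_0\|_{L^Q}>0$ for all $R$. Lemma~\ref{s2:lem:translation} gives $A_j(u_R)\le C_*R^{b_{jk}}$ for each $j$, with $C_*$ independent of $R$.

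Next we insert these bounds into the right-hand side, and this needs the sign information from Lemma~\ref{s2:lem:theta}. Since $\theta_j\ge0$ for every $j$, the map $t\mapsto t^{\theta_j}$ is nondecreasing on $[0,\infty)$. Hence
\[
0<\|u_0\|_{L^Q}=\|u_R\|_{L^Q}\le C\prod_{j=1}^nA_j(u_R)^{\theta_j}\le C\,C_*^{\sum_j\theta_j}\,R^{\sum_j\theta_jb_{jk}}=C'\,R^{c_k}
\]
for all $R\ge1$, where $\sum_j\theta_j=1$. If $c_k<0$, letting $R\to\infty$ makes the right-hand side tend to $0$, which contradicts the fixed positive left-hand side. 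Therefore $c_k\ge0$. If some $\theta_j=0$, the factor $A_j(u_R)^{0}=1$ is harmless, because $A_j(u_R)>0$ by Lemma~\ref{s2:lem:class}(a).

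The equivalence with $\theta_ks_k\ge\delta$ follows immediately from \eqref{s2:eq:scaling}: there $c_k=\theta_ks_k-\delta$, so $c_k\ge0$ holds if and only if $\theta_ks_k\ge\delta=\frac12-\frac1Q$.

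The step that needs care is using the upper bound $A_j(u_R)\lesssim R^{b_{jk}}$ inside the product. This is only legitimate because every $\theta_j\ge0$; with a negative exponent we would need a lower bound on $A_j(u_R)$ instead. Lemma~\ref{s2:lem:theta} removes this issue, and Lemma~\ref{s2:lem:translation} has already handled the delicate case $b_{jk}<0$, including the local singularity where $x_k$ is near $-R$.
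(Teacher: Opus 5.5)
Your proposal is correct and follows the paper's proof essentially verbatim: you translate a fixed $u_0\in\mathcal{S}_0\setminus\{0\}$ along $e_k$, bound each $A_j(u_R)\le C_*R^{b_{jk}}$ by Lemma~\ref{s2:lem:translation}, and use $\theta_j\ge0$ from Lemma~\ref{s2:lem:theta} to obtain $\|u_0\|_{L^Q}\le C'R^{c_k}$, which forces $c_k\ge0$. The equivalence with $\theta_ks_k\ge\delta$ then comes from \eqref{s2:eq:scaling}, exactly as in the paper.
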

	
	\begin{proof}
		By Lemma~\ref{s2:lem:theta}, $\theta_j\ge0$ for all $j$. Fix $k$ and $u_0\in\mathcal{S}_0\setminus\{0\}$, and let
	$
		u_R=u_0(\cdot-Re_k)\in\mathcal{S}_0.
	$
		Translation preserves the $L^Q$ norm, so by \eqref{s2:eq:main} and Lemma~\ref{s2:lem:translation}, for $R\ge1$,
		\[
		0<\|u_0\|_{L^Q}
		=
		\|u_R\|_{L^Q}
		\le
		C\prod_jA_j(u_R)^{\theta_j}
		\le
		C\prod_j(C_*R^{b_{jk}})^{\theta_j}
		=
		C'R^{c_k}.
		\]
		If $c_k<0$, the right-hand side tends to $0$ as $R\to\infty$, a contradiction. The equivalence with $\theta_ks_k\ge\delta$ is \eqref{s2:eq:scaling}.
	\end{proof}
	
	\begin{remark}
		Concentrating a bump of width $\varepsilon\to0$ at a point $x^0$ with all coordinates different from zero, or translating in a direction $a\in\R^n$ with several non-zero components, gives no new information. After undoing the dilation by Lemma~\ref{s2:lem:dilation} and using \eqref{s2:eq:scaling}, these tests reduce to inequalities of the form
		\[
		\prod_{k\in S}(\cdot)^{c_k}\gtrsim 1
		\]
		with a large quantity in the base, and these follow from $c_k\ge0$ for every $k$.
	\end{remark}
	
	\subsection{A hidden family of symmetries and the bound $Q\ge2$}
	
	To establish the restriction on the exponent $Q$, we must first identify a specific family of dilations that leaves both the left- and right-hand sides of the inequality completely invariant.

	\begin{lemma} \label{s2:lem:nu}
		Assume (H) and \eqref{s2:eq:scaling}. Then there is
		$
		\nu\in\R^n\setminus\{0\}
		$
		such that, with
		\[
		\lambda=(\mu^{\nu_1},\dots,\mu^{\nu_n}),
		\qquad
		\Lambda=\mu^{\nu_1+\dots+\nu_n},
		\qquad
		\mu>0,
		\]
		the functions $\Lambda^{1/Q}u(\lambda\,\cdot)$ satisfy
		$
		\big\|\Lambda^{1/Q}u(\lambda\,\cdot)\big\|_{L^Q}
		=
		\|u\|_{L^Q},
	$
		\[
		A_j\big(\Lambda^{1/Q}u(\lambda\,\cdot)\big)
		=
		A_j(u)
		\quad
		\text{for all }j
		\text{ and all }\mu>0 .
		\]
	\end{lemma}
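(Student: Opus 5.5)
We need $\nu\neq0$ such that the dilation $u\mapsto\Lambda^{1/Q}u(\lambda\,\cdot)$ with $\lambda=(\mu^{\nu_1},\dots,\mu^{\nu_n})$ preserves the $L^Q$ norm and every $A_j$. The $L^Q$ invariance is automatic: by Lemma~\ref{s2:lem:dilation}, $\|u(\lambda\cdot)\|_{L^Q}=\Lambda^{-1/Q}\|u\|_{L^Q}$, so the prefactor $\Lambda^{1/Q}$ cancels exactly, for any $\nu$. The whole task is therefore to choose $\nu$ so that each $A_j$ is invariant.

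By Lemma~\ref{s2:lem:dilation} and linearity of $A_j$ under multiplication by positive constants,
\[
A_j\big(\Lambda^{1/Q}u(\lambda\cdot)\big)=\Lambda^{\frac1Q-\frac12}\lambda_j^{s_j}\prod_k\lambda_k^{-b_{jk}}A_j(u)
=\mu^{\,s_j\nu_j-\sum_k b_{jk}\nu_k-\delta\sum_k\nu_k}A_j(u),
\]
using $\frac1Q-\frac12=-\delta$. Invariance for all $\mu>0$ is thus equivalent to the homogeneous linear system
\[
s_j\nu_j-\sum_k b_{jk}\nu_k-\delta\,\mathbf 1^{\mathsf T}\nu=0,\qquad 1\le j\le n,
\]
that is, $L\nu=0$ with $L:=\operatorname{diag}(s)-B-\delta\,\mathbf 1\mathbf 1^{\mathsf T}$. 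Note $\operatorname{diag}(s)-B=M^{\mathsf T}$, so $L=M^{\mathsf T}-\delta\,\mathbf 1\mathbf 1^{\mathsf T}$.

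It suffices to show that $L$ is singular, and for this I will exhibit a nonzero vector in the kernel of $L^{\mathsf T}=M-\delta\,\mathbf 1\mathbf 1^{\mathsf T}$. The natural candidate is $\theta$ itself. By Proposition~\ref{s2:prop:scaling}, $M\theta=\delta\mathbf 1$ and $\mathbf 1^{\mathsf T}\theta=1$. Hence
\[
L^{\mathsf T}\theta=M\theta-\delta\,\mathbf 1(\mathbf 1^{\mathsf T}\theta)=\delta\mathbf 1-\delta\mathbf 1=0,
\]
and $\theta\ne0$ because its entries sum to $1$. So $L^{\mathsf T}$ is singular, hence so is $L$, and $\ker L$ contains some $\nu\neq0$. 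Any such $\nu$ makes every exponent of $\mu$ above vanish, which proves the lemma.

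The only delicate point is the step from the kernel of $L^{\mathsf T}$ to the kernel of $L$. It is just equality of ranks, but it means $\nu$ is not given explicitly. I would note this, and also check that the dilated function remains in $\mathcal S_0$, which holds by Lemma~\ref{s2:lem:class}(b), so the statement is meaningful on the test class. Hypothesis (H) enters only through that finiteness and class membership.
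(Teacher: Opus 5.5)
Your proof is correct and follows the paper's argument. Your $L=M^{\mathsf T}-\delta\,\mathbf 1\mathbf 1^{\mathsf T}$ is the paper's matrix $P$, and the paper likewise uses $M\theta=\delta\mathbf 1$ and $\mathbf 1^{\mathsf T}\theta=1$ to get $\theta^{\mathsf T}P=0$, hence $P$ singular and some $\nu\neq0$ in its kernel, reading off the $L^Q$ identity from Lemma~\ref{s2:lem:dilation}.
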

	
	\begin{proof}
		By Lemma~\ref{s2:lem:dilation}, the factor in front of $A_j(u)$ is
		\[
		\Lambda^{1/Q}
		\lambda_j^{s_j}
		\prod_k\lambda_k^{-b_{jk}}
		\Lambda^{-1/2}
		=
		\mu^{(P\nu)_j},
		\qquad
		P:=M^{\mathsf T}-\delta\,\mathbf{1}\mathbf{1}^{\mathsf T},
		\]
		where we used $\sum_k\nu_k=\mathbf{1}^{\mathsf T}\nu$ and $M^{\mathsf T}={\rm diag}(s)-B$, so that
		\[
		(M^{\mathsf T}\nu)_j
		=
		s_j\nu_j-\sum_kb_{jk}\nu_k.
		\]
		Thus it suffices to find $\nu\ne0$ with $P\nu=0$. Now by \eqref{s2:eq:scaling} and $\mathbf{1}^{\mathsf T}\theta=1$,
		\[
		\theta^{\mathsf T}P
		=
		(M\theta)^{\mathsf T}
		-
		\delta(\theta^{\mathsf T}\mathbf{1})\mathbf{1}^{\mathsf T}
		=
		\delta\mathbf{1}^{\mathsf T}
		-
		\delta\mathbf{1}^{\mathsf T}
		=
		0.
		\]
		Since $\theta\ne0$, the matrix $P$ is singular, so a non-zero $\nu\in\ker P$ exists. The identity for the $L^Q$ norm is the first identity of Lemma~\ref{s2:lem:dilation}.
	\end{proof}
	
	We shall use Khintchine's inequality in the following form. Let $\varepsilon_1,\dots,\varepsilon_L$ be independent random signs. For $0<Q<2$ there is $c_Q>0$ such that for all complex numbers $a_1,\dots,a_L$
	\begin{equation}\label{s2:eq:khintchine}
		\mathbb{E}\Big|\sum_{m=1}^L\varepsilon_ma_m\Big|^Q
		\ge
		c_Q\Big(\sum_{m=1}^L|a_m|^2\Big)^{Q/2}.
	\end{equation}
	For real $a_m$ this is the classical Khintchine inequality. For complex $a_m$ it follows by applying the real case to the real and imaginary parts:
	\[
	|z|^Q
	\ge
	\frac12
	\big(
	|\operatorname{Re}z|^Q
	+
	|\operatorname{Im}z|^Q
	\big),
	\]
	and
$
	(x+y)^{Q/2}\le x^{Q/2}+y^{Q/2}
	$
	since $Q/2\le1$.

	By randomizing over these invariant dilations and applying Khintchine's inequality, we deduce that the target exponent $Q$ cannot fall below 2.
	\begin{proposition}[$Q\ge2$]\label{s2:prop:Q2}
		Assume (H) and \eqref{s2:eq:main}, with $0<Q<\infty$. Then $Q\ge2$.
	\end{proposition}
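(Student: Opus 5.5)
We argue by contradiction: assume $0<Q<2$, so $\delta<0$. We test \eqref{s2:eq:main} on random sums of many copies of one function. The copies are spread along the invariant dilation family of Lemma~\ref{s2:lem:nu}, so that their frequency supports are disjoint. By Plancherel, each $A_j^2$ then adds up. The $L^Q$ norm, averaged over signs, is bounded below through \eqref{s2:eq:khintchine} by the square function, and for $Q<2$ this square function beats the $\ell^2$ sum.

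Fix $u\in\mathcal S_0\setminus\{0\}$ and a nonzero $\nu$ from Lemma~\ref{s2:lem:nu}. Put $T_\mu u:=\Lambda^{1/Q}u(\lambda\cdot)$, so that $\|T_\mu u\|_Q=\|u\|_Q$ and $A_j(T_\mu u)=A_j(u)$ for all $j$. The Fourier support of $T_\mu u$ is $\operatorname{diag}(\mu^{\nu})\operatorname{supp}\widehat u$. Since $\nu\neq0$, some $\nu_k\ne0$, and $\operatorname{supp}\widehat u$ is contained in $\{a\le|\xi_k|\le b\}$ with $0<a<b$. Choosing $\mu_m=N^{m}$ with $N$ large therefore gives pairwise disjoint Fourier supports for $u_m:=T_{\mu_m}u$, $m=1,\dots,L$. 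Set $v_\varepsilon=\sum_m\varepsilon_m u_m\in\mathcal S_0$.

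\emph{Right-hand side.} $D^{s_j}_{x_j}$ preserves the Fourier supports, but multiplication by $w_j$ does not, so orthogonality is not immediate. Here I would use $\mathbb E$ over the signs instead. By Fubini, $\mathbb E A_j(v_\varepsilon)^2=\sum_m A_j(u_m)^2=L\,A_j(u)^2$, since the cross terms have mean zero. Because $\sum\theta_j=1$ and $\theta_j\ge0$ (Lemma~\ref{s2:lem:theta}), the AM--GM inequality gives
\[
\prod_jA_j(v_\varepsilon)^{\theta_j}\le\sum_j\theta_jA_j(v_\varepsilon).
\]
Combining this with Jensen's inequality yields
\[
\mathbb E\prod_jA_j(v_\varepsilon)^{\theta_j Q}\le \Big(\mathbb E\sum_j\theta_jA_j(v_\varepsilon)^2\Big)^{Q/2}\le C L^{Q/2},
\]
using $Q/2<1$. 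The disjointness of supports is therefore not even needed for this step.

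\emph{Left-hand side.} By Fubini and \eqref{s2:eq:khintchine},
\[
\mathbb E\|v_\varepsilon\|_Q^Q\ge c_Q\int\Big(\sum_m|u_m|^2\Big)^{Q/2}dx .
\]
This is the main obstacle: the right side should grow like $L$, not like $L^{Q/2}$. This requires the $u_m$ to be essentially disjointly supported in \emph{space}, which Fourier disjointness does not provide. I would use the dilations: $u_m$ is concentrated where $|x_k|\sim\mu_m^{-\nu_k}$, so the spatial scales separate as $N\to\infty$. Concretely, fix a compact box $K$ avoiding the coordinate hyperplanes with $\int_K|u|^Q\ge\frac12\|u\|_Q^Q$, and set $K_m=\operatorname{diag}(\mu_m^{\nu})^{-1}K$. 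For $N$ large the $K_m$ are pairwise disjoint, because the $k$-th coordinates live in disjoint dyadic ranges. Then
\[
\int\Big(\sum_m|u_m|^2\Big)^{Q/2}\ge\sum_m\int_{K_m}|u_m|^Q=\sum_m\int_K|u|^Q\ge\tfrac L2\|u\|_Q^Q .
\]
Spatial disjointness is what really matters here, and it holds without any approximation.

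\emph{Conclusion.} Raising \eqref{s2:eq:main} to the power $Q$ and taking expectations gives $cL\le C'L^{Q/2}$ for all $L$. Since $Q<2$, letting $L\to\infty$ gives a contradiction. Hence $Q\ge2$.
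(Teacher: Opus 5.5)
Your proof is correct and is essentially the paper's argument: random signs over the invariant dilation family of Lemma~\ref{s2:lem:nu}, the identity $\mathbb{E}A_j(v_\varepsilon)^2=L\,A_j(u)^2$ on the right, and Khintchine plus disjointness of the dilates of a set bounded away from $\{x_k=0\}$ on the left. The differences are minor and cosmetic. You raise the inequality to the power $Q$ and use AM--GM with concavity of $t\mapsto t^{Q/2}$, where the paper squares and uses H\"older with convexity of $t\mapsto t^{2/Q}$; the disjoint Fourier supports you arrange are never used; and $K$ should be any compact subset of $\{a'\le|x_k|\le b'\}$ with $\int_K|u|^Q>0$ rather than a single box, since one box lies in a single orthant and need not carry half the mass.
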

	
	\begin{proof}
		By Lemma~\ref{s2:lem:theta} and Proposition~\ref{s2:prop:scaling}, $\theta_j\ge0$ and \eqref{s2:eq:scaling} holds. Suppose, to get a contradiction, that $Q<2$. Take $\nu\ne0$ from Lemma~\ref{s2:lem:nu}, fix an index $k$ with $\nu_k\ne0$, and fix $u_0\in\mathcal{S}_0\setminus\{0\}$.
		
		\emph{Choice of the pieces.} As $r_0\downarrow0$ and $r_1\uparrow\infty$, the integrals
		\[
		\int_{\{r_0\le|y_k|\le r_1\}}|u_0|^Q\,dy
		\]
		increase to
		\[
		\int_{\{y_k\ne0\}}|u_0|^Qdy
		=
		\|u_0\|_{L^Q}^Q
		>0.
		\]
		Hence we can fix $0<r_0<r_1<\infty$ such that, with
		\[
		E_0:=\{y:\ r_0\le|y_k|\le r_1\},
		\]
		$
		\eta:=\int_{E_0}|u_0|^Q\,dy>0 .
		$
				Choose an integer $K\ge1$ with
	$
		2^{K|\nu_k|}>r_1/r_0.
	$
		For $m=1,\dots,L$ let
		\[
		\lambda^{(m)}
		:=
		(2^{Km\nu_1},\dots,2^{Km\nu_n}),
		\qquad
		\Lambda_m
		:=
		\prod_l\lambda^{(m)}_l,
		\]
		and
		$
		u_m(x)
		:=
		\Lambda_m^{1/Q}\,u_0(\lambda^{(m)}x)
		\in\mathcal{S}_0.
		$
		By Lemma~\ref{s2:lem:nu} (with $\mu=2^{Km}$),
		$
		\|u_m\|_{L^Q}
		=
		\|u_0\|_{L^Q}
		$
		and
	$
		A_j(u_m)=A_j(u_0)
	$
		for all $j$ and $m$.
		
		\emph{Disjointness.} Let
		$
		E_m:=\{x:\ \lambda^{(m)}x\in E_0\}.
		$
		If $x\in E_m\cap E_{m'}$ with $m\ne m'$, then
		\[
		2^{Km\nu_k}|x_k|
		\quad\text{and}\quad
		2^{Km'\nu_k}|x_k|
		\]
		both lie in $[r_0,r_1]$, so
		$
		2^{K|m-m'||\nu_k|}
		\le
		r_1/r_0,
	$
		which contradicts the choice of $K$. Thus the sets
	$
		E_1,\dots,E_L
	$
		are pairwise disjoint. By the change of variables $y=\lambda^{(m)}x$,
		\[
		\int_{E_m}|u_m|^Q\,dx
		=
		\Lambda_m
		\int_{E_m}|u_0(\lambda^{(m)}x)|^Q\,dx
		=
		\int_{E_0}|u_0|^Q\,dy
		=
		\eta .
		\]
		
		\emph{Random sums.} For a sign vector
		\[
		\varepsilon\in\{\pm1\}^L
		\]
		put
	$
		u_\varepsilon:=\sum_m\varepsilon_mu_m\in\mathcal{S}_0 .
		$
		Applying \eqref{s2:eq:main}, squaring, and averaging over $\varepsilon$ (uniform probability), we obtain
		\[
		\mathbb{E}\|u_\varepsilon\|_{L^Q}^2
		\le
		C^2\,\mathbb{E}\prod_jA_j(u_\varepsilon)^{2\theta_j}
		\le
		C^2\prod_j\big(\mathbb{E} A_j(u_\varepsilon)^2\big)^{\theta_j},
		\]
		where the second inequality is Hölder's inequality for the exponents $1/\theta_j$ (recall $\theta_j\ge0$ and $\sum\theta_j=1$; factors with $\theta_j=0$ are equal to $1$).
		
		Since
		$
		\mathbb{E}\,\varepsilon_m\varepsilon_{m'}
		=
		\delta_{mm'}
		$
		and $D_{x_j}^{s_j}$ is linear, Tonelli's theorem gives
		\[
		\mathbb{E} A_j(u_\varepsilon)^2
		=
		\int w_j^2\,
		\mathbb{E}
		\Big|
		\sum_m\varepsilon_mD_{x_j}^{s_j}u_m
		\Big|^2
		dx
		=
		\sum_{m=1}^LA_j(u_m)^2
		=
		L\,A_j(u_0)^2 .
		\]
		Therefore,
		\begin{equation}\label{s2:eq:upper}
			\mathbb{E}\|u_\varepsilon\|_{L^Q}^2
			\le
			C^2\,L\prod_jA_j(u_0)^{2\theta_j}.
		\end{equation}
		
		\emph{Lower bound.} Since $Q<2$, the function $t\mapsto t^{2/Q}$ is convex, so Jensen's inequality gives
		\[
		\mathbb{E}\|u_\varepsilon\|_{L^Q}^2
		=
		\mathbb{E}\big(\|u_\varepsilon\|_{L^Q}^Q\big)^{2/Q}
		\ge
		\big(\mathbb{E}\|u_\varepsilon\|^Q_{L^Q}\big)^{2/Q}.
		\]
		By  the Tonelli theorem and \eqref{s2:eq:khintchine} applied pointwise in $x$,
		\[
		\mathbb{E}\|u_\varepsilon\|_{L^Q}^Q
		=
		\int
		\mathbb{E}
		\Big|
		\sum_m\varepsilon_mu_m(x)
		\Big|^Q
		dx
		\ge
		c_Q
		\int
		\Big(\sum_m|u_m|^2\Big)^{Q/2}
		dx
		\]
		\[
		\ge
		c_Q
		\sum_{m=1}^L
		\int_{E_m}|u_m|^Q\,dx
		=
		c_Q\,L\,\eta ,
		\]
		where for the last inequality we used that the $E_m$ are disjoint and that
		\[
		\Big(\sum_{m'}|u_{m'}|^2\Big)^{Q/2}
		\ge
		|u_m|^Q
		\]
		on $E_m$. Combining this with \eqref{s2:eq:upper},
		\[
		(c_Q\,\eta\,L)^{2/Q}
		\le
		C^2L\prod_jA_j(u_0)^{2\theta_j}
		\qquad
		\text{for all }L\ge1 .
		\]
		Since $2/Q>1$, the left-hand side grows faster than the right-hand side as $L\to\infty$, a contradiction. Hence $Q\ge2$.
	\end{proof}
	
	\subsection{The main result}

	We are now ready to collect these disparate observations into our main structural theorem outlining all necessary conditions for the inequality to hold.
	\begin{theorem}[Necessary conditions]\label{s2:thm:main}
		Assume (H), let $1<Q<\infty$ and $\theta\in\R^n$ with $\theta_1+\dots+\theta_n=1$, and suppose that \eqref{s2:eq:main} holds for all $u\in\mathcal{S}_0$. Then the following hold.
		\begin{enumerate}
			\item[(i)] $\theta_k\ge0$ for every $k$.
			\item[(ii)] For every $k$,
			\[
			\theta_ks_k-\sum_{j=1}^n\theta_jb_{jk}
			=
			\frac12-\frac1Q;
			\]
			equivalently
		$
			M\theta=\delta\mathbf{1}.
			$
			
			\item[(iii)] $2\le Q<\infty$, that is,
		$
			0\le\delta<\frac12.
			$
			
			\item[(iv)] $c_k=\sum_{j}\theta_jb_{jk}\ge0$ for every $k$; equivalently
			$
			\theta_ks_k\ge\delta.
		$
		\end{enumerate}
		
		In particular,
		\begin{equation}\label{s2:eq:range}
			0\le c_k=\theta_ks_k-\delta\le\theta_ks_k
			\qquad
			(1\le k\le n).
		\end{equation}
	\end{theorem}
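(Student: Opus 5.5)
Theorem~\ref{s2:thm:main} only collects the results of Sections~\ref{s2:sec:scaling}--\ref{s2:sec:necessary}, so the plan is to check that the hypotheses of each earlier result are met and then combine them. Throughout, assume (H), $1<Q<\infty$, $\sum_j\theta_j=1$, and that \eqref{s2:eq:main} holds on $\mathcal S_0$. By Lemma~\ref{s2:lem:class}, $\mathcal S_0$ is stable under the dilations and translations used below, and $A_j(u)\in(0,\infty)$ for every $u\ne0$. This is exactly the setting assumed in those results.

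The items are proved in the following order.

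Item (i) is Lemma~\ref{s2:lem:theta}. Its proof uses frequency modulation $e^{i\tau x_k}u_0$ with $\widehat u_0$ supported in $(1,2)^n$, so the modulated functions stay in $\mathcal S_0$.

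Item (ii) is Proposition~\ref{s2:prop:scaling}, obtained from one-parameter dilations in each coordinate $x_k$. Rewriting it in matrix form gives $M\theta=\delta\mathbf 1$.

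Item (iii): Proposition~\ref{s2:prop:Q2} applies, since $0<Q<\infty$ holds. It gives $Q\ge2$, which is $\delta\ge0$. The bound $\delta<\tfrac12$ is immediate from $Q<\infty$.

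Item (iv) is Proposition~\ref{s2:prop:translation}. It uses (i) so that the powers $A_j(u_R)^{\theta_j}$ in Lemma~\ref{s2:lem:translation} can be bounded by $R^{\theta_j b_{jk}}$. The equivalence $c_k\ge0\iff\theta_ks_k\ge\delta$ then follows from (ii).

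Finally, \eqref{s2:eq:range} is pure algebra. By (ii), $c_k=\theta_ks_k-\delta$. By (iv), $c_k\ge0$. By (iii), $\delta\ge0$, so $c_k\le\theta_ks_k$.

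The only step needing care is the order of dependencies. Proposition~\ref{s2:prop:Q2} uses $\theta_j\ge0$ and Lemma~\ref{s2:lem:nu}; that lemma needs (ii) and $\theta\neq0$, and $\theta\neq0$ holds because $\sum\theta_j=1$. Hence (i) and (ii) must be established before (iii). Proposition~\ref{s2:prop:translation} also relies on (i). Beyond this ordering there is no genuine obstacle, since all the analytic work is contained in the cited results.
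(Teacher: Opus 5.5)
Your proposal is correct and follows the paper's proof exactly. Each item is obtained from the corresponding earlier result: Lemma~\ref{s2:lem:theta}, Proposition~\ref{s2:prop:scaling}, Proposition~\ref{s2:prop:Q2} together with $Q<\infty$, and Proposition~\ref{s2:prop:translation}, while \eqref{s2:eq:range} follows from (ii), (iv) and $\delta\ge0$; your remark that (i) and (ii) must come before (iii) and (iv) is accurate and is implicit in the paper's citations.
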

	
	\begin{proof}
		Part (i) is Lemma~\ref{s2:lem:theta}, part (ii) is Proposition~\ref{s2:prop:scaling}, part (iii) is Proposition~\ref{s2:prop:Q2} together with $Q<\infty$, and part (iv) is Proposition~\ref{s2:prop:translation}. Relation \eqref{s2:eq:range} follows from (ii), (iv) and $\delta\ge0$.
	\end{proof}

	 An immediate consequence of these conditions is that any vanishing exponent $\theta_k$ forces the inequality into the $Q=2$ regime.
	\begin{corollary}[Degenerate exponents]\label{s2:cor:degenerate}
		Under the assumptions of Theorem~\ref{s2:thm:main}, if $\theta_k=0$ for some $k$, then $Q=2$ and $c_k=0$. Equivalently, if $Q>2$, then $\theta_k>0$ for every $k$ and $0\le c_k<\theta_ks_k$.
	\end{corollary}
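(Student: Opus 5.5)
The corollary follows directly from Theorem~\ref{s2:thm:main}, parts (ii), (iii) and (iv). Suppose $\theta_k=0$ for some $k$. The scaling identity in (ii) at this index gives $0=\theta_ks_k=c_k+\delta$. By (iv), $c_k\ge0$, and by (iii), $\delta\ge0$. A sum of two nonnegative numbers vanishes only if both do, so $c_k=0$ and $\delta=\frac12-\frac1Q=0$, that is, $Q=2$. This proves the first assertion.

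For the equivalent formulation we take the contrapositive. If $Q>2$, then $\delta>0$, so no $\theta_k$ can vanish; combined with (i), this gives $\theta_k>0$ for every $k$. The bound $0\le c_k$ is (iv). Since (ii) gives $c_k=\theta_ks_k-\delta$, the strict inequality $c_k<\theta_ks_k$ is exactly $\delta>0$. This sharpens \eqref{s2:eq:range}.

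There is no real obstacle here. The only care needed is to note that (ii) must be used at the specific index $k$ with $\theta_k=0$, and that $s_k>0$ from (H) plays no role beyond making $\theta_ks_k=0$ when $\theta_k=0$.
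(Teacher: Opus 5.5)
Your proposal is correct and follows essentially the same argument as the paper. You use (ii) at the index with $\theta_k=0$ to get $c_k+\delta=0$, then (iii) and (iv) force both to vanish, and you obtain the $Q>2$ formulation as the contrapositive together with $c_k=\theta_ks_k-\delta$; your explicit appeal to (i) for $\theta_k>0$ is a detail the paper leaves implicit.
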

	
	\begin{proof}
		If $\theta_k=0$, then (ii) gives $c_k=-\delta$, while (iii) and (iv) give $\delta\ge0$ and $c_k\ge0$. Hence $\delta=0$, i.e.\ $Q=2$, and $c_k=0$. The second statement is the contrapositive, together with
	$
		c_k=\theta_ks_k-\delta<\theta_ks_k
		$
		for $\delta>0$.
	\end{proof}

	In the non-degenerate setting where $M$ is invertible, the necessary constraints can be explicitly rewritten in terms of the matrix parameters.

	\begin{corollary} \label{s2:cor:invertible}
		Assume in addition that $M$ is invertible and put
		\[
		z=M^{-1}\mathbf{1}\in\R^n.
		\]
		Then \eqref{s2:eq:main} can hold only if
		\begin{equation}\label{s2:eq:z}
			z_1+\dots+z_n>2
			\qquad\text{and}\qquad
			s_kz_k\ge1
			\quad
			(1\le k\le n),
		\end{equation}
		and in this case necessarily
		\[
		\frac1Q
		=
		\frac12-\frac1{z_1+\dots+z_n},
		\qquad
		\theta_k
		=
		\frac{z_k}{z_1+\dots+z_n}.
		\]
		In particular $Q>2$.
	\end{corollary}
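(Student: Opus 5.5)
The plan is to combine Corollary~\ref{s2:cor:Q} with the necessary conditions of Theorem~\ref{s2:thm:main}. No new test functions are needed; the argument is purely algebraic.

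\emph{Step 1: the formulas for $Q$ and $\theta$.} Suppose \eqref{s2:eq:main} holds and $M$ is invertible. Corollary~\ref{s2:cor:Q} applies directly. It gives $\mathbf{1}^{\mathsf T}z\neq0$, $\theta=\delta z$, and
\[
\delta=\frac{1}{z_1+\dots+z_n}.
\]
This is exactly the stated formula for $1/Q$, and it gives $\theta_k=z_k/(z_1+\dots+z_n)$. In particular $\delta\cdot\mathbf{1}^{\mathsf T}z=1$, so $\delta\neq0$.

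\emph{Step 2: the sum condition and $Q>2$.} Theorem~\ref{s2:thm:main}(iii) gives $0\le\delta<\tfrac12$. Since $\delta\neq0$ by Step 1, we get $0<\delta<\tfrac12$, so $Q>2$. Then $z_1+\dots+z_n=1/\delta$ is positive and strictly greater than $2$, which is the first condition in \eqref{s2:eq:z}.

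\emph{Step 3: the coordinatewise condition.} Theorem~\ref{s2:thm:main}(iv) gives $\theta_ks_k\ge\delta$ for every $k$. Substituting $\theta_k=\delta z_k$ yields $\delta s_kz_k\ge\delta$. Dividing by $\delta>0$ gives $s_kz_k\ge1$.

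The only point needing care is excluding $\delta=0$, that is $Q=2$, before dividing by $\delta$. This is exactly what the identity $\delta\,\mathbf{1}^{\mathsf T}z=1$ from Corollary~\ref{s2:cor:Q} rules out; invertibility of $M$ is what makes this identity available. Everything else is direct substitution.
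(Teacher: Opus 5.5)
Your proposal is correct and follows the paper's proof essentially step for step. Corollary~\ref{s2:cor:Q} supplies $\theta=\delta z$ and $\delta\,\mathbf{1}^{\mathsf T}z=1$, Theorem~\ref{s2:thm:main}(iii) then forces $0<\delta<\tfrac12$ (hence $z_1+\dots+z_n>2$ and $Q>2$), and part (iv) divided by a positive quantity gives $s_kz_k\ge1$ — you divide by $\delta$ where the paper divides by $\sigma=1/\delta$.
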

	
	\begin{proof}
		By Corollary~\ref{s2:cor:Q},
		\[
		\sigma:=\mathbf{1}^{\mathsf T}z\ne0,
		\qquad
		\delta=1/\sigma,
		\qquad
		\theta=z/\sigma.
		\]
		Since $\delta\ge0$ by Theorem~\ref{s2:thm:main}(iii), in fact $\delta>0$ and $\sigma>0$; moreover
	$
		\delta<\frac12
	$
		gives
	$
		\sigma>2.
	$
		Now Theorem~\ref{s2:thm:main}(iv) states
	$
		\theta_ks_k\ge\delta,
	$
		i.e.
	$
		s_kz_k/\sigma\ge1/\sigma,
	$
		which is
	$
		s_kz_k\ge1
	$
		because $\sigma>0$. Then $z_k>0$ since $s_k>0$, so (i) holds automatically.
	\end{proof}

	These restrictions naturally simplify when the inequality involves purely diagonal weights.
	
	\begin{corollary} \label{s2:cor:diagonal}
		Let $w_j=|x_j|^{b_j}$, i.e.\ $b_{jk}=b_j\delta_{jk}$, and assume $s_k\ne b_k$ for all $k$. If \eqref{s2:eq:main} holds, then
		\[
		0\le b_k<s_k
		\quad
		(1\le k\le n),
		\qquad
		\sum_{k=1}^n\frac1{s_k-b_k}>2 ,
		\]
		and necessarily
		\[
		\theta_k=
		\dfrac{(s_k-b_k)^{-1}}
		{\sum_l(s_l-b_l)^{-1}},
		\qquad
		\dfrac1Q
		=
		\dfrac12-
		\Big(\sum_l(s_l-b_l)^{-1}\Big)^{-1}.
		\]
	\end{corollary}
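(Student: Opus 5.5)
This corollary is a specialization of Corollary~\ref{s2:cor:invertible}, so the plan is to compute $z=M^{-1}\mathbf 1$ explicitly and translate the conditions \eqref{s2:eq:z}. With diagonal weights we have $b_{jk}=b_j\delta_{jk}$, hence $B={\rm diag}(b_1,\dots,b_n)$ and $M={\rm diag}(s_k-b_k)$. The assumption $s_k\ne b_k$ for every $k$ makes $M$ invertible, with
\[
z_k=\frac1{s_k-b_k}.
\]
Corollary~\ref{s2:cor:invertible} then gives $\sum_k z_k>2$, which is the stated sum condition. It also gives $\theta_k=z_k/\sum_l z_l$ and $\frac1Q=\frac12-1/\sum_l z_l$, exactly the displayed formulas; these agree with Remark (Special cases)(2).

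It remains to derive $0\le b_k<s_k$. The second condition in \eqref{s2:eq:z} reads $s_k z_k\ge1$, that is, $s_k/(s_k-b_k)\ge1$. Since $s_k>0$, this forces $s_k-b_k>0$, because otherwise the left side is negative. So $b_k<s_k$. Multiplying through by $s_k-b_k>0$ gives $s_k\ge s_k-b_k$, i.e.\ $b_k\ge0$.

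As a cross-check, $b_k\ge0$ also follows directly from Theorem~\ref{s2:thm:main}(iv). In the diagonal case $c_k=\theta_k b_k$, and by Corollary~\ref{s2:cor:invertible} we have $Q>2$. Corollary~\ref{s2:cor:degenerate} then gives $\theta_k>0$, so $c_k\ge0$ yields $b_k\ge0$.

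The only point needing care is the sign bookkeeping. Positivity of $s_k-b_k$ has to be deduced from $s_kz_k\ge1$ and $s_k>0$ before multiplying out the inequality, rather than assumed. Note also that $\sum_k z_k>2$ alone does not force each $z_k>0$.
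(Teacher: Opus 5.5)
Your proposal is correct and follows the paper's proof: you specialize Corollary~\ref{s2:cor:invertible} with $M={\rm diag}(s_k-b_k)$ and $z_k=(s_k-b_k)^{-1}$, then read off $b_k<s_k$ and $b_k\ge0$ from $s_kz_k\ge1$ together with $s_k>0$. Your extra cross-check of $b_k\ge0$ via Corollary~\ref{s2:cor:degenerate} and Theorem~\ref{s2:thm:main}(iv) is valid but redundant.
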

	
	\begin{proof}
		Here
		 		$M={\rm diag}(s_k-b_k)$
	 $
		z_k=(s_k-b_k)^{-1}\ne0.
	$
		By \eqref{s2:eq:z},
	$
		\sum_kz_k>2
	$
		and
	$
		s_kz_k\ge1.
	$
		Since $s_k>0$, the second condition forces $z_k>0$, hence $s_k>b_k$. Finally
		$
		s_kz_k\ge1
	$
		reads
	$
		s_k\ge s_k-b_k,
	$
		that is,
	$
		b_k\ge0.
	$
	\end{proof}
	
	Finally, specializing the main theorem to one dimension recovers the exact parameter ranges required for the classical single-variable inequality.
	\begin{corollary}[The case $n=1$]\label{s2:cor:one}
		Let $n=1$, $s>0$, $b>-\frac12$ and
		$
		\frac1Q=\frac12-s+b.
		$
		If
		\[
		\|u\|_{L^Q}\le C\|\,|x|^bD^su\|_{L^2}
		\]
		on $\mathcal{S}_0$, then
		$
		b\ge0,
		$ $
		b\le s,
		$ $
		s-b<\tfrac12 ,
	$
		that is,
		$
		\max\{0,\,s-\tfrac12\}<b\le s
		$
		or
		$
		b=0\le s<\tfrac12.
		$
		The endpoint $b=s$ corresponds to $Q=2$.
	\end{corollary}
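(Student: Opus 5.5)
The plan is to specialize the necessary conditions already proved in this section to $n=1$, taking $s_1=s$, $b_{11}=b$, $w_1=|x|^b$ and $\theta=\theta_1=1$ (the constraint $\theta_1+\dots+\theta_n=1$ forces $\theta_1=1$). The standing assumption (H) holds because $s>0$ and $b>-\tfrac12$. The hypothesis is exactly \eqref{s2:eq:main} on $\mathcal S_0$ with $A_1(u)=\||x|^bD^su\|_{L^2}$. The matrix $M$ is the scalar $s-b$, and $c_1=\theta_1b_{11}=b$.

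First I will check consistency with scaling. Proposition~\ref{s2:prop:scaling} gives $\theta_1s_1-c_1=\delta$, that is $s-b=\tfrac12-\tfrac1Q$. This is precisely the assumed relation $\frac1Q=\frac12-s+b$, so no new information arises here. It only confirms that $\delta=s-b$.

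Next I will invoke the three remaining necessary conditions. I cite Lemma~\ref{s2:lem:theta}, Proposition~\ref{s2:prop:translation} and Proposition~\ref{s2:prop:Q2} directly rather than Theorem~\ref{s2:thm:main}, since the latter assumes $1<Q<\infty$, while the propositions only need $0<Q<\infty$; that is the meaningful range, since $\tfrac1Q>0$. The conditions are:
\begin{itemize}
\item[(a)] Proposition~\ref{s2:prop:translation} (translation to infinity) gives $c_1=b\ge0$.
\item[(b)] Proposition~\ref{s2:prop:Q2} (the Khintchine argument) gives $Q\ge2$, i.e.\ $\delta=s-b\ge0$, so $b\le s$.
\item[(c)] Finiteness of $Q$ gives $\delta<\tfrac12$, i.e.\ $s-b<\tfrac12$, so $b>s-\tfrac12$.
\end{itemize}
Lemma~\ref{s2:lem:theta} is automatic here, since $\theta_1=1\ge0$.

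Finally I will combine the three inequalities $b\ge0$, $b\le s$ and $b>s-\tfrac12$. If $b>0$, this is $\max\{0,s-\tfrac12\}<b\le s$ (when $s<\tfrac12$ the maximum is $0$ and $b>0$ holds). If $b=0$, the conditions read $0\le s$ and $s<\tfrac12$, giving the second alternative $b=0\le s<\tfrac12$. The endpoint $b=s$ means $\delta=0$, i.e.\ $Q=2$.

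There is no real obstacle: all the analytic work sits in the cited propositions. The only care needed is the bookkeeping of the strict versus non-strict inequalities in the case split at $b=0$. I will also remark that using the propositions rather than the theorem avoids needing $Q>1$ a priori.
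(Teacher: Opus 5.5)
Your proposal is correct and follows the paper's proof, which reads off $\theta_1=1$, $\delta=s-b$ and $c_1=b$ and then applies parts (iii) and (iv) of Theorem~\ref{s2:thm:main}. Citing Lemma~\ref{s2:lem:theta} and Propositions~\ref{s2:prop:translation} and~\ref{s2:prop:Q2} directly is slightly more careful: the theorem is stated only for $1<Q<\infty$, the relation $\frac1Q=\frac12-s+b$ does not by itself exclude $Q\le1$, and those individual results need only $0<Q<\infty$.
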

	
	\begin{proof}
		Here
		$
		\theta_1=1,
		$ $
		M=s-b,
		$ $
		\delta=s-b
		$
		and
		$
		c_1=b.
		$
		Theorem~\ref{s2:thm:main}(iii) gives
		$
		0\le s-b<\frac12,
		$
		and (iv) gives
		$
		b\ge0.
		$
	\end{proof}
	
	\begin{remark} 
		Theorem~\ref{s2:thm:main} lists conditions that every valid inequality of the form \eqref{s2:eq:main} must satisfy. We do not claim that they are sufficient. Establishing the inequality for parameters satisfying them requires a separate argument (weighted interpolation, Caffarelli--Kohn--Nirenberg or Pitt-type estimates in each variable), and, depending on the class of functions on which the inequality is posed, further restrictions near the coordinate hyperplanes
		 $\{x_k=0\}$
		may appear. The cases $Q=2$ correspond to
	 		$\det M=0$
	 		with
	$M\theta=0,$
		i.e.\ to Hardy--type (uncertainty-principle) inequalities.
	\end{remark}

	\section{Compact embedding}\label{sec-compact}
	
	Let $n\ge2$, $m\ge1$, $s,r>0$, $\gamma\ge0$, and
	$$
	X=\left\{
	u:\|u\|_X<\infty
	\right\},
	\qquad
	\|u\|_X^2
	=
	\|u\|_2^2+\|D_x^su\|_2^2+\||x|^\gamma D_y^ru\|_2^2,
	$$
	is a Hilbert space. 	
We say	$u$ is \emph{radial in $x$} if
	$$
	u(Qx,y)=u(x,y)
	\quad\text{a.e.}
	$$
	for every $Q\in O(n)$, and \emph{radial in $y$} similarly. We denote the corresponding closed subspaces by
	$X_{\rm rad,x}$ and $X_{\rm rad,x,y}$.
	
	\medskip
	
	\noindent
	\textbf{Hypothesis (H).}
	There are
	$
	Q_0=n+\frac{m(s+\gamma)}{r}>2s
	$
	and
	$
	B=\frac{2Q_0}{Q_0-2s}
	$
	with
	$
	\|v\|_{L^{B}(\mathbb R^{n+m})}
	\le C\|v\|_X
	$
	for all $v\in X$.
	
	Before proceeding to the main compactness results, we first collect some essential properties of the fractional Sobolev norm in the $x$-variable, namely multiplier estimates, translation bounds, and standard embeddings.
	
	\begin{lemma}
		\label{lemma0}    
		
		Let
		$
		\|f\|_{H^s_x}^2
		=
		\iint(1+|\xi|^2)^s|\mathcal F_xf(\xi,y)|^2\,d\xi\,dy.
		$
		Then
		$
		\|f\|_{H^s_x}^2
		\simeq
		\|f\|_2^2+\|D_x^sf\|_2^2.
		$
		
		(a) Multipliers.		
		If $\eta_1\in\mathcal S(\mathbb R^n)$, then
		$
		\|\eta_1f\|_{H^s_x}
		\le C_{\eta_1,s}\|f\|_{H^s_x}.
		$
		
		(b) Translations.		
		Let
		$
		\sigma=\min\{s,1\},
		$ $
		\sigma'=\min\{r,1\}.
		$
		Then
		$
		\|\tau_{(h,0)}f-f\|_2
		\le
		2|h|^\sigma\|f\|_{H^s_x},
		$
		and
		$
		\|\tau_{(0,k)}f-f\|_2^2
		\le
		4|k|^{2\sigma'}
		\left(
		\|f\|_2^2
		+
		\iint|\eta|^{2r}
		|\mathcal F_yf|^2
		\right).
		$
		
		(c) Sobolev in $x$.		
		There are $q>2$ and $C$ such that
		$
		\|f\|_{L^q(\mathbb R^n)}
		\le
		C
		\left(
		\|f\|_2^2+\|D_x^sf\|_2^2
		\right)^{1/2}
		$
		for $f\in H^s(\mathbb R^n)$.
	\end{lemma}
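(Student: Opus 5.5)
The whole lemma reduces to one-variable Fourier analysis in $x$ (or in $y$), with the other variable frozen and then integrated by Tonelli. For the equivalence, I use the pointwise bounds
\[
\max\{1,|\xi|^{2s}\}\le(1+|\xi|^2)^s\le 2^s\max\{1,|\xi|^{2s}\},
\]
so that $(1+|\xi|^2)^s\simeq 1+|\xi|^{2s}$ uniformly in $\xi$. Multiplying by $|\mathcal F_xf(\xi,y)|^2$, integrating over $(\xi,y)$, and applying Plancherel in $x$ for each fixed $y$ gives $\|f\|_{H^s_x}^2\simeq\|f\|_2^2+\|D_x^sf\|_2^2$.

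\textbf{Part (a).} For fixed $y$, $\mathcal F_x(\eta_1f)=(2\pi)^{-n}\widehat{\eta_1}*_\xi\mathcal F_xf$. I use Peetre's inequality
\[
(1+|\xi|^2)^{s/2}\le 2^{s/2}(1+|\xi-\zeta|^2)^{s/2}(1+|\zeta|^2)^{s/2}.
\]
Splitting the weight this way gives
\[
(1+|\xi|^2)^{s/2}|\mathcal F_x(\eta_1f)(\xi)|\lesssim \big(|\widehat{\eta_1}|\langle\cdot\rangle^{s}\big)*\big(\langle\cdot\rangle^{s}|\mathcal F_xf|\big)(\xi),
\]
where $\langle\zeta\rangle=(1+|\zeta|^2)^{1/2}$. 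Young's inequality $L^1*L^2\subset L^2$ applies because $\widehat{\eta_1}\langle\cdot\rangle^s\in L^1$, as $\widehat{\eta_1}$ is Schwartz. Squaring and integrating in $y$ yields the bound with $C_{\eta_1,s}=C\|\widehat{\eta_1}\langle\cdot\rangle^s\|_{L^1}$.

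\textbf{Part (b).} By Plancherel in $x$,
\[
\|\tau_{(h,0)}f-f\|_2^2=(2\pi)^{-n}\iint|e^{-ih\cdot\xi}-1|^2|\mathcal F_xf|^2\,d\xi\,dy.
\]
Since $|e^{i\theta}-1|\le\min\{2,|\theta|\}\le 2|\theta|^{\sigma}$ for $\sigma\in(0,1]$, we get
\[
|e^{-ih\cdot\xi}-1|^2\le 4|h|^{2\sigma}|\xi|^{2\sigma}\le 4|h|^{2\sigma}(1+|\xi|^2)^s,
\]
where the last step uses $\sigma\le s$. This gives $\|\tau_{(h,0)}f-f\|_2\le 2|h|^\sigma\|f\|_{H^s_x}$ (with the normalizing constant in the definition of $H^s_x$ chosen consistently). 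The $y$-translation is identical, with the multiplier $|e^{-ik\cdot\eta}|$ in place of $|e^{-ih\cdot\xi}|$. Here I use
\[
|e^{-ik\cdot\eta}-1|^2\le 4|k|^{2\sigma'}|\eta|^{2\sigma'}\le 4|k|^{2\sigma'}(1+|\eta|^{2r}),
\]
where $|\eta|^{2\sigma'}\le 1+|\eta|^{2r}$ holds because $\sigma'\le r$. Plancherel in $y$ then gives the stated bound with $\|f\|_2^2$ plus the $|\eta|^{2r}$ integral.

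\textbf{Part (c).} This is the standard inhomogeneous fractional Sobolev embedding $H^s(\mathbb R^n)\subset L^q$.
\begin{itemize}
\item If $2s<n$, take $q=\frac{2n}{n-2s}$ and use Lemma~\ref{lem:HS} with $a=0$ (in dimension $n$, order $s$). Alternatively, Lemma~\ref{lem:GN} with any $2<q<\frac{2n}{n-2s}$ gives $\|f\|_q\lesssim\|D^sf\|_2^\nu\|f\|_2^{1-\nu}$, and $X^\nu Y^{1-\nu}\le X+Y$ finishes.
\item If $2s\ge n$, the condition $\frac1q>\frac12-\frac sn$ holds for every $q\in(2,\infty)$, so Lemma~\ref{lem:GN} again applies with, say, $q=4$.
\end{itemize}
In either case the interpolated bound is dominated by $(\|f\|_2^2+\|D^sf\|_2^2)^{1/2}$.

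The only mildly delicate point is (a), where nothing beyond the Peetre inequality plus Young is needed. Everything else is a pointwise symbol estimate followed by Plancherel.
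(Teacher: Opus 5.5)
Your proposal is correct. The norm equivalence, part (a), and part (b) follow the paper's argument. In (a) you use Peetre's inequality plus Young's $L^1*L^2\subset L^2$ for each fixed $y$. In (b) you bound $|e^{i\vartheta}-1|^2$ pointwise by $4|h|^{2\sigma}|\xi|^{2\sigma}$ and then apply Plancherel in one variable.

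The genuine difference is in (c). The paper proves it in one self-contained line on the Fourier side. It applies Hausdorff--Young, $\|f\|_q\le\|\widehat f\|_{q'}$, and then H\"older with $\frac1{q'}=\frac12+\frac1{q''}$. This pulls out $\|(1+|\xi|^{2s})^{-1/2}\|_{L^{q''}}$, which is finite as soon as $sq''>n$, i.e.\ for $q>2$ close enough to $2$. That argument needs no case distinction and no input the paper leaves unproved, applies to $H^s$ functions directly, and works for every $s>0$. Its only price is that it produces just some $q$ near $2$, which is all Lemma~\ref{lemma1} requires.

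You instead import Lemma~\ref{lem:HS} (with $a=0$) and Lemma~\ref{lem:GN}, which the paper only states, and you split into the cases $2s<n$ and $2s\ge n$. What this buys is the sharp exponent $q=\frac{2n}{n-2s}$ when $2s<n$, with control by $\|D^sf\|_2$ alone. Inserted into Lemma~\ref{lemma1}, it gives $\alpha=n(\frac12-\frac1q)=s$, the same rate as in Proposition~\ref{prop:A}.

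One detail needs adding. Lemma~\ref{lem:GN} is stated only for $f\in\Sch$, so in the branch $2s\ge n$ (and in your alternative for $2s<n$) you need the routine extension to $H^s$:
\begin{itemize}
\item approximate $f$ in $H^s$ by Schwartz functions $f_k$;
\item note that $(f_k)$ is Cauchy in $L^q$ by the inequality itself;
\item identify the $L^q$-limit with $f$ through an a.e.\ convergent subsequence.
\end{itemize}
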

	
	\begin{proof}
		
		For (a), by Peetre's inequality,
		$$
		1+|\xi|^2
		\le
		2(1+|\xi-\zeta|^2)(1+|\zeta|^2),
		$$
		and hence
		$$
		(1+|\xi|^2)^{s/2}
		\le
		2^{s/2}
		(1+|\xi-\zeta|^2)^{s/2}
		(1+|\zeta|^2)^{s/2}.
		$$
		Since
		$ 
		\mathcal F_x(\eta_1f)
		=
		(2\pi)^{-n/2}\widehat{\eta_1}*\mathcal F_xf,
		$  
		Young's inequality gives
		$$
		\left\|
		(1+|\cdot|^2)^{s/2}
		\mathcal F_x(\eta_1f)
		\right\|_2
		\le
		C
		\left\|
		(1+|\cdot|^2)^{s/2}\widehat{\eta_1}
		\right\|_1
		\left\|
		(1+|\cdot|^2)^{s/2}\mathcal F_xf
		\right\|_2.
		$$
		Applying this for a.e. $y$ and integrating in $y$ proves (a).
		
		For (b), we use
		$$
		|e^{i\vartheta}-1|
		\le
		\min\{2,|\vartheta|\}
		\le
		2^{1-\sigma}|\vartheta|^\sigma.
		$$
		Consequently,
		$$
		|e^{ih\cdot\xi}-1|^2
		\le
		4|h|^{2\sigma}|\xi|^{2\sigma}
		\le
		4|h|^{2\sigma}(1+|\xi|^{2s}),
		$$
		because $\sigma\le s$ whenever $s<1$, while for $s\ge1$,
		$|\xi|^2\le1+|\xi|^{2s}$.
		the Plancherel identity in $x$ gives the first estimate. The $y$-case is identical with $r$ in place of $s$.
		
		For (c), let
		$
		q''=\frac{2q}{q-2}.
		$
		Choose $q>2$ sufficiently close to $2$ that
		$
		q''>\frac ns.
		$
		Since
		$
		\frac1{q'}=\frac12+\frac1{q''},
		$
		The Hausdorff--Young and the Hölder inequalities give
		$$
		\|f\|_q
		\le
		\|\widehat f\|_{q'}
		\le
		\left\|
		(1+|\xi|^{2s})^{1/2}\widehat f
		\right\|_2
		\left\|
		(1+|\xi|^{2s})^{-1/2}
		\right\|_{L^{q''}}.
		$$
		The last factor is finite because $sq''>n$. Therefore
		$$
		\|f\|_q
		\le
		C
		\left(
		\|f\|_2^2+\|D_x^sf\|_2^2
		\right)^{1/2}.
		$$
		This proves the lemma.
	\end{proof}
	
	A central challenge in proving compactness is ruling out the concentration of mass near the origin. The following lemma achieves this by providing uniform $L^2$ and $L^p$ control on thin cylinders around $x=0$, which holds independently of any radiality assumptions.
	\begin{lemma}\label{lemma1}

		Let
		$
		q,$ $
		\alpha=n\left(\frac12-\frac1q\right)>0
		$
		be as in Lemma \ref{lemma0}(c). For all $\delta>0$ and $u\in X$,
		with no radiality assumption and with $y$ ranging over all of
		$\mathbb R^m$,
		$$
		\|u\|_{L^2(\bb_\delta^n\times\mathbb R^m)}
		\le
		C\delta^\alpha\|u\|_X.
		$$
		Consequently, for $2<p<B$ there are $\beta>0$ and $C$ such that
		$$
		\|u\|_{L^p(\bb_\delta^n\times\mathbb R^m)}
		\le
		C\delta^\beta\|u\|_X.
		$$
	\end{lemma}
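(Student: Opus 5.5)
The $L^2$ estimate is a slicewise argument in $y$: Hölder on the small ball in $x$, then the Sobolev embedding of Lemma~\ref{lemma0}(c). The $L^p$ estimate then follows by interpolating between this $L^2$ bound and the global $L^B$ bound from Hypothesis (H).

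\emph{Step 1: the $L^2$ bound.} Fix $y\in\mathbb R^m$ and put $f=u(\cdot,y)$. Since $\frac12=\frac1q+\left(\frac12-\frac1q\right)$, Hölder's inequality on $\bb_\delta^n$ gives
\[
\int_{\bb_\delta^n}|f|^2\,dx\le|\bb_\delta^n|^{1-\frac2q}\|f\|_{L^q(\mathbb R^n)}^2=C\delta^{2\alpha}\|f\|_{L^q(\mathbb R^n)}^2 .
\]
Here we used $n\left(1-\frac2q\right)=2\alpha$. By Lemma~\ref{lemma0}(c),
\[
\|f\|_{L^q}^2\le C\left(\|f\|_2^2+\|D^sf\|_2^2\right).
\]
We integrate in $y$ and use Tonelli together with Plancherel in $x$ for each fixed $y$. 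This yields
\[
\int_{\mathbb R^m}\left(\|u(\cdot,y)\|_2^2+\|D^s_xu(\cdot,y)\|_2^2\right)dy=\|u\|_2^2+\|D^s_xu\|_2^2\le\|u\|_X^2,
\]
hence $\|u\|_{L^2(\bb_\delta^n\times\mathbb R^m)}\le C\delta^\alpha\|u\|_X$.

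The only care needed concerns measurability and density. For $u\in X$, almost every slice $u(\cdot,y)$ lies in $H^s(\mathbb R^n)$, because the $y$-integral of its $H^s$ norm squared is finite. So Lemma~\ref{lemma0}(c) applies for almost every $y$; alternatively, one can argue for Schwartz $u$ and pass to the limit. The weighted $y$-term of $\|u\|_X$ is not used here at all, and no radiality enters.

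\emph{Step 2: the $L^p$ bound.} For $2<p<B$, write $\frac1p=\frac{\theta}{2}+\frac{1-\theta}{B}$ with $\theta\in(0,1)$. The interpolation (Hölder) inequality on the set $\bb_\delta^n\times\mathbb R^m$ gives
\[
\|u\|_{L^p(\bb_\delta^n\times\mathbb R^m)}\le\|u\|_{L^2(\bb_\delta^n\times\mathbb R^m)}^{\theta}\,\|u\|_{L^B(\mathbb R^{n+m})}^{1-\theta}.
\]
By Step~1 the first factor is at most $(C\delta^\alpha\|u\|_X)^\theta$, and by Hypothesis (H) the second is at most $(C\|u\|_X)^{1-\theta}$. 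Therefore
\[
\|u\|_{L^p(\bb_\delta^n\times\mathbb R^m)}\le C\delta^{\beta}\|u\|_X,\qquad \beta=\alpha\theta>0 .
\]

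There is no real obstacle. The only point to watch is that the exponent $q$ from Lemma~\ref{lemma0}(c) is taken with $q>2$, which makes $\alpha>0$, and that the Sobolev step uses only the unweighted $x$-regularity.
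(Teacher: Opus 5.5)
Your proposal is correct and follows essentially the same route as the paper: Hölder on $\bb_\delta^n$ for each slice, then the Sobolev bound of Lemma~\ref{lemma0}(c), integration in $y$, and finally interpolation between this $L^2$ bound and the $L^B$ bound from (H), giving $\beta=\alpha\theta$. Your remark that almost every slice lies in $H^s(\mathbb R^n)$ only makes explicit a point the paper leaves implicit.
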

	\begin{proof}
		
		For a.e. $y$, the  Hölder inequality in $x$ gives
		$$
		\int_{\bb_\delta^n}|u(x,y)|^2\,dx
		\le
		|\bb_\delta^n|^{1-2/q}
		\|u(\cdot,y)\|_{L^q_x}^2.
		$$
		By Lemma \ref{lemma0}(c),
		$$
		\|u(\cdot,y)\|_{L^q_x}^2
		\le
		C
		\left(
		\|u(\cdot,y)\|_2^2
		+
		\|D_x^su(\cdot,y)\|_2^2
		\right).
		$$
		Integrating in $y$ yields
		$$
		\int_{\bb_\delta^n\times\mathbb R^m}|u|^2
		\le
		C\delta^{n(1-2/q)}\|u\|_X^2.
		$$
		Hence,
		$
		\|u\|_{L^2(\bb_\delta^n\times\mathbb R^m)}
		\le
		C\delta^\alpha\|u\|_X.
		$
				For $p$, choose $\theta\in(0,1)$ such that
		$
		\frac1p
		=
		\frac\theta2+\frac{1-\theta}{B}.
		$
		Then the  Hölder inequality and (H) give
		$$
		\|u\|_{L^p(E)}
		\le
		\|u\|_{L^2(E)}^\theta
		\|u\|_{L^{B}(E)}^{1-\theta}
		\le
		C\delta^{\alpha\theta}\|u\|_X
		$$
		for $E=\bb_\delta^n\times\mathbb R^m$.
		Set
		$
		\beta=\alpha\theta.
		$
	\end{proof}
	
	To exploit radial symmetry for decay at infinity, we will need to smoothly localize our functions. The next technical lemma shows that the fractional Sobolev norm is well-behaved under an appropriate bounded partition of unity.
	\begin{lemma}\label{lemma2} 
		
		Let $d\ge1$ and $t>0$. Let
		$$
		\|v\|_{H^t(\mathbb R^d)}^2
		=
		\int_{\mathbb R^d}
		(1+|\xi|^2)^t|\widehat v(\xi)|^2\,d\xi
		$$
		and $k=\lceil t\rceil$.
		Let $\{\chi_j\}_{j\in J}\subset C^\infty(\mathbb R^d)$ satisfy
		$ 
		0\le\chi_j\le1,
		$ $
		\|\partial^\beta\chi_j\|_\infty\le L
		\quad (|\beta|\le k),
		$ 
		and
		$
		\sum_j\mathbf 1_{\operatorname{supp}\chi_j}\le M
		$
		everywhere. Then
		$
		\sum_j\|\chi_jv\|_{H^t}^2
		\le
		C\|v\|_{H^t}^2,
		$ $
		C=C(d,t,M,L).
		$
	\end{lemma}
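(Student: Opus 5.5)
The plan is to treat the integer and fractional parts of $t$ separately, working on the physical side. Write $t=k_0+\sigma$ with $k_0=\lfloor t\rfloor$ and $\sigma\in[0,1)$. Then
\[
\|v\|_{H^t}^2\simeq\sum_{|\alpha|\le k_0}\|\partial^\alpha v\|_2^2+\sum_{|\alpha|=k_0}[\partial^\alpha v]_\sigma^2,
\qquad
[f]_\sigma^2=\iint\frac{|f(x+h)-f(x)|^2}{|h|^{d+2\sigma}}\,dx\,dh,
\]
where the seminorm term is present only when $\sigma>0$. This is exactly the identity $\Phi_\sigma(f)=\kappa_{d,\sigma}\|D^\sigma f\|_2^2$ from the proof of Lemma~\ref{lem:diam}, combined with Plancherel for the integer derivatives. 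The conclusion then has to be proved separately for each piece.

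\textbf{Integer-order terms.} By Leibniz, $\partial^\alpha(\chi_jv)=\sum_{\beta\le\alpha}c_{\alpha\beta}\,\partial^{\alpha-\beta}\chi_j\,\partial^\beta v$. Each coefficient satisfies $|\partial^{\alpha-\beta}\chi_j|\le L\,\mathbf 1_{\operatorname{supp}\chi_j}$, since $|\alpha|\le k_0\le k$. Summing over $j$ and using $\sum_j\mathbf 1_{\operatorname{supp}\chi_j}\le M$ gives
\[
\sum_j\|\partial^\alpha(\chi_jv)\|_2^2\le C(L,M)\sum_{\beta\le\alpha}\|\partial^\beta v\|_2^2 .
\]

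\textbf{Fractional terms ($\sigma>0$).} Apply Leibniz to $\partial^\alpha(\chi_jv)$ with $|\alpha|=k_0$, so it suffices to bound $\sum_j[g_jw]_\sigma^2$ with $g_j=\partial^{\alpha-\beta}\chi_j$ and $w=\partial^\beta v\in H^{t-|\beta|}$. Split the difference as
\[
g_j(x+h)w(x+h)-g_j(x)w(x)=g_j(x+h)\bigl(w(x+h)-w(x)\bigr)+\bigl(g_j(x+h)-g_j(x)\bigr)w(x).
\]
For the first term, $\sum_j|g_j(x+h)|^2\le L^2M$ pointwise, so summing in $j$ gives at most $L^2M[w]_\sigma^2$. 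For the second term we need
\[
\sum_j|g_j(x+h)-g_j(x)|^2\le C\min(1,|h|^2).
\]
When $|h|\ge1$, use $|g_j(x+h)-g_j(x)|^2\le 2|g_j(x+h)|^2+2|g_j(x)|^2$; summing each part over $j$ gives at most $4L^2M$. When $|h|<1$, the mean value theorem bounds the difference by $|h|\sup_{[x,x+h]}|\nabla g_j|$, which requires one more derivative of $\chi_j$ than $g_j$ carries. Then $\int\min(1,|h|^2)|h|^{-d-2\sigma}dh<\infty$, so the second term contributes at most $C\|w\|_2^2$.

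\textbf{Main obstacle.} Two points in the small-$|h|$ bound need care.
\begin{enumerate}
\item \emph{Derivative count.} When $|\alpha-\beta|=k_0$, the mean value step needs $k_0+1$ derivatives of $\chi_j$. This is at most $k=\lceil t\rceil$ precisely because $\sigma>0$, so the hypothesis suffices. When $\sigma=0$ there are no fractional terms, so the case does not arise.
\item \emph{Bounded overlap along the segment.} A nonzero term $\nabla g_j(z)$ with $z\in[x,x+h]$ requires $\operatorname{supp}\chi_j$ to meet $B(x,1)$, and there may be more than $M$ such $j$. The bound $\sum_j\mathbf 1_{\operatorname{supp}\chi_j}\le M$ counts overlaps at a point, not in a ball.
\end{enumerate}
To handle the second point, keep the integral form $g_j(x+h)-g_j(x)=\int_0^1\nabla g_j(x+\theta h)\cdot h\,d\theta$ and apply Cauchy--Schwarz in $\theta$. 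This gives
\[
\sum_j|g_j(x+h)-g_j(x)|^2\le|h|^2\int_0^1\sum_j|\nabla g_j(x+\theta h)|^2\,d\theta\le L^2M|h|^2 ,
\]
since the bounded overlap is now applied at each single point $x+\theta h$. This removes the issue, and combining the integer and fractional estimates yields $\sum_j\|\chi_jv\|_{H^t}^2\le C\|v\|_{H^t}^2$ with $C=C(d,t,M,L)$.
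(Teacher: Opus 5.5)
Your proof is correct, but it reaches general $t$ by a different route from the paper.

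\textbf{The paper's route.} It handles $0<t<1$ by hand; that argument is essentially your fractional step with $g_j=\chi_j$. It then uses Leibniz to show that $v\mapsto(\chi_jv)_j$ is bounded $L^2\to\ell^2(J;L^2)$ and $H^k\to\ell^2(J;H^k)$. Finally it invokes complex interpolation, through $[L^2,H^k]_{t/k}=H^t$ and $[\ell^2(J;A_0),\ell^2(J;A_1)]_\theta=\ell^2(J;[A_0,A_1]_\theta)$.

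\textbf{Your route.} You write $\|v\|_{H^t}^2$ as integer-order derivative norms plus the Gagliardo seminorms of the top-order derivatives $\partial^\alpha v$, $|\alpha|=\lfloor t\rfloor$. You then apply Leibniz followed by the first-order difference splitting. When $t\notin\mathbb N$ this uses exactly $\lfloor t\rfloor+1=\lceil t\rceil$ derivatives of $\chi_j$, so it matches the hypothesis.

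\textbf{Comparison.} Your approach is fully elementary, gives explicit constants, and avoids vector-valued interpolation theory. The paper's proof is shorter, but it relies on that interpolation identity for the general case.

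\textbf{On your second ``obstacle''.} It does not actually arise. If $g_j(x+h)\neq g_j(x)$, then $x$ or $x+h$ lies in $\operatorname{supp}g_j\subset\operatorname{supp}\chi_j$, so at most $2M$ indices contribute. Each such difference is bounded by the global Lipschitz estimate $\|\nabla g_j\|_\infty|h|\le\sqrt d\,L|h|$. This endpoint count is exactly what the paper uses in its step (i). Your Cauchy--Schwarz in $\theta$ is also valid and gives the same bound.
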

	
	\begin{proof}
		
		(i) The case $0<t<1$
		
		The Gagliardo seminorm
		$$
		[v]_t^2
		=
		\iint
		\frac{|v(z)-v(z')|^2}
		{|z-z'|^{d+2t}}
		\,dz\,dz'
		$$
		satisfies
		$$
		[v]_t^2
		=
		c_{d,t}
		\int|\xi|^{2t}|\widehat v(\xi)|^2\,d\xi.
		$$
		Indeed, by the Plancherel identity,
		$$
		\int|v(z+h)-v(z)|^2\,dz
		=
		\int|e^{ih\cdot\xi}-1|^2|\widehat v(\xi)|^2\,d\xi,
		$$
		and the identity follows after the change of variables
		$h\mapsto h/|\xi|$.
		
		It therefore suffices to bound
		$
		\sum_j\|\chi_jv\|_2^2
		+
		\sum_j[\chi_jv]_t^2.
		$
				First,
		$
		\sum_j\|\chi_jv\|_2^2
		=
		\int|v|^2\sum_j\chi_j^2
		\le
		M\|v\|_2^2.
		$
		
		Next,
		$$
		\chi_j(z)v(z)-\chi_j(z')v(z')
		=
		\chi_j(z)(v(z)-v(z'))
		+
		v(z')(\chi_j(z)-\chi_j(z')).
		$$
		The first term, after summation in $j$, contributes at most
		$
		2M[v]_t^2.
		$
		
		For the second term, fix $z,z'$ and put $h=z-z'$. If
		$\chi_j(z)\ne\chi_j(z')$, then $z$ or $z'$ belongs to
		$\operatorname{supp}\chi_j$, so at most $2M$ indices contribute.
		Moreover,
		$$
		|\chi_j(z)-\chi_j(z')|
		\le
		\min\{L|h|,1\}.
		$$
		Therefore
		$$
		\sum_j|\chi_j(z)-\chi_j(z')|^2
		\le
		2M\min\{L^2|h|^2,1\}.
		$$
		Consequently, the second term is bounded by
		$
		4M\|v\|_2^2
		\int_{\mathbb R^d}
		\frac{\min\{L^2|h|^2,1\}}
		{|h|^{d+2t}}\,dh.
		$
		Using polar coordinates,
		$$
		\int_{\mathbb R^d}
		\frac{\min\{L^2|h|^2,1\}}
		{|h|^{d+2t}}\,dh
		\le
		C_{d,t}L^{2t}.
		$$
				This proves the result for $0<t<1$.
		
		(ii) General $t$.
				Let $k=\lceil t\rceil$. Leibniz' rule gives
		$
		\partial^\beta(\chi_jv)
		=
		\sum_{\gamma\le\beta}
		\binom{\beta}{\gamma}
		\partial^{\beta-\gamma}\chi_j
		\,\partial^\gamma v.
		$
		Since every derivative of $\chi_j$ vanishes off
		$\operatorname{supp}\chi_j$,
		$
		\sum_j
		|\partial^{\beta-\gamma}\chi_j|^2
		|\partial^\gamma v|^2
		\le
		L^2M|\partial^\gamma v|^2.
		$
		Hence
		$
		T:v\mapsto(\chi_jv)_j
		$
		is bounded map
		$
		L^2\to\ell^2(J;L^2)
		$
		and
		$
		H^k\to\ell^2(J;H^k),
		$
		uniformly in $J$.
		
		By complex interpolation,
		$
		[L^2,H^k]_{t/k}=H^t
		$
		and
		$
		[\ell^2(J;A_0),\ell^2(J;A_1)]_\theta
		=
		\ell^2(J;[A_0,A_1]_\theta).
		$
		Therefore
		$
		T:H^t\to\ell^2(J;H^t)
		$
		is bounded, which proves the claim.
	\end{proof} 
	
	\emph{Remark.}
	Part (i) is completely elementary. For $t\ge1$ the proof uses standard interpolation theory.
	
	Building toward our decay estimates, we now construct a specific angular partition of unity on spherical shells. This construction relies on rotations and is perfectly tailored to interact with radial functions.
	
	\begin{lemma}\label{lemma3}

		Let $d\ge2$. There are $c,C,M>0$ and $C_\beta$, depending only on $d$, such that for every $R\ge2$ there exist
		$K_R$ with
		$
		cR^{d-1}\le K_R\le CR^{d-1},
		$
		rotations $Q_1,\dots,Q_{K_R}\in SO(d)$, and
		$\chi_R^0\in C_c^\infty(\mathbb R^d)$ such that
		$
		\chi_{R,j}:=\chi_R^0\circ Q_j^{-1}
		$
		satisfy:
		
		\begin{enumerate}
			\item
			$
			0\le\chi_{R,j}\le1;
			$
			
			\item
			$
			\|\partial^\beta\chi_{R,j}\|_\infty
			\le C_\beta,
			$
			independently of $R,j$;
			
			\item
			$
			\sum_j
			\mathbf 1_{\operatorname{supp}\chi_{R,j}}
			\le M;
			$
			
			\item every $z$ with $R<|z|<2R$ has some $j$ with
			$
			\chi_{R,j}(z)=1.
			$
		\end{enumerate}
		
	\end{lemma}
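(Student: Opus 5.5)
The plan is to build everything from one fixed profile placed at unit distance and transported around the shell by rotations. Fix a bump $\psi\in C_c^\infty(\R^d)$ with $0\le\psi\le1$, $\psi=1$ on the closed ball $\overline{\bb_{1/2}}$ and $\operatorname{supp}\psi\subset\bb_1$. For $R\ge2$ let $e=e_1$ and put
\[
\chi_R^0(z):=\psi\big(z-\rho e\big)
\]
for a suitable centre $\rho e$. The difficulty is that a single bump of unit size must be copied along the whole shell $\{R<|z|<2R\}$, which has radial width $R$. Translating $\psi$ only by rotations cannot cover the radial direction. So I would use a radially elongated profile instead, namely
\[
\chi_R^0(z)=\psi_1\big(|z|-\tfrac32R\big)\,\psi_2\big(z-(z\cdot e)e\big)\,\mathbf 1_{\{z\cdot e>0\}},
\]
where $\psi_1\in C_c^\infty(\R)$ equals $1$ on $[-\tfrac R2,\tfrac R2]$ and is supported in $[-\tfrac R2-1,\tfrac R2+1]$, with derivatives bounded independently of $R$. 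Such a $\psi_1$ is obtained by convolving an indicator with a fixed mollifier. The factor $\psi_2$ is a unit bump in the $(d-1)$ transverse variables.

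The $R$-independence of derivatives in item 2 then fails for the transverse variables only through the angular structure. The set where $\chi_R^0=1$ is a tube of transverse radius $\tfrac12$ around the segment $\{te:\ R\le t\le 2R\}$. Its rotates $Q_j$ are tubes around the segments in directions $\omega_j=Q_je$.

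\textbf{Covering (item 4).} A point $z$ with $R<|z|<2R$ lies in the tube around $\omega_j$ if the transverse distance $|z-(z\cdot\omega_j)\omega_j|$ is less than $\tfrac12$. This distance is at most $|z|\,|\hat z-\omega_j|\le 2R\,|\hat z-\omega_j|$, where $\hat z=z/|z|$. Hence it suffices that $\{\omega_j\}$ be a $\tfrac1{4R}$-net of $S^{d-1}$.

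\textbf{Overlap (item 3).} I would take $\{\omega_j\}$ to be a maximal $\tfrac1{8R}$-separated subset of the sphere. This set is a $\tfrac1{8R}$-net, hence covering, and it has cardinality $K_R\simeq R^{d-1}$ by a volume count on $S^{d-1}$. Each $Q_j$ is then any rotation sending $e$ to $\omega_j$. For overlap, a point $z$ in the support of the $j$-th tube has $|z|\ge R-1\ge R/2$ and transverse distance at most $1$. This gives $|\hat z-\omega_j|\lesssim 1/R$. A packing count of $\tfrac1{8R}$-separated points in a cap of radius $C/R$ then bounds the number of such $j$ by a dimensional constant $M$.

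\textbf{Regularity (items 1 and 2).} The bound $0\le\chi\le1$ is clear. Derivatives of $\psi_1(|z|-\tfrac32R)$ are bounded uniformly, since on the support $|z|\ge R/2\ge1$ and the derivatives of $|z|$ of order $\ge2$ are $O(|z|^{-1})$. The transverse factor is a composition of $\psi_2$ with a linear projection, so it is harmless. The half-space cutoff is automatically smooth, because on the support $z\cdot e\ge R/2$, far from the hyperplane. Composition with an orthogonal $Q_j^{-1}$ preserves the sup norms of derivatives up to dimensional constants. This gives the $C_\beta$ independent of $R$ and $j$.

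I expect the main obstacle to be the bookkeeping in the overlap count, in particular making the angular packing estimate uniform in $R$. This is where the transverse width $1$ and the minimal radius $R-1$ must be compared carefully.
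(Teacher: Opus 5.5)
Your construction is correct and is essentially the paper's argument. Both use rotated copies of a single profile concentrated along a ray, indexed by a maximal $\sim 1/R$-separated subset of $S^{d-1}$, with $K_R\simeq R^{d-1}$ from an area count and the overlap bound from packing caps. The only difference is the profile: you use a fixed-width cylinder with an $O(1)$ radial transition plus a half-space cutoff, whereas the paper uses the conical piece $\vartheta(|z|/R)\,\varphi(R|\widehat z-e_d|)$, which excludes the antipodal copy automatically and gives item 2 by the rescaling $z=Rw$.

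One small slip: the claim that $z\cdot e\ge R/2$ on the support holds only for $R\ge 8/3$; in general you only get $|z\cdot e|\ge\sqrt{(R-1)^2-1}$. This does not break anything, because the half-space factor only needs the support of the symmetric product to avoid a neighbourhood of $\{z\cdot e=0\}$. That holds because $|z-(z\cdot e)e|$ stays strictly below $1\le R-1\le|z|$ on the support, once $\operatorname{supp}\psi_2$ is a compact subset of the open unit ball.
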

	\begin{proof}

		Fix
		$
		\vartheta\in C_c^\infty\left(\frac12,3\right),
		$  $
		0\le\vartheta\le1,
		$ $
		\vartheta=1\text{ on }[1,2],
		$
		and
		$
		\varphi\in C^\infty([0,\infty);[0,1]),
		$ $
		\varphi=1\text{ on }[0,1],
		$ $
		\varphi=0\text{ on }[2,\infty).
		$
		Put
		$$
		\chi_R^0(z)
		=
		\vartheta(|z|/R)
		\varphi\big(R|\widehat z-e_d|\big),
		\qquad
		\widehat z=\frac z{|z|}.
		$$
		This is smooth because
		$\vartheta(|z|/R)$ vanishes near $z=0$.
		Let $\{\theta_j\}_{j\le K_R}$ be a maximal
		$1/R$-separated subset of $S^{d-1}$. By maximality, the caps
		$
		B(\theta_j,1/R)\cap S^{d-1}
		$
		cover $S^{d-1}$. By separation, the caps of radius $1/(2R)$
		are disjoint. Since the area of a spherical cap of radius $t$
		satisfies
		$
		|B(\theta,t)\cap S^{d-1}|
		\simeq t^{d-1}
		$
		for $t\le2$, comparison of areas gives
		$
		cR^{d-1}\le K_R\le CR^{d-1}.
		$
		
		Choose $Q_j\in SO(d)$ with
		$
		Q_je_d=\theta_j.
		$
		Then
		$$
		\chi_{R,j}(z)
		=
		\chi_R^0(Q_j^{-1}z)
		=
		\vartheta(|z|/R)
		\varphi(R|\widehat z-\theta_j|).
		$$
		
		If $R<|z|<2R$, then $\vartheta(|z|/R)=1$. By the covering
		property, some $j$ satisfies
		$
		|\widehat z-\theta_j|<1/R,
		$
		and hence
		$
		\varphi(R|\widehat z-\theta_j|)=1.
		$
		This proves (4).
		
		If
		$
		z\in
		\operatorname{supp}\chi_{R,j}
		\cap
		\operatorname{supp}\chi_{R,k},
		$
		then
		$
		|\widehat z-\theta_j|,
		|\widehat z-\theta_k|
		\le\frac2R,
		$
		and hence
		$
		|\theta_j-\theta_k|
		\le\frac4R.
		$
		The disjoint caps of radius $1/(2R)$ around such $\theta_k$
		are contained in a cap of radius $9/(2R)$ around $\theta_j$.
		Comparing areas gives the uniform overlap bound (3).
		
		Finally, derivatives of $\chi_R^0$ are uniformly bounded in
		$R$. Indeed, writing
		$
		\chi_R^0(z)=F_R(z/R),
		$
		all derivatives of $F_R$ on the fixed annulus
		$1/2<|w|<3$ grow at most like $R^{|\beta|}$, while
		$
		\partial_z^\beta
		=
		R^{-|\beta|}\partial_w^\beta.
		$
		Thus
		$
		|\partial^\beta\chi_R^0(z)|
		\le C_\beta,
		$
		and the same holds for $\chi_{R,j}$ because rotations preserve
		derivative norms.
	\end{proof}

	We can now establish a Strauss-type radial estimate. By combining the preceding partition of unity with our fractional localization lemma, we show that functions in $X$ that are radial in $x$ exhibit uniform mass decay as $|x| \to \infty$.
	\begin{lemma}\label{lemma4} 
		
		Let $n\ge2$, and $2\le p\le B$. There is $C$ such that
		for all $R\ge2$ and all $u\in X_{\rm rad,x}$,
		$$
		\int_{\{R<|x|<2R\}\times\mathbb R^m}
		|u|^p\,dx\,dy
		\le
		CR^{-\beta_x}\|u\|_X^p,
		$$
		where
		$
		\beta_x
		=
		\frac{(n-1)(p-2)}2.
		$
		Consequently, for $2<p\le B$ and $R\ge2$,
		\begin{equation}
			\int_{\{|x|>R\}\times\mathbb R^m}
			|u|^p\,dx\,dy
			\le
			C'R^{-\beta_x}\|u\|_X^p.
			\label{1-eq4.1}
		\end{equation}
	\end{lemma}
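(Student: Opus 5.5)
We localize to the annulus with the rotated partition of unity of Lemma~\ref{lemma3} (with $d=n$), use radiality in $x$ to see that all the rotated pieces carry the same $L^p$ mass, and then sum using Lemma~\ref{lemma2}. Fix $R\ge2$ and $u\in X_{\rm rad,x}$, and let $\chi_{R,j}=\chi_R^0\circ Q_j^{-1}$, $j=1,\dots,K_R$, be the functions from Lemma~\ref{lemma3}, regarded as functions of $x$ only.

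\textbf{Equal mass of the pieces.} Since $u(Q_jx,y)=u(x,y)$, the change of variables $x\mapsto Q_jx$ gives
\[
\|\chi_{R,j}u\|_{L^p}=\|\chi_R^0u\|_{L^p}\qquad\text{for every }j .
\]
By property (4) of Lemma~\ref{lemma3}, the annulus $\{R<|x|<2R\}\times\R^m$ is covered by the sets $\{\chi_{R,j}=1\}$. Hence
\[
\int_{\{R<|x|<2R\}}|u|^p\le\sum_j\int|\chi_{R,j}u|^p=K_R\,\|\chi_R^0u\|_p^p ,
\]
and we need a bound on $\|\chi_R^0u\|_p^p$ that is better than the trivial one by a factor comparable to $1/K_R$.

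\textbf{Bound on a single piece.} We interpolate between $L^2$ and $L^B$: write $\frac1p=\frac\theta2+\frac{1-\theta}B$. Each cutoff $v_j=\chi_{R,j}u$ satisfies $\|v_j\|_X\lesssim\|u\|_X$ and lies again in $X$. We apply (H) to $v_j$. The $x$-part of the norm is handled by Lemma~\ref{lemma2} applied in $x$ for a.e.\ $y$, using the uniform bounds (2)--(3) of Lemma~\ref{lemma3}, which give
\[
\sum_j\|v_j\|_{H^s_x}^2\le C\|u\|_{H^s_x}^2 .
\]
The $y$-part is easier, since multiplication by $\chi_{R,j}(x)$ commutes with $D_y^r$: $\sum_j\||x|^\gamma D^r_y v_j\|_2^2\le M\||x|^\gamma D_y^ru\|_2^2$. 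Thus $\sum_j\|v_j\|_X^2\le C\|u\|_X^2$, and since all terms are equal (radiality, with $\chi_{R,j}^2$ summing to at most $M$), we get $\|\chi_R^0u\|_X^2\le C\|u\|_X^2/K_R$. The same equal-mass argument gives $\|\chi_R^0u\|_2^2\le M\|u\|_2^2/K_R$. Therefore
\[
\|\chi_R^0u\|_p^p\le \|\chi_R^0u\|_2^{\theta p}\|\chi_R^0u\|_B^{(1-\theta)p}\le C\,K_R^{-p/2}\|u\|_X^p ,
\]
and consequently $\int_{\{R<|x|<2R\}}|u|^p\le C K_R^{1-p/2}\|u\|_X^p\simeq CR^{-(n-1)(p-2)/2}\|u\|_X^p$, which is exactly $R^{-\beta_x}$.

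\textbf{Tail estimate and the main obstacle.} For \eqref{1-eq4.1}, sum the annulus bound over the dyadic annuli $2^kR<|x|<2^{k+1}R$, $k\ge0$. This is a geometric series, convergent since $\beta_x>0$ when $p>2$ and $n\ge2$. The key step, and the main technical point, is the almost-orthogonality $\sum_j\|\chi_{R,j}u\|_X^2\lesssim\|u\|_X^2$ uniformly in $R$, which relies on the $R$-independent derivative bounds of the rotated cutoffs together with Lemma~\ref{lemma2}. One must also check that $v_j\in X$, so that (H) can be applied; this follows from Lemma~\ref{lemma0}(a) and density.
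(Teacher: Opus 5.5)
Your proposal is correct and follows the paper's proof essentially step for step. Both arguments use the rotated cutoffs of Lemma~\ref{lemma3}, the almost-orthogonality $\sum_j\|\chi_{R,j}u\|_X^2\le C\|u\|_X^2$ (via Lemma~\ref{lemma2} in $x$ and the fact that $\chi_{R,j}(x)$ commutes with $D_y^r$), rotation invariance of $\|\cdot\|_X$ to get $\|\chi_R^0u\|_X^2\le C\|u\|_X^2/K_R$, $L^2$--$L^B$ interpolation with (H), the covering property (4), and dyadic summation for the tail; the only difference is ordering, since you first reduce the annulus integral to $K_R\|\chi_R^0u\|_p^p$ whereas the paper bounds each $\|\chi_{R,j}u\|_p^p$ and then sums.
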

	
	\begin{proof}

		Take $\chi_{R,j}(x)$ from Lemma \ref{lemma3} with $d=n$, viewed as
		functions of $(x,y)$.
		
		Step 1: localization in $X$
		
		We have
		$$
		\sum_j\|\chi_{R,j}u\|_2^2
		\le
		M\|u\|_2^2.
		$$
		
		Since $\chi_{R,j}$ is independent of $y$, then
		$
		\mathcal F_y(\chi_{R,j}u)
		=
		\chi_{R,j}\mathcal F_yu,
		$
		and therefore
		$$
		\sum_j
		\||x|^\gamma D_y^r(\chi_{R,j}u)\|_2^2
		\le
		M
		\||x|^\gamma D_y^ru\|_2^2.
		$$
		
		For the $x$-part, apply Lemma \ref{lemma2} with $d=n$ and $t=s$ to
		$u(\cdot,y)$ for a.e. $y$, and integrate in $y$. Together with
		Lemma \ref{lemma0},
		\begin{equation}
			\sum_j\|\chi_{R,j}u\|_X^2
			\le
			C_1\|u\|_X^2,
			\label{1-eq4.2}
		\end{equation}
		where $C_1$ is independent of $R$.
		
		Step 2: 		
		Let
		$
		T_j(x,y)=(Q_jx,y).
		$
		Since $u$ is radial in $x$,
		$
		u\circ T_j=u,
		$
		and
		$
		\chi_{R,j}\circ Q_j=\chi_R^0.
		$
		Thus
		$
		(\chi_{R,j}u)\circ T_j
		=
		\chi_R^0u.
		$
		By invariance of $\|\cdot\|_X$,
		$
		\|\chi_{R,j}u\|_X
		=
		\|\chi_R^0u\|_X.
		$
		Hence \eqref{1-eq4.2} gives
		$
		K_R\|\chi_R^0u\|_X^2
		\le
		C_1\|u\|_X^2,
		$
		and therefore
		$
		\|\chi_{R,j}u\|_X^2
		\le
		\frac{C_1}{K_R}\|u\|_X^2.
		$
		
		Step 3: 
		By (S),
		$
		\|\chi_{R,j}u\|_p^p
		\le
		CK_R^{-p/2}\|u\|_X^p.
		$
		By Lemma \ref{lemma3}(4),
		$ 
		|u|^p
		\le
		\sum_j|\chi_{R,j}u|^p
		$ 
		on
		$\{R<|x|<2R\}\times\mathbb R^m$.
		Consequently,
		\[
				\begin{split}
			\int_{\{R<|x|<2R\}\times\mathbb R^m}|u|^p
		&	\le
			\sum_j\|\chi_{R,j}u\|_p^p \le
			CK_R^{1-p/2}\|u\|_X^p =
			CK_R^{-(p-2)/2}\|u\|_X^p \le
			CR^{-\frac{(n-1)(p-2)}2}\|u\|_X^p.
		\end{split}
\]
		
		For \eqref{1-eq4.1}, split
		$ 
		\{|x|>R\}
		=
		\bigcup_{k=0}^\infty
		\{2^kR<|x|<2^{k+1}R\}
		$ 
		up to null spheres. Summing gives
		$ 
		\sum_{k=0}^\infty
		C(2^kR)^{-\beta_x}
		=
		C'R^{-\beta_x},
		$ 
		which is finite because $\beta_x>0$ for $p>2$.
	\end{proof}

 When the functions are additionally radial in the $y$-variable, an analogous decay estimate holds as $|y|\to \infty$. The presence of the spatial weight $|x|^\gamma$ requires us to carefully localize in $y$ while keeping $x$ restricted to a bounded set.
	\begin{lemma}\label{lemma5} 
		Let $m\ge2$, $2\le p\le B$, and $M\ge1$. There is
		$C_M$ such that for $R\ge2$ and every $u\in X$ radial in $y$,
		$$
		\int_{\{|x|<M\}\times\{R<|y|<2R\}}
		|u|^p\,dx\,dy
		\le
		C_MR^{-\beta_y}\|u\|_X^p,
		$$
		where
		$ 
		\beta_y
		=
		\frac{(m-1)(p-2)}2.
		$ 
		For $p>2$,
		$$
		\int_{\{|x|<M\}\times\{|y|>R\}}
		|u|^p\,dx\,dy
		\le
		C_M'R^{-\beta_y}\|u\|_X^p.
		$$
	\end{lemma}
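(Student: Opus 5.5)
We mimic the proof of Lemma~\ref{lemma4}, now covering shells in $y$ with the rotated pieces of Lemma~\ref{lemma3} (with $d=m$), after a cutoff in $x$ that tames the weight $|x|^\gamma$. Fix $\eta\in C_c^\infty(\mathbb R^n)$ with $0\le\eta\le1$, $\eta=1$ on $\bb^n_M$ and $\operatorname{supp}\eta\subset\bb^n_{2M}$. Set $v=\eta u$. Then $v$ is still radial in $y$, and by Lemma~\ref{lemma0}(a), $\|v\|_2+\|D_x^sv\|_2\le C_M\|u\|_X$. Since $\eta$ does not depend on $y$, $\||x|^\gamma D_y^rv\|_2\le\||x|^\gamma D_y^ru\|_2$. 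Hence $\|v\|_X\le C_M\|u\|_X$, and $v=u$ on $\{|x|<M\}$. On the support of $v$ we have $|x|^\gamma\le(2M)^\gamma$ because $\gamma\ge0$. So $v$ also satisfies the unweighted bound $\|D_y^rv\|_{L^2(\{|x|<2M\})}$ in the weighted norm, up to constants. This is what makes $y$-localization compatible with the weight: the $y$-cutoffs below multiply $D_y^r$ in a way that Lemma~\ref{lemma2} controls only in unweighted norms. Restricting to bounded $x$ is exactly where the weight becomes harmless from above.

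\textbf{Step 1 (localization).} Take $\chi_{R,j}(y)$ from Lemma~\ref{lemma3} with $d=m$. Since these are independent of $x$, they commute with $D_x^s$. Thus $\sum_j\|\chi_{R,j}v\|_2^2\le M'\|v\|_2^2$ and $\sum_j\|D_x^s(\chi_{R,j}v)\|_2^2\le M'\|D_x^sv\|_2^2$. For the $y$-part, apply Lemma~\ref{lemma2} (with $d=m$, $t=r$) to $v(x,\cdot)$ for each $x$, with weight $|x|^{2\gamma}$. Multiply by $|x|^{2\gamma}$ and integrate in $x$. This gives
\[
\sum_j\||x|^\gamma (1+D_y^2)^{r/2}(\chi_{R,j}v)\|_2^2\le C\||x|^\gamma(1+D_y^2)^{r/2}v\|_2^2 .
\]
The right-hand side is at most $C(\||x|^\gamma v\|_2^2+\||x|^\gamma D_y^rv\|_2^2)$. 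Because $\operatorname{supp}v\subset\bb_{2M}^n\times\mathbb R^m$ and $\gamma\ge0$, we have $\||x|^\gamma v\|_2\le(2M)^\gamma\|v\|_2$. Hence $\sum_j\|\chi_{R,j}v\|_X^2\le C_M\|u\|_X^2$, uniformly in $R$.

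\textbf{Step 2 (rotation).} Let $T_j(x,y)=(x,Q_jy)$. The $X$-norm is invariant under $T_j$: $|x|^\gamma$ is unchanged, and $D_y^r$ and $D_x^s$ commute with rotations in $y$. Radiality of $v$ in $y$ then gives $\|\chi_{R,j}v\|_X=\|\chi_R^0v\|_X$ for every $j$. Combined with Step 1 and the lower bound $K_R\ge cR^{m-1}$, this yields $\|\chi_{R,j}v\|_X^2\le C_MR^{-(m-1)}\|u\|_X^2$.

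\textbf{Step 3 (summation).} By Hypothesis (H), $\|\chi_{R,j}v\|_p\le C\|\chi_{R,j}v\|_X$ for $p=B$. For $p=2$ the same bound is trivial. For $2<p<B$, interpolate: $\|w\|_p\le\|w\|_2^\theta\|w\|_B^{1-\theta}\le C\|w\|_X$. Lemma~\ref{lemma3}(4) gives $|v|^p\le\sum_j|\chi_{R,j}v|^p$ on $\{R<|y|<2R\}$. Therefore
\[
\int_{\{|x|<M\}\times\{R<|y|<2R\}}|u|^p\le\sum_j\|\chi_{R,j}v\|_p^p\le C_MK_RK_R^{-p/2}\|u\|_X^p\le C_MR^{-\beta_y}\|u\|_X^p .
\]
For the tail estimate, decompose $\{|y|>R\}$ into dyadic shells $2^kR<|y|<2^{k+1}R$. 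The resulting series $\sum_k(2^kR)^{-\beta_y}$ converges when $p>2$.

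\textbf{Main obstacle.} The delicate point is Step 1's weighted use of Lemma~\ref{lemma2}. It works because the weight depends only on $x$ while the cutoffs depend only on $y$, so the estimate is applied slice by slice in $x$. The only additional ingredient is the inhomogeneous term $\||x|^\gamma v\|_2$, which is controlled solely because of the $x$-cutoff. This explains the dependence of the constant on $M$ and why the region $\{|x|<M\}$ cannot be removed.
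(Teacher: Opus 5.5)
Your proof is correct and follows the paper's argument essentially step for step. Both cut off in $x$ to $\bb^n_{2M}$, localize in $y$ with the rotated pieces of Lemma~\ref{lemma3}, and apply Lemma~\ref{lemma2} slice-wise in $x$ against the weight $|x|^{2\gamma}$, with the zeroth-order term absorbed via $(2M)^{2\gamma}\|v\|_2^2$; both then conclude with rotation invariance of $\|\cdot\|_X$ in $y$, the covering property, and dyadic summation. The one misstatement is in your preamble, where you say $v$ satisfies an ``unweighted bound in the weighted norm'': the upper bound $|x|^\gamma\le(2M)^\gamma$ controls the weighted quantity by the unweighted one, not conversely. That claim is never used, since Step~1 correctly relies only on $\||x|^\gamma v\|_2\le(2M)^\gamma\|v\|_2$.
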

	\begin{proof}

		Let
		$ 
		\zeta\in C_c^\infty(\mathbb R^n),
		$ $
		0\le\zeta\le1,
		$ $
		\zeta=1\text{ on }\bb_M^n,
	$ and $
		\operatorname{supp}(\zeta)\subset \bb_{2M}^n,
		$ 
		and put
		$ 
		w=\zeta u.
		$ 
		Then $w$ is radial in $y$ and
		$ 
		\|w\|_X\le C_M\|u\|_X.
		$ 
		Indeed,
		$$
		\|D_x^sw\|_2
		\le
		C_\zeta\|u\|_{H_x^s}
		\le
		C_M\|u\|_X
		$$
		by Lemma \ref{lemma0}(a), while
		$ 
		|x|^\gamma D_y^sw
		=
		\zeta |x|^\gamma D_y^ru
		$ 
		and hence
		$ 
		\||x|^\gamma D_y^rw\|_2
		\le
		\||x|^\gamma D_y^ru\|_2.
		$ 
		
		Localization in $y$.
				Take
		$ 
		\psi_j(y)=\chi_{R,j}(y)
		$ 
		from Lemma \ref{lemma3} with $d=m$.
				First,
		$ 
		\sum_j\|\psi_jw\|_2^2
		\le
		M\|w\|_2^2.
		$ 
				Since $\psi_j$ is independent of $x$,
		$ 
		D_x^s(\psi_jw)
		=
		\psi_jD_x^sw,
		$ 
		and therefore
		$ 
		\sum_j\|D_x^s(\psi_jw)\|_2^2
		\le
		M\|D_x^sw\|_2^2.
		$ 
				For a.e. $x$, Lemma \ref{lemma2} with $d=m$ and $t=r$ gives
		$$
		\sum_j
		\|\psi_jw(x,\cdot)\|_{H_y^r}^2
		\le
		C
		\|w(x,\cdot)\|_{H_y^r}^2.
		$$
		Multiplying by $|x|^{2\gamma}$ and integrating in $x$ gives
		$$
		\sum_j
		\||x|^\gamma D_y^r(\psi_jw)\|_2^2
		\le
		C
		\left(
		\||x|^\gamma D_y^rw\|_2^2
		+
		(2M)^{2\gamma}\|w\|_2^2
		\right).
		$$
		Thus
		$  
		\sum_j\|\psi_jw\|_X^2
		\le
		C_M\|w\|_X^2.
		$ 
		The truncation is essential here: the zeroth-order term produced by
		the localization in $y$ contains
		$|x|^{2\gamma}\|w(x,\cdot)\|_2^2$, which is controlled because
		$|x|\le2M$ on the support of $w$.
		
		Now let
		$ 
		T_j(x,y)=(x,Q_jy).
		$ 
		Since $w$ is radial in $y$,
		$
		w\circ T_j=w.
		$
		Repeating the rotation and covering argument of Lemma \ref{lemma4} gives
		$$
		\int_{\mathbb R^n\times\{R<|y|<2R\}}
		|w|^p\,dx\,dy
		\le
		C_MR^{-\beta_y}\|w\|_X^p.
		$$
		Since $w=u$ on $\bb_M^n\times\mathbb R^m$,
		$$
		\int_{\{|x|<M\}\times\{R<|y|<2R\}}
		|u|^p\,dx\,dy
		\le
		C_MR^{-\beta_y}\|u\|_X^p.
		$$
		The tail estimate follows by dyadic summation.
	\end{proof}

Having established uniform control both near the origin and at spatial infinity, we must now obtain compactness in the intermediate region. The next lemma utilizes the Riesz--Kolmogorov theorem to prove local compactness on bounded domains that are strictly separated from $x=0$.
	\begin{lemma}\label{lemma6} 
		
		Let
		$
		0<\delta<M<\infty,
		\qquad
		T>0,
		$
		and
		$
		U
		=
		\{\delta<|x|<M\}\times \bb_T^m.
		$
		If $(u_j)$ is bounded in $X$, then $(u_j|_U)$ is relatively
		compact in $L^2(U)$. If $2\le p< B$ and (H) holds, then
		$(u_j|_U)$ is relatively compact in $L^p(U)$.
	\end{lemma}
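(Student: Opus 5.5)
We use the Riesz--Kolmogorov (Fréchet--Kolmogorov) criterion on the bounded set $U$, after a smooth cutoff. Fix $\zeta\in C_c^\infty(\R^n\setminus\{0\})$ with $\zeta=1$ on $\{\delta\le|x|\le M\}$ and support in $\{\delta/2<|x|<2M\}$. Fix also $\omega\in C_c^\infty(\R^m)$ with $\omega=1$ on $\bb_T^m$ and support in $\bb_{2T}^m$. Set $v_j=\zeta(x)\omega(y)u_j$. Then $v_j=u_j$ on $U$, the $v_j$ are supported in a fixed compact set $K$, and $\sup_j\|v_j\|_2<\infty$. It therefore suffices to show equicontinuity of translations in $L^2$:
\[
\sup_j\|\tau_{(h,k)}v_j-v_j\|_2\to0\qquad\text{as }(h,k)\to0 .
\]
By the triangle inequality this splits into an $x$-translation and a $y$-translation, and we treat them separately using Lemma~\ref{lemma0}(b).

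For the $x$-translation, Lemma~\ref{lemma0}(a) applied with $\eta_1=\zeta$ (and the bounded factor $\omega(y)$, which commutes with $\mathcal F_x$) gives $\|v_j\|_{H^s_x}\le C\|u_j\|_{H^s_x}\le C\|u_j\|_X$. Lemma~\ref{lemma0}(b) then yields $\|\tau_{(h,0)}v_j-v_j\|_2\le C|h|^{\sigma}$ uniformly in $j$.

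The $y$-translation is the main obstacle, because the $X$-norm controls $D_y^r$ only through the weight $|x|^\gamma$. The cutoff $\zeta$ resolves this: on its support $|x|\ge\delta/2$, hence $|x|^{-\gamma}\le(2/\delta)^{\gamma}$ (recall $\gamma\ge0$). For a.e.\ $x$ we write $v_j(x,\cdot)=\zeta(x)\,\omega\,u_j(x,\cdot)$, and estimate its $H^r_y$ norm by a $y$-multiplier bound for $\omega$; this is the same Peetre/Young argument as in Lemma~\ref{lemma0}(a), with $y$ in place of $x$. This gives
\[
\iint|\eta|^{2r}|\mathcal F_yv_j|^2\le C\int\zeta(x)^2\big(\|u_j(x,\cdot)\|_2^2+\|D_y^ru_j(x,\cdot)\|_2^2\big)dx
\le C\big(\|u_j\|_2^2+(2/\delta)^{2\gamma}\||x|^\gamma D_y^ru_j\|_2^2\big)\le C_\delta\|u_j\|_X^2 .
\]
The second part of Lemma~\ref{lemma0}(b) then gives $\|\tau_{(0,k)}v_j-v_j\|_2\le C_\delta|k|^{\sigma'}$ uniformly in $j$. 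Together with the uniform compact support, the Riesz--Kolmogorov theorem shows that $(v_j)$ is relatively compact in $L^2(\R^{n+m})$, and hence that $(u_j|_U)$ is relatively compact in $L^2(U)$.

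For $2<p<B$, choose $\theta\in(0,1]$ with $\tfrac1p=\tfrac\theta2+\tfrac{1-\theta}B$. The interpolation inequality
\[
\|f\|_{L^p(U)}\le\|f\|_{L^2(U)}^{\theta}\|f\|_{L^B(U)}^{1-\theta}
\]
together with the uniform bound $\|u_j\|_{L^B}\le C\|u_j\|_X$ from (H) shows that any subsequence which is Cauchy in $L^2(U)$ is also Cauchy in $L^p(U)$. This gives relative compactness in $L^p(U)$; the case $p=2$ is the first part.
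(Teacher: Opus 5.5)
Your proof is correct and follows the paper's argument. You cut off away from $\{x=0\}$ so the weight $|x|^\gamma$ is bounded below, apply Lemma~\ref{lemma0}(a),(b) and the Riesz--Kolmogorov theorem, and then interpolate between $L^2(U)$ and the uniform $L^B$ bound from (H). The only cosmetic difference is the $y$-cutoff: you absorb $\omega$ through a Peetre/Young multiplier bound in $H^r_y$, whereas the paper writes $\tau_{(0,k)}g_j-g_j=(\tau_k\eta_2)(\tau_kf_j-f_j)+(\tau_k\eta_2-\eta_2)f_j$ and bounds the commutator term by $\|\nabla\eta_2\|_\infty|k|\,\|f_j\|_2$, which avoids a multiplier lemma in $y$.
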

	
	\begin{proof}

		Choose
		$
		\eta_1\in C_c^\infty(\mathbb R^n)
		$
		such that
		$
		0\le\eta_1\le1,
		\qquad
		\eta_1=1
		\quad\text{on }\{\delta\le|x|\le M\},
		$
		and
		$
		\operatorname{supp}\eta_1
		\subset
		\left\{\frac\delta2<|x|<2M\right\}.
		$
		Choose
		$
		\eta_2\in C_c^\infty(\mathbb R^m)
		$
		such that
		$
		\eta_2=1$ on $\bb_T^m,
		$ $
		\operatorname{supp}(\eta_2)\subset \bb_{T+1}^m.
		$
		Put
		$ 
		f_j=\eta_1u_j,
		\qquad
		g_j=\eta_2f_j.
		$ 
		These functions are bounded in $L^2$ and have support in a fixed
		compact set.
				For $x$-translations,
		$$
		\tau_{(h,0)}g_j-g_j
		=
		\eta_2(\tau_hf_j-f_j).
		$$
		Hence, by Lemma \ref{lemma0}(b) and (a),
		$$
		\|\tau_{(h,0)}g_j-g_j\|_2
		\le
		2|h|^\sigma\|f_j\|_{H_x^s}
		\le
		C|h|^\sigma\|u_j\|_X.
		$$
		
		For $y$-translations,
		$$
		\tau_{(0,k)}g_j-g_j
		=
		(\tau_k\eta_2)(\tau_kf_j-f_j)
		+
		(\tau_k\eta_2-\eta_2)f_j.
		$$
		Therefore
		$$
		\|\tau_{(0,k)}g_j-g_j\|_2
		\le
		\|\tau_kf_j-f_j\|_2
		+
		\|\nabla\eta_2\|_\infty|k|\|f_j\|_2.
		$$
		By Lemma \ref{lemma0}(b),
		$$
		\|\tau_kf_j-f_j\|_2^2
		\le
		4|k|^{2\sigma'}
		\left(
		\|u_j\|_2^2
		+
		\left(\frac\delta2\right)^{-2\gamma}
		\||x|^\gamma D_y^ru_j\|_2^2
		\right).
		$$
		Indeed, on $\operatorname{supp}\eta_1$,
		$
		|x|\ge\frac\delta2,
		$
		so the weight is bounded below. No commutator term occurs because
		$\eta_1$ is independent of $y$.
		
		Thus
		$$
		\sup_j\|\tau_zg_j-g_j\|_2\to0
		\qquad\text{as }z\to0.
		$$
		Since the $g_j$ have uniformly compact support, the
		Riesz--Kolmogorov theorem implies that $(g_j)$ is relatively compact
		in $L^2(\mathbb R^{n+m})$. Since
		$
		g_j=u_j\quad\text{on }U,
		$
		the restrictions $(u_j|_U)$ are relatively compact in $L^2(U)$.
		
		For $L^p$, let $\mathcal B$ denote the set of restrictions to $U$
		of functions in the unit ball of $X$. It is totally bounded in
		$L^2(U)$, and by (H),
		$ 
		\|f\|_{L^{ B}(U)}\le C
		$ 
		uniformly for $f\in\mathcal B$. Hence, for $f,g\in\mathcal B$,
		$$
		\|f-g\|_p
		\le
		\|f-g\|_2^\theta
		\|f-g\|_{ B}^{1-\theta}
		\le
		C\|f-g\|_2^\theta,
		$$
		where
		$
		\frac1p
		=
		\frac\theta2+\frac{1-\theta}{ B}.
		$
		Thus $\mathcal B$ is totally bounded in $L^p(U)$.
	\end{proof}

 We are now ready to prove our first compact embedding. By gluing together the control near $x=0$ (Lemma \ref{lemma1}), the decay at $|x| \to \infty$ (Lemma \ref{lemma4}), and the local compactness in the intermediate region (Lemma \ref{lemma6}), we obtain compactness for $x$-radial functions on domains bounded in $y$.
	
	\begin{theorem}
		Assume (H), $n\ge2$, $m\ge1$, and
		$
		2<p< B,
		$ $
		T>0.
		$
		Then the restriction map
		$
		X_{\rm rad,x}
		\to
		L^p(\mathbb R^n\times \bb_T^m)
		$
		is compact.
				In particular, every bounded sequence in $X_{\rm rad,x}$ has a
		subsequence converging in
		$ 
		L^p(\mathbb R^n\times \bb_T^m).
		$ 
		If
		$ 
		u_j\rightharpoonup u
		\quad\text{weakly in }X_{\rm rad,x},
		$ 
		then
		$ 
		u_j\to u
		\quad\text{in }L^p(\mathbb R^n\times \bb_T^m).
		$ 
	\end{theorem}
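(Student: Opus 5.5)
We argue by a standard three-region decomposition combined with a diagonal/total-boundedness argument. Let $(u_j)$ be bounded in $X_{\rm rad,x}$, say $\|u_j\|_X\le K$. Since $X$ is a Hilbert space and $X_{\rm rad,x}$ is a closed subspace, after passing to a subsequence we may assume $u_j\rightharpoonup u$ weakly in $X_{\rm rad,x}$. Set $v_j=u_j-u$, which is bounded in $X_{\rm rad,x}$ by $2K$. It suffices to show $v_j\to0$ in $L^p(\mathbb R^n\times\bb_T^m)$. This gives the final statement directly, and the compactness of the restriction map follows because a compact operator on a Hilbert space is exactly one mapping weakly convergent sequences to norm convergent ones.

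Fix $\varepsilon>0$ and split $\mathbb R^n\times\bb_T^m$ into three pieces: $E_1=\bb^n_\delta\times\bb_T^m$, $E_2=\{|x|>R\}\times\bb_T^m$ with $R\ge2$, and $U=\{\delta<|x|<R\}\times\bb_T^m$ (spheres are null sets). On $E_1$, Lemma~\ref{lemma1} gives $\|v_j\|_{L^p(E_1)}\le C\delta^{\beta}(2K)$, uniformly in $j$. On $E_2$, since $v_j$ is radial in $x$ and $2<p<B$, estimate \eqref{1-eq4.1} of Lemma~\ref{lemma4} gives $\|v_j\|_{L^p(E_2)}^p\le C'R^{-\beta_x}(2K)^p$ with $\beta_x=(n-1)(p-2)/2>0$; this is where $n\ge2$ and $p>2$ are used. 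We choose $\delta$ small and $R$ large so that both contributions are below $\varepsilon$ for all $j$.

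On the intermediate region $U$, Lemma~\ref{lemma6} (with $M=R$) shows that $(v_j|_U)$ is relatively compact in $L^p(U)$. It remains to identify the limit, which we do as follows. Every $L^p(U)$-limit point $w$ of a subsequence must vanish. Indeed, for $\phi\in C_c^\infty(U)$ the functional $f\mapsto\int f\bar\phi$ is continuous on $X$ (it is bounded by $\|f\|_2\|\phi\|_2$), so $\int v_j\bar\phi\to0$ by weak convergence; hence $\int w\bar\phi=0$ for all such $\phi$, and so $w=0$. Consequently the whole sequence satisfies $v_j|_U\to0$ in $L^p(U)$, and $\limsup_j\|v_j\|_{L^p(\mathbb R^n\times\bb_T^m)}\le 2\varepsilon^{1/p}+\varepsilon$ (up to constants). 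Since $\varepsilon$ was arbitrary, the proof is complete.

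The main potential obstacle is the uniformity of the tail estimates, but this is already handled. The bounds of Lemmas~\ref{lemma1} and \ref{lemma4} depend only on $\|v_j\|_X$ and not on $j$, and Lemma~\ref{lemma1} requires no radiality. The one point to check is that $u$ itself lies in $X_{\rm rad,x}$, so that Lemma~\ref{lemma4} applies to the differences $v_j$; this holds because $X_{\rm rad,x}$ is closed, hence weakly closed.
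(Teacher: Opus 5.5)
Your proof is correct and follows the paper's argument: the same three-region splitting into $\bb_\delta^n\times\bb_T^m$, $\{|x|>R\}\times\bb_T^m$ and the intermediate region $U$, controlled respectively by Lemma~\ref{lemma1}, estimate \eqref{1-eq4.1} and Lemma~\ref{lemma6}. The only difference is packaging. The paper proves total boundedness of the image of the unit ball with an $\varepsilon/3$-net and deduces the weak-to-strong statement afterwards; you instead work with $v_j=u_j-u$ and show the limit on $U$ is zero by testing against $C_c^\infty(U)$, and that route is equally valid.
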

	
\begin{proof}
	
	Let $\mathcal B$ be the unit ball of $X_{\rm rad,x}$ and let
	$\varepsilon>0$.
  By \eqref{1-eq4.1}, choose $M\ge2$ such that
		$$
		\sup_{u\in\mathcal B}
		\|u\|_{L^p(\{|x|>M\}\times\mathbb R^m)}
		<
		\frac{\varepsilon}{3}.
		$$
		This uses radiality in $x$ and $n\ge2$.
  By Lemma \ref{lemma1}, choose
		$\delta\in(0,M)$ such that
		$$
		\sup_{u\in\mathcal B}
		\|u\|_{L^p(\bb_\delta^n\times\mathbb R^m)}
		<
		\frac{\varepsilon}{3}.
		$$
  Let
		$ 
		U
		=
		\{\delta<|x|<M\}\times \bb_T^m.
		$ 
		By Lemma \ref{lemma6}, $\mathcal B|_U$ is totally bounded in $L^p(U)$.
		Let
		$ 
		f_1,\dots,f_N
		$ 
		be an $\varepsilon/3$-net, extended by zero to
		$\mathbb R^n\times \bb_T^m$.
 For $u\in\mathcal B$, choose $i$ such that
	$$
	\|u-f_i\|_{L^p(U)}
	<
	\frac{\varepsilon}{3}.
	$$
	Up to null sets,
	$$
	\mathbb R^n\times \bb_T^m
	=
	U
	\cup
	(\bb_\delta^n\times \bb_T^m)
	\cup
	(\{|x|\ge M\}\times \bb_T^m).
	$$
	Therefore
	$ 
	\|u-f_i\|_{L^p(\mathbb R^n\times \bb_T^m)}
	<
	\varepsilon.
	$ 
	Thus the image of $\mathcal B$ is totally bounded in
	$L^p(\mathbb R^n\times \bb_T^m)$ and hence relatively compact.
		The weak-convergence statement follows because compact linear
	operators map weakly convergent sequences to strongly convergent
	sequences.
\end{proof}

 Finally, we can extend this result to the entire space $\mathbb R^{n+m}$ by imposing full radiality. By incorporating the uniform $y$-decay from Lemma \ref{lemma5} to control the remaining tails at infinity, we obtain our global compact embedding theorem.
	
	\begin{theorem}
		Assume (H),
		$ 
		n\ge2,
		$ $
		m\ge2,
		$ $
		2<p< B.
		$ 
		Then
		$ 
		X_{\rm rad,x,y}
		\hookrightarrow
		L^p(\mathbb R^{n+m})
		$ 
		is compact.
	\end{theorem}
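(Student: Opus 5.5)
We follow the same total-boundedness scheme as in the proof of the preceding theorem. We cover $\mathbb R^{n+m}$ by regions on which the unit ball $\mathcal B$ of $X_{\rm rad,x,y}$ is either uniformly small in $L^p$ or totally bounded. Fix $\varepsilon>0$ and $2<p<B$; all constants below are uniform over $u\in\mathcal B$.

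\emph{Step 1 (large $|x|$).} Every $u\in\mathcal B$ is in particular radial in $x$. Since $n\ge2$ and $p>2$, estimate \eqref{1-eq4.1} of Lemma~\ref{lemma4} gives $M\ge2$ with
\[
\sup_{u\in\mathcal B}\|u\|_{L^p(\{|x|>M\}\times\mathbb R^m)}<\varepsilon/4 .
\]

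\emph{Step 2 (small $|x|$).} By Lemma~\ref{lemma1}, which needs no symmetry and allows $y\in\mathbb R^m$ unrestricted, we choose $\delta\in(0,M)$ with
\[
\sup_{u\in\mathcal B}\|u\|_{L^p(\bb_\delta^n\times\mathbb R^m)}<\varepsilon/4 .
\]

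\emph{Step 3 (large $|y|$).} With $M$ now fixed, we use that $u$ is radial in $y$ and $m\ge2$. The tail estimate of Lemma~\ref{lemma5} yields $T\ge2$ such that
\[
\sup_{u\in\mathcal B}\|u\|_{L^p(\{|x|<M\}\times\{|y|>T\})}\le \bigl(C_M'T^{-\beta_y}\bigr)^{1/p}<\varepsilon/4 ,
\]
which holds because $\beta_y=(m-1)(p-2)/2>0$. The order of choices matters here. The constant $C_M'$ in Lemma~\ref{lemma5} depends on the $x$-cutoff $M$, because of the zeroth-order term $|x|^{2\gamma}\|w(x,\cdot)\|_2^2$ produced by localizing in $y$. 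Therefore $M$ must be fixed before $T$ is chosen.

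\emph{Step 4 (compact core).} On $U=\{\delta<|x|<M\}\times\bb_T^m$, Lemma~\ref{lemma6} shows that $\mathcal B|_U$ is totally bounded in $L^p(U)$. We pick an $\varepsilon/4$-net $f_1,\dots,f_N$ and extend each $f_i$ by zero.

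\emph{Conclusion.} Up to null sets,
\[
\mathbb R^{n+m}=U\cup(\bb_\delta^n\times\mathbb R^m)\cup(\{|x|\ge M\}\times\mathbb R^m)\cup(\{\delta\le|x|<M\}\times\{|y|\ge T\}).
\]
The last set is contained in the region controlled in Step 3. Given $u\in\mathcal B$, choose $i$ with $\|u-f_i\|_{L^p(U)}<\varepsilon/4$. The $f_i$ vanish off $U$, so on each of the other three regions $\|u-f_i\|_{L^p}=\|u\|_{L^p}<\varepsilon/4$. Using $\|\cdot\|_{L^p(\mathbb R^{n+m})}^p=\sum_{\rm regions}\|\cdot\|_{L^p({\rm region})}^p$, we obtain $\|u-f_i\|_{L^p(\mathbb R^{n+m})}<\varepsilon$. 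Hence $\mathcal B$ is totally bounded in $L^p(\mathbb R^{n+m})$, and the embedding is compact.

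The only genuinely delicate point is Step 3: the $y$-decay is available only on bounded $x$-sets, with constants that depend on that bound. It is resolved by fixing $M$ first, using the $x$-radial decay from Step 1. Everything else is a direct assembly of Lemmas~\ref{lemma1}, \ref{lemma4}, \ref{lemma5} and~\ref{lemma6}.
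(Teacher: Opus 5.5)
Your proposal is correct and follows the paper's proof essentially step for step. You use the same four regions and the same order of choices: $M$ from \eqref{1-eq4.1}, then $\delta$ from Lemma~\ref{lemma1}, then $T$ from Lemma~\ref{lemma5} with $M$ already fixed, then an $\varepsilon/4$-net on $U$ from Lemma~\ref{lemma6}, followed by the same gluing; your remark that $M$ must precede $T$ because of the $M$-dependent zeroth-order term in Lemma~\ref{lemma5} is exactly the point the paper makes.
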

\begin{proof}
 
	Let $\mathcal B$ be the unit ball of $X_{\rm rad,x,y}$ and let
	$\varepsilon>0$.
	  Choose $M\ge2$ such that
		$$
		\sup_{u\in\mathcal B}
		\|u\|_{L^p(\{|x|>M\}\times\mathbb R^m)}
		<
		\frac{\varepsilon}{4}
		$$
		by \eqref{1-eq4.1}.
	  Choose $\delta\in(0,M)$ such that
		$$
		\sup_{u\in\mathcal B}
		\|u\|_{L^p(\bb_\delta^n\times\mathbb R^m)}
		<
		\frac{\varepsilon}{4}
		$$
		by Lemma \ref{lemma1}.
	  With $M$ fixed, choose $T\ge2$ such that
		$$
		\sup_{u\in\mathcal B}
		\|u\|_{L^p(\{|x|<M\}\times\{|y|>T\})}
		<
		\frac{\varepsilon}{4}
		$$
		by Lemma \ref{lemma5}, using radiality in $y$ and $m\ge2$.
	  Let
		$ 
		U
		=
		\{\delta<|x|<M\}\times \bb_T^m.
		$ 
		By Lemma \ref{lemma6}, $\mathcal B|_U$ has an
		$\varepsilon/4$-net
		$ 
		f_1,\dots,f_N
		$ 
		in $L^p(U)$. Extend these functions by zero to
		$\mathbb R^{n+m}$.
	Up to null sets,
$$\mathbb R^{n+m}
={}
U
\cup
(\bb_\delta^n\times\mathbb R^m)
\cup
(\{|x|\ge M\}\times\mathbb R^m)
\cup
(\{\delta\le|x|<M\}\times\{|y|\ge T\}).$$
	Hence, for every $u\in\mathcal B$, there is an $i$ such that
	$$
	\|u-f_i\|_{L^p(\mathbb R^{n+m})}
	<
	\varepsilon.
	$$
	Therefore $\mathcal B$ is totally bounded in
	$L^p(\mathbb R^{n+m})$, proving compactness.
\end{proof}

	\section{Strauss' lemma}\label{sec-strauss}
	Let $u=u(x,y)$, $x\in\mathbb R^n$, $y\in\mathbb R^m$, and put
	\[
	M_1:=\|D_x^su\|_{L^2(\mathbb R^{n+m})},\qquad
	M_2:=\||x|^bD_y^ru\|_{L^2(\mathbb R^{n+m})},\qquad
	M_0:=\|u\|_{L^2(\mathbb R^{n+m})}.
	\]
	Let $\tilde u(x,\eta)$ be the Fourier transform in $y$ only. By the Plancherel identity (constants omitted),
	\[
	\|D_x^su\|_2^2=\!\int\! h(\eta)^2d\eta,\quad
	\||x|^bD_y^ru\|_2^2=\!\int\! g(\eta)^2d\eta,
	\]
	where
	\[
	h(\eta)=\|D_x^s\tilde u(\cdot,\eta)\|_{L^2_x},\qquad
	g(\eta)=|\eta|^r\,\||x|^b\tilde u(\cdot,\eta)\|_{L^2_x}.
	\]
	``Radial in $x$'' means $u(x,y)=u(|x|,y)$, and similarly for $y$.
	Throughout, $1/2<s<n/2$, $b>0$, $m\ge2$, and
	\[
	a_0:=\frac{2b+1}{2(s+b)}\in(0,1).
	\]

	To begin our analysis of radial functions, we first establish a foundational estimate for the Fourier transform of radial distributions, controlling the standard spherical kernel.
	\begin{lemma} \label{lem:kernel}
		Let $\Phi_d(t)=\int_{S^{d-1}}e^{it\omega_1}d\omega$. Then $|\Phi_d(t)|\le C\min(1,t^{-(d-1)/2})$.
		If $f$ is radial on $\mathbb R^d$, then
		\[
		f(x)=c\int_0^\infty \hat f(\rho)\,\Phi_d(\rho|x|)\,\rho^{d-1}d\rho ,
		\qquad
		|f(x)|\le C\int_0^\infty \|\hat f(\rho)\|_H \min\!\big(1,(\rho|x|)^{-\frac{d-1}2}\big)\rho^{d-1}d\rho .
		\]
	\end{lemma}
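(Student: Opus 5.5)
The plan is to reduce to classical facts about the spherical kernel $\Phi_d$ and then apply the Fourier inversion formula in polar coordinates. Both the bound on $\Phi_d$ and the representation formula are standard. The only point needing care is that $\hat f$ may be vector valued: later $\hat f(\rho)$ will be the $L^2_x$-valued function $\eta\mapsto\tilde u(\cdot,\eta)$ with $|\eta|=\rho$, and this is what the norm $\|\cdot\|_H$ in the statement refers to.

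\emph{Step 1: bound on the kernel.} The bound $|\Phi_d(t)|\le|S^{d-1}|$ is trivial, and it gives the factor $1$ in the minimum. For large $t$, I would first integrate out the directions orthogonal to $e_1$:
\[
\Phi_d(t)=|S^{d-2}|\int_{-1}^{1}e^{it\sigma}(1-\sigma^2)^{\frac{d-3}2}\,d\sigma
\]
for $d\ge2$. Up to a constant this is $t^{-(d-2)/2}J_{(d-2)/2}(t)$. The classical asymptotics $|J_\nu(t)|\le C t^{-1/2}$ for $t\ge1$ then give $|\Phi_d(t)|\le Ct^{-(d-1)/2}$.

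If one prefers to avoid Bessel functions, the same bound follows from stationary phase on the sphere. The phase $\omega\mapsto\omega_1$ has exactly two nondegenerate critical points, $\pm e_1$, on the $(d-1)$-dimensional manifold $S^{d-1}$, and each contributes $O(t^{-(d-1)/2})$. Away from these points, integration by parts gives arbitrary decay. For $d=1$ the bound is trivial, since $\Phi_1(t)=2\cos t$ and the exponent $(d-1)/2$ is $0$.

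\emph{Step 2: representation formula.} Let $f$ be radial and, first, Schwartz (or at least have $\hat f\in L^1$ with radial $\hat f$). The Fourier transform commutes with rotations, so $\hat f$ is radial; write $\hat f(\xi)=\hat f(|\xi|)$. I would then write Fourier inversion in polar coordinates $\xi=\rho\omega$:
\[
f(x)=c\int_0^\infty\hat f(\rho)\int_{S^{d-1}}e^{i\rho x\cdot\omega}\,d\omega\,\rho^{d-1}\,d\rho .
\]
Choosing a rotation $R$ with $Rx=|x|e_1$, which is legitimate by rotation invariance of the surface measure, the inner integral equals $\Phi_d(\rho|x|)$. The pointwise bound then follows from Step 1 by taking norms inside the integral, using Minkowski's inequality for Bochner integrals in the vector-valued case.

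\emph{Step 3: extension to general $f$.} For general $f$ the identity is read in the sense that the right-hand side converges absolutely. I would obtain it by approximation: apply the formula to $f*\phi_\varepsilon$ with a radial mollifier $\phi_\varepsilon$, or use a Gaussian cutoff $e^{-\varepsilon\rho^2}$ on the Fourier side, and pass to the limit by dominated convergence whenever the integral on the right of the estimate is finite. The passage to the limit holds a.e., and everywhere if $f$ is continuous.

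I expect the only real obstacle to be the decay $t^{-(d-1)/2}$ in Step 1. Everything else is bookkeeping. For that step I would simply cite the Bessel representation together with the standard large-argument bound on $J_\nu$.
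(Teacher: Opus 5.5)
The paper gives no proof of this lemma. It is stated as a classical fact and used immediately in the proof of the Strauss estimates (Lemma~\ref{lem:strauss}), so there is no argument of the author to compare yours against.

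Your proof is the standard one and it is correct:
\begin{itemize}
\item The slicing $d\omega=(1-\sigma^2)^{(d-3)/2}\,d\sigma\,d\omega'$ turns $\Phi_d$ into the Poisson integral for $t^{-(d-2)/2}J_{(d-2)/2}(t)$.
\item The bound $|J_\nu(t)|\lesssim t^{-1/2}$ for $t\ge1$ gives the decay. Since $\Phi_d$ is even, the estimate should really be read in $|t|$.
\item Fourier inversion in polar coordinates, together with rotation invariance of surface measure, gives the representation formula.
\end{itemize}

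Two remarks.

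First, your Step~3 is more than bookkeeping in the context of this paper. In Lemma~\ref{lem:strauss} one only knows $|\xi|^s\hat f\in L^2$ with $\tfrac12<s<\tfrac d2$. So $\hat f$ is in general not integrable at high frequencies, and the inversion formula makes sense only because the radial kernel decay $(\rho|x|)^{-(d-1)/2}$ makes the right-hand side converge for $x\neq0$. Your Gaussian cutoff $e^{-\varepsilon\rho^2}\hat f$ handles exactly this point. The cut-off transform is integrable by Cauchy--Schwarz, and dominated convergence applies wherever the right-hand integral is finite, so the identity holds a.e.\ there.

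Second, the place where the paper explicitly invokes the $H$-valued form is Step~2 of the second proof of Theorem~\ref{thm:main-r}. There $H=L^2_y$ and $f(x)=u_k(x,\cdot)$ is radial in $x$, which is not quite the reading you guessed. In Theorem~\ref{thm:yrad} the lemma is applied in $y$, pointwise in $x$, followed by Minkowski in a weighted $L^p_x$. Your argument covers the $H$-valued case verbatim. It uses only the Bochner integral and the triangle inequality for $\|\cdot\|_H$, plus, in the approximation step, Plancherel in $L^2(\mathbb R^d;H)$, which holds for Hilbert $H$.
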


	With these kernel bounds in hand, we can now derive the classical Strauss-type pointwise estimates. This lemma quantifies the radial decay of functions using only their fractional Sobolev and $L^2$ norms.
	
	\begin{lemma}[Strauss, and a three-norm version]\label{lem:strauss}
		Let $f$ be radial on $\mathbb R^n$, $\tfrac12<s<\tfrac n2$. Then
		\begin{align}
			&\text{(i)}\quad |x|^{\frac n2-s}\,|f(x)|\lesssim \|D^sf\|_2,\\
			&\text{(ii)}\quad |x|^{\frac{n-1}2}\,|f(x)|\lesssim \|f\|_2^{\,1-\frac1{2s}}\|D^sf\|_2^{\,\frac1{2s}} .
		\end{align}
	\end{lemma}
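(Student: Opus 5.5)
We prove both estimates from the radial Fourier representation in Lemma~\ref{lem:kernel} with $d=n$ and scalar values ($H=\mathbb C$). Write $\hat f(\xi)=F(|\xi|)$, so that
\[
|f(x)|\le C\int_0^\infty |F(\rho)|\min\big(1,(\rho|x|)^{-\frac{n-1}2}\big)\rho^{n-1}\,d\rho .
\]
Polar coordinates give
\[
\|f\|_2^2\simeq\int_0^\infty|F|^2\rho^{n-1}\,d\rho,\qquad
\|D^sf\|_2^2\simeq\int_0^\infty|F|^2\rho^{2s+n-1}\,d\rho .
\]
Both bounds then follow from Cauchy--Schwarz on the $\rho$-integral with suitable weights, together with a splitting at a frequency threshold.

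For (i), set $t=|x|$. Apply Cauchy--Schwarz against the weight $\rho^{s+\frac{n-1}2}$:
\[
|f(x)|\lesssim \|D^sf\|_2\Big(\int_0^\infty \min\big(1,(\rho t)^{-(n-1)}\big)\rho^{n-1-2s}\,d\rho\Big)^{1/2}.
\]
Substitute $\rho=\sigma/t$. The integral becomes $t^{2s-n}\int_0^\infty\min(1,\sigma^{1-n})\sigma^{n-1-2s}\,d\sigma$.
\begin{itemize}
\item Near $0$ the integrand is $\sigma^{n-1-2s}$, which is integrable since $2s<n$.
\item Near $\infty$ it is $\sigma^{-2s}$, which is integrable since $s>\tfrac12$.
\end{itemize}
Hence $|f(x)|\lesssim t^{s-\frac n2}\|D^sf\|_2$, which is (i).

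For (ii), drop the $\min$ in favour of the decaying branch, $\min(1,(\rho t)^{-\frac{n-1}2})\le(\rho t)^{-\frac{n-1}2}$. This gives
\[
t^{\frac{n-1}2}|f(x)|\lesssim\int_0^\infty|F(\rho)|\rho^{\frac{n-1}2}\,d\rho .
\]
Split this integral at a level $\rho_0>0$.
\begin{itemize}
\item On $(0,\rho_0)$, Cauchy--Schwarz with $\rho^{\frac{n-1}2}$ gives the bound $\rho_0^{1/2}\|f\|_2$.
\item On $(\rho_0,\infty)$, Cauchy--Schwarz with $\rho^{s+\frac{n-1}2}$ gives $\|D^sf\|_2\big(\int_{\rho_0}^\infty\rho^{-2s}\,d\rho\big)^{1/2}\simeq\rho_0^{\frac12-s}\|D^sf\|_2$. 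This is finite precisely because $s>\tfrac12$.
\end{itemize}
Choose $\rho_0=(\|D^sf\|_2/\|f\|_2)^{1/s}$ to balance the two terms. Both become $\|f\|_2^{1-\frac1{2s}}\|D^sf\|_2^{\frac1{2s}}$, which is (ii). If either norm vanishes then $f=0$ and there is nothing to prove.

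The delicate point is not the computation but justifying the pointwise Fourier representation, i.e.\ absolute convergence of the $\rho$-integral. The bounds above show that $F\rho^{n-1}\min(\cdot)$ is integrable whenever $\|f\|_2+\|D^sf\|_2<\infty$, using exactly $\tfrac12<s<\tfrac n2$. So we would prove (i) and (ii) first for radial Schwartz functions, where Lemma~\ref{lem:kernel} applies directly. We then extend by density: the right-hand sides control $f$ uniformly on $\{|x|\ge\varepsilon\}$, so the estimates pass to the limit for a.e.\ $x$, and for the continuous representative on $x\ne0$.
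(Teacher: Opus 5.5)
Your proposal is correct and follows the paper's proof. Part (i) is the same Cauchy--Schwarz computation against the kernel bound of Lemma~\ref{lem:kernel}, and part (ii) uses the same frequency split at $N_*=(\|D^sf\|_2/\|f\|_2)^{1/s}$.

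There are two small differences. You bound $\min\big(1,(\rho|x|)^{-\frac{n-1}2}\big)$ by its decaying branch for all $\rho$, which makes the paper's case distinction $N_*|x|\ge1$ versus $N_*|x|<1$ unnecessary. You also add a density argument justifying the pointwise Fourier representation, which the paper leaves implicit.
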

	
	\begin{proof}
		(i) By Lemma \ref{lem:kernel} and the Cauchy--Schwarz inequality,
		\[
		|f(x)|\lesssim \|D^sf\|_2\Big(\int_0^\infty\rho^{-2s}\min\big(1,(\rho|x|)^{-(n-1)}\big)\rho^{n-1}d\rho\Big)^{1/2}.
		\]
		The integral is
		\[
		\int_0^{1/|x|}\rho^{n-1-2s}d\rho+|x|^{-(n-1)}\int_{1/|x|}^\infty\rho^{-2s}d\rho
		=\Big(\tfrac1{n-2s}+\tfrac1{2s-1}\Big)|x|^{2s-n},
		\]
		which is finite exactly when $\frac12<s<\frac n2$.
		
		(ii) Let $D=\|D^sf\|_2$, $E=\|f\|_2$, $N_*=(D/E)^{1/s}$. Split $\hat f$ into $\rho\le N_*$ and $\rho>N_*$.
		
		\emph{Case $N_*|x|\ge1$.} By Cauchy--Schwarz, the low part is $\lesssim E\,(N_*|x|^{-(n-1)})^{1/2}$ and the high part is
		$\lesssim D\,|x|^{-\frac{n-1}2}N_*^{\frac12-s}$. Both equal $|x|^{-\frac{n-1}2}E^{1-\frac1{2s}}D^{\frac1{2s}}$.
		
		\emph{Case $N_*|x|<1$.} Use the bound $1$ for the kernel on $\rho\le 1/|x|$. This gives low part $\lesssim N_*^{n/2}E$ and high part $\lesssim |x|^{s-\frac n2}D$.
		Relative to the right-hand side of (ii) these are $(N_*|x|)^{\frac{n-1}2}\le1$ and $(N_*|x|)^{s-\frac12}\le 1$ respectively.
	\end{proof}

	In our setting, we must also account for the spatial weight $\vert{}x\vert{}^b$. The following Heisenberg--Pitt inequality allows us to interpolate between the unweighted $L^2$ norm, the fractional derivative, and the weighted mass.
	
	\begin{lemma}[Heisenberg--Pitt type]\label{lem:pitt}
		For $0<s<n/2$, $b>0$:
		$\|f\|_2\lesssim\|D^sf\|_2^{\frac b{s+b}}\,\||x|^bf\|_2^{\frac s{s+b}}$.
	\end{lemma}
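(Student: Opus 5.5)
The plan is to reuse the splitting argument from Lemma~\ref{lem2.2}, with the roles of the local and far regions unchanged. Fix $f$ with $\|D^sf\|_2$ and $\||x|^bf\|_2$ finite, and a radius $R>0$. Write
\[
\|f\|_2^2=\int_{|x|<R}|f|^2\,dx+\int_{|x|\ge R}|f|^2\,dx .
\]
On $\{|x|\ge R\}$ we have $|x|^{2b}\ge R^{2b}$, so the second term is at most $R^{-2b}\||x|^bf\|_2^2$.

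For the first term I would use the Hardy inequality $\||x|^{-s}f\|_2\lesssim\|D^sf\|_2$, which is Lemma~\ref{lem:HS} with $a=s$, $g=2$, and $0<s<n/2$. On the ball, $|x|^{-2s}\ge R^{-2s}$, and therefore
\[
\int_{|x|<R}|f|^2\,dx\le R^{2s}\int|x|^{-2s}|f|^2\,dx\lesssim R^{2s}\|D^sf\|_2^2 .
\]
This avoids the absorption step of Lemma~\ref{lem2.2}, because the estimate on the ball does not involve $\|f\|_2$ at all. Combining the two pieces gives
\[
\|f\|_2^2\lesssim R^{2s}\|D^sf\|_2^2+R^{-2b}\||x|^bf\|_2^2
\]
for every $R>0$.

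Now optimize in $R$. If either norm on the right vanishes, then $f=0$, since $\|D^sf\|_2=0$ together with $f\in L^2$ forces $f=0$; so assume both are positive. Choose $R^{2(s+b)}=\||x|^bf\|_2^2/\|D^sf\|_2^2$. Then both terms equal $\|D^sf\|_2^{2b/(s+b)}\||x|^bf\|_2^{2s/(s+b)}$, and taking square roots gives exactly the stated inequality.

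I expect no real obstacle. The only points to check are that Lemma~\ref{lem:HS} applies at the endpoint $a=r$ (here $a=s$), which is within its stated range $0\le a\le r$, and that $f$ lies in a class where both sides make sense, for instance $\Sch$, with extension by density. As a cross-check, the exponents agree with the scaling $f\mapsto f(\lambda\cdot)$, under which the powers of $\lambda$ on the two sides balance.
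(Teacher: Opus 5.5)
Your argument is correct, and it follows the same scheme as the paper. Both split $\|f\|_2$ at $|x|=R$, bound the exterior by $R^{-b}\||x|^bf\|_2$ and the ball by $R^{s}\|D^sf\|_2$, and then take $R^{s+b}=\||x|^bf\|_2/\|D^sf\|_2$.

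The one genuine difference is the estimate on $\bb_R$. The paper uses the homogeneous Sobolev inequality $\|f\|_{p_0}\lesssim\|D^sf\|_2$, $\frac1{p_0}=\frac12-\frac sn$, followed by H\"older on the ball, $\|f\|_{L^2(\bb_R)}\le|\bb_R|^{s/n}\|f\|_{p_0}\lesssim R^s\|D^sf\|_2$. You instead use the Hardy inequality, i.e.\ Lemma~\ref{lem:HS} at the opposite endpoint $a=s$, $g=2$, together with $1\le R^{2s}|x|^{-2s}$ on $\bb_R$. These are the two ends of the same Hardy--Sobolev family: any $a\in[0,s]$ in Lemma~\ref{lem:HS}, combined with H\"older on $\bb_R$, gives the same bound $R^s\|D^sf\|_2$. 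Both endpoints require exactly $0<s<\frac n2$, and neither involves $\|f\|_2$.

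Your remark that you avoid the absorption step of Lemma~\ref{lem2.2} is accurate, but the paper's proof of this lemma avoids it too. It does so because it uses the homogeneous Sobolev inequality rather than the inhomogeneous estimate \eqref{eq3.2}. Your route saves the H\"older step at the cost of invoking the Hardy endpoint, while the paper's needs only the more standard Sobolev embedding. The outcome, including the exponents you checked by scaling, is identical.
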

	\begin{proof}
		For $R>0$, by Sobolev ($\frac12-\frac1{p_0}=\frac sn$) and the  Hölder inequality on $\bb_R$,
		$\|f\|_2\le R^{-b}\||x|^bf\|_2+\|f\|_{L^2(\bb_R)}\lesssim R^{-b}\||x|^bf\|_2+R^{s}\|D^sf\|_2$.
		Choose $R=(\||x|^bf\|_2/\|D^sf\|_2)^{1/(s+b)}$.
	\end{proof}

	By combining the standard Strauss estimate with the preceding interpolation inequality, we obtain a weighted radial bound. This seamlessly integrates the fractional derivative and the spatial weight into a single pointwise decay estimate.

	\begin{lemma}[weighted radial Strauss]\label{lem:weighted}
		Let $f$ be radial on $\mathbb R^n$ (possibly $H$-valued), $\frac12<s<\frac n2$, $b>0$. Then for every $a\in[a_0,1]$,
		\[
		|x|^{\frac n2-as+(1-a)b}\,|f(x)|\;\lesssim\;\|D^sf\|_2^{\,a}\,\||x|^bf\|_2^{\,1-a}.
		\]
	\end{lemma}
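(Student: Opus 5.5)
The plan is to prove the two endpoint cases $a=a_0$ and $a=1$ separately, and then obtain every intermediate $a$ by taking a geometric mean of the two endpoint bounds. This works because both the power of $|x|$ and the powers of the norms depend affinely on $a$.

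\emph{Endpoint $a=1$.} Here the exponent of $|x|$ is $\frac n2-s$ and the right-hand side is $\|D^sf\|_2$. So this case is exactly Lemma~\ref{lem:strauss}(i).

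\emph{Endpoint $a=a_0$.} First compute the power of $|x|$:
\[
-a_0s+(1-a_0)b=\frac{-s(2b+1)+b(2s-1)}{2(s+b)}=-\frac12 ,
\]
so it equals $\frac{n-1}2$. Lemma~\ref{lem:strauss}(ii) then gives
\[
|x|^{\frac{n-1}2}|f(x)|\lesssim \|f\|_2^{1-\frac1{2s}}\|D^sf\|_2^{\frac1{2s}}.
\]
The exponent $1-\frac1{2s}$ is positive because $s>\frac12$. We therefore insert Lemma~\ref{lem:pitt}, $\|f\|_2\lesssim \|D^sf\|_2^{\frac b{s+b}}\||x|^bf\|_2^{\frac s{s+b}}$. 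The resulting power of $\|D^sf\|_2$ is
\[
\Big(1-\tfrac1{2s}\Big)\frac b{s+b}+\frac1{2s}=\frac{(2s-1)b+s+b}{2s(s+b)}=\frac{2b+1}{2(s+b)}=a_0 .
\]
The resulting power of $\||x|^bf\|_2$ is $\frac{2s-1}{2s}\cdot\frac s{s+b}=\frac{2s-1}{2(s+b)}=1-a_0$. This is precisely the claimed inequality at $a=a_0$.

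\emph{Interpolation.} For $a\in[a_0,1]$, write $a=\theta a_0+(1-\theta)$ with $\theta\in[0,1]$. Raise the $a_0$-bound to the power $\theta$, raise the $a=1$ bound to the power $1-\theta$, and multiply. The power of $|x|$ is the affine function $a\mapsto \frac n2-as+(1-a)b$, so it combines correctly. Likewise the power of $\|D^sf\|_2$ becomes $\theta a_0+(1-\theta)=a$, and the power of $\||x|^bf\|_2$ becomes $\theta(1-a_0)=1-a$. The implicit constant is bounded uniformly in $a$.

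\emph{Main obstacle: the $H$-valued case.} The step needing most care is allowing $f$ to take values in a Hilbert space $H$. For Lemma~\ref{lem:strauss}, the proof uses only the pointwise kernel bound in Lemma~\ref{lem:kernel}, which is stated with $\|\hat f(\rho)\|_H$, together with Cauchy--Schwarz in $\rho$. It therefore carries over verbatim once $\|D^sf\|_2$ and $\|f\|_2$ are read as Bochner $L^2(\R^n;H)$ norms, by Plancherel for $H$-valued functions. For Lemma~\ref{lem:pitt}, the Sobolev and H\"older step on $\bb_R$ can be applied to the scalar function $\|f(\cdot)\|_H$. Alternatively, one can expand $f$ in an orthonormal basis of $H$ and sum the squares, since $\|D^sf\|_2^2$ and $\||x|^bf\|_2^2$ split coordinatewise. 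Either way the scalar constants are preserved.
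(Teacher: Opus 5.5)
Your proposal is correct and is essentially the paper's proof: the endpoint $a=a_0$ comes from inserting Lemma~\ref{lem:pitt} into Lemma~\ref{lem:strauss}(ii), the endpoint $a=1$ is Lemma~\ref{lem:strauss}(i), intermediate $a$ follow by the geometric mean of the two pointwise bounds, and your exponent computations all check out. Your extra remarks on the $H$-valued case go beyond what the paper writes, and there the orthonormal-basis route (coordinatewise Pitt and Strauss, then H\"older for sums, as in Step~1 of Proposition~\ref{prop:B}) is the clean one; if you use the other route, apply Sobolev to $f$ as an $H$-valued function rather than to the scalar $\|f(\cdot)\|_H$, which need not lie in $\dot H^s$ once $s\ge\frac32$.
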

	\begin{proof}
		For $a=a_0$, insert Lemma \ref{lem:pitt} into Lemma \ref{lem:strauss}(ii).
		The exponent of $\||x|^bf\|_2$ is $\frac s{s+b}(1-\frac1{2s})=\frac{2s-1}{2(s+b)}=1-a_0$,
		and $\frac n2-a_0s+(1-a_0)b=\frac{n-1}2$.
		For $a=1$ this is Lemma \ref{lem:strauss}(i).
		For $a\in(a_0,1)$ take the corresponding geometric mean of the two pointwise inequalities; the weights and exponents are affine in $a$.
	\end{proof}
	
	We are now positioned to state the main decay result of this section. By exploiting radial symmetry in both the $x$ and $y$ variables, this theorem yields sharp, scale-invariant pointwise decay estimates across the entire space $\mathbb R^{n+m}$.
	\begin{theorem}\label{thm:main-r}
		Let $n\ge 2$, $m\ge2$, $\frac12<s<\frac n2$, $b>0$, $r>0$, and let $u$ be radial in $x$ and radial in $y$. Let $a\in[a_0,1]$ satisfy
		\begin{equation}\label{cond}
			\frac12<(1-a)\,r<\frac m2 .
		\end{equation}
		Then
		\begin{equation}\label{main}
			|x|^{A}\,|y|^{B}\,|u(x,y)|\;\le\; C\,\|D_x^su\|_2^{\,a}\;\||x|^bD_y^ru\|_2^{\,1-a},
			\qquad
			A=\frac n2-as+(1-a)b,\quad B=\frac m2-(1-a)r .
		\end{equation}
		The set of admissible $a$ is nonempty iff $s+b<r(2s-1)$.
	\end{theorem}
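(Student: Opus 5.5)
The plan is to apply the one-variable radial estimates twice: first in $x$ with values in a Hilbert space of functions of $y$, then in $y$ pointwise. Write $\tilde u(x,\eta)$ for the partial Fourier transform in $y$. For fixed $\eta$, the function $x\mapsto\tilde u(x,\eta)$ is radial on $\mathbb R^n$.

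\textbf{Step 1 (the $x$-step).} Lemma~\ref{lem:weighted} applies to each such slice with $H=\mathbb C$, giving
\[
|x|^{A}\,|\tilde u(x,\eta)|\lesssim h(\eta)^{a}\,\bigl(|\eta|^{-r}g(\eta)\bigr)^{1-a},
\]
since $\||x|^b\tilde u(\cdot,\eta)\|_{L^2_x}=|\eta|^{-r}g(\eta)$. Here $A=\frac n2-as+(1-a)b$, as in the statement.

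\textbf{Step 2 (the $y$-step).} Fix $x$. The function $y\mapsto u(x,y)$ is radial on $\mathbb R^m$, and its Fourier transform is $\tilde u(x,\eta)$, which is radial in $\eta$. By Lemma~\ref{lem:kernel} with $d=m$,
\[
|x|^A|u(x,y)|\lesssim\int_0^\infty h(\rho)^a g(\rho)^{1-a}\rho^{-(1-a)r}\min\bigl(1,(\rho|y|)^{-\frac{m-1}2}\bigr)\rho^{m-1}\,d\rho,
\]
where $h,g$ are regarded as radial functions of $\rho=|\eta|$. Apply Hölder's inequality with exponents $\frac2a$, $\frac2{1-a}$ and $2$, taken against the measure $\rho^{m-1}d\rho$. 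Since $\int h^2\rho^{m-1}d\rho\simeq M_1^2$ and $\int g^2\rho^{m-1}d\rho\simeq M_2^2$, the product $h^ag^{1-a}$ contributes $M_1^aM_2^{1-a}$ in $L^2(\rho^{m-1}d\rho)$. What remains is
\[
\Bigl(\int_0^\infty\rho^{-2(1-a)r}\min\bigl(1,(\rho|y|)^{-(m-1)}\bigr)\rho^{m-1}d\rho\Bigr)^{1/2}.
\]
This is exactly the integral in the proof of Lemma~\ref{lem:strauss}(i), with $s$ replaced by $(1-a)r$ and $n$ by $m$. It is finite precisely under \eqref{cond}, and it equals $C|y|^{-2B}$ with $B=\frac m2-(1-a)r$. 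This yields \eqref{main}.

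\textbf{Main obstacle: justifying Step 1 pointwise in $\eta$.} For fixed $\eta$ we need $D^s_x\tilde u(\cdot,\eta)$ and $|x|^b\tilde u(\cdot,\eta)$ to be in $L^2_x$, which holds for $u\in\Sch$. The pointwise inversion formula of Lemma~\ref{lem:kernel} is applied on $\mathbb R^m$ at a fixed $x$. I would first carry out the argument for Schwartz $u$ and then extend by density, if required.

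\textbf{Nonemptiness criterion.} Condition \eqref{cond} is $\frac1{2r}<1-a<\frac m{2r}$, and we also need $1-a\in[0,1-a_0]$, where $1-a_0=\frac{2s-1}{2(s+b)}$. Since $m\ge2$ and $a\ge a_0>0$, we have $1-a<1\le\frac m{2r}$ whenever $r\le\frac m2$; in the opposite case the upper constraint is still satisfied by $1-a$ close to $\frac1{2r}$. Hence the binding requirement is $\frac1{2r}<1-a_0$, which is equivalent to $s+b<r(2s-1)$. Conversely, if $s+b<r(2s-1)$, take $1-a$ slightly larger than $\frac1{2r}$; this satisfies all constraints, because $\frac1{2r}<\frac m{2r}$.
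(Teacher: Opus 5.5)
Your proposal is correct and follows essentially the same route as the paper's (first) proof: apply Lemma~\ref{lem:weighted} slice-wise to $\tilde u(\cdot,\eta)$, then Lemma~\ref{lem:kernel} in $y$, and close with Cauchy--Schwarz/H\"older in $\eta$. Your three-exponent H\"older in $\rho^{m-1}d\rho$ is the same step, and it reduces everything to $\int_0^\infty\rho^{-2(1-a)r}\min\bigl(1,(\rho|y|)^{-(m-1)}\bigr)\rho^{m-1}\,d\rho\simeq|y|^{-2B}$, which is finite exactly under \eqref{cond}; your nonemptiness argument via $\frac1{2r}<1-a_0=\frac{2s-1}{2(s+b)}$ also matches the paper.
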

	
	\begin{remark}[scaling]
		Replacing $u$ by $u(\lambda x,\mu y)$ multiplies the left side of \eqref{main} by $\lambda^{-A}\mu^{-B}$ and the right side by
		$\lambda^{a(s-\frac n2)-(1-a)(b+\frac n2)}\mu^{-\frac{am}2+(1-a)(r-\frac m2)}$.
		Equality of the two factors holds if and only if $A,B$ are as in \eqref{main}. So the decay rates are forced by scaling.
	\end{remark}
	
	\begin{proof}
		Write $\sigma=|\eta|$ and $u(x,y)=c\int e^{iy\cdot\eta}\tilde u(x,\eta)d\eta$.
		Since $\tilde u(x,\cdot)$ is radial, Lemma \ref{lem:kernel} gives
		\[
		|u(x,y)|\lesssim\int_{\mathbb R^m} w_y(\eta)\,|\tilde u(x,\eta)|\,d\eta,\qquad
		w_y(\eta)=\min\big(1,(|\eta||y|)^{-\frac{m-1}2}\big).
		\]
		For fixed $\eta$, $\tilde u(\cdot,\eta)$ is radial in $x$. Lemma \ref{lem:weighted} gives
		\[
		|x|^{A}|\tilde u(x,\eta)|\lesssim h(\eta)^a\big(|\eta|^{-r}g(\eta)\big)^{1-a}.
		\]
		Hence
		\[
		|x|^A|u(x,y)|\lesssim\int F_y(\eta)\,h(\eta)^ag(\eta)^{1-a}d\eta,\qquad F_y(\eta)=w_y(\eta)|\eta|^{-(1-a)r}.
		\]
		By the Cauchy--Schwarz, and then the  Hölder inequalities ($\int h^{2a}g^{2(1-a)}\le(\int h^2)^a(\int g^2)^{1-a}$),
		\[
		|x|^A|u(x,y)|\lesssim \|F_y\|_{L^2(d\eta)}\,M_1^{a}M_2^{1-a}.
		\]
		Put $\kappa=(1-a)r$. Then
		\[
		\|F_y\|_2^2\simeq\int_0^{1/|y|}\sigma^{m-1-2\kappa}d\sigma+|y|^{-(m-1)}\!\int_{1/|y|}^\infty\sigma^{-2\kappa}d\sigma
		\simeq |y|^{2\kappa-m},
		\]
		which is finite iff $\frac12<\kappa<\frac m2$, i.e.\ \eqref{cond}. This gives $\|F_y\|_2\simeq|y|^{-B}$.
		Finally, \eqref{cond} is solvable with $1-a\in(0,1-a_0]$ iff $\frac1{2r}<1-a_0=\frac{2s-1}{2(s+b)}$, i.e.\ $s+b<r(2s-1)$.
	\end{proof}

	\begin{proof}[Second proof of Theorem \ref{thm:main-r}]
		\emph{Step 1 (Bernstein in $y$).} By Lemma \ref{lem:kernel} and  the Cauchy--Schwarz inequality on $\{|\eta|\sim2^k\}$,
		\[
		|u_k(x,y)|\lesssim 2^{\frac{km}2}(1+2^k|y|)^{-\frac{m-1}2}\,G_k(x),\qquad G_k(x):=\|u_k(x,\cdot)\|_{L^2_y}.
		\]
		\emph{Step 2 (radial lemma with values in $H=L^2_y$).} The function $x\mapsto u_k(x,\cdot)\in H$ is radial in $x$, and Lemma \ref{lem:weighted} holds for $H$-valued functions with the same proof (replace $|\cdot|$ by $\|\cdot\|_H$ in Lemma \ref{lem:kernel}). So
		\[
		|x|^{A}G_k(x)\lesssim C_k^{\,a}(2^{-kr}B_k)^{1-a}\le 2^{-k(1-a)r}M_1^{a}M_2^{1-a}.
		\]
		\emph{Step 3 (sum in $k$).} With $B=\frac m2-(1-a)r$,
		\[
		|x|^A|u(x,y)|\lesssim M_1^aM_2^{1-a}\sum_{k}2^{kB}(1+2^k|y|)^{-\frac{m-1}2}.
		\]
		Terms with $2^k|y|\le1$ form a geometric series if $B>0$ (i.e.\ $(1-a)r<\frac m2$), with sum $\simeq|y|^{-B}$.
		Terms with $2^k|y|>1$ form a geometric series if $B<\frac{m-1}2$ (i.e.\ $(1-a)r>\frac12$), with sum $\simeq|y|^{-B}$.
		This is \eqref{main}.
	\end{proof}
	
	\begin{remark}
		Step 3 only uses $\sup_kC_k^a B_k^{1-a}$, so the same argument gives the slightly stronger statement with $M_1^aM_2^{1-a}$ replaced by $\sup_k C_k^a B_k^{1-a}$, i.e.\ with $\ell^\infty_k$ norms instead of $\ell^2_k$. One can also decompose in $x$-frequency. This is not done above because $P^x_j$ does not commute with $|x|^b$.
	\end{remark}

	It is natural to ask what happens if radial symmetry is only assumed in one of the variables. If the function is radial in $x$ but arbitrary in $y$, we lose pointwise decay in $y$, but we can still recover mixed-norm estimates with uniform decay in the $x$-variable.

	\begin{theorem}[radial in $x$ only]\label{thm:xrad}
		Let $u$ be radial in $x$ and arbitrary in $y$. Let $\frac12<s<\frac n2$, $b>0$, $a\in[a_0,1]$ and $\kappa=(1-a)r<\frac m2$. Then, with $A$ as in \eqref{main},
		\[
		\sup_{x}\;|x|^{A}\,\big\|D_y^{(1-a)r}u(x,\cdot)\big\|_{L^2_y}\lesssim M_1^{a}M_2^{1-a}.
		\]
		Consequently, for $\frac1q=\frac12-\frac{(1-a)r}m$ (and $a<1$),
		\[
		\sup_{x}\;|x|^{A}\,\|u(x,\cdot)\|_{L^q_y(\mathbb R^m)}\lesssim M_1^{a}M_2^{1-a}.
		\]
	\end{theorem}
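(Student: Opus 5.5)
The plan is to run the first proof of Theorem~\ref{thm:main-r} but to stop before the $y$-integration. We take the partial Fourier transform in $y$ and set $v=D_y^{(1-a)r}u$. Then $\tilde v(x,\eta)=|\eta|^{\kappa}\tilde u(x,\eta)$ with $\kappa=(1-a)r$, and for each fixed $\eta$ the function $\tilde u(\cdot,\eta)$ is radial in $x$. The $y$-variable plays no role in the radial structure, so no radiality in $y$ is needed.

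Applying Lemma~\ref{lem:weighted} to $f=\tilde u(\cdot,\eta)$ gives, pointwise in $x$ and $\eta$,
\[
|x|^{A}|\tilde u(x,\eta)|\lesssim \|D_x^s\tilde u(\cdot,\eta)\|_{L^2_x}^{a}\,\||x|^b\tilde u(\cdot,\eta)\|_{L^2_x}^{1-a}=h(\eta)^a\big(|\eta|^{-r}g(\eta)\big)^{1-a}.
\]
Multiplying by $|\eta|^{\kappa}=|\eta|^{(1-a)r}$ cancels the power of $|\eta|$ exactly, so $|x|^A|\tilde v(x,\eta)|\lesssim h(\eta)^ag(\eta)^{1-a}$.

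Next we square and integrate in $\eta$, using Plancherel in $y$ for fixed $x$. H\"older's inequality with exponents $1/a$ and $1/(1-a)$ then gives
\[
|x|^{2A}\|v(x,\cdot)\|_{L^2_y}^2\lesssim\int h^{2a}g^{2(1-a)}\,d\eta\le \Big(\int h^2\Big)^a\Big(\int g^2\Big)^{1-a}=M_1^{2a}M_2^{2(1-a)}.
\]
Taking the supremum in $x$ yields the first estimate. Unlike in Theorem~\ref{thm:main-r}, no kernel $F_y$ appears, so the lower constraint $\kappa>\tfrac12$ is not needed. The condition $\kappa<\frac m2$ is only used in the second part.

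For the second part we assume $a<1$, so that $0<\kappa<\frac m2$ and $\frac1q=\frac12-\frac\kappa m\in(0,\frac12)$. We apply the homogeneous Sobolev embedding $\dot H^{\kappa}(\R^m)\hookrightarrow L^q(\R^m)$ on each slice, which is Lemma~\ref{lem:HS} with $a=0$ and $r$ replaced by $\kappa$. This gives $\|u(x,\cdot)\|_{L^q_y}\lesssim\|D_y^{\kappa}u(x,\cdot)\|_{L^2_y}$, and combining it with the first estimate finishes the proof.

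The main obstacle is justifying the use of Lemma~\ref{lem:weighted} for each fixed $\eta$. This requires $D_x^s\tilde u(\cdot,\eta)\in L^2$ and $|x|^b\tilde u(\cdot,\eta)\in L^2$ for a.e.\ $\eta$, which follows from Fubini since $h,g\in L^2(d\eta)$. For Schwartz $u$ the pointwise bound holds for every $x\neq0$ by continuity. The extension to general $u$ then comes from density together with lower semicontinuity of the left-hand side.
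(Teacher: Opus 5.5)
Your proposal is correct and follows the paper's proof essentially verbatim. You apply Lemma~\ref{lem:weighted} to $\tilde u(\cdot,\eta)$ for each $\eta$ and absorb $|\eta|^{(1-a)r}$, then take the $L^2_\eta$ norm via Plancherel and H\"older, and finally use the Sobolev embedding $\dot H^{\kappa}(\mathbb R^m)\subset L^q$ for the second estimate. Your side remarks are accurate: $\kappa>\tfrac12$ is not needed, and $\kappa<\tfrac m2$ enters only in the second part.
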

	\begin{proof}
		For each $\eta$, Lemma \ref{lem:weighted} applied to $\tilde u(\cdot,\eta)$ gives
		$|x|^A|\eta|^{(1-a)r}|\tilde u(x,\eta)|\lesssim h(\eta)^ag(\eta)^{1-a}$.
		Take the $L^2_\eta$ norm and use the  Hölder inequality: $\int h^{2a}g^{2(1-a)}\le(\int h^2)^a(\int g^2)^{1-a}$.
		The second statement follows from the Sobolev embedding $\dot H^{\kappa}(\mathbb R^m)\subset L^q$, $0<\kappa<\frac m2$.
	\end{proof}
	Pointwise decay in $y$ is lost: without radial symmetry in $y$ there is no $L^2\to L^\infty$ gain at the scale-invariant level, since $\int|\eta|^{-2\kappa}d\eta$ diverges at both ends.

	Conversely, if radial symmetry holds in $y$ but not in $x$, we maintain the pointwise decay in $y$ while sacrificing it in $x$. Instead, we obtain a weighted Lebesgue bound in the $x$-direction.
	\begin{theorem}[radial in $y$ only]\label{thm:yrad}
		Let $u$ be radial in $y$ and arbitrary in $x$. Let $0<s<\frac n2$, $b>0$, $a\in[0,1)$ with $\frac12<(1-a)r<\frac m2$, and
		$\frac1{p}=\frac12-\frac{as}n$. Then
		\[
		\sup_{y}\;|y|^{\frac m2-(1-a)r}\,\big\|\,|x|^{(1-a)b}u(\cdot,y)\big\|_{L^p(\mathbb R^n_x)}\lesssim M_1^{a}M_2^{1-a}.
		\]
	\end{theorem}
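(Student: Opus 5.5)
The plan mirrors the first proof of Theorem~\ref{thm:main-r}, with the roles of the two variables exchanged. We take the Fourier transform in $y$ and write $u(x,y)=c\int e^{iy\cdot\eta}\tilde u(x,\eta)\,d\eta$. Since $u$ is radial in $y$, so is $\tilde u(x,\cdot)$ for a.e.\ $x$. We then apply Lemma~\ref{lem:kernel} in $\mathbb R^m$, not pointwise but with values in the Banach space $L^p_x$ weighted by $|x|^{(1-a)b}$. Writing $\tilde u(x,\eta)=\tilde u(x,|\eta|)$, the representation $u(\cdot,y)=c\int_0^\infty\tilde u(\cdot,\rho)\Phi_m(\rho|y|)\rho^{m-1}d\rho$ holds, and Minkowski's integral inequality gives
\[
\big\||x|^{(1-a)b}u(\cdot,y)\big\|_{L^p_x}\lesssim\int_{\mathbb R^m} w_y(\eta)\,\big\||x|^{(1-a)b}\tilde u(\cdot,\eta)\big\|_{L^p_x}\,d\eta ,
\]
with $w_y(\eta)=\min\big(1,(|\eta||y|)^{-\frac{m-1}2}\big)$ as before.

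For fixed $\eta$ we need a non-radial substitute for Lemma~\ref{lem:weighted}, namely a Caffarelli--Kohn--Nirenberg type bound
\[
\big\||x|^{(1-a)b}f\big\|_{L^p(\mathbb R^n)}\lesssim\|D^sf\|_2^{a}\,\||x|^bf\|_2^{1-a},\qquad \frac1p=\frac12-\frac{as}n .
\]
I would prove this by Hölder's inequality. Write $|x|^{(1-a)b}|f|=|f|^{a}\cdot(|x|^b|f|)^{1-a}$ and choose exponents $\frac1p=\frac{a}{p_0}+\frac{1-a}2$ with $\frac1{p_0}=\frac12-\frac sn$. This is consistent, since $\frac a{p_0}+\frac{1-a}2=\frac12-\frac{as}{n}$. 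Sobolev's inequality $\|f\|_{p_0}\lesssim\|D^sf\|_2$ for $0<s<\frac n2$ then finishes the bound. The case $a=0$ is trivial, with $p=2$, which is why $a\in[0,1)$ is allowed here with no lower restriction such as $a_0$.

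Applying this with $f=\tilde u(\cdot,\eta)$ gives $\||x|^{(1-a)b}\tilde u(\cdot,\eta)\|_{L^p_x}\lesssim h(\eta)^a\big(|\eta|^{-r}g(\eta)\big)^{1-a}$. Hence the left side is bounded by $\int F_y\,h^ag^{1-a}\,d\eta$ with $F_y=w_y|\eta|^{-(1-a)r}$. Cauchy--Schwarz, followed by Hölder's inequality $\int h^{2a}g^{2(1-a)}\le(\int h^2)^a(\int g^2)^{1-a}$, bounds this by $\|F_y\|_{L^2_\eta}M_1^aM_2^{1-a}$. The computation of $\|F_y\|_2$ from the proof of Theorem~\ref{thm:main-r} applies verbatim with $\kappa=(1-a)r$. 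It gives $\|F_y\|_2\simeq|y|^{(1-a)r-\frac m2}$, finite exactly when $\frac12<\kappa<\frac m2$. Multiplying through by $|y|^{\frac m2-(1-a)r}$ and taking the supremum over $y$ yields the claim.

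The main obstacle is justifying the vector-valued use of Lemma~\ref{lem:kernel}, that is, commuting the $\eta$-integral with the weighted $L^p_x$ norm. For Schwartz $u$ this is Minkowski's inequality, because $|x|^{(1-a)b}|\tilde u(x,\eta)|$ is jointly integrable in the required sense. General $u$ with finite $M_1,M_2$ then follow by density, with Fatou's lemma on the left side. A secondary check is that the Hölder splitting needs $p_0<\infty$, which is exactly the hypothesis $s<\frac n2$.
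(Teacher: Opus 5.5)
Your proposal is correct and follows essentially the same route as the paper. The paper's proof likewise applies, for each fixed $\eta$, the Hölder splitting with $\frac1p=\frac a{p_0}+\frac{1-a}2$ together with Sobolev to $\tilde u(\cdot,\eta)$. It then uses Lemma~\ref{lem:kernel} in $y$ with Minkowski's inequality and concludes with the $\|F_y\|_{L^2_\eta}$ computation from Theorem~\ref{thm:main-r}. Your remarks on justifying the vector-valued kernel step and on needing $p_0<\infty$ are details the paper leaves implicit.
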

	\begin{proof}
		For $f=\tilde u(\cdot,\eta)$, the  Hölder inequality with $\frac1p=\frac a{p_0}+\frac{1-a}2$, $\frac1{p_0}=\frac12-\frac sn$, and the Sobolev inequality $\|f\|_{p_0}\lesssim\|D^sf\|_2$ give
		\[
		\||x|^{(1-a)b}f\|_p\le\|f\|_{p_0}^{a}\,\||x|^bf\|_2^{1-a}\lesssim\|D^sf\|_2^{a}\,\||x|^bf\|_2^{1-a}.
		\]
		By Lemma \ref{lem:kernel} in $y$ and Minkowski's inequality,
		$\||x|^{(1-a)b}u(\cdot,y)\|_p\lesssim\int F_y(\eta)h^ag^{1-a}d\eta$.
		Conclude as in the proof of Theorem \ref{thm:main-r}.
	\end{proof}
	Here no $x$-decay is available, only a weighted $L^p_x$ bound, while the full $|y|^{-B}$ decay is kept. The restriction $a\ge a_0$ is not needed, and $s\le\frac12$ is allowed.
	
	Since $\||x|^{b\tau}D_y^{r\tau}u\|_2\le\|u\|_2^{1-\tau}\||x|^bD_y^ru\|_2^{\tau}$ for $\tau\in[0,1]$ (the  Hölder inequalityin $(x,\eta)$ applied to $|x|^{2b\tau}|\eta|^{2r\tau}|\tilde u|^2$), Theorem \ref{thm:main-r} applied with $(b,r)$ replaced by $(b\tau,r\tau)$ gives the following.

	Returning to the fully radial setting, we can refine our main estimate by introducing an additional interpolation parameter. This three-norm inequality is particularly useful for balancing the contributions of the fractional Sobolev, weighted, and standard $L^2$ norms.
	\begin{theorem}\label{thm:three}
		Let $u$ be radial in $x$ and in $y$, $\frac12<s<\frac n2$, $b>0$, $\tau\in(0,1]$, and $a\in\big[\tfrac{2b\tau+1}{2(s+b\tau)},1\big]$ with $\frac12<(1-a)\tau r<\frac m2$. Then
		\[
		|x|^{A_\tau}|y|^{B_\tau}|u(x,y)|\lesssim \|D_x^su\|_2^{\,a}\,\||x|^bD_y^ru\|_2^{\,(1-a)\tau}\,\|u\|_2^{\,(1-a)(1-\tau)},
		\]
		\[
		A_\tau=\frac n2-as+(1-a)\tau b,\qquad B_\tau=\frac m2-(1-a)\tau r .
		\]
	\end{theorem}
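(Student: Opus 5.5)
The plan is to follow the hint in the paragraph preceding the statement: reduce to Theorem~\ref{thm:main-r} with the pair $(b,r)$ replaced by $(b\tau,r\tau)$, and then interpolate the resulting weighted norm between $\|u\|_2$ and $\||x|^bD_y^ru\|_2$.

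\emph{Step 1 (parameters).} Set $b'=b\tau>0$ and $r'=r\tau>0$. The threshold of Theorem~\ref{thm:main-r} for these parameters is $a_0'=\frac{2b'+1}{2(s+b')}=\frac{2b\tau+1}{2(s+b\tau)}$, which is exactly the lower end of the admissible interval for $a$ in the statement. Condition \eqref{cond} for $(b',r')$ reads $\frac12<(1-a)r\tau<\frac m2$, which is also assumed. The remaining hypotheses carry over unchanged: $n,m\ge2$, $\frac12<s<\frac n2$, and radiality in $x$ and in $y$. Theorem~\ref{thm:main-r} therefore gives
\[
|x|^{A'}|y|^{B'}|u(x,y)|\lesssim \|D_x^su\|_2^{a}\,\||x|^{b\tau}D_y^{r\tau}u\|_2^{1-a},
\]
with $A'=\frac n2-as+(1-a)b\tau=A_\tau$ and $B'=\frac m2-(1-a)r\tau=B_\tau$.

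\emph{Step 2 (interpolation of the weighted norm).} Using the Fourier transform in $y$ and Plancherel, write
\[
\||x|^{b\tau}D_y^{r\tau}u\|_2^2=\iint \big(|x|^{2b}|\eta|^{2r}|\tilde u|^2\big)^{\tau}\big(|\tilde u|^2\big)^{1-\tau}\,dx\,d\eta .
\]
Hölder's inequality with exponents $1/\tau$ and $1/(1-\tau)$ then bounds this by $\||x|^bD_y^ru\|_2^{2\tau}\|u\|_2^{2(1-\tau)}$. When $\tau=1$ the step is trivial. Raising to the power $(1-a)/2$ and inserting the result into Step 1 yields the claimed inequality, with exponents $a$, $(1-a)\tau$ and $(1-a)(1-\tau)$.

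\emph{Main obstacle.} The only delicate point is justifying the application of Theorem~\ref{thm:main-r}, that is, ensuring that $\||x|^{b\tau}D_y^{r\tau}u\|_2$ is finite and that the pointwise representation used there is legitimate. Both follow from Step 2 whenever the right-hand side of the statement is finite, so I would first argue for Schwartz functions radial in $x$ and $y$ and then extend by density if needed. I would also check that $a_0'\in(0,1)$; this holds because $s>\frac12$ gives $2b\tau+1<2s+2b\tau$.
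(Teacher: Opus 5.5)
Your proposal is correct and is the paper's own argument: apply Theorem~\ref{thm:main-r} with $(b,r)$ replaced by $(b\tau,r\tau)$, so that the threshold $a_0$ and condition \eqref{cond} become exactly the stated hypotheses and $A'=A_\tau$, $B'=B_\tau$. Then bound $\||x|^{b\tau}D_y^{r\tau}u\|_2\le\||x|^bD_y^ru\|_2^{\tau}\|u\|_2^{1-\tau}$ by H\"older's inequality in $(x,\eta)$, which gives the three exponents $a$, $(1-a)\tau$ and $(1-a)(1-\tau)$.
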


	% Packages: amsmath, amssymb, amsthm.  Environments theorem, lemma, proposition, remark, proof assumed.
	
	\section{Compactness via Strauss' lemma}\label{sec-compact-struss}
	
	\paragraph{Fourier transform.} On $\mathbb R^d$ use the unitary convention
	$\hat f(\zeta)=(2\pi)^{-d/2}\int f(z)e^{-iz\cdot\zeta}dz$, so the Plancherel identity holds with constant $1$.
	$\mathcal F_x,\mathcal F_y$ denote partial transforms, $\tilde u(x,\eta)=\mathcal F_yu$, and
	$D^\tau$ is the multiplier $|\zeta|^\tau$. For $u\in L^2(\mathbb R^{n+m})$ put
	\[
	\|D_x^su\|_2=\||\xi|^s\hat u\|_{L^2},\qquad
	\||x|^bD_y^ru\|_2=\||x|^b|\eta|^r\tilde u(x,\eta)\|_{L^2(dx\,d\eta)} ,
	\]
	(finite or $+\infty$), and
	\[
	X=\{u\in L^2(\mathbb R^{n+m}):\ \|u\|_X:=\|D_x^su\|_2+\||x|^bD_y^ru\|_2+\|u\|_2<\infty\}.
	\]
	$u$ is \emph{radial in $x$} if $u(Ox,y)=u(x,y)$ a.e.\ for every $O\in O(n)$, and radial in $y$ similarly.
	$X_{\rm rad}:=\{u\in X:\ u \text{ radial in } x \text{ and in } y\}$.
	
	\paragraph{Hypotheses.}
	\begin{itemize}
		\item[(H)] $n\ge2,\ m\ge2,\ \tfrac12<s<\tfrac n2,\ r>\tfrac12,\ b>0$.
		\item[(E)] $\|u\|_{L^p}\le C_E\|u\|_X$ for all $u\in X$ and $p\in[2,Q]$, where
		\[
		Q=\frac{2(rn+m(b+s))}{m(b+s)+nr-2rs},\qquad\text{i.e.}\qquad \frac1Q=\frac12-\frac{rs}{nr+m(b+s)} .
		\]
	\end{itemize}
	Since $2rs<nr$, the denominator of $Q$ is positive and $2<Q<\infty$.
	Moreover $\frac1Q>\frac12-\frac sn=\frac1{p_0}$, where $p_0:=\frac{2n}{n-2s}$; hence $Q<p_0$.
	Only the case $p=Q$ of (E) will be used (the case $p=2$ is trivial).

	\begin{theorem}\label{thm:compact}
		Assume (H) and (E). For every $p\in(2,Q)$ the embedding $X_{\rm rad}\hookrightarrow L^p(\mathbb R^{n+m})$ is compact:
		every bounded sequence in $X_{\rm rad}$ has a subsequence converging in $L^p$.
	\end{theorem}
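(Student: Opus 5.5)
We follow the standard Strauss--Lions scheme. By (E) with $p=Q$ and the interpolation $\|f\|_p\le\|f\|_2^{\theta}\|f\|_Q^{1-\theta}$, it suffices to show that if $u_j\rightharpoonup0$ weakly in $X_{\rm rad}$ and $\sup_j\|u_j\|_X\le1$, then $u_j\to0$ in $L^p$ for a single $p\in(2,Q)$; the whole range then follows by interpolating between this $p$, $2$ and $Q$. Fix $\varepsilon>0$. The plan is to split $\mathbb R^{n+m}$ into four pieces: a thin cylinder $\{|x|<\delta\}\times\mathbb R^m$, the region $\{|x|>R\}$, the region $\{\delta\le|x|\le R,\ |y|>T\}$, and the compact annular core $U=\{\delta<|x|<R\}\times\bb_T^m$.

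\emph{Near $x=0$.} We use Lemma~\ref{lemma1} verbatim. Its proof uses only Lemma~\ref{lemma0}(c) in $x$ and the bound $\|u\|_Q\lesssim\|u\|_X$, which is exactly (E). It gives $\|u\|_{L^p(\bb_\delta^n\times\mathbb R^m)}\le C\delta^{\beta}\|u\|_X$, so we choose $\delta$ small.

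\emph{Tails.} Here Strauss' lemma replaces the covering argument. Theorem~\ref{thm:three} gives, for suitable admissible $(a,\tau)$, the bound $|u(x,y)|\lesssim|x|^{-A_\tau}|y|^{-B_\tau}\|u\|_X$ with $A_\tau,B_\tau>0$. We write
\[
|u|^p=|u|^{p-2}|u|^2\le\sup_{E}|u|^{p-2}\,|u|^2 ,
\]
so on $\{|x|>R\}$ or $\{|y|>T\}$ with $|x|\ge\delta$ the integral is at most $C(\delta)\,(R^{-A_\tau}+T^{-B_\tau})^{p-2}\|u\|_X^{p}$.

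The hard part is the region $\{|x|>R\}$ with $|y|$ small, where the factor $|y|^{-B_\tau}$ blows up. Here we would instead apply Theorem~\ref{thm:xrad}, which yields $\sup_x|x|^{A}\|u(x,\cdot)\|_{L^q_y}\lesssim\|u\|_X$ with $A>0$. Combining this with Hölder in $y$ between $L^2_y$ and $L^q_y$ (taking $q>p$, or interpolating with $L^Q$) gives a gain $R^{-c}$ for $\int_{|x|>R}|u|^p$. Alternatively, the earlier Lemma~\ref{lemma4} already handles this region using only $n\ge2$ and radiality in $x$. Likewise, the region $|x|\le R$, $|y|>T$ is handled either by Lemma~\ref{lemma5} or by the pointwise $y$-decay above. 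These bounds are uniform in $j$.

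\emph{Core.} Lemma~\ref{lemma6} gives precompactness of $u_j|_U$ in $L^p(U)$. Since $u_j\rightharpoonup0$, the limit of any convergent subsequence is $0$, hence $\|u_j\|_{L^p(U)}\to0$.

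Summing the four pieces gives $\limsup_j\|u_j\|_p\le C\varepsilon$. The main obstacle is checking that the exponents in Theorem~\ref{thm:three} are admissible under (H), i.e.\ that some $a,\tau$ satisfy $\tfrac12<(1-a)\tau r<\tfrac m2$. This holds for $\tau$ small because $r>\tfrac12$ and $a$ may be taken close to $a_0$. If this verification fails, we fall back on Lemmas~\ref{lemma4} and \ref{lemma5}, whose hypotheses are implied by (H) and (E).
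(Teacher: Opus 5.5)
\paragraph{The primary tail argument fails.}
The admissibility claim for Theorem~\ref{thm:three} is false. The constraint $a\ge\frac{2b\tau+1}{2(s+b\tau)}$ gives $(1-a)\tau r\le\frac{(2s-1)\tau r}{2(s+b\tau)}$. This bound is increasing in $\tau$ and tends to $0$ as $\tau\to0$, so small $\tau$ makes the requirement $(1-a)\tau r>\frac12$ harder, not easier. The admissible set is nonempty iff $\frac{(2s-1)r}{2(s+b)}>\frac12$, i.e.\ iff $s+b<r(2s-1)$, the same condition as in Theorem~\ref{thm:main-r}. This is not implied by (H): it fails for every $b>0$ whenever $s,r\le1$, in particular for $s=r=1$, $n\ge3$.

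Consequences:
\begin{itemize}
\item The two-variable pointwise decay $|x|^{-A_\tau}|y|^{-B_\tau}$ is unavailable in general.
\item So is the ``pointwise $y$-decay'' you invoke on $\{\delta\le|x|\le R,\ |y|>T\}$.
\item Your Theorem~\ref{thm:xrad} variant on $\{|x|>R\}$ is not closed as written. H\"older in $y$ between $L^2_y$ and $L^q_y$ leaves $\int_{|x|>R}\|u(x,\cdot)\|_{L^2_y}^{p(1-\theta)}\,dx$ with $p(1-\theta)<2$, which $\|u\|_2$ does not control on an unbounded set. You would also need a H\"older step in $x$ using $\int_{|x|>R}\|D_y^{r}u(x,\cdot)\|_{L^2_y}^2\,dx\le R^{-2b}\||x|^bD_y^ru\|_2^2$, as in Step~3 of Proposition~\ref{prop:B}.
\end{itemize}

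\paragraph{Your fallback is a complete proof.}
Condition (E) at $p=Q$ is exactly hypothesis (H) of Section~\ref{sec-compact} with $\gamma=b$ and $B=Q$. The two $X$-norms are equivalent, and $n,m\ge2$. Hence Lemmas~\ref{lemma1}, \ref{lemma4}, \ref{lemma5} and \ref{lemma6} apply as stated. Gluing them over your four regions is precisely the proof of the full-space compactness theorem at the end of Section~\ref{sec-compact}, which already contains Theorem~\ref{thm:compact} as a special case.

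\paragraph{Comparison with the paper's proof.}
The paper uses the same four-region decomposition but different tools on each piece.
\begin{itemize}
\item Tails: it applies the one-variable Strauss bound, Lemma~\ref{lem:strauss}(ii), fibrewise to the partial Fourier transform. This gives $\|v(x,\cdot)\|_{L^2_y}\lesssim M|x|^{-\frac{n-1}2}$ (Proposition~\ref{prop:B}). After cutting off $\{|x|<\varepsilon/2\}$, which turns the weight into a plain $D_y^r$ bound, it gets the analogous decay in $y$ (Proposition~\ref{prop:C}). Gagliardo--Nirenberg in the other variable plus H\"older then give explicit rates.
\item Core: it works on the Fourier side (Proposition~\ref{prop:D}) rather than via Riesz--Kolmogorov.
\end{itemize}
That route is short and quantitative; it even gains an extra $R^{-b\alpha}$ from the weight. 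It needs $s,r>\frac12$, which is why these appear in (H). It uses only the fibrewise $L^2$ decay, never Theorem~\ref{thm:three}, and so avoids $s+b<r(2s-1)$ altogether.

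Your covering route via Lemmas~\ref{lemma2}--\ref{lemma5} works for all $s,r>0$. Its cost is the angular partition of unity and the uniform fractional localization estimate.
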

	
	\paragraph{Notation.} $C$ denotes constants depending only on $n,m,s,r,b,p,C_E$ (and, where indicated, on auxiliary cut-offs),
	never on the sequence, nor on $R,R',k$. $M$ denotes a bound for the $X$-norm of the functions in question.
	
Below, we provide some tools that are essential in the proof of main theorem.
	
%	\begin{lemma}[Strauss lemma, three-norm form]\label{lem:strauss}
%		Let $d\ge2$, $\tau>\frac12$, and let $f\in L^2(\mathbb R^d)$ be radial with $D^\tau f\in L^2$. Then for a.e.\ $x$,
%		\[
%		|f(x)|\le C(d,\tau)\,|x|^{-\frac{d-1}2}\,\|f\|_2^{1-\frac1{2\tau}}\,\|D^\tau f\|_2^{\frac1{2\tau}} .
%		\]
%	\end{lemma}
%	

	 Transitioning to the proof of compactness, we first need to ensure that our function space $X$ is well-behaved under smooth localization. The following lemma confirms that multiplying by standard cutoff functions preserves the relevant fractional Sobolev bounds.
	
	\begin{lemma}[multipliers]\label{lem:mult}
		Let $\theta\in\mathcal S(\mathbb R^d)$, $\sigma\ge0$, $\langle\zeta\rangle=(1+|\zeta|^2)^{1/2}$. Then
		$\|\langle\zeta\rangle^\sigma\widehat{\theta f}\|_2\le C(\theta,\sigma)\|\langle\zeta\rangle^\sigma\hat f\|_2$. Consequently, for $v\in L^2(\mathbb R^{n+m})$,
		multiplying by $\theta(x)$ (resp.\ $\theta(y)$) maps $\{\|D_x^sv\|_2<\infty\}$ (resp.\ $\{\|D_y^rv\|_2<\infty\}$) into itself with
		\[
		\|D_x^s(\theta v)\|_2\le C(\theta,s)\big(\|v\|_2+\|D_x^sv\|_2\big),\qquad
		\|D_y^r(\theta v)\|_2\le C(\theta,r)\big(\|v\|_2+\|D_y^rv\|_2\big).
		\]
		Multiplication by a bounded function of $y$ commutes with $D_x^s$ in the sense $\|D_x^s(\chi(y)v)\|_2\le\|\chi\|_\infty\|D_x^sv\|_2$, and multiplication by a bounded function of $x$ commutes with $D_y^r$: $D_y^r(\psi(x)v)=\psi(x)D_y^rv$.
	\end{lemma}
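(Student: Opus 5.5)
The weighted estimate in Lemma~\ref{lem:mult} comes first, and the rest of the lemma reduces to it by partial Fourier transforms. I will prove it exactly as in Lemma~\ref{lemma0}(a). The unitary convention gives $\widehat{\theta f}=(2\pi)^{-d/2}\,\hat\theta*\hat f$. Peetre's inequality
\[
\langle\zeta\rangle^{\sigma}\le 2^{\sigma/2}\langle\zeta-\omega\rangle^{\sigma}\langle\omega\rangle^{\sigma},
\]
valid for $\sigma\ge0$, yields the pointwise bound
\[
\langle\zeta\rangle^\sigma|\widehat{\theta f}(\zeta)|\le C\int\langle\zeta-\omega\rangle^\sigma|\hat\theta(\zeta-\omega)|\,\langle\omega\rangle^\sigma|\hat f(\omega)|\,d\omega .
\]
Young's inequality $L^1*L^2\subset L^2$ then gives the claim with $C(\theta,\sigma)=C\|\langle\cdot\rangle^\sigma\hat\theta\|_{L^1}$. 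This constant is finite because $\hat\theta\in\mathcal S$.

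For the mixed statements, let $v\in L^2(\mathbb R^{n+m})$ and multiply by $\theta(x)$. For a.e.\ $y$, apply the first part with $d=n$ and $\sigma=s$ to $f=v(\cdot,y)$. Use $|\xi|^{s}\le\langle\xi\rangle^{s}$ together with $\langle\xi\rangle^{2s}\le C(1+|\xi|^{2s})$, and integrate in $y$ by Tonelli's theorem. The Plancherel identity in $x$ for a.e.\ $y$ then gives
\[
\|D_x^s(\theta v)\|_2^2\le C\int\|\langle\xi\rangle^s\mathcal F_xv(\cdot,y)\|_2^2\,dy\le C\big(\|v\|_2^2+\|D_x^sv\|_2^2\big).
\]
The $y$-version is identical, with $d=m$, $\sigma=r$, and the roles of $x$ and $y$ exchanged. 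In both cases the right-hand side is finite, so the space is mapped into itself.

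For the commutation claims I use that a multiplier in one variable is a multiplication in the other variable's Fourier picture. If $\chi=\chi(y)$ is bounded, then $\mathcal F_x(\chi v)(\xi,y)=\chi(y)\mathcal F_xv(\xi,y)$. Hence
\[
\||\xi|^s\mathcal F_x(\chi v)\|_2\le\|\chi\|_\infty\||\xi|^s\mathcal F_xv\|_2,
\]
and Plancherel in $x$ converts this to $\|D_x^s(\chi v)\|_2\le\|\chi\|_\infty\|D_x^sv\|_2$. Similarly, if $\psi=\psi(x)$, then $\mathcal F_y(\psi v)=\psi(x)\tilde v(x,\eta)$, so $D_y^r(\psi v)=\psi D_y^rv$ as elements of $L^2$ (or as distributions when $D_y^rv$ is only defined via $\tilde v$).

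There is no real obstacle in this lemma. The only points needing care are measurability and the a.e.-slice application, both handled by Fubini–Tonelli since $v\in L^2$. One must also keep the inequality $\langle\xi\rangle^{2s}\simeq1+|\xi|^{2s}$ valid for all $s>0$, which holds with constants depending only on $s$.
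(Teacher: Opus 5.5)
Your proposal is correct and follows essentially the same route as the paper. It obtains the weighted bound from Peetre's inequality and Young's $L^1*L^2\subset L^2$, applies it slice-wise with the other variable as a parameter, and gets the commutation claims from $\mathcal F_x(\chi(y)v)=\chi(y)\mathcal F_xv$ and $\mathcal F_y(\psi(x)v)=\psi(x)\mathcal F_yv$; you only spell out the Tonelli/Plancherel bookkeeping that the paper leaves implicit.
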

	
	\begin{proof}
		$\widehat{\theta f}=(2\pi)^{-d/2}\hat\theta*\hat f$ and $\langle\zeta\rangle^\sigma\le2^{\sigma/2}\langle\zeta-\zeta'\rangle^\sigma\langle\zeta'\rangle^\sigma$
		(from $1+|a+b|^2\le2(1+|a|^2)(1+|b|^2)$). Young's inequality $L^1*L^2\subset L^2$ gives the first claim, with
		$C=(2\pi)^{-d/2}2^{\sigma/2}\|\langle\cdot\rangle^\sigma\hat\theta\|_{L^1}$. The second follows slice-wise (the other variable is a parameter),
		using $|\zeta|^\sigma\le\langle\zeta\rangle^\sigma\le C_\sigma(1+|\zeta|^\sigma)$. The last two statements hold because
		$\mathcal F_x(\chi(y)v)=\chi(y)\mathcal F_xv$ and $\mathcal F_y(\psi(x)v)=\psi(x)\mathcal F_yv$.
	\end{proof}
	
	To extract convergent subsequences, we must establish the basic functional analytic properties of our working space. This lemma verifies that $X$ has a complete Hilbert structure and that radiality is preserved under weak limits.
	\begin{lemma} \label{lem:hilbert}
		$X$ is complete, equivalent to a Hilbert space, and $X_{\rm rad}$ is a closed (hence weakly closed) subspace. Every bounded sequence in $X_{\rm rad}$ has a subsequence that converges weakly in $X$ to some $u\in X_{\rm rad}$, and then also weakly in $L^2$.
	\end{lemma}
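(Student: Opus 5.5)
The plan is to verify the three assertions of Lemma~\ref{lem:hilbert} directly from the Fourier-side description of the norm.

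\emph{Completeness and Hilbert structure.} Set
\[
\langle u,v\rangle_X=\langle u,v\rangle_{L^2}+\langle |\xi|^s\hat u,|\xi|^s\hat v\rangle_{L^2}+\langle |x|^b|\eta|^r\tilde u,|x|^b|\eta|^r\tilde v\rangle_{L^2(dx\,d\eta)} .
\]
Its induced norm $\|u\|_H$ satisfies $\|u\|_H\le\|u\|_X\le\sqrt3\,\|u\|_H$, so the two norms are equivalent. For completeness, take a Cauchy sequence $(u_j)$ in $X$. Then $u_j\to u$ in $L^2$. Moreover $|\xi|^s\hat u_j\to F$ in $L^2$ and $|x|^b|\eta|^r\tilde u_j\to G$ in $L^2(dx\,d\eta)$ for some limits $F,G$. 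Since $\hat u_j\to\hat u$ and $\tilde u_j\to\tilde u$ in $L^2$ by Plancherel, we can pass to a.e.\ convergent subsequences and identify $F=|\xi|^s\hat u$ and $G=|x|^b|\eta|^r\tilde u$ almost everywhere. Hence $u\in X$ and $u_j\to u$ in $X$.

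\emph{$X_{\rm rad}$ is closed.} For a fixed $O\in O(n)$ the map $T_O u=u(O\cdot,\cdot)$ is an isometry of $L^2$. It preserves each of the three seminorms, because $\widehat{T_Ou}(\xi,\eta)=\hat u(O\xi,\eta)$ and $|Ox|=|x|$. In particular $T_O-I$ is bounded on $X$ (it is even bounded $L^2\to L^2$, and $\|\cdot\|_2\le\|\cdot\|_X$). Therefore $\ker(T_O-I)\cap X$ is closed in $X$, and the same holds for rotations in $y$. So $X_{\rm rad}$ is an intersection of closed subspaces and is closed. Being a closed convex set, it is weakly closed by Mazur's theorem.

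\emph{Weak compactness.} A bounded sequence in the Hilbert space $(X,\langle\cdot,\cdot\rangle_X)$ has a weakly convergent subsequence by Banach--Alaoglu and reflexivity. By weak closedness the limit $u$ lies in $X_{\rm rad}$. Finally, $v\mapsto\langle v,\varphi\rangle_{L^2}$ is a continuous functional on $X$ for every $\varphi\in L^2$, since $\|v\|_2\le\|v\|_X$. Hence weak convergence in $X$ implies weak convergence in $L^2$.

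The only mildly delicate step is identifying the limits $F$ and $G$. That identification is settled by the a.e.\ subsequence argument above, so no real obstacle is expected.
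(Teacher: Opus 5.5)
Your proposal is correct and follows the paper's proof essentially step by step. It uses the same $\sqrt3$-equivalence with the norm of $(u,|\xi|^s\hat u,|x|^b|\eta|^r\tilde u)$ in $L^2(\mathbb R^{n+m})^3$, and identifies the limits of the last two components through a.e.\ convergent subsequences. Closedness of $X_{\rm rad}$ comes, as in the paper, from radiality being preserved under $L^2$ limits; you phrase this via the $L^2$-boundedness of $T_O-I$. The last step is the same too: weak sequential compactness in a Hilbert space, plus $X$-continuity of $v\mapsto\langle v,\varphi\rangle_{L^2}$.
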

	
	\begin{proof}
		Define $Tu=(u,\ |\xi|^s\hat u,\ |x|^b|\eta|^r\tilde u)\in L^2(\mathbb R^{n+m})^3$. Then $\|Tu\|\le\|u\|_X\le\sqrt3\|Tu\|$.
		If $Tu_j\to(f_0,f_1,f_2)$, then $u_j\to f_0=:u$ in $L^2$, $\hat u_j\to\hat u$ and $\tilde u_j\to\tilde u$ in $L^2$ (unitarity), and
		along a subsequence a.e., so $f_1=|\xi|^s\hat u$ and $f_2=|x|^b|\eta|^r\tilde u$ a.e. Hence $T(X)$ is closed in a Hilbert space, so $X$ is complete.
		Radiality is preserved under $L^2$ limits, and $\|\cdot\|_2\le\|\cdot\|_X$, so $X_{\rm rad}$ is closed in $X$. Bounded sequences in a Hilbert space have weakly convergent subsequences,
		closed subspaces are weakly closed, and $u\mapsto\langle u,\phi\rangle_{L^2}$ is $X$-continuous.
	\end{proof}
	
	Our compactness argument relies on splitting the space into distinct geometric regions and controlling the $L^p$ norm in each. To systematically glue these bounds together, we formulate a convenient interpolation inequality that leverages the uniform control of the $X$-norm.
	\begin{lemma} \label{lem:interp}
		Fix $p_1\in(2,Q)$ and $p\in(2,Q)$. There is $\mu=\mu(p,p_1)\in(0,1]$ such that for every measurable $\Omega\subset\mathbb R^{n+m}$ and $v\in X$,
		\[
		\|v\|_{L^p(\Omega)}\le C\,M^{1-\mu}\|v\|_{L^{p_1}(\Omega)}^{\mu},\qquad M\ge\|v\|_X .
		\]
	\end{lemma}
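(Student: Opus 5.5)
The plan is to interpolate in Lebesgue exponents by Hölder's inequality on $\Omega$, with the endpoint exponent $Q$ controlled globally through hypothesis (E). There are two cases, according to whether $p\le p_1$ or $p>p_1$.

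\emph{Case $p\le p_1$.} Since $2<p\le p_1$, we write $\frac1p=\frac{\theta}{2}+\frac{1-\theta}{p_1}$ with $\theta\in[0,1)$. Hölder's inequality on $\Omega$ (log-convexity of $L^q$ norms) gives
\[
\|v\|_{L^p(\Omega)}\le\|v\|_{L^2(\Omega)}^{\theta}\,\|v\|_{L^{p_1}(\Omega)}^{1-\theta}\le M^{\theta}\,\|v\|_{L^{p_1}(\Omega)}^{1-\theta},
\]
because $\|v\|_{L^2(\Omega)}\le\|v\|_2\le\|v\|_X\le M$. Here we take $\mu=1-\theta\in(0,1]$ and $C=1$.

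\emph{Case $p>p_1$.} Now $p_1<p<Q$, so we write $\frac1p=\frac{\mu}{p_1}+\frac{1-\mu}{Q}$ with $\mu\in(0,1)$. This $\mu$ is positive because $p<Q$. Hölder's inequality on $\Omega$ gives
\[
\|v\|_{L^p(\Omega)}\le\|v\|_{L^{p_1}(\Omega)}^{\mu}\,\|v\|_{L^{Q}(\Omega)}^{1-\mu}.
\]
We then bound $\|v\|_{L^Q(\Omega)}\le\|v\|_{L^Q(\mathbb R^{n+m})}\le C_E\|v\|_X\le C_EM$ by (E) with $p=Q$. This yields the claim with $C=C_E^{1-\mu}$.

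In both cases $\mu$ depends only on $p$, $p_1$ (and on $Q$, which is fixed by the parameters), and it is independent of $\Omega$ and $v$. There is no genuine obstacle. The only points to check are that $\mu>0$ in the second case, which uses the strict inequality $p<Q$, and that the global bounds restrict to an arbitrary measurable $\Omega$, which is immediate since norms over $\Omega$ are dominated by norms over the whole space. If $\|v\|_{L^{p_1}(\Omega)}=\infty$ the inequality is trivial, although $v\in X$ already forces finiteness via (E) and interpolation.
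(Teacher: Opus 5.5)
Your proof is correct and is essentially the paper's argument. You interpolate $L^p$ between $L^2$ and $L^{p_1}$ when $p\le p_1$, using $\|v\|_{L^2(\Omega)}\le M$, and between $L^{p_1}$ and $L^Q$ when $p>p_1$, using (E). In both cases this gives $\mu>0$ with $\mu$ independent of $\Omega$ and $v$.
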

	
	\begin{proof}
		If $p\le p_1$, interpolate $L^p$ between $L^2$ and $L^{p_1}$, and use $\|v\|_{L^2(\Omega)}\le M$.
		If $p>p_1$, interpolate between $L^{p_1}$ and $L^Q$ and use $\|v\|_{L^Q}\le C_EM$. The exponent on $\|v\|_{L^{p_1}(\Omega)}$ is positive in both cases.
	\end{proof}
	
	\paragraph{Choice of $p_1$.}
	Let $\sigma:=\min\{r,m/4\}\in(0,m/2)$ and $p_\sigma:=\frac{2m}{m-2\sigma}$. Fix once and for all
	\begin{equation}\label{p1}
		p_1:=2+\tfrac12\min\Big\{\,Q-2,\ \tfrac{4s}n,\ \tfrac{4r}m,\ p_\sigma-2\Big\}\in(2,Q).
	\end{equation}
	Thus $p_1\le 2+\frac{4s}n$, $p_1\le2+\frac{4r}m$ and $p_1\le p_\sigma$.

	Let $v\in X$ with $\|v\|_X\le M$.
		We now systematically bound the mass of our sequence in these different spatial regions. The first proposition controls the behavior near the singular set $\{x=0\}$, leveraging the fractional $x$-derivative independently of any radial symmetry.
	\begin{proposition}[near $\{x=0\}$; no symmetry needed]\label{prop:A}
		For $p\in(2,Q)$ let $\lambda\in(0,1)$ be defined by $\frac1p=\frac{1-\lambda}2+\frac\lambda Q$. Then for all $\varepsilon>0$,
		\[
		\|v\|_{L^2(\{|x|<\varepsilon\})}\le C\varepsilon^sM,\qquad
		\|v\|_{L^p(\{|x|<\varepsilon\})}\le C\varepsilon^{s(1-\lambda)}M .
		\]
	\end{proposition}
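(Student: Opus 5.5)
The plan is to prove the $L^2$ bound slice by slice in $y$, using the homogeneous Sobolev inequality in $x$ together with H\"older's inequality on the small ball, and then to obtain the $L^p$ bound by interpolating with (E). No symmetry is needed, since only the $x$-derivative term of $\|v\|_X$ enters.

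\emph{Step 1 (slices).} Since $v\in L^2(\mathbb R^{n+m})$ and $\|D_x^sv\|_2<\infty$, the Plancherel identity in $x$ and Tonelli's theorem give
\[
\int_{\mathbb R^m}\big(\|v(\cdot,y)\|_{L^2_x}^2+\|D^s v(\cdot,y)\|_{L^2_x}^2\big)\,dy=\|v\|_2^2+\|D_x^sv\|_2^2<\infty .
\]
Hence $v(\cdot,y)\in H^s(\mathbb R^n)$ for a.e.\ $y$. Put $p_0=\frac{2n}{n-2s}$, which is finite because $s<\frac n2$ by (H). The fractional Sobolev inequality $\|f\|_{L^{p_0}(\mathbb R^n)}\le C\|D^sf\|_{L^2(\mathbb R^n)}$ then applies to each such slice. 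This inequality holds for $f\in H^s$ by density of Schwartz functions; it is also the case $a=0$ of Lemma~\ref{lem:HS} in dimension $n$.

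\emph{Step 2 (H\"older on the ball).} For a.e.\ $y$, H\"older's inequality on $\bb^n_\varepsilon$ gives
\[
\int_{|x|<\varepsilon}|v(x,y)|^2\,dx\le|\bb^n_\varepsilon|^{1-\frac2{p_0}}\|v(\cdot,y)\|_{L^{p_0}}^2 .
\]
Since $1-\frac2{p_0}=\frac{2s}{n}$, the prefactor equals $C\varepsilon^{2s}$, so the right-hand side is at most $C\varepsilon^{2s}\|D^sv(\cdot,y)\|_{L^2_x}^2$. Integrating in $y$ yields
\[
\|v\|_{L^2(\{|x|<\varepsilon\})}^2\le C\varepsilon^{2s}\|D_x^sv\|_2^2\le C\varepsilon^{2s}M^2,
\]
which is the first claim. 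Using the homogeneous norm is what gives the exact power $\varepsilon^s$ for all $\varepsilon>0$, with no restriction to $\varepsilon\le1$.

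\emph{Step 3 ($L^p$).} Let $\frac1p=\frac{1-\lambda}2+\frac\lambda Q$. H\"older's (interpolation) inequality on $\Omega=\{|x|<\varepsilon\}$ gives
\[
\|v\|_{L^p(\Omega)}\le\|v\|_{L^2(\Omega)}^{1-\lambda}\,\|v\|_{L^Q(\Omega)}^{\lambda} .
\]
By (E), $\|v\|_{L^Q(\Omega)}\le\|v\|_{L^Q}\le C_EM$. Combining this with Step 2,
\[
\|v\|_{L^p(\{|x|<\varepsilon\})}\le (C\varepsilon^sM)^{1-\lambda}(C_EM)^{\lambda}=C\varepsilon^{s(1-\lambda)}M .
\]

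The only point needing care is Step 1: justifying that a.e.\ slice lies in $H^s$ and that the Sobolev inequality applies to it, rather than only to Schwartz functions. This follows from Tonelli's theorem and density, so I do not expect a genuine obstacle.
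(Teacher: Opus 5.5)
Your proposal is correct and follows the paper's proof essentially verbatim. The paper also applies H\"older on $\bb^n_\varepsilon$ slicewise with the Sobolev exponent $p_0=\frac{2n}{n-2s}$ and the homogeneous inequality $\|f\|_{p_0}\lesssim\|D^sf\|_2$, then integrates in $y$ using Plancherel in $x$, and finishes with the interpolation $\|v\|_{L^p(\Omega)}\le\|v\|_{L^2(\Omega)}^{1-\lambda}\|v\|_{L^Q(\Omega)}^{\lambda}$ combined with (E). Your remarks on a.e.\ slices lying in $H^s$ and on density only make explicit what the paper leaves implicit.
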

	
	\begin{proof}
		For a.e.\ $y$, $v(\cdot,y)\in L^2_x$ with $D^s_xv(\cdot,y)\in L^2_x$. By the  Hölder inequality and the Sobolev inequality,
		\[
		\int_{|x|<\varepsilon}|v|^2dx\le|\bb_\varepsilon|^{1-\frac2{p_0}}\|v(\cdot,y)\|_{p_0}^2\le C\varepsilon^{2s}\|D_x^sv(\cdot,y)\|_2^2 .
		\]
		Integrate in $y$ and use the Plancherel identity in $x$. For the second claim, the  Hölder inequality gives
		$\|v\|_{L^p(\Omega)}\le\|v\|_{L^2(\Omega)}^{1-\lambda}\|v\|_{L^Q(\Omega)}^\lambda$, and $\|v\|_Q\le C_EM$.
	\end{proof}

	Away from the origin, we can begin to exploit the radiality of the sequence. By applying our previously established radial estimates, we show that the $L^{p_1}$ norm decays uniformly as $|x| \to \infty$.
	\begin{proposition}[tail in $x$; uses radiality in $x$]\label{prop:B}
		Let $v\in X$ be radial in $x$, $\|v\|_X\le M$. Then for every $R>0$,
		\[
		\int_{\{|x|>R\}}|v|^{p_1}\,dx\,dy\le C\,M^{p_1}\,R^{-\kappa_1},\qquad
		\kappa_1=\tfrac{(n-1)(p_1-2)}2+b\,\alpha>0,\quad \alpha:=\tfrac{m(p_1-2)}{2r}\in(0,2].
		\]
	\end{proposition}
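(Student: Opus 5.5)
The plan is to work slice by slice in $y$, exactly as in the proof of Theorem~\ref{thm:main-w}, and then use radiality in $x$ only through the Strauss bound of Lemma~\ref{lem:strauss}(ii), applied to the profile $U_2(x)=\|v(x,\cdot)\|_{L^2_y}$. For a.e.\ $x$ set
\[
F(x)=\|D_y^rv(x,\cdot)\|_{L^2_y},\qquad U_2(x)=\|v(x,\cdot)\|_{L^2_y}.
\]
By Tonelli and Plancherel in $y$,
\[
\int|x|^{2b}F^2\,dx=\||x|^bD_y^rv\|_2^2\le M^2,\qquad \|U_2\|_{L^2(\R^n)}=\|v\|_2\le M.
\]

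\emph{Step 1 (Gagliardo--Nirenberg in $y$).} Apply Lemma~\ref{lem:GN} with $d=m$, $\sigma=r$, $p=p_1$. Its hypothesis $\frac1{p_1}>\frac12-\frac rm$ follows from $p_1\le p_\sigma$ in \eqref{p1}, since $\sigma\le r$ and $\sigma<m/2$. The exponent is $\nu=\frac mr(\frac12-\frac1{p_1})$, so $p_1\nu=\frac{m(p_1-2)}{2r}=\alpha$. The choice $p_1\le 2+\frac{4r}m$ gives $\alpha\in(0,2]$. Hence, for a.e.\ $x$,
\[
\int_{\R^m}|v(x,y)|^{p_1}dy\le C\,F(x)^{\alpha}\,U_2(x)^{p_1-\alpha}.
\]

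\emph{Step 2 (Strauss decay of the profile).} Since $v$ is radial in $x$, so is $U_2$. By the Gagliardo-seminorm argument of Lemma~\ref{lem:diam}, $\|D^sU_2\|_2\le\|D_x^sv\|_2\le M$. That argument uses only the triangle inequality in $L^2_y$ and Tonelli, so it applies to $v\in X$ directly, not only to Schwartz functions. Since (H) gives $n\ge2$ and $\frac12<s<\frac n2$, Lemma~\ref{lem:strauss}(ii) yields, for a.e.\ $x$,
\[
U_2(x)\le C|x|^{-\frac{n-1}2}\|U_2\|_2^{1-\frac1{2s}}\|D^sU_2\|_2^{\frac1{2s}}\le C|x|^{-\frac{n-1}2}M .
\]
If one prefers to avoid the a.e.\ subtlety, one can apply Lemma~\ref{lem:strauss} to the radial $H^s$ function $U_2$ after a standard density argument.

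\emph{Step 3 (assembling the tail bound).} On $\{|x|>R\}$ split
\[
F^{\alpha}U_2^{p_1-\alpha}=|x|^{-b\alpha}\,\big(|x|^bF\big)^{\alpha}\,U_2^{2-\alpha}\,U_2^{p_1-2}.
\]
This splitting is legitimate because $p_1-\alpha\ge p_1-2\ge0$ and $2-\alpha\ge 0$. Bound the two decaying factors pointwise:
\[
|x|^{-b\alpha}\le R^{-b\alpha},\qquad U_2^{p_1-2}\le CM^{p_1-2}R^{-\frac{(n-1)(p_1-2)}2}\quad\text{on }\{|x|>R\}.
\]
Apply H\"older in $x$ with exponents $\frac2\alpha$ and $\frac2{2-\alpha}$ to the remaining factors:
\[
\int\big(|x|^bF\big)^{\alpha}U_2^{2-\alpha}dx\le\Big(\int|x|^{2b}F^2\Big)^{\alpha/2}\Big(\int U_2^2\Big)^{\frac{2-\alpha}2}\le M^2 .
\]
When $\alpha=2$ the second factor is absent. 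Multiplying the three bounds gives
\[
\int_{\{|x|>R\}}|v|^{p_1}\,dx\,dy\le C\,M^{p_1}R^{-\frac{(n-1)(p_1-2)}2-b\alpha},
\]
which is the claim with $\kappa_1>0$, because $p_1>2$ and $b>0$.

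The only delicate point is the justification in Step 2 for general $v\in X$ rather than Schwartz $v$: one needs $U_2\in H^s$ with the stated bound, and the pointwise Strauss inequality for a radial $H^s$ function holding a.e. Everything else is a direct combination of Lemma~\ref{lem:GN}, Lemma~\ref{lem:diam} and Lemma~\ref{lem:strauss}, with the restrictions in \eqref{p1} chosen precisely to make $\alpha\le2$ and Lemma~\ref{lem:GN} applicable.
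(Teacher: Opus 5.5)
Your Steps 1 and 3 match Steps 2 and 3 of the paper's proof. Your Step 2, however, has a genuine gap.

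Lemma~\ref{lem:diam} is proved only for $0<s\le1$: the Gagliardo identity covers $s<1$ and the gradient argument covers $s=1$. The standing hypothesis (H) of this section allows any $s\in(\frac12,\frac n2)$, so $s>1$ is admissible as soon as $n\ge3$. For such $s$ the bound $\|D^sU_2\|_2\le\|D_x^sv\|_2$ is not merely unproved but false. Take $v=\phi(x)\psi(y)$ with $\phi(x)=(|x|^2-1)e^{-|x|^2}$ and $\psi$ a radial Schwartz function. Then $v\in X$ and $v$ is radial in $x$. But $U_2=\|\psi\|_2\,|\phi|$ has a kink across the sphere $|x|=1$, so $U_2\notin H^s$ for $s\ge\frac32$. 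This occurs in the admissible range, for example $n=4$, $s=\frac32$, or $n=5$, $s=2$. The profile map $v\mapsto\|v(x,\cdot)\|_{L^2_y}$ is nonlinear, and the triangle-inequality argument transfers smoothness only up to order one. So the delicate point is this restriction on $s$, not the Schwartz-versus-$X$ issue you flag at the end.

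The paper avoids the problem by applying Lemma~\ref{lem:strauss}(ii) to each partial Fourier slice $f_\eta=\tilde v(\cdot,\eta)$ rather than to the profile. This slice is linear in $v$ and radial in $x$, and it satisfies $\int\|f_\eta\|_2^2\,d\eta=\|v\|_2^2$ and $\int\|D^sf_\eta\|_2^2\,d\eta=\|D_x^sv\|_2^2$. The pointwise bound $|\tilde v(x,\eta)|\lesssim|x|^{-\frac{n-1}2}\|f_\eta\|_2^{1-\frac1{2s}}\|D^sf_\eta\|_2^{\frac1{2s}}$ is squared and integrated in $\eta$, and Plancherel in $y$ turns the left side into $U_2(x)^2$. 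H\"older in $\eta$ with exponents $\frac{2s}{2s-1}$ and $2s$ then gives $U_2(x)\lesssim M|x|^{-\frac{n-1}2}$ for every $s\in(\frac12,\frac n2)$.

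Alternatively, you can keep your profile argument by running it at order $s'=\min\{s,1\}$. Note that $s'\in(\frac12,\frac n2)$, since $s>1$ forces $n\ge3$. Lemma~\ref{lem:diam} and Lemma~\ref{lem:strauss}(ii) at order $s'$, combined with the frequency-side H\"older bound $\|D_x^{s'}v\|_2\le\|v\|_2^{1-s'/s}\|D_x^sv\|_2^{s'/s}\le M$, give the same decay of $U_2$. With either repair, the rest of your proof goes through unchanged.
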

	
	\begin{proof}
		\emph{Step 1: decay of $g(x):=\|v(x,\cdot)\|_{L^2_y}$.}
		For a.e.\ $\eta$, $f_\eta:=\tilde v(\cdot,\eta)\in L^2_x$ is radial (a countable dense set of rotations suffices, by continuity of rotations in $L^2$),
		with $E_\eta=\|f_\eta\|_2$, $D_\eta=\|D^s f_\eta\|_2$, and by the Plancherel identity $\int E_\eta^2d\eta=\|v\|_2^2$, $\int D_\eta^2d\eta=\|D_x^sv\|_2^2$.
		Lemma \ref{lem:strauss} ($d=n\ge2$, $\tau=s>\frac12$) gives, for a.e.\ $(x,\eta)$ (Fubini),
		$|\tilde v(x,\eta)|\le C|x|^{-\frac{n-1}2}E_\eta^{1-\frac1{2s}}D_\eta^{\frac1{2s}}$. Hence for a.e.\ $x$, by the Plancherel identity in $y$ and the  Hölder inequality in $\eta$ with exponents $\frac{2}{2-1/s}$ and $2s$,
		\[
		g(x)^2=\int|\tilde v(x,\eta)|^2d\eta\le C|x|^{-(n-1)}\!\int E_\eta^{2-\frac1s}D_\eta^{\frac1s}d\eta
		\le C|x|^{-(n-1)}\|v\|_2^{2-\frac1s}\|D_x^sv\|_2^{\frac1s}.
		\]
		So $g(x)\le CM|x|^{-\frac{n-1}2}$ for a.e.\ $x$.
		
		\emph{Step 2: Gagliardo--Nirenberg in $y$.}
		Let $k(x):=\|D_y^rv(x,\cdot)\|_{L^2_y}$. Then $\int|x|^{2b}k(x)^2dx\le M^2$, so $k(x)<\infty$ for a.e.\ $x$, and
		$\int_{|x|>R}k^2dx\le R^{-2b}M^2$. Since $p_1\le p_\sigma$, Lemma \ref{lem:GN} with $d=m$, $\tau=\sigma<\rho=r$ gives for a.e.\ $x$ 	from \eqref{p1} that
		\[
		\int|v(x,y)|^{p_1}dy\le C\,k(x)^{\alpha}g(x)^{p_1-\alpha},\qquad \alpha=p_1\cdot\tfrac mr\big(\tfrac12-\tfrac1{p_1}\big)=\tfrac{m(p_1-2)}{2r}\in(0,2].
		\]

		\emph{Step 3.} Write $g^{p_1-\alpha}=g^{p_1-2}g^{2-\alpha}$ and use the  Hölder inequality with exponents $\frac2\alpha,\frac2{2-\alpha}$ (trivial if $\alpha=2$):
		\[
		\int_{|x|>R}\!\!\int|v|^{p_1}dy\,dx\le C\sup_{|x|>R}g^{p_1-2}\Big(\int_{|x|>R}k^2\Big)^{\frac\alpha2}\Big(\int g^2\Big)^{1-\frac\alpha2}
		\le C\big(MR^{-\frac{n-1}2}\big)^{p_1-2}\big(MR^{-b}\big)^{\alpha}M^{2-\alpha}.  
		\]
	\end{proof}
	
A similar tail estimate is needed in the $y$-direction. Provided we stay strictly away from $\{x=0\}$ to avoid the weight singularity, radial symmetry in $y$ guarantees uniform mass decay as $|y| \to \infty$.
	\begin{proposition}\label{prop:C}
		Let $v\in X$ be radial in $y$, $\|v\|_X\le M$, and $\varepsilon\in(0,1]$. Then there is $C_\varepsilon$ such that for all $R'>0$,
		\[
		\int_{\{|x|\ge\varepsilon,\ |y|>R'\}}|v|^{p_1}dx\,dy\le C_\varepsilon M^{p_1}R'^{-\frac{(m-1)(p_1-2)}2}.
		\]
	\end{proposition}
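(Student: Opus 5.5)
The plan is to mirror the proof of Proposition~\ref{prop:B}, with the roles of $x$ and $y$ exchanged, and to cut off in $x$ so that the weight $|x|^b$ is bounded below. Fix $\zeta\in C^\infty(\R^n)$ with $0\le\zeta\le1$, $\zeta=0$ on $\{|x|<\varepsilon/2\}$ and $\zeta=1$ on $\{|x|\ge\varepsilon\}$; take for instance $\zeta=1-\theta$ with $\theta\in C_c^\infty$. Put $w=\zeta v$. Then $w$ is still radial in $y$, and $w=v$ on $\{|x|\ge\varepsilon\}$. By Lemma~\ref{lem:mult} applied to $\theta$, $\|D_x^sw\|_2\le C_\varepsilon M$. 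Since $D_y^r$ commutes with $\zeta(x)$ and $|x|\ge\varepsilon/2$ on $\operatorname{supp}\zeta$, we also get
\[
\|D_y^rw\|_2\le(\varepsilon/2)^{-b}\,\||x|^bD_y^rv\|_2\le C_\varepsilon M .
\]
Hence $w\in L^2$, $D_y^rw\in L^2$ and $D_x^sw\in L^2$, all with norms at most $C_\varepsilon M$.

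\emph{Step 1 (Strauss in $y$).} Fix $x$ and set $G(y):=\|w(\cdot,y)\|_{L^2_x}$, the counterpart of $g$. For a.e.\ $\xi$, $\mathcal F_xw(\xi,\cdot)$ is radial in $y$. Apply Lemma~\ref{lem:strauss}(ii) on $\R^m$ with smoothness $\tau=\sigma$. Here I need $\tfrac12<\sigma$; since $r>\tfrac12$ by (H), the cleanest choice is to work with $\tau=r$ when $r<m/2$, and otherwise with any $\tau\in(\tfrac12,\tfrac m2)$, $\tau\le r$, interpolating $\|D_y^\tau w\|_2\le\|w\|_2^{1-\tau/r}\|D_y^rw\|_2^{\tau/r}$. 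Doing this pointwise in $\xi$, then using Plancherel in $x$ and H\"older in $\xi$ exactly as in Step 1 of Proposition~\ref{prop:B}, yields
\[
G(y)\le C_\varepsilon M|y|^{-\frac{m-1}2}\qquad\text{for a.e.\ }y .
\]
Alternatively, one can apply the $H$-valued version of Lemma~\ref{lem:strauss} with $H=L^2_x$, as noted in the second proof of Theorem~\ref{thm:main-r}.

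\emph{Step 2 (Gagliardo--Nirenberg in $x$).} For a.e.\ $y$, Lemma~\ref{lem:GN} with $d=n$, smoothness $s$ and $p=p_1$ applies, because $p_1\le2+4s/n<2n/(n-2s)$. It gives
\[
\int|w(x,y)|^{p_1}dx\le C\,K(y)^{\alpha'}G(y)^{p_1-\alpha'},\qquad K(y)=\|D_x^sw(\cdot,y)\|_{L^2_x},\quad \alpha'=\tfrac{n(p_1-2)}{2s}\le2 .
\]
The bound $\alpha'\le2$ is exactly the constraint $p_1\le 2+4s/n$ built into \eqref{p1}.

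\emph{Step 3 (assembly).} Integrate over $\{|y|>R'\}$, factor $G^{p_1-\alpha'}=G^{p_1-2}G^{2-\alpha'}$, and bound $\sup_{|y|>R'}G^{p_1-2}$ by Step 1. Then apply H\"older with exponents $2/\alpha'$ and $2/(2-\alpha'$), using $\int K^2\,dy\le C_\varepsilon^2M^2$ and $\int G^2\,dy\le M^2$. This gives $C_\varepsilon M^{p_1}R'^{-(m-1)(p_1-2)/2}$, and since $w=v$ on $\{|x|\ge\varepsilon\}$ the claim follows.

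The main obstacle is Step 1: the smoothness index used in the Strauss lemma must exceed $\tfrac12$ but lie below $m/2$, and the weighted $D_y^r$ bound is only usable after the cutoff. The interpolation of $\|D_y^\tau w\|_2$ handles the case of large $r$. One also has to check that the product of the H\"older exponents in $\xi$ yields $\|w\|_2^{1-1/(2\tau)}\|D^\tau_yw\|_2^{1/(2\tau)}\le C_\varepsilon M$.
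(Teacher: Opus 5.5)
Your proposal is correct and follows the paper's proof essentially step for step. It uses the same cut-off $w=\zeta(x)v$ vanishing for $|x|<\varepsilon/2$, which turns the weight into a plain $D_y^r$ bound; the same fiberwise Strauss estimate in $y$ for $\mathcal F_xw(\xi,\cdot)$ with H\"older in $\xi$, giving $\|w(\cdot,y)\|_{L^2_x}\le C_\varepsilon M|y|^{-(m-1)/2}$; Gagliardo--Nirenberg in $x$ with exponent $n(p_1-2)/(2s)\le2$; and the same final H\"older splitting.

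Your interpolation down to some $\tau\in(\tfrac12,\tfrac m2)$ with $\tau\le r$ is a small but legitimate refinement. The paper applies Lemma~\ref{lem:strauss} directly with $\tau=r$ on $\R^m$, although that lemma as stated requires $\tfrac12<r<\tfrac m2$ there.
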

	
	\begin{proof}
		\emph{Step 0: cut-off.} Choose radial $\phi\in C_c^\infty(\mathbb R^n)$, $0\le\phi\le1$, $\phi=1$ on $\{|x|\le\frac12\}$, $\operatorname{supp}\phi\subset\{|x|<1\}$,
		and set $\phi_\varepsilon=\phi(\cdot/\varepsilon)$, $\psi_\varepsilon=1-\phi_\varepsilon$, $w:=\psi_\varepsilon(x)v$. Then $w=v$ on $\{|x|\ge\varepsilon\}$, $w=0$ on $\{|x|\le\varepsilon/2\}$, $w$ is radial in $y$, and
		\[
		\|w\|_2\le M,\qquad
		\|D_y^rw\|_2=\|\psi_\varepsilon D_y^rv\|_2\le(\varepsilon/2)^{-b}\||x|^bD_y^rv\|_2\le(\varepsilon/2)^{-b}M,
		\]
		\[
		\|D_x^sw\|_2\le\|D_x^sv\|_2+\|D_x^s(\phi_\varepsilon v)\|_2\le C_\varepsilon M\quad\text{(Lemma \ref{lem:mult})}.
		\]
		Here the weight $|x|^b$ has been converted into a plain $y$-derivative bound, at the price of a constant depending on $\varepsilon$.
		
		\emph{Step 1: decay of $g(y):=\|w(\cdot,y)\|_{L^2_x}$.}
		For a.e.\ $\xi$, $y\mapsto\mathcal F_xw(\xi,y)$ is radial in $y$ with $E_\xi=\|\mathcal F_xw(\xi,\cdot)\|_{L^2_y}$, $D_\xi=\|D_y^r\mathcal F_xw(\xi,\cdot)\|_{L^2_y}$, and $\int E_\xi^2=\|w\|_2^2$, $\int D_\xi^2=\|D_y^rw\|_2^2$.
		Lemma \ref{lem:strauss} ($d=m\ge2$, $\tau=r>\frac12$) and the  Hölder inequality in $\xi$ (as in Prop.\ \ref{prop:B}) give, for a.e.\ $y$,
		\[
		g(y)\le C|y|^{-\frac{m-1}2}\|w\|_2^{1-\frac1{2r}}\|D_y^rw\|_2^{\frac1{2r}}\le C_\varepsilon M|y|^{-\frac{m-1}2}.
		\]
		
		\emph{Step 2: Gagliardo--Nirenberg in $x$.}
		Let $h(y)=\|D_x^sw(\cdot,y)\|_{L^2_x}$, so $\int h^2dy=\|D_x^sw\|_2^2\le C_\varepsilon^2M^2$ and $\int g^2dy=\|w\|_2^2$.
		Put $e_1=\frac{n(p_1-2)}{2s}$. By \eqref{p1}, $e_1\le2$, and $\theta:=\frac ns(\frac12-\frac1{p_1})=\frac{e_1}{p_1}<1$, so $p_1<p_0$ and Lemma \ref{lem:GN} ($d=n$, $\tau=s$) gives
		$\int|w(x,y)|^{p_1}dx\le Ch^{e_1}g^{p_1-e_1}$ for a.e.\ $y$.
		
		\emph{Step 3.} Using the  Hölder inequality with exponents $\frac2{e_1},\frac2{2-e_1}$ on $g^{p_1-e_1}=g^{p_1-2}g^{2-e_1}$,
		\[
		\int_{|y|>R'}\!\!\int|w|^{p_1}dx\,dy\le C\sup_{|y|>R'}g^{p_1-2}\Big(\int h^2\Big)^{\frac{e_1}2}\Big(\int g^2\Big)^{1-\frac{e_1}2}
		\le C_\varepsilon M^{p_1}R'^{-\frac{(m-1)(p_1-2)}2}.
		\]
		Since $|v|=|w|$ on $\{|x|\ge\varepsilon\}$, the claim follows.
	\end{proof}
	
	The compact counterpart is formulated here.
	\begin{proposition}\label{prop:D}
		Let $(v_k)\subset X$ with $\|v_k\|_X\le M$ and $v_k\rightharpoonup0$ weakly in $L^2$. Let $0<\varepsilon<R<\infty$, $R'>0$, and
		$\Omega=\{\varepsilon\le|x|\le R,\ |y|\le R'\}$. Then $\|v_k\|_{L^p(\Omega)}\to0$ for every $p\in(2,Q)$.
	\end{proposition}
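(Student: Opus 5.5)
The plan is a Riesz--Kolmogorov argument on a smooth localization of $v_k$, followed by Lemma~\ref{lem:interp} to pass from $L^2$ to $L^p$. This mirrors Lemma~\ref{lemma6}; the only new input is the weak-convergence assumption, which will identify the limit as $0$.

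\emph{Step 1: localize and check uniform bounds.} Choose cut-offs $\eta_1\in C_c^\infty(\mathbb R^n)$ with $\eta_1=1$ on $\{\varepsilon\le|x|\le R\}$ and $\operatorname{supp}\eta_1\subset\{\varepsilon/2<|x|<2R\}$, and $\eta_2\in C_c^\infty(\mathbb R^m)$ with $\eta_2=1$ on $\{|y|\le R'\}$ and $\operatorname{supp}\eta_2\subset\{|y|<R'+1\}$. Set $g_k=\eta_1(x)\eta_2(y)v_k$, so that $g_k=v_k$ on $\Omega$ and all $g_k$ are supported in a fixed compact set $K$. The uniform bounds come from Lemma~\ref{lem:mult}:
\[
\|D_x^sg_k\|_2\le C\big(\|v_k\|_2+\|D_x^sv_k\|_2\big)\le CM .
\]
Multiplication by $\eta_2(y)$ is harmless for $D_x^s$. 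For the $y$-derivative, $D_y^r(\eta_1v_k)=\eta_1D_y^rv_k$, and since $|x|\ge\varepsilon/2$ on $\operatorname{supp}\eta_1$,
\[
\|D_y^r(\eta_1v_k)\|_2\le(\varepsilon/2)^{-b}M .
\]
Applying Lemma~\ref{lem:mult} in $y$ then gives $\|D_y^rg_k\|_2\le C_\varepsilon M$.

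\emph{Step 2: equicontinuity of translates.} Using Plancherel and $|e^{i\vartheta}-1|\le2^{1-\sigma}|\vartheta|^\sigma$, as in Lemma~\ref{lemma0}(b), I get
\[
\|g_k(\cdot+z)-g_k\|_2\le C\big(|h|^{\min(s,1)}+|k'|^{\min(r,1)}\big)C_\varepsilon M,\qquad z=(h,k').
\]
Hence $\sup_k\|\tau_zg_k-g_k\|_2\to0$ as $z\to0$. Together with the fixed compact support, the Riesz--Kolmogorov theorem shows that $(g_k)$ is relatively compact in $L^2$.

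\emph{Step 3: identify the limit.} Since $v_k\rightharpoonup0$ in $L^2$ and $\eta_1\eta_2$ is bounded, $g_k\rightharpoonup0$ in $L^2$. Any $L^2$-strong subsequential limit of $(g_k)$ must then be $0$. So every subsequence has a further subsequence converging to $0$ in $L^2$, and therefore $g_k\to0$ in $L^2$ for the full sequence. In particular $\|v_k\|_{L^2(\Omega)}\to0$.

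\emph{Step 4: upgrade to $L^p$.} For $p\in(2,Q)$, interpolate between $L^2(\Omega)$ and $L^Q(\Omega)$ as in the first case of Lemma~\ref{lem:interp}, using (E), which gives $\|v_k\|_{L^Q}\le C_EM$:
\[
\|v_k\|_{L^p(\Omega)}\le\|v_k\|_{L^2(\Omega)}^{1-\lambda}(C_EM)^\lambda\to0 .
\]

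The main obstacle is Step 1, specifically controlling $D_y^r$ of the localized function despite the degenerate weight. This is exactly why $\Omega$ must avoid $\{x=0\}$: the lower bound $|x|\ge\varepsilon/2$ on $\operatorname{supp}\eta_1$ converts the weighted norm into a plain $\|D_y^r\cdot\|_2$ bound. The $r>1$ case of the translation estimate needs only $|\eta|^{2}\le1+|\eta|^{2r}$, as in Lemma~\ref{lemma0}(b).
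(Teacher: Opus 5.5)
Your proof is correct. It agrees with the paper's proof in the localization step: a cut-off in $x$ supported away from $\{x=0\}$ and a cut-off in $y$, the conversion $\|\eta_1D_y^rv_k\|_2\le(\varepsilon/2)^{-b}M$ of the weighted norm into an unweighted one, and Lemma~\ref{lem:mult}. It also agrees in the final $L^2$--$L^Q$ interpolation.

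The difference is how you get $\|g_k\|_2\to0$.
\begin{itemize}
\item \emph{Your route.} You prove relative compactness of $(g_k)$ in $L^2$ from translation equicontinuity and the Riesz--Kolmogorov theorem, as in Lemma~\ref{lemma6}. You then use $v_k\rightharpoonup0$ to identify every subsequential limit as $0$.
\item \emph{The paper's route.} It works directly on the Fourier side, with $\zeta=(\xi,\eta)$. The uniform bounds on $\int|\xi|^{2s}|\hat g_k|^2$ and $\int|\eta|^{2r}|\hat g_k|^2$ make the high frequencies uniformly small: $\int_{|\zeta|>2N}|\hat g_k|^2\le CM^2(N^{-2s}+N^{-2r})$, since $|\zeta|>2N$ forces $|\xi|>N$ or $|\eta|>N$. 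For each fixed $\zeta$, $\hat g_k(\zeta)=(2\pi)^{-(n+m)/2}\int v_k\,\eta_1\eta_2\,e^{-iz\cdot\zeta}\,dz\to0$ by weak convergence. This is dominated uniformly by $(2\pi)^{-(n+m)/2}\|\eta_1\eta_2\|_2M$, so dominated convergence on $\{|\zeta|\le2N\}$ handles the low frequencies.
\end{itemize}

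What each buys:
\begin{itemize}
\item The paper's argument is self-contained: a Rellich-type compactness proof is built into it. It gives convergence of the full sequence without extracting subsequences. It never needs a H\"older modulus for translations, so the cases $s>1$ or $r>1$ need no separate comment.
\item Your argument is more modular. It separates a compactness statement valid for every bounded sequence from the hypothesis $v_k\rightharpoonup0$, which then only pins down the limit.
\end{itemize}

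A minor citation point: the $L^2$--$L^Q$ interpolation in your Step~4 is the one used in Proposition~\ref{prop:A}, not the first case of Lemma~\ref{lem:interp}. That case interpolates between $L^2$ and $L^{p_1}$. This does not affect correctness.
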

	
	\begin{proof}
		Choose $\psi\in C_c^\infty(\mathbb R^n\setminus\{0\})$ with $\psi=1$ on $\{\varepsilon\le|x|\le R\}$ and $\chi\in C_c^\infty(\mathbb R^m)$ with $\chi=1$ on $\{|y|\le R'\}$.
		Set $\Theta=\psi(x)\chi(y)$ and $w_k=\Theta v_k$, with compact support in $K=\operatorname{supp}\Theta$. Let $c_\psi=\min_{\operatorname{supp}\psi}|x|>0$.
		
		\emph{Uniform regularity.}
		$\|w_k\|_2\le\|\Theta\|_\infty M$. For $D_x^s$: $\|D_x^s(\chi v_k)\|_2\le\|\chi\|_\infty M$ and then, by Lemma \ref{lem:mult} (multiplication by $\psi(x)$),
		$\|D_x^sw_k\|_2\le C M$. For $D_y^r$: $\|D_y^r(\psi v_k)\|_2=\|\psi D_y^rv_k\|_2\le\|\psi\|_\infty c_\psi^{-b}M$ and $\|\psi v_k\|_2\le\|\psi\|_\infty M$, so by Lemma \ref{lem:mult} (multiplication by $\chi(y)$), $\|D_y^rw_k\|_2\le CM$. Hence, with $\zeta=(\xi,\eta)$,
		\[
		\int|\xi|^{2s}|\hat w_k|^2+\int|\eta|^{2r}|\hat w_k|^2\le CM^2,
		\qquad
		\int_{|\zeta|>2N}|\hat w_k|^2\le CM^2\big(N^{-2s}+N^{-2r}\big)
		\]
		(because $|\zeta|>2N$ forces $|\xi|>N$ or $|\eta|>N$).
		
		\emph{Low frequencies.} For each fixed $\zeta$, $\hat w_k(\zeta)=(2\pi)^{-\frac{n+m}2}\int v_k\,\Theta e^{-iz\cdot\zeta}dz\to0$, since $v_k\rightharpoonup0$ in $L^2$ and $\Theta e^{-iz\cdot\zeta}\in L^2$.
		Also $|\hat w_k(\zeta)|\le(2\pi)^{-\frac{n+m}2}\|\Theta\|_2M$. By dominated convergence on the ball $\{|\zeta|\le2N\}$, $\int_{|\zeta|\le2N}|\hat w_k|^2\to0$.
		
Finally, for every $N$, $\limsup_k\|w_k\|_2^2\le CM^2(N^{-2s}+N^{-2r})$; letting $N\to\infty$, $\|w_k\|_2\to0$.
		Since $\|w_k\|_Q\le\|\Theta\|_\infty C_EM$, interpolation gives $\|w_k\|_p\le\|w_k\|_2^{1-\lambda}\|w_k\|_Q^\lambda\to0$, and $\|v_k\|_{L^p(\Omega)}\le\|w_k\|_p$ because $\Theta=1$ on $\Omega$.
	\end{proof}
	
\begin{proof}[Proof of Theorem \ref{thm:compact}.]
		Let $(u_k)\subset X_{\rm rad}$, $\|u_k\|_X\le M_0$, and fix $p\in(2,Q)$.
	By Lemma \ref{lem:hilbert} we may pass to a subsequence with $u_k\rightharpoonup u$ weakly in $X$ and in $L^2$, where $u\in X_{\rm rad}$, $\|u\|_X\le M_0$ (weak lower semicontinuity of the norm in the Hilbert space $T(X)$, up to the equivalence constant $\sqrt3$; we simply set $M:=2\sqrt3M_0$).
	Put $v_k:=u_k-u$. Then $v_k\in X_{\rm rad}$, $\|v_k\|_X\le M$, and $v_k\rightharpoonup0$ in $L^2$. By (E), $u\in L^p$, so it suffices to show $\|v_k\|_{L^p}\to0$.
	
	Let $\mu=\mu(p,p_1)\in(0,1]$ be as in Lemma \ref{lem:interp} and $\lambda\in(0,1)$ as in Proposition \ref{prop:A}. Given $\delta>0$,
	  choose $\varepsilon\in(0,1]$ with $C\varepsilon^{s(1-\lambda)}M\le\delta$. By Prop.\ \ref{prop:A}, $\|v_k\|_{L^p(\Omega_1)}\le\delta$ for all $k$, where $\Omega_1=\{|x|<\varepsilon\}$.
	  Choose $R>\varepsilon$ with $CM^{1-\mu}(MR^{-\kappa_1/p_1}\!\cdot C)^{\mu}\le\delta$. By Prop.\ \ref{prop:B} ($v_k$ is radial in $x$) and Lemma \ref{lem:interp}, $\|v_k\|_{L^p(\Omega_2)}\le\delta$ for all $k$, where $\Omega_2=\{|x|>R\}$.
  With this $\varepsilon$, choose $R'$ so large that $CM^{1-\mu}\big(C_\varepsilon M^{p_1}R'^{-(m-1)(p_1-2)/2}\big)^{\mu/p_1}\le\delta$. By Proposition  \ref{prop:C} ($v_k$ is radial in $y$) and Lemma \ref{lem:interp}, $\|v_k\|_{L^p(\Omega_3)}\le\delta$ for all $k$, where $\Omega_3=\{|x|\ge\varepsilon,\ |y|>R'\}$.
	  By Proposition \ref{prop:D}, $\|v_k\|_{L^p(\Omega_4)}\to0$ where $\Omega_4=\{\varepsilon\le|x|\le R,\ |y|\le R'\}$.
 
	Since $\mathbb R^{n+m}=\Omega_1\cup\Omega_2\cup\Omega_3\cup\Omega_4$,
	\[
	\limsup_{k\to\infty}\|v_k\|_{L^p(\mathbb R^{n+m})}\le\sum_{j=1}^4\limsup_k\|v_k\|_{L^p(\Omega_j)}\le3\delta .
	\]
	As $\delta>0$ is arbitrary, $v_k\to0$ in $L^p$, i.e.\ $u_k\to u$ in $L^p$.  
\end{proof}

	%==========================================================================
	\section{The weighted Gagliardo--Nirenberg inequality of Grushin type \\ under radial symmetry } \label{sec-radialgrushin}
	%==========================================================================

	Throughout  this section, $m,n\ge1$ are integers, the variable in $\R^n\times\R^m$ is written $(x,y)$, and the exponents $s\in(0,1]$, $r>0$ and the weights $b,\beta\in\R$ are fixed. For a function $f$ on $\R^d$ the Fourier transform is $\widehat f(\xi)=(2\pi)^{-d/2}\int e^{-ix\cdot\xi}f(x)\,dx$, and $D^\sigma$ is the Fourier multiplier with symbol $|\xi|^\sigma$. The operators $D_x^s$ and $D_y^r$ are the multipliers with symbols $|\xi|^s$ and $|\eta|^r$ acting through the partial Fourier transform in $x$ and in $y$, respectively. For $u\in\Sch(\R^{n+m})$ we set
	\begin{equation}\label{s3eq:defAC}
		\cA(u)=\big\||x|^{b}D_y^{r}u\big\|_{L^2(\R^{n+m})},\qquad
		\cC(u)=\big\|D_x^{s}u\big\|_{L^2(\R^{n+m})}.
	\end{equation}
	All norms are understood in $[0,\infty]$. We introduce the quantities
	\begin{equation}\label{s3eq:defparam}
		\varrho=s+\beta,\quad \Gamma=rn+m(b+s),\quad
		\delta=\frac{r\varrho}{\Gamma},\quad
		\tau=\frac{m\varrho}{\Gamma}=\frac{m\delta}{r},\quad
		B=\frac{2\Gamma}{\Gamma-2r\varrho}=\frac{2}{1-2\delta},
	\end{equation}
	so that $B=\dfrac{2\,(rn+m(b+s))}{m(b+s)+r(n-2s-2\beta)}$. The inequality under study is
	\begin{equation}\label{s3eq:main}
		\big\||x|^{\beta}u\big\|_{L^{B}(\R^{n+m})}\le C\,\cA(u)^{\tau}\,\cC(u)^{1-\tau}.
	\end{equation}
	
	We shall consider four classes of functions. The class $\Sch$ consists of all Schwartz functions on $\R^{n+m}$. The class $\Sch_x$ consists of those $u\in\Sch$ that are radial in $x$, that is $u(Qx,y)=u(x,y)$ for all $Q\in O(n)$ and all $(x,y)$. The class $\Sch_y$ consists of those $u\in\Sch$ that are radial in $y$, that is $u(x,Py)=u(x,y)$ for all $P\in O(m)$. The class $\Sch_{xy}=\Sch_x\cap\Sch_y$ consists of the functions radial in both variables. Note that radial in $x$ with $n=1$ means even in $x$.
	
	We will use the following hypotheses on the parameters.
	\begin{gather}
		2s<n, \tag{H0}\label{s3H0}\\
		s+\beta>0, \tag{H1}\label{s3H1}\\
		\Gamma>m(s+\beta), \tag{H2}\label{s3H2}\\
		\Gamma>2r(s+\beta), \tag{H3}\label{s3H3}\\
		\beta\,(rn+ms)\le ms\,b, \tag{H4}\label{s3H4}\\
		\beta\,(r+ms)\le s\big(mb+(n-1)r\big), \tag{R1}\label{s3R1}\\
		\beta\,(m+2r)<mb+r(n-2s). \tag{R2}\label{s3R2}
	\end{gather}
	Hypotheses \eqref{s3H1}--\eqref{s3H3} express that $B\in(2,\infty)$ and $\tau\in(0,1)$ (see Lemma~\ref{s3lem:param} below); \eqref{s3H0} is a limitation of the method; \eqref{s3H4} is the condition $\beta\le\tau b$, and it is the one that is modified by radial symmetry in $x$.

	We begin by formulating our main result, which establishes the valid parameter regimes for the weighted Grushin-type Gagliardo--Nirenberg inequality both in the general non-symmetric case and under partial radial symmetry in the $x$-variable.
	\begin{theorem}\label{s3thm:main}
		Let $m,n\ge1$, $s\in(0,1]$, $r>0$, $b,\beta\in\R$, and assume \eqref{s3H0}--\eqref{s3H3}. Then $\Gamma>0$, $2<B<\infty$ and $0<\tau<1$, and there is a constant $C=C(m,n,s,r,b,\beta)$ such that the following two statements hold.
		
		\smallskip
		\noindent\emph{(a) (no symmetry).} If \eqref{s3H4} holds, then \eqref{s3eq:main} holds for all $u\in\Sch$.
		
		\smallskip
		\noindent\emph{(b) (radial in $x$).} If $n\ge2$ and both \eqref{s3R1} and \eqref{s3R2} hold, then \eqref{s3eq:main} holds for all $u\in\Sch_x$, and in particular for all $u\in\Sch_{xy}$.
	\end{theorem}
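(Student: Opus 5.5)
Part (a) is exactly Theorem~\ref{thm:main-w}: its hypotheses are \eqref{H0}--\eqref{H4}, which coincide with \eqref{s3H0}--\eqref{s3H4}. So I would only record the parameter facts. By \eqref{s3H1}--\eqref{s3H3}, as in Remark~\ref{rem:hyp}, we have $\Gamma>m\varrho>0$, $0<\tau<1$, $0<\delta<\tfrac12$ and $B=2/(1-2\delta)\in(2,\infty)$. These facts would be isolated as Lemma~\ref{s3lem:param}. Then I quote the earlier proof.

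For (b) I would rerun the proof of Theorem~\ref{thm:main-w} verbatim and change only the $x$-step. Again set $a=(\tau b-\beta)/(1-\tau)$ and $\tfrac1g=\tfrac12-\tfrac{\delta}{1-\tau}$. The identities $\beta=\tau b-(1-\tau)a$, $s-a=n\delta/(1-\tau)$, $\tfrac1B=\tfrac\tau2+\tfrac{1-\tau}g$ and $\tfrac1g=\tfrac12-\tfrac{s-a}{n}$ were purely algebraic and do not use \eqref{H4}, so they remain valid. The slice-wise Gagliardo--Nirenberg step (Lemma~\ref{lem:GN} in $y$ with $p=B$, exponent $\nu=\tau$) and the H\"older step giving \eqref{eq:step2} are unchanged. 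What is lost without \eqref{s3H4} is $a\ge0$: now $a$ may be negative. This is exactly where Lemma~\ref{lem:HS} fails and radiality must enter.

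If $u\in\Sch_x$, then $U_2(x)=\|u(x,\cdot)\|_{L^2_y}$ is radial on $\R^n$, and by Lemma~\ref{lem:diam} we have $\|D^sU_2\|_2\le\cC(u)$. I would then invoke the radial weighted Sobolev inequality of Drelichman--Dur\'an in place of Lemma~\ref{lem:HS}. It states that for radial $f$ one has $\||x|^{-a}f\|_{L^g(\R^n)}\lesssim\|D^sf\|_{L^2}$ whenever $2<g<\infty$, $\tfrac1g=\tfrac12-\tfrac{s-a}{n}$, $a<s$, and the radial range $a\ge-(n-1)s$ holds. This last condition is the radial lowering of the threshold $a\ge0$, which requires $n\ge2$.

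It remains to translate these conditions into \eqref{s3R1}--\eqref{s3R2}.
\begin{itemize}
\item[$\bullet$] $a<s$ and $g>2$ follow from $s-a=n\delta/(1-\tau)>0$.
\item[$\bullet$] $g<\infty$ means $\delta<(1-\tau)/2$. With $\delta=r\varrho/\Gamma$ and $\tau=m\varrho/\Gamma$ this reads $\varrho(m+2r)<\Gamma$, which is exactly \eqref{s3R2}.
\item[$\bullet$] $a\ge-(n-1)s$ means $\tau b-\beta\ge-(n-1)s(1-\tau)$. Multiplying by $\Gamma$ and using $\Gamma-m\varrho=rn+mb-m\beta$, this becomes $msb-\beta(rn+ms)+(n-1)s(rn+mb-m\beta)\ge0$. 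After dividing by $n$ it is precisely \eqref{s3R1}.
\end{itemize}
If $a\ge0$, one could alternatively just use Lemma~\ref{lem:HS} with \eqref{s3H0}. Inserting the radial bound into \eqref{eq:step2} gives \eqref{s3eq:main} on $\Sch_x$, hence on $\Sch_{xy}\subset\Sch_x$.

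The main obstacle is to have the radial weighted inequality in exactly the required form: for fractional $s\in(0,1]$, for the relevant sign of $a$, with the sharp lower bound $a\ge-(n-1)s$, and including the endpoint. I would first check the hypotheses in Drelichman--Dur\'an directly, and also verify that \eqref{s3H0} is still needed there, or is replaced by $g<\infty$. Failing that, I would prove the needed bound by hand. The plan would be to combine the Strauss-type pointwise bound $|x|^{\frac n2-s}|f(x)|\lesssim\|D^sf\|_2$ from Lemma~\ref{lem:strauss}(i), valid for $s>\tfrac12$, with the unweighted Sobolev inequality, interpolating between them on dyadic annuli.
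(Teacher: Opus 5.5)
Your proposal is correct and follows the paper's proof essentially step for step. It uses the same fiberwise Gagliardo--Nirenberg step in $y$ and the same H\"older step with $a=(\tau b-\beta)/(1-\tau)$. In part (b) it applies the Drelichman--Dur\'an radial weighted Sobolev inequality to the radial profile $U_2$, and your translation of $g<\infty$ and $a\ge-(n-1)s$ into \eqref{s3R2} and \eqref{s3R1} is exactly Lemma~\ref{s3lem:param}\,(v).

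The obstacle you flag is what the paper settles in Corollary~\ref{s3cor:HSrad}. There it checks that the Drelichman--Dur\'an hypothesis $c(1-2s)\le 2s(n-1)$, with $c=-ag$, reduces to $a\ge-(n-1)s$, still using \eqref{s3H0}. It also gives a cut-off/mollification density argument, closed with Fatou's lemma, to pass from radial $C_c^\infty$ to radial $H^s$. You need that extension too, since $U_2$ is only Lipschitz and not compactly supported.
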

	
	Part (a) is the result obtained earlier without symmetry; it is recalled here because its proof is the same as that of part (b), with the classical Hardy--Sobolev inequality in place of its radial extension. The point of the present note is part (b) and the optimality statements below.

	To clarify the analytical advantage gained from radial symmetry, we explicitly compare the improved threshold for the weight exponent $\beta$ against the standard non-symmetric bound.
	\begin{remark} \label{s3rem:gain}
		Let $n\ge2$. Then \eqref{s3H4} implies \eqref{s3R1} and \eqref{s3R2} (Lemma~\ref{s3lem:param}\,(vi)), so (b) is an improvement of (a). The threshold for $\beta$ in \eqref{s3H4} is $\frac{msb}{ms+rn}$, while the threshold in \eqref{s3R1} is $\frac{s(mb+(n-1)r)}{r+ms}$, and a direct computation gives
		\[
		\frac{s\,(mb+(n-1)r)}{r+ms}-\frac{msb}{ms+rn}=\frac{(n-1)\,s\,r\,\Gamma}{(r+ms)(ms+rn)}>0 .
		\]
		For $n=1$ the two thresholds coincide, which is natural: radial means even in that case, and an even function can carry two symmetric bumps at $\pm x_0$, which reproduces the translation obstruction. The values of $B$ and $\tau$, and the conditions \eqref{s3H1}--\eqref{s3H3}, are not affected by radial symmetry.
	\end{remark}

	To complement the sufficiency result of Theorem~\ref{s3thm:main}, we state an optimality proposition showing that the scaling relations, parameter regimes, and symmetry conditions are sharp.
	\begin{proposition}[Optimality]\label{s3prop:opt}
		Let $m,n\ge1$, $s\in(0,1]$, $r>0$ and $b,\beta\in\R$ with $\Gamma>0$. Let $\mathcal Y$ be one of the classes $\Sch$, $\Sch_x$, $\Sch_y$, $\Sch_{xy}$, and suppose that there are $B\in[1,\infty)$, $\tau\in(0,1]$ and $K<\infty$ such that
		\begin{equation}\label{s3eq:ineqK}
			\||x|^\beta u\|_{B}\le K\,\cA(u)^{\tau}\,\cC(u)^{1-\tau}\qquad\text{for all }u\in\mathcal Y\text{ with }\operatorname{supp}u\subset\{x\neq0\}\text{ compact}.
		\end{equation}
		Then the following three statements hold.
		
		\smallskip
		\noindent\emph{(a)} $B$ and $\tau$ are given by \eqref{s3eq:defparam}.
		
		\smallskip
		\noindent\emph{(b)} If $\mathcal Y=\Sch$ or $\mathcal Y=\Sch_y$, then $\tau<1$ and $\beta\le\tau b$, that is \eqref{s3H4} holds.
		
		\smallskip
		\noindent\emph{(c)} If $\mathcal Y=\Sch_x$ or $\mathcal Y=\Sch_{xy}$, then $\tau<1$ and \eqref{s3R1} holds.
		
		\smallskip
		\noindent\emph{(d)} In all four classes, \eqref{s3H1}, \eqref{s3H2} and \eqref{s3H3} hold.
	\end{proposition}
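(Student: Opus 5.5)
The plan is to run the three test-function mechanisms of Proposition~\ref{prop:necessity} (scaling, translation to infinity, and modulation/concentration), checking at each step that the test functions stay in the class $\mathcal Y$. Where a mechanism destroys radiality in $x$, we replace it by a radial analogue in which a bump of the radial variable $|x|$ plays the role of a translated bump. Throughout we use product test functions $u=\phi\otimes\psi$, with $\psi\in C_c^\infty(\R^m)$ radial and $\phi$ either arbitrary or radial, and $\operatorname{supp}\phi\subset\{x\ne0\}$ compact.

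\emph{(a) and (d).} The anisotropic dilations $u(\lambda x,\mu y)$ preserve each of the four classes and the condition that $\operatorname{supp} u$ be compact in $\{x\neq0\}$. The computation in the proof of Proposition~\ref{prop:necessity} therefore gives \eqref{eq:scal} verbatim. Since $\Gamma>0$, the algebra following it yields $\tau=m\varrho/\Gamma$ and $\frac1B=\frac12-\delta$, which is (a). Part (d) is then read off:
\begin{itemize}
\item $\tau>0$ together with $\Gamma>0$ gives $\varrho>0$, which is \eqref{s3H1};
\item $\tau<1$, once it is proved in (b) and (c), is exactly \eqref{s3H2};
\item $1\le B<\infty$ together with $\frac1B=\frac12-\delta$ gives $\delta<\frac12$, which is \eqref{s3H3}.
\end{itemize}

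\emph{(b).} For $\mathcal Y=\Sch$ this is Proposition~\ref{prop:necessity}. For $\mathcal Y=\Sch_y$ we observe that every test function used there can be taken radial in $y$: the $x$-translates $u(x-x_0,y)$ with $u=\phi\otimes\psi$, the modulations $e^{i\lambda x_1}\phi\otimes\psi$, and the concentrating bumps $\phi((x-x_0)/\varepsilon)\otimes\psi$. None of these operations acts on $y$, so with $\psi$ radial the same estimates give $\beta\le\tau b$ and $\tau<1$.

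\emph{(c).} Let $n\ge2$ and take $\phi_R(x)=\varphi(|x|-R)$ with $\varphi\in C_c^\infty(-1,1)$, $\varphi\ne0$, and $R\to\infty$. Then $u=\phi_R\otimes\psi\in\Sch_{xy}$, and since the shell has volume $\simeq R^{n-1}$,
\[
\||x|^\beta u\|_B\simeq R^{\beta+\frac{n-1}B},\qquad \cA(u)\simeq R^{b+\frac{n-1}2},\qquad \cC(u)\lesssim R^{\frac{n-1}2}.
\]
The bound on $\cC(u)$ comes from $s\le1$ and $\|D^s\phi_R\|_2\le\|\phi_R\|_2^{1-s}\|\nabla\phi_R\|_2^{s}$. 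Inserting these into \eqref{s3eq:ineqK} gives $\beta\le\tau b+(n-1)\delta$. Using (a), multiply through by $\Gamma$ and subtract $\beta(mb+(n-1)r)$; this is exactly \eqref{s3R1}.

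For $\tau<1$ in the radial class, the translation and modulation arguments must be made radial:
\begin{itemize}
\item If $\tau=1$, concentrate in the radial variable, $\phi_\varepsilon(x)=\varphi((|x|-1)/\varepsilon)$. As in the proof of Proposition~\ref{prop:necessity}, this gives $\varepsilon^{1/B}\lesssim\varepsilon^{1/2}$, which is impossible since $B>2$.
\item If $\tau>1$, use the radial oscillation $e^{i\lambda|x|}\varphi(|x|-1)$. This leaves $|u|$ and $\cA(u)$ unchanged, so it remains to show $\cC(u)\to\infty$. The interpolation $\|\nabla f\|_2\le\|D^sf\|_2^{1/(2-s)}\|D^2f\|_2^{(1-s)/(2-s)}$, with $\|\nabla f\|\simeq\lambda$ and $\|D^2f\|\lesssim\lambda^2$, gives $\|D^s f\|_2\gtrsim\lambda^s$. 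Hence $\cC(u)\to\infty$, while $1-\tau<0$, a contradiction.
\end{itemize}

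The main obstacle is to make the shell computations rigorous uniformly in $R$, $\varepsilon$ and $\lambda$, in particular the two-sided bounds on $\||x|^\beta u\|_B$ and $\cA(u)$ and the derivative estimates for the radial profiles in polar coordinates. These are routine but must be checked carefully, since they are exactly what converts the shell geometry into the extra gain $(n-1)\delta$.
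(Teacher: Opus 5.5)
Your proof is correct, but for (b) and (c) you test at infinity where the paper concentrates at a fixed location.

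The paper uses a bump $\phi((x-x_0)/\varepsilon)$ at a fixed $x_0\neq0$ for (b), and the thin shell $\phi_0((|x|-1)/\varepsilon)$ for (c). Each yields a single inequality, $n\delta'\le(1-\tau)s$ respectively $\delta'\le(1-\tau)s$. Since $\delta'=\tau r/m>0$, this gives both $\tau<1$ and the weight condition at once.

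Your tests are these same ones seen after the isotropic dilation in $x$ with $R=1/\varepsilon$:
\begin{itemize}
\item the translated bump, via Proposition~\ref{prop:necessity}, whose test class coincides with $\Sch$ here;
\item the shell $\varphi(|x|-R)$.
\end{itemize}
Once (a) is known, they therefore carry the same information. Using $n\delta=\varrho-\tau(b+s)$, your inequality $\beta\le\tau b+(n-1)\delta$ is equivalent to $\delta\le(1-\tau)s$, and $\beta\le\tau b$ is equivalent to $n\delta\le(1-\tau)s$.

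In particular, your large-shell estimate already excludes $\tau=1$, since $\delta=\tau r/m>0$. So your separate radial-concentration step is redundant. The radial oscillation $e^{i\lambda|x|}\varphi(|x|-1)$ is not needed either, because $\tau\in(0,1]$ is a hypothesis. It is nevertheless correct, and it shows that this hypothesis could be relaxed to $\tau>0$ in the radial classes too, which the paper does not do.

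Your bound $\cC(u)\lesssim R^{(n-1)/2}$ via $\|D^sf\|_2\le\|f\|_2^{1-s}\|\nabla f\|_2^{s}$ is also shorter than the paper's splitting of the Gagliardo seminorm into $|h|<\varepsilon$ and $|h|\ge\varepsilon$. It would give the paper's thin-shell bound $\varepsilon^{\frac12-s}$ equally well.

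Finally, there is no need to assume $n\ge2$ in (c). For $n=1$ (even functions), the same computation with the two bumps at $\pm R$ gives $\beta\le\tau b$, which is exactly \eqref{s3R1} when $n=1$.
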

	
	The assumption $\tau\le1$ is made because for $\tau>1$ the right-hand side of \eqref{s3eq:ineqK} contains a negative power of $\cC(u)$, and the upper bounds for $\cC$ used below would not be applicable. In the classes without radial symmetry in $x$ this case is excluded by the modulation $e^{i\lambda x_1}\phi(x)$ with $\lambda\to\infty$, which leaves $\||x|^\beta u\|_B$ and $\cA(u)$ unchanged and makes $\cC(u)\to\infty$.

	By combining the affirmative bounds of Theorem~\ref{s3thm:main} with the sharp necessary conditions of Proposition~\ref{s3prop:opt}, we obtain complete characterizations across the various symmetry classes.
	\begin{corollary}[Consequences]\label{s3cor:opt}
		Assume \eqref{s3H0}--\eqref{s3H3}.
		
		\smallskip
		\noindent\emph{(1) Radial in $y$ only.} In the class $\Sch_y$ the inequality \eqref{s3eq:main} holds if and only if \eqref{s3H4} holds. Thus radial symmetry in $y$ alone gives no improvement whatsoever.
		
		\smallskip
		\noindent\emph{(2) Radial in $x$, and in both variables.} Let $n\ge2$ and $s<\frac12$. In the classes $\Sch_x$ and $\Sch_{xy}$ the inequality \eqref{s3eq:main} holds if and only if \eqref{s3R1} holds; in particular the range of Theorem~\ref{s3thm:main}\,(b) is sharp, and radial symmetry in $y$ in addition to radial symmetry in $x$ gives no further improvement. Together with Proposition~\ref{s3prop:opt}\,(a),(d), this means that, for $\Gamma>0$, $s<\frac12$ and $n\ge2$, the inequality \eqref{s3eq:main} with some $B\in[1,\infty)$ and $\tau\in(0,1]$ holds in $\Sch_x$ (or in $\Sch_{xy}$) if and only if $B,\tau$ are given by \eqref{s3eq:defparam} and \eqref{s3H1}--\eqref{s3H3} and \eqref{s3R1} hold. For $s\ge\frac12$, \eqref{s3R1} is necessary and \eqref{s3R2} together with \eqref{s3R1} is sufficient; whether \eqref{s3R2} is necessary is not decided here.
	\end{corollary}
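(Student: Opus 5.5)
The corollary follows by combining Theorem~\ref{s3thm:main} with Proposition~\ref{s3prop:opt}. The one genuinely new ingredient is an algebraic fact: when $s<\frac12$, \eqref{s3R1} implies \eqref{s3R2} under \eqref{s3H0}--\eqref{s3H3}. In each case I would first prove sufficiency from the theorem, then necessity from the proposition applied to the subclass of functions supported compactly in $\{x\ne0\}$. That subclass is contained in $\Sch_y$, $\Sch_x$ and $\Sch_{xy}$ respectively, so the proposition applies directly.

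For part (1), sufficiency is Theorem~\ref{s3thm:main}(a), since $\Sch_y\subset\Sch$. For necessity, assume \eqref{s3eq:main} holds on $\Sch_y$ with the exponents \eqref{s3eq:defparam}. Lemma~\ref{s3lem:param} gives $\Gamma>0$ and $\tau\in(0,1)$, so the hypotheses of Proposition~\ref{s3prop:opt} are met with $\mathcal Y=\Sch_y$. Part (b) of the proposition then yields $\beta\le\tau b$, which is \eqref{s3H4}.

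For part (2), necessity is the same argument with Proposition~\ref{s3prop:opt}(c), which gives \eqref{s3R1} in both $\Sch_x$ and $\Sch_{xy}$. For sufficiency I would apply Theorem~\ref{s3thm:main}(b), which requires \eqref{s3R2} as well. I therefore need to show that \eqref{s3R1}, together with $s<\frac12$ and \eqref{s3H2}--\eqref{s3H3}, implies \eqref{s3R2}. I would split on the sign of $\beta$.
\begin{itemize}
\item If $\beta\le0$: \eqref{s3R2} reads $\beta(m+2r)<mb+r(n-2s)$. Since $\Gamma-2r\varrho=mb+ms+rn-2rs-2r\beta>0$ by \eqref{s3H3}, we get $mb+r(n-2s)>2r\beta-ms\ge\beta(m+2r)$. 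The last step holds because it is equivalent to $-ms\ge m\beta$, i.e.\ $\beta\le -s$, which contradicts \eqref{s3H1}. So this route fails; instead I would use \eqref{s3H2} in the form $mb+rn>m\beta$.
\item If $\beta>0$: \eqref{s3R1} gives $\beta\le \frac{s(mb+(n-1)r)}{r+ms}$. It then suffices to check
\[
\frac{s(mb+(n-1)r)}{r+ms}<\frac{mb+r(n-2s)}{m+2r}.
\]
Cross-multiplying, this is an affine inequality in $b$, and it should follow from $2s<1$ together with $n\ge2$ and positivity of the right-hand numerator. That positivity comes from \eqref{s3H0} when $b\ge0$, and from \eqref{s3H2}--\eqref{s3H3} otherwise.
\end{itemize}
Combining \eqref{s3H1}--\eqref{s3H3} with the cross-multiplied form should cover all cases.

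The remaining assertions are bookkeeping. The characterization statement combines the above with Proposition~\ref{s3prop:opt}(a),(d): any valid $B,\tau\in(0,1]$ are forced to be \eqref{s3eq:defparam}, and then \eqref{s3H1}--\eqref{s3H3} hold. The claim that $y$-symmetry adds nothing is the identity of the conditions for $\Sch_x$ and $\Sch_{xy}$. For $s\ge\frac12$ the statement simply quotes the necessity of \eqref{s3R1} and the sufficiency of \eqref{s3R1} plus \eqref{s3R2}.

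The main obstacle is the implication \eqref{s3R1}$\Rightarrow$\eqref{s3R2} for $s<\frac12$. The first thing I would try is to expand the cross-multiplied inequality and collect the coefficients of $b$ and of $n$. I expect each collected term to carry a factor $(1-2s)$, or a positive combination involving $\Gamma-2r\varrho$, and I would verify this explicitly.
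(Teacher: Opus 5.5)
Your overall structure matches the paper's: sufficiency from Theorem~\ref{s3thm:main}(a) resp.\ (b), necessity from Proposition~\ref{s3prop:opt}(b) resp.\ (c), and the characterization from (a),(d) of that proposition. The gap is in the one step you single out, \eqref{s3R1}$\Rightarrow$\eqref{s3R2} for $s<\frac12$.

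\textbf{The $\beta\le0$ branch fails.} You correctly see that your first route fails, but you then propose to use \eqref{s3H2} without invoking \eqref{s3R1}. This cannot work, because \eqref{s3R2} does not follow from \eqref{s3H0}--\eqref{s3H3} together with $\beta\le0$. Take $n=2$, $m=r=1$, $s=\frac14$, $\beta=0$, $b=-\frac85$. Then $\Gamma=\frac{13}{20}$, so \eqref{s3H0}--\eqref{s3H3} hold and $mb+rn>m\beta$. Yet \eqref{s3R2} reads $0<-\frac1{10}$. The same example refutes your claim that $mb+r(n-2s)>0$ follows from \eqref{s3H2}--\eqref{s3H3} when $b<0$.

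\textbf{The $\beta>0$ branch is unproved.} The decisive comparison of thresholds is only asserted (``should follow''), not carried out.

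\textbf{Repair.} No case split is needed. Since $r+ms>0$ and $m+2r>0$, for every sign of $\beta$ the conditions \eqref{s3R1} and \eqref{s3R2} say exactly $\beta\le T_1$ and $\beta<T_2$, where
\[
T_1=\frac{s(mb+(n-1)r)}{r+ms},\qquad T_2=\frac{mb+r(n-2s)}{m+2r}.
\]
Expanding gives
\[
(m+2r)(r+ms)(T_2-T_1)=r(1-2s)\,\Gamma>0,
\]
using $s<\frac12$ and $\Gamma>m\varrho>0$ from \eqref{s3H1}--\eqref{s3H2}. No positivity of numerators is required, and the factor accompanying $1-2s$ is $\Gamma$ itself, not $\Gamma-2r\varrho$.

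More simply, this implication is already Lemma~\ref{s3lem:param}(vi), which is what the paper cites. With $a=(\tau b-\beta)/(1-\tau)$ one has \eqref{s3R1}$\iff a\ge-(n-1)s$ and \eqref{s3R2}$\iff a>s-\frac n2$. Since $-(n-1)s-(s-\frac n2)=n(\frac12-s)>0$, the first implies the second. With either fix, the rest of your proposal goes through as written.
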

	
	For $s<\frac12$, hypothesis \eqref{s3R1} implies \eqref{s3R2}, and for $s\ge\frac12$ hypothesis \eqref{s3R2} implies \eqref{s3R1} (Lemma~\ref{s3lem:param}\,(vi)); this is why only one of the two conditions appears in the discussion above.

	%==========================================================================
 
	%==========================================================================

 Moving toward the proof of Theorem~\ref{s3thm:main}, we first establish a reduction identity that allows us to control the full $x$-fractional Sobolev norm through the $H^s$-norm of the partial $L^2_y$-profile.
	
	\begin{lemma}[Gagliardo identity and reduction to $L^2$ in $y$]\label{s3lem:diam}
		\emph{(a)} Let $0<s<1$ and $f\in L^2(\R^n)$. Then
		\[
		\int_{\R^n}\!|\xi|^{2s}|\widehat f(\xi)|^2d\xi=\kappa_{n,s}^{-1}\iint\frac{|f(x+h)-f(x)|^2}{|h|^{n+2s}}\,dx\,dh,
		\qquad\kappa_{n,s}=\int_{\R^n}\!\frac{|e^{ih_1}-1|^2}{|h|^{n+2s}}\,dh\in(0,\infty),
		\]
		with both sides possibly infinite.
		
		\smallskip
		\noindent\emph{(b)} Let $0<s\le1$, $u\in\Sch(\R^{n+m})$ and $U_2(x)=\|u(x,\cdot)\|_{L^2(\R^m)}$. Then $U_2\in H^s(\R^n)$ and
		\[
		\|D^sU_2\|_{L^2(\R^n)}\le\|D_x^su\|_{L^2(\R^{n+m})}.
		\]
		If $u\in\Sch_x$ then $U_2$ is radial.
	\end{lemma}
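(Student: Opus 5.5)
Part (a) is a Fourier-side identity. By Plancherel, for each fixed $h$,
\[
\int_{\R^n}|f(x+h)-f(x)|^2\,dx=\int_{\R^n}|e^{ih\cdot\xi}-1|^2|\widehat f(\xi)|^2\,d\xi .
\]
Dividing by $|h|^{n+2s}$ and integrating in $h$, Tonelli lets us swap the order of integration; this is legitimate because everything is nonnegative, and both sides may equal $+\infty$. It then remains to compute the inner integral $\int|e^{ih\cdot\xi}-1|^2|h|^{-n-2s}\,dh$ for $\xi\neq0$. We rotate so that $\xi=|\xi|e_1$ and substitute $h\mapsto h/|\xi|$, which gives $\kappa_{n,s}|\xi|^{2s}$. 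Finiteness of $\kappa_{n,s}$ comes from the bound $|e^{ih_1}-1|^2\le\min(|h|^2,4)$: near $0$ the integrand is integrable because $2-n-2s>-n$ (using $s<1$), and at infinity because $-n-2s<-n$ (using $s>0$). Positivity holds since the integrand is positive on a set of positive measure. The case $\xi=0$ contributes nothing on either side.

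For (b) we follow the proof of Lemma~\ref{lem:diam}. Fubini gives $\|U_2\|_2=\|u\|_2$.

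For $0<s<1$ we apply (a) to $u(\cdot,y)$ for each $y$, integrate in $y$, and use Plancherel in $x$. This yields
\[
\kappa_{n,s}\|D_x^su\|_2^2=\iint\|u(x+h,\cdot)-u(x,\cdot)\|_{L^2_y}^2\,|h|^{-n-2s}\,dx\,dh .
\]
The reverse triangle inequality in $L^2(\R^m)$ gives $|U_2(x+h)-U_2(x)|\le\|u(x+h,\cdot)-u(x,\cdot)\|_{L^2_y}$. Applying (a) to $f=U_2$ then bounds $\kappa_{n,s}\|D^sU_2\|_2^2$ by the right-hand side above. In particular $U_2\in H^s$, since the Gagliardo seminorm is finite and $U_2\in L^2$.

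For $s=1$ we argue as before. The map $x\mapsto u(x,\cdot)\in L^2_y$ is $C^1$ with bounded derivative, so $U_2$ is Lipschitz. Rademacher's theorem gives differentiability a.e., and at such points the difference quotient bound yields $|\nabla U_2(x)|\le\||\nabla_xu|(x,\cdot)\|_{L^2_y}$. Integrating in $x$ gives $\|\nabla U_2\|_2\le\|\nabla_x u\|_2$. A Lipschitz $L^2$ function with $L^2$ gradient lies in $H^1$, and $\|D^1U_2\|_2=\|\nabla U_2\|_2$.

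The radial claim is immediate. If $u(Qx,y)=u(x,y)$ for all $Q\in O(n)$, then $U_2(Qx)=\|u(Qx,\cdot)\|_{L^2_y}=U_2(x)$.

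The only step needing care is the justification of the Tonelli interchanges with possibly infinite values, together with the finiteness of $\kappa_{n,s}$. Both are routine once everything is kept nonnegative.
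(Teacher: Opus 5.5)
Your proposal is correct and follows essentially the same route as the paper. For (a) you use Plancherel plus Tonelli and reduce the inner integral to $\kappa_{n,s}|\xi|^{2s}$ by rotation and dilation. For (b) you use the slice-wise Gagliardo identity with the reverse triangle inequality in $L^2_y$ when $s<1$, the Lipschitz/Rademacher difference-quotient bound when $s=1$, and the immediate invariance $U_2(Qx)=U_2(x)$ for radiality.
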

	
	\begin{proof}
		(a) By the Plancherel identity and Tonelli, the double integral equals $\int|\widehat f(\xi)|^2\big(\int|e^{ih\cdot\xi}-1|^2|h|^{-n-2s}dh\big)d\xi$. By rotation invariance and the substitution $h\mapsto h/|\xi|$ the inner integral equals $\kappa_{n,s}|\xi|^{2s}$, and $\kappa_{n,s}$ is finite and positive because $|e^{ih_1}-1|^2\lesssim\min(|h|^2,1)$ and $0<s<1$.
		
		(b) Note that $\|U_2\|_2=\|u\|_2$. Assume first $0<s<1$. Apply (a) to $u(\cdot,y)$ for each $y$, integrate in $y$, and use the Plancherel identity in the $x$-variable (for each fixed $y$) and Tonelli:
		\[
		\kappa_{n,s}\|D_x^su\|_2^2=\iint\big\|u(x+h,\cdot)-u(x,\cdot)\big\|_{L^2(\R^m)}^2|h|^{-n-2s}\,dx\,dh .
		\]
		By the triangle inequality in $L^2(\R^m)$ we have $|U_2(x+h)-U_2(x)|\le\|u(x+h,\cdot)-u(x,\cdot)\|_{L^2(\R^m)}$, hence by (a) applied to $U_2$, $\kappa_{n,s}\|D^sU_2\|_2^2\le\kappa_{n,s}\|D_x^su\|_2^2$.
		
		Assume now $s=1$. Since $u\in\Sch$, the map $x\mapsto u(x,\cdot)\in L^2(\R^m)$ is $C^1$ with bounded derivative, so $U_2$ is Lipschitz and differentiable almost everywhere. At a point of differentiability and for a unit vector $e$,
		\[
		|\partial_eU_2(x)|\le\lim_{t\to0}\Big\|\frac{u(x+te,\cdot)-u(x,\cdot)}{t}\Big\|_{L^2(\R^m)}=\|\partial_eu(x,\cdot)\|_{L^2(\R^m)}\le\big\||\nabla_xu|(x,\cdot)\big\|_{L^2(\R^m)} .
		\]
		Taking the supremum over $e$ and integrating in $x$ gives $\|\nabla U_2\|_2\le\|\nabla_xu\|_2$. A Lipschitz function in $L^2$ with gradient in $L^2$ belongs to $H^1$, and $\|D^1f\|_2=\|\nabla f\|_2$ by the Plancherel identity.
		
		Finally, if $u(Qx,y)=u(x,y)$ for all $Q\in O(n)$, then $U_2(Qx)=U_2(x)$.
	\end{proof}

	The next result is the radial extension of Lemma~\ref{lem:HS}. It allows $a<0$, that is, growing weights on the left-hand side. It is the radial weighted Sobolev inequality of \cite{DrelichmanDuran11,DDD12}, where it is used together with a Strauss-type lemma for radial functions.

	The core estimate needed to exploit radial symmetry in $x$ is the following weighted Sobolev inequality, which accommodates growing weights for radially symmetric functions.
	\begin{lemma}[Radial weighted Sobolev inequality {\cite{DrelichmanDuran11,DDD12}}]\label{s3lem:radSob}
		Let $d\ge2$, $0<\sigma<\frac d2$, $c>-2\sigma$ with $c(1-2\sigma)\le2\sigma(d-1)$, and $q=\frac{2(d+c)}{d-2\sigma}$ (so that $q>2$). Then
		\[
		\big\||x|^{c/q}u\big\|_{L^q(\R^d)}\le C\,\|D^{\sigma}u\|_{L^2(\R^d)}\qquad\text{for every radial }u\in C_c^\infty(\R^d).
		\]
	\end{lemma}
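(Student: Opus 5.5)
The plan is to split into three parameter regimes. Two of them reduce directly to results already available in the paper. The genuinely radial case $\sigma<\frac12$, $c>0$ needs a one-dimensional argument. Throughout I write $p_0=\frac{2d}{d-2\sigma}$ for the Sobolev exponent, so that $\|u\|_{p_0}\lesssim\|D^\sigma u\|_2$. A scaling check shows that $q=\frac{2(d+c)}{d-2\sigma}$ is the only exponent compatible with the dilation $u\mapsto u(\lambda\cdot)$, so all estimates below are consistent with it.

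\emph{Case $c\le0$, that is $-2\sigma<c\le0$.} Here no symmetry is needed. Put $a=-c/q\ge0$. The condition $a\le\sigma$ is equivalent to $-c(d-2\sigma)\le2\sigma(d+c)$, that is, to $c\ge-2\sigma$. A short computation gives $\frac1q=\frac12-\frac{\sigma-a}{d}$. The estimate is then exactly Lemma~\ref{lem:HS} with $(m,r)$ replaced by $(d,\sigma)$.

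\emph{Case $c>0$ and $\sigma>\frac12$.} In this case the side condition $c(1-2\sigma)\le2\sigma(d-1)$ is automatic. I will use the Strauss bound of Lemma~\ref{lem:strauss}(i), namely $|x|^{\frac d2-\sigma}|u(x)|\lesssim\|D^\sigma u\|_2$. The key algebraic fact is
\[
q-p_0=\frac{2c}{d-2\sigma},\qquad\text{so}\qquad c=\Big(\frac d2-\sigma\Big)(q-p_0).
\]
Hence
\[
\int|x|^c|u|^q=\int\big(|x|^{\frac d2-\sigma}|u|\big)^{q-p_0}|u|^{p_0}\lesssim\|D^\sigma u\|_2^{\,q-p_0}\,\|u\|_{p_0}^{p_0}\lesssim\|D^\sigma u\|_2^{\,q}.
\]
The same argument covers $\sigma=\frac12$ if one has a suitable substitute for the pointwise bound. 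Alternatively, $\sigma=\frac12$ can be treated together with the next case.

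\emph{Case $c>0$ and $\sigma\le\frac12$: the main obstacle.} Here pointwise Strauss bounds fail, and the restriction $c\le c_{\max}:=\frac{2\sigma(d-1)}{1-2\sigma}$ is a genuine constraint. My first observation is that $d+c_{\max}=\frac{d-2\sigma}{1-2\sigma}$, so at the endpoint $q=\frac{2}{1-2\sigma}$. This is precisely the one-dimensional Hardy--Littlewood--Sobolev exponent. I will therefore prove only the endpoint $c=c_{\max}$ and obtain $0<c<c_{\max}$ by Hölder interpolation between it and the case $c=0$, which is the Sobolev inequality. Because the exponents $c/q$ and $1/q$ are affine in the interpolation parameter, the interpolated inequality has the correct form.

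For the endpoint, I write $u=I_\sigma f$ with $f=D^\sigma u$ radial in $L^2$. In polar coordinates this becomes $u(\rho)=\int_0^\infty K(\rho,t)f(t)t^{d-1}dt$, where $K(\rho,t)=\int_{S^{d-1}}|\rho e_1-t\omega|^{\sigma-d}d\omega$. I split the $t$-integration into three regions.
\begin{itemize}
\item On $t<\rho/2$ one has $K\simeq\rho^{\sigma-d}$.
\item On $t>2\rho$ one has $K\simeq t^{\sigma-d}$.
\item On $\rho/2\le t\le2\rho$ the spherical average satisfies $K\lesssim\rho^{-(d-1)}|\rho-t|^{\sigma-1}$. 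This bound is valid since $\sigma<1$.
\end{itemize}
The two far regions are Hardy-type operators with power weights. I will bound them by Schur's test; this is where the margin in the power-weight exponents must be verified. After substituting $F(t)=t^{\frac{d-1}2}f(t)\in L^2(dt)$, the diagonal region becomes $\rho^{-(d-1)/2}\int_{|t-\rho|<\rho}|\rho-t|^{\sigma-1}F(t)\,dt$, up to bounded factors. Multiplying by the weight $\rho^{(c_{\max}+d-1)/q}$ should leave exactly the one-dimensional operator $I^{(1)}_\sigma F$, with no residual power of $\rho$, and 1D HLS then gives the $L^2\to L^{2/(1-2\sigma)}$ bound. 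Checking this exponent bookkeeping is the delicate step; it is precisely what forces $c\le c_{\max}$. The case $\sigma=\frac12$, where $q\to\infty$ and 1D HLS degenerates, is handled by the previous Strauss argument rather than by this route.
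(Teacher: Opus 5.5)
\textbf{Gap: the case $\sigma=\tfrac12$, $c>0$ is never proved.} In Case~2 you defer $\sigma=\tfrac12$ to Case~3, and at the end of Case~3 you send it back to ``the previous Strauss argument''. Neither tool works there:
\begin{itemize}
\item The pointwise bound $|x|^{\frac d2-\sigma}|u(x)|\lesssim\|D^\sigma u\|_2$ is false at $\sigma=\tfrac12$. The integral $\int_{1/|x|}^\infty\rho^{-2\sigma}\,d\rho$ in the proof of Lemma~\ref{lem:strauss}(i) diverges, and a radial profile with a $\log\log$ singularity on the unit sphere is a genuine counterexample.
\item $c_{\max}=\frac{2\sigma(d-1)}{1-2\sigma}=\infty$, so there is no endpoint to H\"older-interpolate from; one-dimensional HLS into $L^\infty$ fails.
\end{itemize}
The lemma covers every $c>0$ at $\sigma=\tfrac12$. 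The paper needs this case: Corollary~\ref{s3cor:HSrad} with $s=\tfrac12$ and $a<0$ feeds Theorem~\ref{s3thm:main}(b).

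For $\sigma\neq\tfrac12$ your argument is sound. Case~1 is exactly Lemma~\ref{lem:HS}. The identity $c=(\frac d2-\sigma)(q-p_0)$ makes Case~2 immediate. The H\"older interpolation in $c$ is consistent because $q$ is affine in $c$, and your three kernel bounds are correct. This is a self-contained route, and it proves the endpoint $c=c_{\max}$ directly. The paper gives no proof at all: it quotes the result from Drelichman--Dur\'an and De N\'apoli--Drelichman--Dur\'an, and hedges on that endpoint in its closing remark.

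\textbf{How to repair it.} Avoid the endpoint and run your polar reduction at the given $c$. Set $U(\rho)=\rho^{(c+d-1)/q}u(\rho)$ and $F(t)=t^{\frac{d-1}2}f(t)$. Your diagonal bound $K\lesssim\rho^{1-d}|\rho-t|^{\sigma-1}$ then gives a kernel $\lesssim\rho^{e}|\rho-t|^{\sigma-1}$ on $\{\rho/2\le t\le 2\rho\}$, where
\[
e=\frac{c+d-1}{q}-\frac{d-1}{2}=\frac12-\frac1q-\sigma=\frac{c(1-2\sigma)-2\sigma(d-1)}{2(d+c)}.
\]
Thus $e\le0$ is exactly the hypothesis, and $e=-\frac1q<0$ when $\sigma=\frac12$.

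Work block by block, with $\rho\in[2^j,2^{j+1}]$. Use Young's inequality with $\frac1{r_0}=\frac12+\frac1q$ and
\[
\big\||\cdot|^{\sigma-1}\mathbf 1_{\{|\cdot|\le 2^{j+2}\}}\big\|_{L^{r_0}}\simeq 2^{-je},
\]
which is finite precisely because $e<0$. This gives an $L^2\to L^q$ bound uniform in $j$. Sum the blocks using $q\ge2$ and the bounded overlap of the $t$-intervals.

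The two off-diagonal pieces are weighted Hardy operators. The Bradley--Muckenhoupt condition holds for both, since the powers of $r$ cancel exactly. This covers $\sigma=\frac12$, and in fact every non-endpoint $c$ with $\sigma<1$.

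\textbf{A smaller point.} Schur's test in its usual form gives $L^p\to L^p$ bounds, not $L^2\to L^q$ with $q>2$. At your endpoint you do not need it. In both far regions
\[
\rho^{\frac{d-1}2}t^{\frac{d-1}2}K(\rho,t)\lesssim\max(\rho,t)^{\sigma-1}\le|\rho-t|^{\sigma-1},
\]
so the whole operator is dominated by $I^{(1)}_\sigma|F|$, and one application of one-dimensional HLS finishes.
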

	
	We now rewrite this inequality in the parametrization used in the present note and extend it by density from radial $C_c^\infty$ to radial $H^s$.

	Rewriting this estimate in our parameterization and extending it by density to $H^s(\mathbb{R}^n)$, we deduce a general radial Hardy--Sobolev inequality tailored to our setting.
	\begin{corollary}[Radial Hardy--Sobolev inequality]\label{s3cor:HSrad}
		Let $n\ge2$, $0<s<\frac n2$, and let $a\in\R$ satisfy
		\begin{equation}\label{s3eq:arange}
			s-\frac n2<a<s\qquad\text{and}\qquad a\ge-(n-1)s .
		\end{equation}
If $\frac1g=\frac12-\frac{s-a}{n}$, then $g\in(2,\infty)$ and
		\[
		\big\||x|^{-a}f\big\|_{L^g(\R^n)}\le C\,\|D^sf\|_{L^2(\R^n)}\qquad\text{for every radial }f\in H^s(\R^n).
		\]
	\end{corollary}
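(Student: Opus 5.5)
The plan is to apply Lemma~\ref{s3lem:radSob} with $d=n$, $\sigma=s$ and a suitable $c$, after translating between the two parametrizations. Given $a$ satisfying \eqref{s3eq:arange}, we set $\frac1g=\frac12-\frac{s-a}{n}$ and look for $c$ with $c/q=-a$ and $q=g$. Solving $q=\frac{2(n+c)}{n-2s}$ together with $c=-ag$ gives
\[
\frac1g=\frac{n-2s}{2(n+c)},\qquad c=-ag .
\]
Substituting $c=-ag$ yields $n-2s=\frac{2n}{g}-2a$, which is exactly the defining relation of $g$, so the two parametrizations agree. The range of $g$ comes from \eqref{s3eq:arange}: $a<s$ gives $\frac1g<\frac12$, so $g>2$, and $a>s-\frac n2$ gives $\frac1g>0$, so $g<\infty$.

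Next I would check the hypotheses of Lemma~\ref{s3lem:radSob}. The condition $c>-2\sigma$ reads $-ag>-2s$, that is $ag<2s$. If $a\le0$ this is trivial. If $0<a<s$, it follows from $g=\frac{2n}{n-2s+2a}$, since $ag<2s$ is equivalent to $2an<2s(n-2s+2a)$, i.e.\ $(a-s)(n-2s)<0$, which holds. The remaining condition is $c(1-2s)\le 2s(n-1)$, i.e.\ $-ag(1-2s)\le2s(n-1)$. The hypothesis $a\ge-(n-1)s$ should be exactly what delivers this. When $a\ge0$ and $s\le\frac12$ the left side is $\le0$. In the general case I would multiply out using $g=\frac{2n}{n-2s+2a}$ and reduce to a linear inequality in $a$. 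Write $D=n-2s+2a>0$; the condition becomes $-2an(1-2s)\le 2s(n-1)D$, which simplifies to $a\bigl(2s(n-1)+n(1-2s)\bigr)\ge -s(n-1)(n-2s)$. Since $2s(n-1)+n(1-2s)=n-2s>0$, this is $a\ge-(n-1)s$. This algebraic matching is the main bookkeeping step, and the step most liable to sign errors, so I would verify it carefully.

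Finally, I would extend from radial $C_c^\infty$ to radial $f\in H^s(\R^n)$ by density. Take radial mollification and radial cutoffs $f_k=\chi(\cdot/k)(f*\varphi_{1/k})$ with $\chi,\varphi$ radial. These lie in radial $C_c^\infty$ and satisfy $f_k\to f$ in $H^s$, using Lemma~\ref{lem:mult} to control the cutoff uniformly in $k$. The inequality then gives that $(|x|^{-a}f_k)$ is Cauchy in $L^g$. A subsequence converges a.e.\ to $f$, so Fatou's lemma yields $\||x|^{-a}f\|_{L^g}\le C\liminf_k\|D^sf_k\|_2=C\|D^sf\|_2$. The only delicate point here is that the cutoff error in $\dot H^s$ vanishes, which follows from the $H^s$ convergence.
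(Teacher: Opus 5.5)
Your proposal is correct and follows the paper's proof. Like the paper, you apply Lemma~\ref{s3lem:radSob} with $d=n$, $\sigma=s$, $q=g$, $c=-ag$, verify its hypotheses, and pass from radial $C_c^\infty$ to radial $H^s$ by density and Fatou's lemma.

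There are two small differences. First, your single computation, multiplying by $n-2s+2a>0$, shows that $c(1-2s)\le 2s(n-1)\iff a\ge-(n-1)s$ for every admissible $a$; this is a cleaner, case-free replacement for the paper's four-case check. Second, your density step uses the uniform-in-$k$ bound of Lemma~\ref{lem:mult} for the cutoffs $\chi(\cdot/k)$, where the paper estimates the Gagliardo seminorm of the cutoff error directly; your version has the minor advantage of not needing $s\le 1$.
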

	
	\begin{proof}
		We have $\frac1g\in(0,\frac12)$ by \eqref{s3eq:arange}, so $g\in(2,\infty)$ and $n-2s+2a>0$; in fact $g=\frac{2n}{n-2s+2a}$. Apply Lemma~\ref{s3lem:radSob} with $d=n$, $\sigma=s$, $q=g$ and $c=-ag$, so that $|x|^{c/q}=|x|^{-a}$. We check the hypotheses.
		
		First, the relation $g=\frac{2(n+c)}{n-2s}$ holds. Indeed, from $g(n-2s+2a)=2n$ we get $ag=\frac12\big(2n-g(n-2s)\big)$, hence $n+c=n-ag=\frac12g(n-2s)$.
		
		Second, $c>-2s$, which means $ag<2s$. Since $g=\frac{2n}{n-2s+2a}>0$, the inequality $ag<2s$ is equivalent to $na<s(n-2s+2a)$, that is $a(n-2s)<s(n-2s)$, that is $a<s$.
		
		Third, $c(1-2s)\le2s(n-1)$. We distinguish cases. If $c\le0$ and $s\le\frac12$, the left-hand side is $\le0$. If $c\le0$ and $s>\frac12$, then $|c|<2s$ by the second point, so $c(1-2s)=|c|(2s-1)<2s(2s-1)<2s(n-1)$, because $2s<n$. If $c>0$ and $s\ge\frac12$, the left-hand side is $\le0$. Finally, let $c>0$ (that is $a<0$) and $s<\frac12$, and write $\alpha=-a>0$. Then $c=\alpha g$ with $g=\frac{2n}{n-2s-2\alpha}$, and multiplying the inequality by $n-2s-2\alpha>0$ it becomes
		\begin{align*}
			2n\alpha(1-2s)\le2s(n-1)(n-2s-2\alpha)
			&\iff\alpha\big(n-2ns+2s(n-1)\big)\le s(n-1)(n-2s)\\
			&\iff\alpha(n-2s)\le s(n-1)(n-2s),
		\end{align*}
		that is $\alpha\le(n-1)s$, which is the assumption $a\ge-(n-1)s$.
		
		Thus Lemma~\ref{s3lem:radSob} applies, and gives the inequality for radial $f\in C_c^\infty(\R^n)$. To pass to radial $f\in H^s(\R^n)$ we show that radial functions in $C_c^\infty$ are dense in the radial functions of $H^s$. Let $\eta_R(x)=\eta(|x|/R)$ with a radial cut-off $\eta\in C^\infty$, $\eta=1$ on $[0,\frac12]$, $\eta=0$ on $[1,\infty)$. We claim $\eta_Rf\to f$ in $H^s$ as $R\to\infty$. Clearly $\|(1-\eta_R)f\|_2\to0$. For $0<s<1$, by Lemma~\ref{s3lem:diam}\,(a) it suffices to treat the seminorm of $(1-\eta_R)f$, and
		\[
		|(1-\eta_R)f(x)-(1-\eta_R)f(z)|\le|f(x)-f(z)|\,|1-\eta_R(x)|+|f(z)|\,|\eta_R(x)-\eta_R(z)| .
		\]
		The integral of the square of the first term against $|x-z|^{-n-2s}$ tends to zero by dominated convergence, since $1-\eta_R(x)\to0$ pointwise and $|f(x)-f(z)|^2|x-z|^{-n-2s}\in L^1$. The second term gives
		\begin{multline*}
			\iint\frac{|f(z)|^2|\eta_R(x)-\eta_R(z)|^2}{|x-z|^{n+2s}}\,dx\,dz\le\|f\|_2^2\int\frac{\min(\|\eta_R'\|_\infty^2|h|^2,1)}{|h|^{n+2s}}\,dh\\
			=C\|\eta_R'\|_\infty^{2s}\|f\|_2^2\le CR^{-2s}\|f\|_2^2\to0 .
		\end{multline*}
		For $s=1$ the claim follows from $\nabla((1-\eta_R)f)=(1-\eta_R)\nabla f-f\nabla\eta_R$. After the cut-off we mollify with a radial mollifier; $\rho_\epsilon*(\eta_Rf)\to\eta_Rf$ in $H^s$ by the Plancherel identity and dominated convergence, and the result is radial, smooth and compactly supported. Hence radial $C_c^\infty$ functions are dense in the radial functions of $H^s$. If $f_k\to f$ in $H^s$ with $f_k$ radial in $C_c^\infty$ and (after a subsequence) almost everywhere, Fatou's lemma gives $\||x|^{-a}f\|_g\le\liminf_k\||x|^{-a}f_k\|_g\le C\lim_k\|D^sf_k\|_2=C\|D^sf\|_2$.
	\end{proof}
	%==========================================================================
 
	%==========================================================================
	Before embarking on the main proof, we establish the algebraic properties of our scaling parameters and prove key equivalences between the parameter conditions and weight signs.
	\begin{lemma}\label{s3lem:param}
		Assume \eqref{s3H1}--\eqref{s3H3}. Then $\Gamma>m\varrho>0$, $0<\delta<\frac12$, $0<\tau<1$ and $2<B<\infty$. Define
		\begin{equation}\label{s3eq:defa}
			a=\frac{\tau b-\beta}{1-\tau},\qquad\frac1g=\frac12-\frac{\delta}{1-\tau}\in\R .
		\end{equation}
		Then the following statements hold.
		\begin{gather}
			\beta=\tau b-(1-\tau)a,\qquad (1-\tau)(s-a)=n\delta, \tag{i}\\
			\frac1B=\frac\tau2+\frac{1-\tau}{g},\qquad\frac1g=\frac12-\frac{s-a}{n}, \tag{ii}\\
			a<s, \tag{iii}\\
			a\ge0\iff\text{\eqref{s3H4}}, \tag{iv}\\
			a\ge-(n-1)s\iff\text{\eqref{s3R1}},\qquad \frac1g>0\iff a>s-\frac n2\iff\text{\eqref{s3R2}}, \tag{v}\\
			\text{\eqref{s3H4}}\Rightarrow\text{\eqref{s3R1} and \eqref{s3R2}},\quad s<\tfrac12\ \&\ \text{\eqref{s3H0}}\Rightarrow(\text{\eqref{s3R1}}\Rightarrow\text{\eqref{s3R2}}),\quad s\ge\tfrac12\ \&\ \text{\eqref{s3H0}}\Rightarrow(\text{\eqref{s3R2}}\Rightarrow\text{\eqref{s3R1}}). \tag{vi}
		\end{gather}
	\end{lemma}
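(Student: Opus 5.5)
The plan is to treat this as pure algebra in the quantities $\varrho=s+\beta$, $\Gamma$, $\delta=r\varrho/\Gamma$, $\tau=m\delta/r$. The first step is the preliminary claims. By \eqref{s3H1}, $\varrho>0$, and \eqref{s3H2} gives $\Gamma>m\varrho>0$. Hence $\tau=m\varrho/\Gamma\in(0,1)$ and $\delta>0$. Next, \eqref{s3H3} reads $2r\varrho<\Gamma$, which is $\delta<\tfrac12$, so $B=2/(1-2\delta)\in(2,\infty)$. This repeats the computation of Remark~\ref{rem:hyp}.

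For (i)--(iii) I will copy the computation in the proof of Theorem~\ref{thm:main-w}.
\begin{itemize}
\item The first identity in (i) is the definition of $a$.
\item For the second, $(1-\tau)(s-a)=s(1-\tau)-\tau b+\beta=\varrho-\tau(b+s)=\varrho(\Gamma-m(b+s))/\Gamma=\varrho rn/\Gamma=n\delta$.
\item In (ii), the first identity follows from $1/B=\tfrac12-\delta$ and the definition of $g$. The second is obtained by substituting $\delta/(1-\tau)=(s-a)/n$.
\item (iii) is immediate, since $s-a=n\delta/(1-\tau)>0$.
\end{itemize}

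For (iv)--(v), multiply each inequality on $a$ by $(1-\tau)\Gamma>0$ and express everything through $\Gamma$. Note that $(1-\tau)\Gamma=\Gamma-m\varrho=rn+mb-m\beta$.
\begin{itemize}
\item (iv): $a\ge0$ is equivalent to $\tau b\ge\beta$, i.e.\ $m\varrho b\ge\beta\Gamma$. Expanding gives $msb+m\beta b\ge\beta rn+\beta mb+\beta ms$, which is \eqref{s3H4}.
\item (v), first equivalence: $a\ge-(n-1)s$ becomes $\tau b-\beta\ge-(n-1)s(1-\tau)$. Multiplying by $\Gamma$ gives $m\varrho b-\beta\Gamma+(n-1)s(rn+mb-m\beta)\ge0$. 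Expanding yields $-\beta rn-\beta ms+msb+(n-1)s(rn+mb)-(n-1)sm\beta\ge0$. This needs to collapse to $\beta(r+ms)\le s(mb+(n-1)r)$ after dividing by $n$. This bookkeeping is where I expect an error to hide, so I will track the coefficients of $\beta$ and of $\beta$-free terms separately: the $\beta$-coefficient is $-n(r+ms)$ and the constant is $nsmb+n(n-1)sr$, which indeed gives \eqref{s3R1}.
\item (v), second line: $1/g>0$ means $\delta/(1-\tau)<\tfrac12$, i.e.\ $2r\varrho<\Gamma-m\varrho$, i.e.\ $\varrho(m+2r)<rn+mb+ms$. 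With $\varrho=s+\beta$ this is $\beta(m+2r)<mb+r(n-2s)$, which is \eqref{s3R2}. By (ii), $1/g>0$ is the same as $s-a<n/2$, i.e.\ $a>s-\tfrac n2$.
\end{itemize}

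For (vi), work entirely in terms of $a$ using (iv) and (v).
\begin{itemize}
\item \eqref{s3H4} gives $a\ge0\ge-(n-1)s$. By \eqref{s3H0}, $s-\tfrac n2<0\le a$. This gives both \eqref{s3R1} and \eqref{s3R2}.
\item If $s<\tfrac12$ and $a\ge-(n-1)s$, then $-(n-1)s>s-\tfrac n2$ because this is equivalent to $ns<\tfrac n2$. So $a>s-\tfrac n2$, i.e.\ \eqref{s3R2}.
\item If $s\ge\tfrac12$, then $s-\tfrac n2\ge-(n-1)s$, so $a>s-\tfrac n2$ implies $a\ge-(n-1)s$, i.e.\ \eqref{s3R1}.
\end{itemize}
In this last step \eqref{s3H0} is not even needed.
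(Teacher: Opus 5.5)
Your proposal is correct and follows essentially the same route as the paper's proof. Both derive $\Gamma>m\varrho>0$, $0<\delta<\frac12$ and $0<\tau<1$ from \eqref{s3H1}--\eqref{s3H3}, prove (i)--(iii) with the same identities, obtain (iv)--(v) by multiplying by $(1-\tau)\Gamma=rn+mb-m\beta>0$, and prove (vi) by comparing the thresholds $-(n-1)s$ and $s-\frac n2$. Your coefficient bookkeeping in (v) checks out, and you are right that \eqref{s3H0} enters only in the implication \eqref{s3H4}$\Rightarrow$\eqref{s3R2}, exactly where the paper uses it.
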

	
	\begin{proof}
		By \eqref{s3H1} and \eqref{s3H2}, $\Gamma>m\varrho>0$, hence $\tau=m\varrho/\Gamma\in(0,1)$ and $\delta=r\tau/m>0$; \eqref{s3H3} is equivalent to $\delta<\frac12$, and $B=2/(1-2\delta)\in(2,\infty)$.
		
		The first identity in (i) is the definition of $a$. For the second, using $\tau\Gamma=m\varrho$ and $\Gamma-m(b+s)=rn$,
		\[
		(1-\tau)(s-a)=s(1-\tau)-\tau b+\beta=\varrho-\tau(b+s)=\varrho\,\frac{\Gamma-m(b+s)}{\Gamma}=\frac{\varrho\,rn}{\Gamma}=n\delta .
		\]
		For (ii), $\frac1B=\frac12-\delta$, and therefore $\frac\tau2+\frac{1-\tau}g=\frac\tau2+\frac{1-\tau}2-\delta=\frac12-\delta=\frac1B$; the second identity in (ii) is the second identity in (i) inserted in \eqref{s3eq:defa}. Statement (iii) follows from (i) since $\delta>0$ and $1-\tau>0$.
		
		For (iv), $a\ge0$ means $\tau b\ge\beta$, that is $m\varrho b\ge\beta\Gamma$, that is $msb+m\beta b\ge\beta rn+\beta mb+\beta ms$, which is \eqref{s3H4}.
		
		For the first equivalence in (v), note that $(1-\tau)\Gamma=\Gamma-m\varrho=rn+mb-m\beta>0$. Multiplying $a+(n-1)s\ge0$ by this positive number and using $a(1-\tau)=\tau b-\beta$ and $\tau\Gamma=m\varrho$ we obtain
		\[
		\begin{aligned}
			(a+(n-1)s)(1-\tau)\Gamma&=m\varrho b-\beta\Gamma+(n-1)s(rn+mb-m\beta)\\
			&=msb-\beta rn-\beta ms+(n-1)s\,rn+(n-1)s\,mb-(n-1)s\,m\beta\\
			&=n\Big[s\big(mb+(n-1)r\big)-\beta(r+ms)\Big],
		\end{aligned}
		\]
		so $a\ge-(n-1)s$ is equivalent to \eqref{s3R1}. For the second statement in (v), $\frac1g>0$ is equivalent to $2\delta<1-\tau$, that is $2r\varrho<\Gamma-m\varrho$, that is $(m+2r)(s+\beta)<rn+mb+ms$, that is \eqref{s3R2}; and $\frac1g>0$ is equivalent to $s-a<\frac n2$ by (ii).
		
		For (vi), if \eqref{s3H4} holds then $a\ge0>s-\frac n2$ by \eqref{s3H0} (so \eqref{s3R2} holds) and $a\ge0\ge-(n-1)s$ (so \eqref{s3R1} holds). If $s<\frac12$, then $-(n-1)s-(s-\frac n2)=n(\frac12-s)>0$, so $a\ge-(n-1)s$ implies $a>s-\frac n2$. If $s\ge\frac12$, then $(s-\frac n2)+(n-1)s=n(s-\frac12)\ge0$, so $a>s-\frac n2$ implies $a>-(n-1)s$.
	\end{proof}
	
	%==========================================================================
 \bigskip
	%==========================================================================
With our analytical preliminaries and parameter equivalences established, we proceed to prove the main theorem by combining a fiberwise Gagliardo--Nirenberg estimate in $y$ with a weighted Sobolev step in $x$.

	\begin{proof}[Proof of Theorem~\ref{s3thm:main}.]
	
	The first assertions ($\Gamma>0$, $2<B<\infty$, $0<\tau<1$) are contained in Lemma~\ref{s3lem:param}. Let $u\in\Sch(\R^{n+m})$, $u\ne0$. Then $\cA(u),\cC(u)>0$, since $D_y^ru=0$ or $D_x^su=0$ would force $\widehat u=0$ almost everywhere. If $\cA(u)=\infty$ there is nothing to prove, so we assume $\cA=\cA(u)<\infty$ and write $\cC=\cC(u)$. We use the numbers $a$ and $g$ of \eqref{s3eq:defa}. In part (a) we assume \eqref{s3H4}, so that $a\ge0$ by Lemma~\ref{s3lem:param}\,(iv), and $0\le a<s$ by (iii). In part (b) we assume $n\ge2$, \eqref{s3R1} and \eqref{s3R2}; then by Lemma~\ref{s3lem:param}\,(iii) and (v), $s-\frac n2<a<s$ and $a\ge-(n-1)s$, that is, \eqref{s3eq:arange} holds. In both parts $\frac1g=\frac12-\frac{s-a}n\in(0,\frac12)$, so $2<g<\infty$.

	\paragraph{Step 1: Gagliardo--Nirenberg inequality in $y$.}
	For $x\in\R^n$ let
	\[
	F(x)=\|D_y^ru(x,\cdot)\|_{L^2(\R^m)},\qquad U_2(x)=\|u(x,\cdot)\|_{L^2(\R^m)},\qquad U_B(x)=\|u(x,\cdot)\|_{L^B(\R^m)} .
	\]
	By Tonelli, $\||x|^bF\|_{L^2(\R^n)}=\cA$ and $\||x|^\beta U_B\|_{L^B(\R^n)}=\||x|^\beta u\|_{L^B(\R^{n+m})}$. For fixed $x$ we apply Lemma~\ref{lem:GN} to $u(x,\cdot)\in\Sch(\R^m)$ with $d=m$, $\sigma=r$, $p=B$. The hypotheses hold: $B\in(2,\infty)$, and $\frac1B>\frac12-\frac rm$ is equivalent to $\delta<\frac rm$, that is to $\tau<1$. The exponent is $\nu=\frac mr(\frac12-\frac1B)=\frac{m\delta}r=\tau$. Hence
	\begin{equation}\label{s3eq:step1}
		U_B(x)\le C\,F(x)^{\tau}\,U_2(x)^{1-\tau}\qquad\text{for every }x\in\R^n.
	\end{equation}
	
	\paragraph{Step 2: Hölder's inequality in $x$.}
	Multiplying \eqref{s3eq:step1} by $|x|^\beta$ and using $\beta=\tau b-(1-\tau)a$ (Lemma~\ref{s3lem:param}\,(i)), we obtain for $x\ne0$
	\[
	|x|^{\beta}U_B(x)\le C\,\big(|x|^{b}F(x)\big)^{\tau}\,\big(|x|^{-a}U_2(x)\big)^{1-\tau}.
	\]
	Since $\frac1B=\frac\tau2+\frac{1-\tau}g$ by Lemma~\ref{s3lem:param}\,(ii), Hölder's inequality with the exponents $\frac2\tau$ and $\frac g{1-\tau}$ yields
	\begin{equation}\label{s3eq:step2}
		\||x|^{\beta}u\|_{L^B(\R^{n+m})}\le C\,\||x|^bF\|_{L^2(\R^n)}^{\tau}\,\||x|^{-a}U_2\|_{L^g(\R^n)}^{1-\tau}=C\,\cA^{\tau}\,\||x|^{-a}U_2\|_{L^g(\R^n)}^{1-\tau}.
	\end{equation}
	Here $a$ may be negative in part (b); then $|x|^{-a}=|x|^{|a|}$ is a growing weight, and nothing in this step depends on the sign of $a$.
	
	\paragraph{Step 3: Hardy--Sobolev inequality in $x$.}
	By Lemma~\ref{s3lem:diam}\,(b), $U_2\in H^s(\R^n)$ and $\|D^sU_2\|_2\le\cC$. In part (a) we apply Lemma~\ref{lem:HS}, whose hypotheses ($0\le a<s<\frac n2$) hold, to obtain
	\begin{equation}\label{s3eq:step3}
		\||x|^{-a}U_2\|_{L^g(\R^n)}\le C\,\|D^sU_2\|_{L^2(\R^n)}\le C\,\cC .
	\end{equation}
	In part (b), $u\in\Sch_x$, so $U_2$ is radial by Lemma~\ref{s3lem:diam}\,(b), and we apply Corollary~\ref{s3cor:HSrad}, whose hypotheses ($n\ge2$, $0<s<\frac n2$ and \eqref{s3eq:arange}) hold; this again gives \eqref{s3eq:step3}. Note that it is exactly here that the radial symmetry is used.

	Inserting \eqref{s3eq:step3} into \eqref{s3eq:step2} yields $\||x|^\beta u\|_B\le C\,\cA^{\tau}\,\cC^{1-\tau}$, which is \eqref{s3eq:main}. The constants depend only on $m,n,s,r,b,\beta$. Since $\Sch_{xy}\subset\Sch_x$, part (b) also applies to functions radial in both variables.

\end{proof}
	We now present the proofs of the optimality statements and their consequences, constructing explicit test functions designed to isolate the fundamental scaling and geometric obstructions.
	%==========================================================================
	\section*{Proof of Proposition~\ref{s3prop:opt} and Corollary~\ref{s3cor:opt}}\label{s3sec:nec}
	%==========================================================================
	
	\paragraph{Test functions.}
	All the test functions below are products $u(x,y)=\phi(x)\psi(y)$ with $\phi\in C_c^\infty(\R^n\setminus\{0\})$ and $\psi\in C_c^\infty(\R^m)$, $\phi,\psi\ne0$. The function $u$ belongs to the class $\mathcal Y$ provided $\phi$ is radial when $\mathcal Y\in\{\Sch_x,\Sch_{xy}\}$ and $\psi$ is radial when $\mathcal Y\in\{\Sch_y,\Sch_{xy}\}$. For such $u$ we have
	\begin{equation}\label{s3eq:prod}
		\||x|^\beta u\|_B=\||x|^\beta\phi\|_B\|\psi\|_B,\quad\cA(u)=\||x|^b\phi\|_2\|D^r\psi\|_2,\quad\cC(u)=\|D^s\phi\|_2\|\psi\|_2,
	\end{equation}
	and all these quantities are finite and positive, for all real $b,\beta$, because $\operatorname{supp}\phi\subset\{x\ne0\}$ is compact.
	
	\paragraph{Proof of (a): scaling.}
	For $\lambda,\mu>0$ let $u_{\lambda,\mu}(x,y)=u(\lambda x,\mu y)$; it belongs to the same class as $u$, and has compact support in $\{x\neq0\}$ if $u$ has. A change of variables gives
	\begin{gather*}
		\||x|^\beta u_{\lambda,\mu}\|_B=\lambda^{-\beta-\frac nB}\mu^{-\frac mB}\||x|^\beta u\|_B,\qquad
		\cA(u_{\lambda,\mu})=\lambda^{-b-\frac n2}\mu^{r-\frac m2}\cA(u),\\
		\cC(u_{\lambda,\mu})=\lambda^{s-\frac n2}\mu^{-\frac m2}\cC(u).
	\end{gather*}
	Inserting this in \eqref{s3eq:ineqK} we get, for all $\lambda,\mu>0$,
	\[
	\lambda^{-\beta-\frac nB}\mu^{-\frac mB}\le K'\,\lambda^{e_1}\mu^{e_2},\qquad e_1=-\tau\big(b+\tfrac n2\big)+(1-\tau)\big(s-\tfrac n2\big),\quad e_2=\tau r-\tfrac m2,
	\]
	with $K'=K\cA(u)^\tau\cC(u)^{1-\tau}/\||x|^\beta u\|_B$ independent of $\lambda,\mu$. Letting $\lambda\to0$ and $\lambda\to\infty$ (with $\mu=1$) forces $-\beta-\frac nB=e_1$, and letting $\mu\to0$ and $\mu\to\infty$ forces $-\frac mB=e_2$. With $\delta':=\frac12-\frac1B$ these two relations read
	\begin{equation}\label{s3eq:scal}
		\delta'=\frac{\tau r}m,\qquad n\delta'=\varrho-\tau(b+s),
	\end{equation}
	because $e_1=-\tau b+(1-\tau)s-\frac n2$ and $-\beta-\frac nB=-\beta-\frac n2+n\delta'$. Substituting the first relation in the second gives $\tau\big(\frac{rn}m+b+s\big)=\varrho$, that is $\tau=m\varrho/\Gamma$ (recall $\Gamma>0$), and then $\delta'=r\varrho/\Gamma=\delta$ and $B=2/(1-2\delta)$. This is \eqref{s3eq:defparam}.
	
	\paragraph{Proof of (b): concentration at a point $x_0\ne0$.}
	Let $\mathcal Y=\Sch$ or $\Sch_y$. Fix $x_0\ne0$, $\phi\in C_c^\infty(B(0,1))$, $\psi\in C_c^\infty(\R^m)$ (radial if $\mathcal Y=\Sch_y$), and for $0<\varepsilon<|x_0|/2$ let $\phi_\varepsilon(x)=\phi((x-x_0)/\varepsilon)$ and $u_\varepsilon=\phi_\varepsilon\otimes\psi\in\mathcal Y$. On $\operatorname{supp}\phi_\varepsilon$ we have $\frac12|x_0|\le|x|\le\frac32|x_0|$, so by \eqref{s3eq:prod}
	\[
	\||x|^\beta u_\varepsilon\|_B\ge c\,\varepsilon^{n/B},\qquad\cA(u_\varepsilon)\le C\,\varepsilon^{n/2},\qquad\cC(u_\varepsilon)=\varepsilon^{\frac n2-s}\|D^s\phi\|_2\|\psi\|_2,
	\]
	the last identity being exact by scaling. Since $\tau\in(0,1]$, inequality \eqref{s3eq:ineqK} gives $\varepsilon^{n/B}\lesssim\varepsilon^{\tau\frac n2+(1-\tau)(\frac n2-s)}=\varepsilon^{\frac n2-(1-\tau)s}$ for all small $\varepsilon$, hence $\frac nB\ge\frac n2-(1-\tau)s$, that is
	\begin{equation}\label{s3eq:pointcond}
		n\delta'\le(1-\tau)s .
	\end{equation}
	Since $\delta'=\tau r/m>0$, this forces $\tau<1$. By the second relation in \eqref{s3eq:scal}, $n\delta'=\beta+s-\tau b-\tau s$, so \eqref{s3eq:pointcond} reads $\beta+s-\tau b-\tau s\le s-\tau s$, that is $\beta\le\tau b$. Multiplying by $\Gamma>0$ and using $\tau\Gamma=m\varrho$, this is $\beta\Gamma\le m(s+\beta)b$, that is $\beta(rn+mb+ms)\le msb+m\beta b$, which is \eqref{s3H4}.
	
	\paragraph{Proof of (c): the thin shell.}
	Let $\mathcal Y=\Sch_x$ or $\Sch_{xy}$. Fix $\phi_0\in C_c^\infty((-1,1))$, $\phi_0\ne0$, and $\psi\in C_c^\infty(\R^m)$ (radial if $\mathcal Y=\Sch_{xy}$), and for $0<\varepsilon<\frac12$ let
	\[
	\Phi_\varepsilon(x)=\phi_0\Big(\frac{|x|-1}{\varepsilon}\Big),\qquad u_\varepsilon=\Phi_\varepsilon\otimes\psi .
	\]
	Then $\Phi_\varepsilon$ is smooth, radial, and supported in the annulus $S_\varepsilon=\{1-\varepsilon\le|x|\le1+\varepsilon\}$, whose volume is $\le C\varepsilon$; thus $u_\varepsilon\in\mathcal Y$. We estimate the three factors in \eqref{s3eq:prod}. First, since $|x|\in[\frac12,\frac32]$ on $S_\varepsilon$ and $\int|\Phi_\varepsilon|^B\,dx=|S^{n-1}|\int_0^\infty t^{\,n-1}|\phi_0(\tfrac{t-1}\varepsilon)|^B\,dt\ge c\,\varepsilon$,
	\[
	\||x|^\beta\Phi_\varepsilon\|_B\ge c\,\varepsilon^{1/B},\qquad\||x|^b\Phi_\varepsilon\|_2\le C\,\varepsilon^{1/2}.
	\]
	Second, we claim $\|D^s\Phi_\varepsilon\|_2\le C\varepsilon^{\frac12-s}$. For $s=1$ this is clear, since $|\nabla\Phi_\varepsilon|\le\|\phi_0'\|_\infty\varepsilon^{-1}$ on a set of volume $\le C\varepsilon$. For $0<s<1$ we use Lemma~\ref{s3lem:diam}\,(a). Since $||x|-|z||\le|x-z|$, the function $\Phi_\varepsilon$ is Lipschitz with constant $\|\phi_0'\|_\infty/\varepsilon$. In the region $|h|<\varepsilon$ of the double integral, the integrand vanishes unless $x$ lies in the annulus $S_{2\varepsilon}$, whose volume is $\le C\varepsilon$, so
	\[
	\iint_{|h|<\varepsilon}\frac{|\Phi_\varepsilon(x+h)-\Phi_\varepsilon(x)|^2}{|h|^{n+2s}}\,dx\,dh\le C\varepsilon\cdot\varepsilon^{-2}\int_{|h|<\varepsilon}|h|^{2-n-2s}\,dh=C\varepsilon^{1-2s},
	\]
	because $s<1$. In the region $|h|\ge\varepsilon$ we use $|\Phi_\varepsilon(x+h)-\Phi_\varepsilon(x)|^2\le2|\Phi_\varepsilon(x+h)|^2+2|\Phi_\varepsilon(x)|^2$, and obtain a bound $4\|\Phi_\varepsilon\|_2^2\int_{|h|\ge\varepsilon}|h|^{-n-2s}dh\le C\varepsilon\cdot\varepsilon^{-2s}$. Adding the two regions gives the claim.
	
	Hence $\cA(u_\varepsilon)\le C\varepsilon^{1/2}$ and $\cC(u_\varepsilon)\le C\varepsilon^{\frac12-s}$, and \eqref{s3eq:ineqK} gives, for small $\varepsilon$ and $\tau\in(0,1]$,
	\[
	\varepsilon^{1/B}\lesssim\varepsilon^{\frac\tau2+(1-\tau)(\frac12-s)}=\varepsilon^{\frac12-(1-\tau)s}.
	\]
	Therefore $\frac1B\ge\frac12-(1-\tau)s$, that is
	\begin{equation}\label{s3eq:shellcond}
		\delta'\le(1-\tau)s .
	\end{equation}
	Again $\delta'=\tau r/m>0$ forces $\tau<1$. Using $\delta'=r\varrho/\Gamma$ and $1-\tau=(\Gamma-m\varrho)/\Gamma$ (recall $\Gamma>0$), \eqref{s3eq:shellcond} becomes $r\varrho\le s(\Gamma-m\varrho)=s(rn+mb-m\beta)$, that is $rs+r\beta\le srn+smb-sm\beta$, that is $\beta(r+ms)\le s(mb+(n-1)r)$. This is \eqref{s3R1}.
	
	\paragraph{Proof of (d).}
	By (a), $\tau=m\varrho/\Gamma$ and $B=2/(1-2\delta)$ with $\delta=r\varrho/\Gamma$, and by (b) or (c) we have $\tau<1$. Since $\tau>0$ and $\Gamma>0$ we get $\varrho>0$, which is \eqref{s3H1}; $\tau<1$ means $\Gamma>m\varrho$, which is \eqref{s3H2}; and $B<\infty$ means $\frac1B=\frac12-\delta>0$, that is $\Gamma>2r\varrho$, which is \eqref{s3H3}.
	
	\paragraph{Proof of Corollary~\ref{s3cor:opt}.}
	Assume \eqref{s3H0}--\eqref{s3H3}; then $\Gamma>0$, $2<B<\infty$ and $0<\tau<1$ by Lemma~\ref{s3lem:param}. (1) In the class $\Sch_y$, if \eqref{s3H4} holds then \eqref{s3eq:main} holds for all $u\in\Sch$ by Theorem~\ref{s3thm:main}\,(a), in particular for $u\in\Sch_y$. Conversely, if \eqref{s3eq:main} holds in $\Sch_y$, then \eqref{s3eq:ineqK} holds with $\mathcal Y=\Sch_y$, and Proposition~\ref{s3prop:opt}\,(b) gives \eqref{s3H4}. (2) Let $n\ge2$ and $s<\frac12$. If \eqref{s3R1} holds, then \eqref{s3R2} holds by Lemma~\ref{s3lem:param}\,(vi), and Theorem~\ref{s3thm:main}\,(b) gives \eqref{s3eq:main} on $\Sch_x$ and $\Sch_{xy}$. Conversely, \eqref{s3eq:main} on $\Sch_x$ or $\Sch_{xy}$ implies \eqref{s3eq:ineqK}, hence \eqref{s3R1} by Proposition~\ref{s3prop:opt}\,(c). For $s\ge\frac12$, necessity of \eqref{s3R1} is Proposition~\ref{s3prop:opt}\,(c), and sufficiency of \eqref{s3R1} and \eqref{s3R2} is Theorem~\ref{s3thm:main}\,(b). \qed
	
	%==========================================================================
 \bigskip
	%==========================================================================
	
	The effect of radial symmetry can be understood from the proof of Theorem~\ref{s3thm:main} and from the optimality statements. Radial symmetry in $x$ acts on the $x$-step, where the quantity $U_2(x)=\|u(x,\cdot)\|_{L^2_y}$ must be controlled in a weighted Lebesgue space by $\|D^sU_2\|_2$ alone. Without symmetry the weight $|x|^{-a}$ must satisfy $a\ge0$, and this is exactly the translation (more precisely, point-concentration) obstruction \eqref{s3eq:pointcond}. For radial functions translations are not available; the only remaining obstruction in the range of validity of the radial Hardy--Sobolev inequality is the thin shell of Proposition~\ref{s3prop:opt}\,(c), whose condition $a\ge-(n-1)s$ coincides with the condition $c(1-2s)\le2s(n-1)$ of Lemma~\ref{s3lem:radSob} when $s<\frac12$. This explains why, for $s<\frac12$, the radial result is sharp. The same thin-shell test applied directly to Lemma~\ref{s3lem:radSob} (with $\||x|^{c/q}\Phi_\varepsilon\|_q\gtrsim\varepsilon^{1/q}$ and $\|D^s\Phi_\varepsilon\|_2\lesssim\varepsilon^{\frac12-s}$) shows that its condition $c(1-2s)\le2s(n-1)$ cannot be relaxed.
	
	Radial symmetry in $y$ alone acts on the $y$-step, which is the Gagliardo--Nirenberg inequality in the unweighted space $L^B(\R^m)$ of Lemma~\ref{lem:GN}. Radial symmetry gives improvements for weighted norms (Strauss-type decay and weighted Sobolev inequalities), but for unweighted norms the range of Lemma~\ref{lem:GN} cannot be enlarged, and, more to the point, Proposition~\ref{s3prop:opt}\,(a),(b) show that all the necessary conditions are already forced by test functions that can be taken radial in $y$. Weights in $y$ would change this picture, but they require a different structure of the inequality, since the weight $|x|^b$ in $\cA(u)$ is a weight in $x$.
	
	When both variables are radial, the range of parameters is that of the $x$-radial case, by Corollary~\ref{s3cor:opt}\,(2). The additional symmetry removes the translations in $y$ as well; this is relevant for compactness questions, but it does not change the range of parameters for which \eqref{s3eq:main} holds.
	
	\begin{remark} 
		Hypothesis \eqref{s3H0}, $2s<n$, is used in Lemmas~\ref{lem:HS} and~\ref{s3lem:radSob} and cannot be dropped in the present argument, even for radial functions: for $n=2$, $s=1$ the homogeneous radial Hardy--Sobolev inequality fails. For $s\ge\frac12$ the gap between the necessary condition \eqref{s3R1} and the sufficient pair \eqref{s3R1}--\eqref{s3R2} corresponds to the endpoint $g=\infty$ of Corollary~\ref{s3cor:HSrad}, which is a radial Strauss-type pointwise estimate $|f(x)|\lesssim|x|^{-(n-2s)/2}\|D^sf\|_2$ ($\frac12<s<\frac n2$).  The endpoint $c(1-2s)=2s(n-1)$ of Lemma~\ref{s3lem:radSob} (equality in \eqref{s3R1} for $s<\frac12$) is used as stated in \cite{DrelichmanDuran11,DDD12}; if only the strict inequality is available, replace $\le$ by $<$ in \eqref{s3R1} in Theorem~\ref{s3thm:main}\,(b).
	\end{remark}

	\section{Minimizers}\label{sec-minimizers}

	Let $m,n\ge1$, $s,r\in(0,1]$, $b\ge0$, and write
	\begin{equation}\label{em-eq:D}
		D:=rn+m(b+s),\qquad B:=\frac{2D}{D-2sr}=\frac{2(rn+m(b+s))}{m(b+s)+r(n-2s)},\qquad t:=\frac{sm}{D}.
	\end{equation}
	Then $0<t<1$ and the following identities, which are used repeatedly, hold:
	\begin{equation}\label{em-eq:ids}
		1-t=\frac{rn+mb}{D},\qquad 1-\frac2B=\frac{2sr}{D},\qquad m\Big(1-\frac2B\Big)=2rt,\qquad \frac{D}{B}=\frac{D-2sr}{2}.
	\end{equation}
	(For the third identity: $m(1-2/B)=2msr/D=2rt$ because $t=sm/D$.)
	
  We assume $(m,n,s,r)\neq(1,1,1,1)$. Since $rn+ms\ge r+s\ge 2sr$ with equality throughout only if $m=n=1$ and $s=r=1$, this guarantees $D-2sr>0$, hence $2<B<\infty$. Moreover $B$ is decreasing in $b$, so $B\le B_0:=B|_{b=0}<\infty$. We put $B':=B/(B-2)$, the conjugate exponent of $B/2$, and $\varrho:=2/B\in(0,1)$.
	
  For $s\in(0,1)$ and a measurable $u$ on $\R^{n+m}$ set
	\[
	E(u):=c_{n,s}\int_{\R^m}\!\iint_{\R^n\times\R^n}\frac{|u(x,y)-u(x',y)|^2}{|x-x'|^{n+2s}}\dd x\dd x'\dd y ,
	\]
	and for $s=1$ let $E(u):=\int|\nabla_xu|^2$. For $r\in(0,1)$ put
	\[
	F(u):=c_{m,r}\int_{\R^n}|x|^{2b}\iint_{\R^m\times\R^m}\frac{|u(x,y)-u(x,y')|^2}{|y-y'|^{m+2r}}\dd y\dd y'\dd x ,
	\]
	and for $r=1$ let $F(u):=\int|x|^{2b}|\nabla_yu|^2$ (the gradient being the distributional gradient on $\{x\neq0\}$). We write $G(u)$ for $F(u)$ with $b=0$. The constants $c_{n,s}$, $c_{m,r}$ are normalised so that, for $w\in L^2(\R^{n+m})$ and the unitary Fourier transform $\hat w(\xi,\eta)$,
	\begin{equation}\label{em-eq:fourier}
		E(w)=\int|\xi|^{2s}|\hat w|^2\dd\xi\dd\eta,\qquad G(w)=\int|\eta|^{2r}|\hat w|^2\dd\xi\dd\eta .
	\end{equation}
	This is possible because, by the Plancherel identity, $\int|w(x+z,y)-w(x,y)|^2\dd x\dd y=\int|e^{iz\cdot\xi}-1|^2|\hat w|^2\dd\xi\dd\eta$, and $\int|e^{iz\cdot\xi}-1|^2|z|^{-n-2s}\dd z=\kappa_{n,s}|\xi|^{2s}$ with $\kappa_{n,s}\in(0,\infty)$ by rotation invariance and homogeneity; take $c_{n,s}=\kappa_{n,s}^{-1}$, and likewise in the $y$-variable. Finally put
	\[
	\Nn(u):=\|u\|_B^B,\qquad J(u):=\frac{E(u)^{1-t}F(u)^{t}}{\|u\|_B^{2}},\qquad J_*:=\inf\{J(u):u\in V\setminus\{0\}\},
	\]
	where $V$ is the space defined next.
	
	\paragraph{The space $V$.} Let $V:=\{u\in L^B(\R^{n+m}):E(u)+F(u)<\infty\}$ with $\|u\|_V^2:=E(u)+F(u)$, and let $V_0$ be the analogous space with $b=0$.
	
	\paragraph{Hypotheses.} What we prove is the existence of optimizers and the compactness of minimizing sequences; the inequality itself is an input, exactly as the Sobolev inequality is an input in Theorems~1 and~3 of the paper. We assume:
	
	\medskip
	\noindent\textbf{(H$_b$)} there is $C_b<\infty$ with $\|u\|_B\le C_b\,E(u)^{\frac{1-t}2}F(u)^{\frac t2}$ for all $u\in V$;
	
	\smallskip
	\noindent\textbf{(H$_0$)} (only needed if $b>0$) there is $C_0<\infty$ with $\|v\|_{B_0}\le C_0\,E(v)^{\frac{1-t_0}2}G(v)^{\frac{t_0}2}$ for all compactly supported $v\in V_0$, where $t_0=\frac{sm}{rn+ms}$.
	
	\medskip
	Under (H$_b$) we have $J_*\ge C_b^{-2}>0$.
	
	\paragraph{Symmetries.} For $c,\lambda,\mu>0$ and $y_0\in\R^m$ let $(T_{c,\lambda,\mu,y_0}u)(x,y):=c\,u(\lambda x,\mu(y-y_0))$. Let $\G$ be the group generated by these maps if $b>0$, and by these maps together with translations in $x$ if $b=0$. By Lemma~\ref{em-lem:scaling} below, $J(gu)=J(u)$ for all $g\in\G$.

 With the functional framework, hypotheses, and symmetries firmly established, we can now state our main result regarding the strict realization of the infimum $J_*$. 
	\begin{theorem}\label{em-thm:main}
		Assume (H$_b$) and, if $b>0$, also (H$_0$). Then $J_*$ is attained. More precisely, if $(\tilde u_k)\subset V\setminus\{0\}$ satisfies $J(\tilde u_k)\to J_*$, then there are $g_k\in\G$ and a subsequence such that $g_k\tilde u_k\to u_0$ strongly in $V$, where $u_0\in V$ satisfies $J(u_0)=J_*$.
	\end{theorem}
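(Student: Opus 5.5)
The plan is a concentration-compactness argument in the spirit of Lions, carried out in four stages. Throughout, I use the invariance $J(gu)=J(u)$ for $g\in\G$ from Lemma~\ref{em-lem:scaling} and the homogeneity of $J$.

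\emph{Normalization.} Given a minimizing sequence $(\tilde u_k)$, I first apply a dilation $T_{c,\lambda,\mu,0}$. The amplitude $c$ and the two dilations act on the triple $(\|u\|_B^B,E,F)$ by the factors
\[
\|u\|_B^B\mapsto c^B\lambda^{-n}\mu^{-m}\|u\|_B^B,\qquad E\mapsto c^2\lambda^{2s-n}\mu^{-m}E,\qquad F\mapsto c^2\lambda^{-2b-n}\mu^{2r-m}F .
\]
The exponent matrix of this action has rank $3$, except for the one-parameter degeneracy that makes $J$ invariant. I use this freedom to impose $\|u_k\|_B=1$ and $E(u_k)=F(u_k)$, so that $E(u_k)=F(u_k)\to J_*$. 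The remaining one-parameter freedom is kept in reserve to fix the concentration scale.

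\emph{Non-vanishing.} Next I decompose in $y$-frequency, as in the second proof of Section~\ref{sec-secondproof}. I combine this with a refined inequality of the form
\[
\|u\|_B\lesssim \|u\|_{V}^{\theta}\sup_{j,\,Q}\bigl(\text{local }L^2\text{ mass of }P_ju\text{ on a dyadic box }Q\bigr)^{1-\theta},
\]
obtained by interpolating the scale-invariant bound \eqref{eq5.1} with a Bernstein/H\"older estimate on boxes. This produces scales $2^{j_k}$ and points $(x_k,y_k)$ carrying a fixed amount of mass. I then use the reserved dilation freedom to set $j_k=0$ and translate to $y_k=0$. If $b=0$ I also translate $x_k$ to $0$.

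\emph{Bounded concentration point ($b>0$).} If $b>0$, I must show that the normalized $x_k$ stay bounded; this is difficulty (ii). Suppose instead that $|x_k|\to\infty$. After localizing near $x_k$, the weight $|x|^{2b}$ is essentially the constant $|x_k|^{2b}$, and a further rescaling turns the profile into a nonzero function $v$ for the unweighted problem. Hypothesis (H$_0$) applies to $v$, and the exponents $B_0$, $t_0$ differ from $B$, $t$. Comparing homogeneities, the localized $L^B$ mass would then have to vanish as $|x_k|\to\infty$, which contradicts non-vanishing. I expect this to be the main obstacle, because the two problems scale differently and the splitting of nonlocal energies must be quantitative. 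Having bounded $x_k$, I extract a weak limit $u_0\neq0$ in $V$ along a subsequence; the $L^B$ bound and $V$-bounds give weak compactness, and local compactness as in Lemma~\ref{lemma6} gives a.e.\ convergence.

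\emph{Dichotomy exclusion and conclusion.} Write $u_k=u_0+w_k$. The Brezis--Lieb lemma gives
\[
1=\|u_0\|_B^B+\|w_k\|_B^B+o(1),
\]
and weak convergence in the Hilbert space $V$ gives
\[
E(u_k)=E(u_0)+E(w_k)+o(1),\qquad F(u_k)=F(u_0)+F(w_k)+o(1).
\]
The splitting lemma, Lemma~\ref{em-lem:split}, handles the nonlocal cross terms if a localized version is needed. Put $a=\|u_0\|_B^B\in(0,1]$. Applying $J\ge J_*$ to both pieces and using H\"older's inequality in the form $E^{1-t}F^t+E'^{1-t}F'^t\le(E+E')^{1-t}(F+F')^t$ yields
\[
J_*\bigl(a^{2/B}+(1-a)^{2/B}\bigr)\le J_*+o(1).
\]
Since $2/B<1$, the function $a\mapsto a^{2/B}+(1-a)^{2/B}$ exceeds $1$ for $a\in(0,1)$, so this forces $a=1$. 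Hence $w_k\to0$ in $L^B$, and then $E(w_k)+F(w_k)\to0$, which is strong convergence in $V$. Finally, $J(u_0)=J_*$ follows from lower semicontinuity of $J$ under weak convergence in $V$ combined with strong convergence in $L^B$.
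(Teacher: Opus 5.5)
Your skeleton agrees with the paper, and your last stage is essentially its Step~2. By skeleton I mean: normalize $\Nn=1$ and $E=F$ with the hidden dilation of Lemma~\ref{em-lem:hidden} held in reserve, translate in $y$, bound the centre for $b>0$, take a weak limit, then apply Brezis--Lieb and strict subadditivity of $a\mapsto a^{2/B}$.

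\textbf{Non-vanishing.} This is the genuine gap, and it is the heart of the theorem. Your ``refined inequality'' $\|u\|_B\lesssim\|u\|_V^{\theta}\sup_{j,Q}(\cdots)^{1-\theta}$ is not proved anywhere. It is also not a consequence of (H$_b$) and (H$_0$), which are the only hypotheses of the theorem, so as written it is an extra assumption. It does not follow from \eqref{eq5.1} by Bernstein and H\"older either, for four reasons.
\begin{itemize}
\item In Section~\ref{sec-secondproof} the pieces are recombined only through $\ell^2$-orthogonality and H\"older in $j$, which leaves no room to pull out a $\sup_j$ factor.
\item There is no frequency localization in $x$, and $x$-projections do not commute with $|x|^b$, so Bernstein yields no local gain in $x$.
\item The absorption step in Lemma~\ref{lem2.2} uses $\|g\|_2<\infty$, whereas $V\not\subset L^2$.
\item Your boxes would have to be adapted both to the anisotropic scaling $(\tau^r,\tau^{b+s})$ and to the weight.
\end{itemize}
The paper needs no refined inequality. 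It uses the hidden dilation to normalize $\sup_{Q_1}\int|u_k|^B=\delta_*$ exactly (Lemma~\ref{em-lem:conc}). If $u_0=0$, then $u_k\to0$ in $L^2(Q)$, so Lemma~\ref{em-lem:split} decouples $E$ and $F$ across each of $L$ disjoint transition shells. Lemma~\ref{em-lem:elem}\,(iii) then forces each shell to carry $L^B$-mass at least $\varepsilon_0$, which is absurd for $L>1/\varepsilon_0$. This is where the quantitative nonlocal splitting is indispensable. In your final step it is not needed, because weak convergence in the Hilbert space $V$ splits $E$ and $F$ exactly.

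\textbf{Bounded centre.} This step is only heuristic and omits the balance that makes it work. Since $u$ is only in $L^B$, localizing near $x_k$ also requires a cut-off in $y$. The error $\int W_\theta\psi^2|u|^2$ can only be controlled via H\"older, by $\|W_\theta\|_{L^{B'}}\|\psi^2\|_{L^{B'}}\|u\|_B^2$, which is infinite for $\psi\equiv1$.
\begin{itemize}
\item A $y$-cut-off of width $\Lambda$ costs $\Lambda^{-2r(1-t)}$ in $G$. This must not exceed the main term $M^{-2b}F(u)$, where $M\approx|x_k|$.
\item That forces $\Lambda=M^{b/(r(1-t))}$, which raises the $E$-error to $M^{2bt/(1-t)}$.
\item Only then does (H$_0$) give $\|v\|_{B_0}^2\lesssim M^{-\theta_0}$ with $\theta_0=\frac{2b^2sm^2}{(rn+mb)(rn+ms)}>0$. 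This contradicts $\|v\|_{B_0}\ge c\,\delta^{1/B}$, obtained from H\"older on the unit box (Lemma~\ref{em-lem:center}).
\end{itemize}
``Comparing homogeneities'' does not show that this closes. Also, your non-vanishing quantity is the local $L^2$ mass of $P_0u_k$, which is nonlocal in $y$, so you would additionally have to transfer that lower bound to the localized function.

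\textbf{Smaller points.}
\begin{itemize}
\item The exponent matrix of $(c,\lambda,\mu)$ has rank $2$, not $3$; its kernel is the hidden dilation.
\item Lemma~\ref{lemma6} is for $X\subset L^2$, so in $V$ you need Lemma~\ref{em-lem:loccomp}.
\item $\|w_k\|_B\to0$ alone does not give $E(w_k)+F(w_k)\to0$. You first need $E(u_0)=F(u_0)=J_*$, which follows from lower semicontinuity, $J(u_0)\ge J_*$ and $E(u_k)=F(u_k)$. Only then does $E(w_k)=E(u_k)-E(u_0)+o(1)\to0$ follow, and likewise for $F$.
\end{itemize}
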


Before analyzing minimizing sequences, we must first establish the fundamental topological structure and weak lower semicontinuity properties of our working space $V$.
	
	\begin{lemma}[The space $V$]\label{em-lem:V}
		Assume (H$_b$). Then $\|\cdot\|_V$ is a norm and $(V,\|\cdot\|_V)$ is a Hilbert space continuously embedded in $L^B$. The forms $E,F$ are induced by bounded symmetric bilinear forms $\langle\cdot,\cdot\rangle_E$, $\langle\cdot,\cdot\rangle_F$ on $V$ with $|\langle u,v\rangle_E|\le\|u\|_V\|v\|_V$ and likewise for $F$. Consequently, if $u_k\rightharpoonup u_0$ weakly in $V$ and $v_k:=u_k-u_0$, then
		\begin{equation}\label{em-eq:hilbertsplit}
			E(u_k)=E(u_0)+E(v_k)+o(1),\qquad F(u_k)=F(u_0)+F(v_k)+o(1),
		\end{equation}
		and in particular $E(u_0)\le\liminf E(u_k)$, $F(u_0)\le\liminf F(u_k)$.
	\end{lemma}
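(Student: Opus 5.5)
The plan is to realize $V$ isometrically as a closed subspace of a product of weighted $L^2$ spaces. First, $E$ and $F$ are quadratic forms given by integrals of $|u(x,y)-u(x',y)|^2$ (respectively $|x|^{2b}|u(x,y)-u(x,y')|^2$) against positive kernels, or by $\int|\nabla_x u|^2$ and $\int|x|^{2b}|\nabla_y u|^2$ when $s=1$ or $r=1$. Hence they are squares of seminorms, and polarization gives symmetric bilinear forms $\langle\cdot,\cdot\rangle_E$, $\langle\cdot,\cdot\rangle_F$ with $|\langle u,v\rangle_E|\le E(u)^{1/2}E(v)^{1/2}\le\|u\|_V\|v\|_V$ by Cauchy--Schwarz, and likewise for $F$. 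So $\|\cdot\|_V$ is a seminorm induced by the inner product $\langle\cdot,\cdot\rangle_E+\langle\cdot,\cdot\rangle_F$. By (H$_b$) and Young's inequality,
\[
\|u\|_B\le C_bE(u)^{\frac{1-t}2}F(u)^{\frac t2}\le C_b\|u\|_V .
\]
Therefore $\|u\|_V=0$ forces $u=0$, so $\|\cdot\|_V$ is a norm and $V\hookrightarrow L^B$ continuously.

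For completeness, take a Cauchy sequence $(u_k)$ in $V$. By the embedding it is Cauchy in $L^B$, so $u_k\to u$ in $L^B$, and along a subsequence $u_k\to u$ a.e. Set $\Delta_xu(x,x',y)=u(x,y)-u(x',y)$. The sequence $\Delta_xu_k$ is Cauchy in $L^2(c_{n,s}|x-x'|^{-n-2s}\,dx\,dx'\,dy)$, and its limit must equal $\Delta_xu$ a.e. by the pointwise convergence. Fatou's lemma then gives $E(u_k-u)\le\liminf_j E(u_k-u_j)\to0$, and the same argument with the weight $|x|^{2b}$ gives $F(u_k-u)\to0$. Thus $u\in V$ and $u_k\to u$ in $V$.

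The main obstacle is the local case $s=1$ (or $r=1$), since there are no difference quotients to apply Fatou to. There I would use that $\nabla_xu_k$ is Cauchy in $L^2$, so it converges to some $G$. Since $u_k\to u$ in $L^B_{\rm loc}$, the distributional gradients converge, which identifies $G=\nabla_xu$. For $F$ the identification is done on $\{x\ne0\}$, where the weight $|x|^{2b}$ is locally bounded above and below, and the limit $G$ lies in $L^2(|x|^{2b})$. Lower semicontinuity then follows by the same weak-limit identification.

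Finally, suppose $u_k\rightharpoonup u_0$ in $V$ and set $v_k=u_k-u_0$. Expanding the quadratic form gives
\[
E(u_k)=E(u_0)+E(v_k)+2\langle u_0,v_k\rangle_E .
\]
The map $v\mapsto\langle u_0,v\rangle_E$ is a bounded linear functional on $V$, and $v_k\rightharpoonup0$, so the cross term tends to $0$. The same holds for $F$, which proves \eqref{em-eq:hilbertsplit}. Because $E(v_k),F(v_k)\ge0$, the lower semicontinuity inequalities $E(u_0)\le\liminf E(u_k)$ and $F(u_0)\le\liminf F(u_k)$ follow immediately.
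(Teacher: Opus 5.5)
Your proposal is correct and follows essentially the same route as the paper. Both arguments get the embedding from (H$_b$) and the weighted AM--GM/Young bound, the bounded forms from Cauchy--Schwarz, and completeness from $L^B$ convergence, an a.e.\ subsequence and Fatou, with distributional gradients identified on $\{x\neq0\}$ when $s=1$ or $r=1$. The splitting \eqref{em-eq:hilbertsplit} then comes from expanding the quadratic form and letting weak convergence kill the cross term $2\langle u_0,v_k\rangle_E$.
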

	
	\begin{proof}
		By (H$_b$) and the weighted arithmetic--geometric mean inequality $E^{1-t}F^t\le(1-t)E+tF\le E+F$ we get $\|u\|_B\le C_b\|u\|_V$, so $\|u\|_V=0$ forces $u=0$. The forms $E$ and $F$ are quadratic forms, being (for fractional order) integrals of squares of linear expressions in $u$, or (for order one) squares of an $L^2$-norm of a derivative; hence they come from symmetric bilinear forms, and the bound $|\langle u,v\rangle_E|\le E(u)^{1/2}E(v)^{1/2}$ is the Cauchy--Schwarz inequality.
		
		For completeness, let $(u_j)$ be Cauchy in $V$. Then it is Cauchy in $L^B$, so $u_j\to u$ in $L^B$ and a subsequence converges almost everywhere. In the fractional case the integrands defining $E(u_i-u_j)$ and $F(u_i-u_j)$ converge almost everywhere on the relevant product spaces as $i\to\infty$ along this subsequence, so Fatou's lemma gives $E(u-u_j)\le\liminf_iE(u_i-u_j)$ and the same for $F$. Given $\varepsilon>0$ the right-hand sides are less than $\varepsilon$ for $j$ large, hence $u\in V$ and $u_j\to u$ in $V$. For $s=1$, $\nabla_xu_j$ is Cauchy in $L^2$, hence converges to some $g$, and since $u_j\to u$ in $L^1_{\rm loc}$ we get $\nabla_xu=g$ in the sense of distributions. For $r=1$ we argue in the same way on the open set $\{x\neq0\}$, where $|x|^b$ is locally bounded above and below.
		
		For (\ref{em-eq:hilbertsplit}) we write, for each of the two forms, $E(u_k)=E(u_0)+E(v_k)+2\langle u_0,v_k\rangle_E$. Since $v\mapsto\langle u_0,v\rangle_E$ is a bounded linear functional on $V$ and $v_k\rightharpoonup0$, the last term tends to $0$. The lower semicontinuity follows because $E(v_k)\ge0$.
	\end{proof}

To carefully handle the nonlinear structure of the functional and the fractional exponents that appear in our sub-additivity estimates, we will rely on the following sequence of elementary algebraic inequalities.
	\begin{lemma} \label{em-lem:elem}
		Let $t\in(0,1)$ and $\varrho\in(0,1)$.
		\begin{enumerate}
			\item[] \textup{(i)} If $A_1,A_2,B_1,B_2\ge0$ then $(A_1+A_2)^{1-t}(B_1+B_2)^t\ge A_1^{1-t}B_1^t+A_2^{1-t}B_2^t$.
			\item[] \textup{(ii)} If $\alpha\in(0,1)$ then $\alpha^\varrho+(1-\alpha)^\varrho>1$.
			\item[] \textup{(iii)} For every $\delta\in(0,1]$ there is $\gamma=\gamma(\delta,\varrho)>0$ such that $a^\varrho+b^\varrho\ge(1+\gamma)(a+b)^\varrho$ for all $a,b\in[\delta,1]$.
		\end{enumerate}
	\end{lemma}
	
%	\begin{proof}
%		For (i), apply H\"older's inequality for sums with exponents $1/(1-t)$ and $1/t$ to the sequences $(A_i^{1-t})_{i=1,2}$ and $(B_i^t)_{i=1,2}$: it gives $\sum_iA_i^{1-t}B_i^t\le(\sum_iA_i)^{1-t}(\sum_iB_i)^t$. For (ii), since $\varrho<1$ and $\alpha\in(0,1)$ we have $\alpha^\varrho>\alpha$ and $(1-\alpha)^\varrho>1-\alpha$; add. For (iii), if $a,b\in[\delta,1]$ then $\lambda:=a/(a+b)$ lies in the compact interval $I_\delta:=[\delta/(1+\delta),1/(1+\delta)]\subset(0,1)$ and $a^\varrho+b^\varrho=(a+b)^\varrho(\lambda^\varrho+(1-\lambda)^\varrho)$. By (ii) the continuous function $\lambda\mapsto\lambda^\varrho+(1-\lambda)^\varrho-1$ is positive on $I_\delta$, hence bounded below by some $\gamma>0$ there, and $\gamma$ depends only on $\delta$ and $\varrho$.
%	\end{proof}
	
	We shall also use the Brezis--Lieb lemma   in the following form: if $f_k\to f$ almost everywhere and $\sup_k\|f_k\|_{L^p}<\infty$ for some $0<p<\infty$, then $\|f_k\|_p^p-\|f_k-f\|_p^p\to\|f\|_p^p$.
	
	%====================================================================
 
	%====================================================================
	A crucial preliminary step in applying the concentration-compactness principle is identifying exactly how the norms and fractional energy forms behave under the symmetry group $\mathcal{G}$.
	\begin{lemma} \label{em-lem:scaling}
		For $v=T_{c,\lambda,\mu,y_0}u$ one has
		\begin{align}
			\Nn(v)&=c^B\lambda^{-n}\mu^{-m}\Nn(u),\label{em-eq:scN}\\
			E(v)&=c^2\lambda^{2s-n}\mu^{-m}E(u),\label{em-eq:scE}\\
			F(v)&=c^2\lambda^{-2b-n}\mu^{2r-m}F(u).\label{em-eq:scF}
		\end{align}
		Translations in $y$ (and in $x$ if $b=0$) leave $\Nn,E,F$ unchanged. In particular $J(v)=J(u)$.
	\end{lemma}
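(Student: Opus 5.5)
The plan is a direct change of variables in each of the three quantities, followed by a short exponent check for $J$. Fix $v(x,y)=c\,u(\lambda x,\mu(y-y_0))$. We substitute $x'=\lambda x$, $y'=\mu(y-y_0)$, so that $\dd x\,\dd y=\lambda^{-n}\mu^{-m}\dd x'\dd y'$. For $\Nn$ this immediately gives $\Nn(v)=c^B\int|u(\lambda x,\mu(y-y_0))|^B=c^B\lambda^{-n}\mu^{-m}\Nn(u)$, which is \eqref{em-eq:scN}.

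For $E$ in the fractional case $s\in(0,1)$, substitute $x=\xi_1/\lambda$, $x'=\xi_2/\lambda$ and $y=y_0+\eta/\mu$ in the triple integral. Then $|x-x'|^{-n-2s}=\lambda^{n+2s}|\xi_1-\xi_2|^{-n-2s}$, while $\dd x\,\dd x'\,\dd y$ contributes $\lambda^{-2n}\mu^{-m}$. The total factor is $c^2\lambda^{2s-n}\mu^{-m}$. For $s=1$ one uses instead $\nabla_xv=c\lambda(\nabla_xu)(\lambda x,\mu(y-y_0))$, which gives the same factor. The computation for $F$ follows the same pattern. The weight transforms as $|x|^{2b}=\lambda^{-2b}|x'|^{2b}$, and the kernel in $y$ contributes $\mu^{m+2r}$. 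The measures $\dd x\,\dd y\,\dd y'$ contribute $\lambda^{-n}\mu^{-2m}$, so the total factor is $c^2\lambda^{-2b-n}\mu^{2r-m}$. For $r=1$ one uses $\nabla_yv=c\mu(\nabla_yu)(\cdots)$, and the identity holds on $\{x\ne0\}$, which is preserved by the dilation. Translation invariance in $y$, and in $x$ when $b=0$ so that no weight is present, follows from translation invariance of Lebesgue measure, since the difference kernels depend only on $x-x'$ and $y-y'$.

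For $J$, combining the three identities shows that $J(v)/J(u)$ equals $c^{2-2}=1$ times $\lambda^{e_\lambda}\mu^{e_\mu}$, where
\[
e_\lambda=(1-t)(2s-n)-t(2b+n)+\tfrac{2n}{B},\qquad e_\mu=-(1-t)m+t(2r-m)+\tfrac{2m}{B}.
\]
We then check that both exponents vanish using \eqref{em-eq:ids}. We have $e_\mu=2rt-m(1-\tfrac2B)=0$ by the third identity. For $e_\lambda$, write it as $2s(1-t)-2bt-n(1-\tfrac2B)$. With $t=sm/D$ and $1-t=(rn+mb)/D$ this becomes $\bigl(2s(rn+mb)-2bsm-2nsr\bigr)/D=0$. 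This exponent bookkeeping is the only point requiring care, so it is where I would concentrate; everything else is routine substitution.
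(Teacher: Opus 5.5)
Your proof is correct and follows essentially the same route as the paper: a change of variables in $\Nn$, $E$ and $F$ that tracks the Jacobians, the homogeneity of the difference kernels and of the weight $|x|^{2b}$, followed by a check that the exponents of $c$, $\lambda$ and $\mu$ in $J(v)/J(u)$ vanish, using $t=sm/D$ and $m(1-2/B)=2rt=2smr/D$. Your explicit chain-rule treatment of the local cases $s=1$ and $r=1$ is a small addition that the paper's proof leaves implicit.
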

	
	\begin{proof}
		The formulas for $\Nn$, $E$, $F$ follow by the changes of variables $(x,y)\mapsto(\lambda x,\mu y)$, $(x',y)\mapsto(\lambda x',\mu y)$ and $(x,y')\mapsto(\lambda x,\mu y')$ respectively, keeping track of the Jacobians and of the homogeneity of the kernels $|x-x'|^{-n-2s}$, $|y-y'|^{-m-2r}$ and of the weight $|x|^{2b}$. For $E$, for example, the two integrations in $x,x'$ produce $\lambda^{-2n}$, the integration in $y$ produces $\mu^{-m}$, and the kernel produces $\lambda^{n+2s}$, so the total factor is $c^2\lambda^{2s-n}\mu^{-m}$. Translations in $y$ commute with all operations that appear in $E$ and $F$ because the weight depends on $x$ only; translations in $x$ do so if $b=0$.
		
		To check that $J$ is invariant, it suffices to see that $J(v)/J(u)=c^{a_1}\lambda^{a_2}\mu^{a_3}$ with $a_1=2(1-t)+2t-2=0$, $a_2=(2s-n)(1-t)-(2b+n)t+\frac{2n}{B}$, $a_3=-m(1-t)+(2r-m)t+\frac{2m}{B}$. Using (\ref{em-eq:ids}) and $t=sm/D$ one finds $a_3=2rt-m(1-2/B)=0$ and
		\[
		a_2=2s-2t(s+b)-n\Big(1-\frac2B\Big)=2s-\frac{2sm(s+b)+2srn}{D}=0 .\qedhere
		\]
	\end{proof}

	Exploiting the multi-parameter scaling invariances established above, we can uniquely normalize any non-zero configuration to simultaneously fix its $L^B$ mass and balance its respective gradient energies.
	\begin{lemma} \label{em-lem:norm}
		For every $u\in V\setminus\{0\}$ there are $c,\lambda>0$ such that $v:=T_{c,\lambda,1,0}u$ satisfies $\Nn(v)=1$ and $E(v)=F(v)=J(u)$.
	\end{lemma}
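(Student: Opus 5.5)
The plan is to use the scaling formulas of Lemma~\ref{em-lem:scaling} with $\mu=1$, $y_0=0$, and solve for $(c,\lambda)$ explicitly. For $u\in V\setminus\{0\}$ we first note that $\Nn(u)$, $E(u)$, $F(u)$ are all finite and strictly positive. Finiteness holds by definition of $V$. Positivity of $\Nn(u)$ is clear since $u\ne0$ in $L^B$. By (H$_b$), $\|u\|_B\le C_bE(u)^{\frac{1-t}2}F(u)^{\frac t2}$, so neither $E(u)$ nor $F(u)$ can vanish. This is the only place where a hypothesis enters, and it is needed so that the logarithms below make sense.

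With $v=T_{c,\lambda,1,0}u$, the formulas \eqref{em-eq:scN}--\eqref{em-eq:scF} give
\[
\Nn(v)=c^B\lambda^{-n}\Nn(u),\qquad E(v)=c^2\lambda^{2s-n}E(u),\qquad F(v)=c^2\lambda^{-2b-n}F(u).
\]
We first impose $E(v)=F(v)$. This reads $\lambda^{2s+2b}=F(u)/E(u)$, and since $s+b>0$ it determines $\lambda=(F(u)/E(u))^{1/(2(s+b))}>0$ uniquely. We then choose $c=(\lambda^{n}/\Nn(u))^{1/B}$, which gives $\Nn(v)=1$.

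It remains to identify the common value of $E(v)$ and $F(v)$. By Lemma~\ref{em-lem:scaling}, $J(v)=J(u)$. On the other hand, since $\|v\|_B=\Nn(v)^{1/B}=1$ and $E(v)=F(v)$, we have $J(v)=E(v)^{1-t}F(v)^t=E(v)$. Hence $E(v)=F(v)=J(u)$.

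No step is a real obstacle. The only point requiring care is the strict positivity of $E(u)$ and $F(u)$, which (H$_b$) provides. One also needs $s+b\neq0$, which is automatic from $s>0$ and $b\ge0$.
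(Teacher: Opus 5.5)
Your proof is correct and follows the same route as the paper. You rescale with $\mu=1$ via Lemma~\ref{em-lem:scaling}, fix two quantities, and obtain the remaining identity from $J(v)=J(u)$; the only difference is which two you fix. You impose $E(v)=F(v)$ and $\Nn(v)=1$, which gives explicit formulas for $\lambda$ and then $c$. The paper instead imposes $\Nn(v)=1$ and $E(v)=J(u)$, checks that the resulting $2\times2$ log-linear system has determinant $\frac{4sm(b+s)}{D-2sr}\neq0$, and then recovers $F(v)=J(u)$ from $t>0$.
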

	
	\begin{proof}
		Put $j:=J(u)$ and write $a=\log c$, $l=\log\lambda$. By (\ref{em-eq:scN}) and (\ref{em-eq:scE}) with $\mu=1$, the two conditions $\Nn(v)=1$ and $E(v)=j$ form the linear system
		\[
		Ba-nl=-\log\Nn(u),\qquad 2a+(2s-n)l=\log j-\log E(u)
		\]
		in the unknowns $(a,l)$. Its determinant is $B(2s-n)+2n=2sB-n(B-2)=\frac{4s(D-rn)}{D-2sr}=\frac{4sm(b+s)}{D-2sr}\neq0$, so it has a unique solution. With this choice, $\Nn(v)=1$ and $E(v)=j$. Since $J(v)=J(u)=j$ by Lemma~\ref{em-lem:scaling}, we have $E(v)^{1-t}F(v)^t=j\,\|v\|_B^2=j$, that is $j^{1-t}F(v)^t=j$, and so $F(v)=j$ because $t>0$.
	\end{proof}

While the preceding normalizations fix the global energy levels, there remains an internal, hidden one-parameter scaling group that leaves all three governing functionals strictly invariant.
	\begin{lemma} \label{em-lem:hidden}
		For $\tau>0$ let $u_\tau(x,y):=\tau^{D/B}u(\tau^rx,\tau^{b+s}y)$, i.e.\ $u_\tau=T_{c,\lambda,\mu,0}u$ with $(c,\lambda,\mu)=(\tau^{D/B},\tau^r,\tau^{b+s})$. Then $\Nn(u_\tau)=\Nn(u)$, $E(u_\tau)=E(u)$ and $F(u_\tau)=F(u)$.
	\end{lemma}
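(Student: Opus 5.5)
This is a direct computation from Lemma~\ref{em-lem:scaling}. The plan is to plug $(c,\lambda,\mu)=(\tau^{D/B},\tau^r,\tau^{b+s})$ into the three scaling formulas \eqref{em-eq:scN}--\eqref{em-eq:scF}. In each case the resulting power of $\tau$ should vanish identically, using only the definition of $D$ and the identities \eqref{em-eq:ids}. No analysis is needed beyond what that lemma already provides, since $T_{c,\lambda,\mu,0}$ maps $V$ to itself by the same formulas.

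For $\Nn$, the exponent of $\tau$ is $B\cdot\frac{D}{B}-rn-(b+s)m=D-D=0$.

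For $E$, the exponent is
\[
\frac{2D}{B}+r(2s-n)-m(b+s).
\]
By the last identity in \eqref{em-eq:ids}, $\frac{2D}{B}=D-2sr$. Substituting, this becomes $D-2sr+2sr-rn-m(b+s)=D-D=0$.

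For $F$, the exponent is
\[
\frac{2D}{B}-r(2b+n)+(b+s)(2r-m).
\]
Again substitute $\frac{2D}{B}=D-2sr$. This gives $rn+m(b+s)-2sr-2br-rn+2br+2sr-m(b+s)$, and every term cancels, so the result is $0$.

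The only point needing care is bookkeeping. I would not expect any real obstacle: the identity $D/B=(D-2sr)/2$ is the only nontrivial input, and it is already recorded in \eqref{em-eq:ids}. As a consistency check, the vanishing of the $E$ and $F$ exponents also reflects that $(\log c,\log\lambda,\log\mu)=(D/B,r,b+s)$ spans the kernel of the $3\times3$ exponent matrix in Lemma~\ref{em-lem:scaling}. This is the one-dimensional kernel left after the two normalizations of Lemma~\ref{em-lem:norm}.
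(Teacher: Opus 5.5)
Your proposal is correct and is essentially the paper's own proof: substitute $(c,\lambda,\mu)=(\tau^{D/B},\tau^r,\tau^{b+s})$ into \eqref{em-eq:scN}--\eqref{em-eq:scF} and use $2D/B=D-2sr$. Each exponent then reduces to $D-rn-m(b+s)=0$, and your three computations check out.
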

	
	\begin{proof}
		By (\ref{em-eq:scN}) the exponent of $\tau$ in $\Nn(u_\tau)/\Nn(u)$ is $D-rn-m(b+s)=0$. By (\ref{em-eq:scE}) and $2D/B=D-2sr$, the exponent for $E$ is $(D-2sr)+r(2s-n)-m(b+s)=D-rn-m(b+s)=0$. By (\ref{em-eq:scF}) the exponent for $F$ is $(D-2sr)-r(2b+n)+(b+s)(2r-m)=D-rn-m(b+s)=0$.
	\end{proof}
	
	Thus the group has three independent scalings $(c,\lambda,\mu)$ but only two of them move the triple $(\Nn,E,F)$ (the invariance of $J$ is one linear relation among the three logarithms); the third, Lemma~\ref{em-lem:hidden}, fixes the triple and will be used to normalize the concentration scale, exactly as the single dilation is used in (8) of the paper.
	
	\paragraph{The concentration function.} For $x_0\in\R^n$, $y_0\in\R^m$ let $Q_1(x_0,y_0):=\bb_1(x_0)\times \bb_1(y_0)$, the unit box. For $u\in L^B$ and $\rho=(\rho_1,\rho_2)\in(0,\infty)^2$ define
	\[
	M_u(\rho):=\sup_{x_0,y_0}\int_{\bb_{\rho_1}(x_0)\times \bb_{\rho_2}(y_0)}|u|^B .
	\]
	
	By continuously acting on a function with this hidden symmetry, we can rigorously adjust and trap the exact spatial scale at which the sequence concentrates its mass.
	\begin{lemma}[Choice of the concentration scale]\label{em-lem:conc}
		Let $u\in L^B$ with $\|u\|_B=1$. The function $\Phi_u(\tau):=M_u(\tau^r,\tau^{b+s})$ is continuous and nondecreasing on $(0,\infty)$, tends to $0$ as $\tau\to0$ and to $1$ as $\tau\to\infty$, and $\Phi_u(\tau)=\sup_{x_0,y_0}\int_{Q_1(x_0,y_0)}|u_\tau|^B$. In particular, for every $\delta\in(0,1)$ there is $\tau>0$ with $\Phi_u(\tau)=\delta$.
	\end{lemma}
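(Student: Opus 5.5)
The plan is to compute everything from the definition of $u_\tau$ via a change of variables, and then use basic measure theory for the limits.

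\emph{Identity for $\Phi_u$.} With $u_\tau(x,y)=\tau^{D/B}u(\tau^rx,\tau^{b+s}y)$, substitute $x'=\tau^rx$ and $y'=\tau^{b+s}y$. This gives
\[
\int_{Q_1(x_0,y_0)}|u_\tau|^B=\tau^{D-rn-m(b+s)}\int_{\bb_{\tau^r}(\tau^rx_0)\times\bb_{\tau^{b+s}}(\tau^{b+s}y_0)}|u|^B .
\]
The exponent of $\tau$ vanishes, and $(x_0,y_0)\mapsto(\tau^rx_0,\tau^{b+s}y_0)$ is a bijection. Taking the supremum therefore gives $\Phi_u(\tau)=\sup_{x_0,y_0}\int_{Q_1}|u_\tau|^B$.

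\emph{Monotonicity and limits.} Both radii $\tau^r$ and $\tau^{b+s}$ increase with $\tau$, since $r>0$ and $b+s>0$, so $\Phi_u$ is nondecreasing. Since $\int_{\text{box}}|u|^B\le1$, we have $\Phi_u\le1$. For a fixed box $\bb_R\times\bb_R$, monotone convergence gives $\int_{\bb_R\times\bb_R}|u|^B\to1$ as $R\to\infty$, so $\Phi_u(\tau)\to1$ as $\tau\to\infty$.

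For $\tau\to0$, use absolute continuity of the integral. Given $\varepsilon>0$, there is $\eta>0$ such that $|A|<\eta$ implies $\int_A|u|^B<\varepsilon$. Every box $\bb_{\rho_1}(x_0)\times\bb_{\rho_2}(y_0)$ has measure $c\rho_1^n\rho_2^m$, uniformly in the center. Hence $\Phi_u(\tau)\le\varepsilon$ once $c\tau^{rn+m(b+s)}<\eta$.

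\emph{Continuity (the main point).} Fix $0<\tau<\tau'$. For every center, the box with radii $(\tau^r,\tau^{b+s})$ is contained in the box with radii $(\tau'^r,\tau'^{b+s})$ with the same center. The difference set has measure
\[
c\big(\tau'^{rn}\tau'^{m(b+s)}-\tau^{rn}\tau^{m(b+s)}\big),
\]
which is independent of the center and tends to $0$ as $\tau'-\tau\to0$ on compact subsets of $(0,\infty)$. By absolute continuity, the integral of $|u|^B$ over this difference set is small uniformly in the center. So for each center the two box integrals differ by at most $\varepsilon$, and taking suprema gives $0\le\Phi_u(\tau')-\Phi_u(\tau)\le\varepsilon$. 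This proves continuity on $(0,\infty)$.

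\emph{Conclusion.} The intermediate value theorem, applied on $(0,\infty)$ with limits $0$ and $1$ at the two ends, yields for each $\delta\in(0,1)$ some $\tau>0$ with $\Phi_u(\tau)=\delta$. The only delicate step is making the continuity estimate uniform in the center, and this is handled by the center-independent bound on the measure of the difference set.
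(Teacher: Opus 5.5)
Your proof is correct and follows essentially the same route as the paper. The paper uses the change of variables with prefactor $\tau^{D-rn-m(b+s)}=1$, uniform absolute continuity of $\int|u|^B$ for both continuity and the limit at $0$, monotone convergence for the limit at $\infty$, and the intermediate value theorem; the only cosmetic difference is that you compare nested boxes along the curve $(\tau^r,\tau^{b+s})$, whereas the paper bounds symmetric differences of boxes for general radii $\rho'\to\rho$.
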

	
	\begin{proof}
		Changing variables $(x,y)\mapsto(\tau^rx,\tau^{b+s}y)$ we get $\int_{Q_1(x_0,y_0)}|u_\tau|^B=\tau^{D-rn-m(b+s)}\int|u|^B$ over the box $\bb_{\tau^r}(\tau^rx_0)\times \bb_{\tau^{b+s}}(\tau^{b+s}y_0)$, and the prefactor equals $1$. Taking the supremum over centers gives the formula for $\Phi_u$. Monotonicity is clear. Since $|u|^B\in L^1$, its integral is uniformly absolutely continuous: for each $\varepsilon>0$ there is $\eta>0$ with $\int_S|u|^B<\varepsilon$ whenever $|S|<\eta$. If $\rho'\to\rho$, the Lebesgue measure of the symmetric difference of two boxes with the same center and radii $\rho,\rho'$ tends to $0$ uniformly in the center, so $|M_u(\rho')-M_u(\rho)|\le\sup_{\rm centers}\int_{\rm sym.\,diff.}|u|^B\to0$; this is the continuity. As $\tau\to0$ the boxes have measure tending to $0$ uniformly, so $\Phi_u(\tau)\to0$ by the same absolute continuity. As $\tau\to\infty$ the boxes centered at the origin exhaust $\R^{n+m}$, so $\Phi_u(\tau)\to\int|u|^B=1$ by monotone convergence. The last assertion is the intermediate value theorem.
	\end{proof}
	
	%====================================================================
	\section*{Localization estimates}
	%====================================================================
	
	For a Lipschitz function $\theta:\R^n\to[0,1]$ with compact support put
	\[
	W_\theta(x):=c_{n,s}\int_{\R^n}\frac{|\theta(x)-\theta(x')|^2}{|x-x'|^{n+2s}}\dd x'\quad(s<1),\qquad W_\theta:=|\nabla\theta|^2\quad(s=1),
	\]
	and for a Lipschitz $\psi:\R^m\to[0,1]$ with compact support define $W_\psi$ in the same way with $(m,r)$ in place of $(n,s)$. We also write $\gamma_u(x):=c_{m,r}\iint|u(x,y)-u(x,y')|^2|y-y'|^{-m-2r}\dd y\dd y'$ (or $\int|\nabla_yu(x,y)|^2\dd y$ if $r=1$), so that $F(u)=\int|x|^{2b}\gamma_u(x)\dd x$ and $G(u)=\int\gamma_u(x)\dd x$.

	In order to rigorously perform localized energy estimates on minimizing sequences, we first need precise bounds on the errors introduced by applying Lipschitz cutoff functions.
	\begin{lemma} \label{em-lem:cutoff}
		Let $\theta,\psi$ be as above and $u\in V$, $w:=\theta(x)\psi(y)u$. Then $W_\theta\in L^1\cap L^\infty(\R^n)$, $W_\psi\in L^1\cap L^\infty(\R^m)$ and
		\begin{align}
			E(w)&\le2E(u)+2\int W_\theta(x)\psi(y)^2|u(x,y)|^2\dd x\dd y,\label{em-eq:cutE}\\
			G(w)&\le2\int\theta(x)^2\gamma_u(x)\dd x+2\int\theta(x)^2W_\psi(y)|u(x,y)|^2\dd x\dd y.\label{em-eq:cutG}
		\end{align}
		In particular, if $\theta=0$ on $\{|x|<\rho\}$ then $\int\theta^2\gamma_u\le\rho^{-2b}F(u)$. Finally, by H\"older's inequality with exponents $B/2$ and $B'$,
		\begin{equation}\label{em-eq:cutholder}
			\int W_\theta\psi^2|u|^2\le\|W_\theta\|_{L^{B'}}\|\psi^2\|_{L^{B'}}\|u\|_B^2,\qquad
			\int\theta^2W_\psi|u|^2\le\|\theta^2\|_{L^{B'}}\|W_\psi\|_{L^{B'}}\|u\|_B^2 .
		\end{equation}
	\end{lemma}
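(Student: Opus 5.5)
We prove the four assertions in turn. First $W_\theta$. For $s=1$, $W_\theta=|\nabla\theta|^2$ is bounded, since $\theta$ is Lipschitz, and compactly supported, hence in $L^1\cap L^\infty$. For $s<1$ we use $|\theta(x)-\theta(x')|\le\min(L|x-x'|,1)$, where $L$ is the Lipschitz constant. This gives $W_\theta(x)\le c_{n,s}\int\min(L^2|h|^2,1)|h|^{-n-2s}\dd h<\infty$ uniformly in $x$, so $W_\theta\in L^\infty$. For integrability, let $\operatorname{supp}\theta\subset \bb_{R_0}$. For $|x|\ge 2R_0$ only $x'\in \bb_{R_0}$ contribute, so
\[
W_\theta(x)\le c_{n,s}|\bb_{R_0}|\,(|x|/2)^{-n-2s},
\]
which is integrable at infinity. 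Together with boundedness on $\bb_{2R_0}$ this gives $W_\theta\in L^1$. The same argument with $(m,r)$ handles $W_\psi$.

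For \eqref{em-eq:cutE} with $s<1$, fix $y$ and split
\[
w(x,y)-w(x',y)=\psi(y)\theta(x)\big(u(x,y)-u(x',y)\big)+\psi(y)u(x',y)\big(\theta(x)-\theta(x')\big).
\]
Apply $|\alpha+\beta|^2\le2|\alpha|^2+2|\beta|^2$ and use $0\le\theta,\psi\le1$. The first term contributes at most $2E(u)$. The second term, after integrating in $x$ first (using the symmetry of the kernel in $x,x'$), contributes $2\int W_\theta(x')\psi(y)^2|u(x',y)|^2\dd x'\dd y$. For $s=1$ the same two terms come from the product rule $\nabla_x(\theta u)=\theta\nabla_xu+u\nabla\theta$. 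Here we may assume the right-hand side of \eqref{em-eq:cutE} is finite, and then the product rule is justified by approximation or directly in the distributional sense, since $\theta$ is Lipschitz and $u,\nabla_x u\in L^1_{\rm loc}$.

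For \eqref{em-eq:cutG}, work at fixed $x$ and split in the $y$-variables in the same way:
\[
w(x,y)-w(x,y')=\theta(x)\big[\psi(y)\big(u(x,y)-u(x,y')\big)+u(x,y')\big(\psi(y)-\psi(y')\big)\big].
\]
This yields $\gamma_w(x)\le2\theta(x)^2\gamma_u(x)+2\theta(x)^2\int W_\psi|u(x,\cdot)|^2$, and integrating in $x$ gives \eqref{em-eq:cutG}. For $r=1$ the analogous product rule is used on $\{x\ne0\}$. When $\theta$ vanishes near $0$ this is harmless; otherwise $\{x=0\}$ is a null set and we interpret $\gamma_u$ a.e.

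The consequence for $\theta=0$ on $\{|x|<\rho\}$ is immediate: there $|x|^{2b}\ge\rho^{2b}$ because $b\ge0$, so $\int\theta^2\gamma_u\le\rho^{-2b}\int|x|^{2b}\gamma_u=\rho^{-2b}F(u)$. Finally, \eqref{em-eq:cutholder} is H\"older's inequality on $\R^{n+m}$ with exponents $B'$ and $B/2$ applied to the product $W_\theta(x)\psi(y)^2\cdot|u|^2$. The $L^{B'}$ norm of the tensor product factorizes by Tonelli, and these norms are finite since $W_\theta\in L^1\cap L^\infty$ and $\psi^2\in L^1\cap L^\infty$.

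The only delicate point is the $L^1$ decay of $W_\theta$ and the justification for $r=1$ near $x=0$; everything else is routine.
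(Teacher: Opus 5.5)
Your proposal is correct and follows essentially the same route as the paper. You use the same bound $|\theta(x)-\theta(x')|\le\min(1,L|x-x'|)$ plus the off-support tail estimate to get $W_\theta\in L^1\cap L^\infty$. You use the same pointwise splitting of the difference: you attach $\theta(x)$ to the $u$-difference and integrate in $x$ first, whereas the paper attaches $\theta(x')$ and integrates in $x'$ first, which is equivalent by symmetry of the kernel. The bound $1\le\rho^{-2b}|x|^{2b}$ on $\operatorname{supp}\theta$ and H\"older with the factorizing $L^{B'}$ norm of the tensor product are also as in the paper. Your extra remarks on justifying the product rule for $s=1$ and $r=1$ are fine; they only make explicit what the paper leaves implicit.
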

	
	\begin{proof}
		Assume $\operatorname{supp}\theta\subset \bb_\rho(x_0)$ and let $\ell$ be a Lipschitz constant of $\theta$. We have $|\theta(x)-\theta(x')|\le\min(1,\ell|x-x'|)$, so for $s<1$
		\[
		W_\theta(x)\le c_{n,s}\int\frac{\min(1,\ell^2|z|^2)}{|z|^{n+2s}}\dd z=:C_{\ell}<\infty,
		\]
		the integral being finite near $0$ because $2-2s>-0$ and at infinity because $2s>0$. If $|x-x_0|\ge2\rho$ then $\theta(x)=0$ and $W_\theta(x)=c_{n,s}\int\theta(x')^2|x-x'|^{-n-2s}\dd x'\le c_{n,s}|\bb_\rho|(|x-x_0|/2)^{-n-2s}$, which is integrable at infinity. Hence $W_\theta\in L^1\cap L^\infty$. For $s=1$ the claim is obvious. The same argument applies to $W_\psi$.
		
		For (\ref{em-eq:cutE}) fix $y$ and use $|\theta(x)u(x,y)-\theta(x')u(x',y)|^2\le2\theta(x')^2|u(x,y)-u(x',y)|^2+2|\theta(x)-\theta(x')|^2|u(x,y)|^2$. Multiply by $\psi(y)^2\le1$ and the kernel, and integrate. The first term is bounded by $2E(u)$ because $\theta\le1$. In the second term, integrating first in $x'$ produces $W_\theta(x)$. For $s=1$ one uses instead $|\nabla_x(\theta\psi u)|^2\le2\theta^2\psi^2|\nabla_xu|^2+2|\nabla\theta|^2\psi^2u^2$. The proof of (\ref{em-eq:cutG}) is identical with the roles of $x$ and $y$ exchanged: for fixed $x$ one uses $|\psi(y)u(x,y)-\psi(y')u(x,y')|^2\le2\psi(y')^2|u(x,y)-u(x,y')|^2+2|\psi(y)-\psi(y')|^2|u(x,y)|^2$, the factor $\theta(x)^2$ being outside. If $\theta$ vanishes for $|x|<\rho$ then $1\le\rho^{-2b}|x|^{2b}$ on its support, which gives the stated bound. The inequalities (\ref{em-eq:cutholder}) are H\"older's inequality together with Fubini's theorem.
	\end{proof}
	
	The scaling behaviour of the norms entering (\ref{em-eq:cutholder}) is what makes the estimates in Lemma~\ref{em-lem:center} work. If $\psi(y)=\psi_0((y-y_0)/\Lambda)$, then
	\[
	\|\psi^2\|_{L^{B'}}=\Lambda^{m/B'}\|\psi_0^2\|_{L^{B'}},\qquad W_\psi(y)=\Lambda^{-2r}W_{\psi_0}\Big(\frac{y-y_0}\Lambda\Big),\qquad \|W_\psi\|_{L^{B'}}=\Lambda^{-2r+m/B'}\|W_{\psi_0}\|_{L^{B'}} ,
	\]
	and by (\ref{em-eq:ids}), $m/B'=m(1-2/B)=2rt$ and $-2r+m/B'=-2r(1-t)$. Similarly $W_\theta$ and $\|\theta^2\|_{L^{B'}}$ do not change if $\theta_0$ is merely translated.

	Because the fractional energy forms are non-local, standard spatial truncation introduces long-range interaction errors; the following lemma demonstrates how to control these cross-terms to obtain an almost-orthogonal splitting of the energy.
	\begin{lemma} \label{em-lem:split}
		Let $x_c\in\R^n$, let $\phi:[0,\infty)\to[0,\pi/2]$ be Lipschitz with constant $\ell$, $\phi=0$ on $[0,2]$ and $\phi=\pi/2$ on $[4,\infty)$, and set
		\[
		\Theta(x,y):=\phi\big(\max\{|x-x_c|,|y|\}\big),\qquad\chi:=\cos\Theta,\quad\eta:=\sin\Theta,\qquad Q:=\bb_4(x_c)\times \bb_4(0).
		\]
		Then $\chi^2+\eta^2=1$, $\chi u,\eta u\in V$ for every $u\in V$, and
		\begin{align}
			\big|E(u)-E(\chi u)-E(\eta u)\big|&\le C\big(E(u)^{1/2}\|u\|_{L^2(Q)}+\|u\|_{L^2(Q)}^2\big),\label{em-eq:splitE}\\
			\big|F(u)-F(\chi u)-F(\eta u)\big|&\le C\big(F(u)^{1/2}\|u\|_{L^2(Q)}+\|u\|_{L^2(Q)}^2\big),\label{em-eq:splitF}
		\end{align}
		with $C$ depending only on $m,n,s,r,b,\ell$ and $|x_c|$.
	\end{lemma}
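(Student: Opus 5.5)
The plan is to treat $E$ and $F$ separately, but in the same way. Everything is built on the pointwise identity $\chi^2+\eta^2=1$.

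\textbf{The identity.} For fixed $y$ and $x,x'$ we have
\[
|\chi(x)u(x)-\chi(x')u(x')|^2+|\eta(x)u(x)-\eta(x')u(x')|^2
=|u(x)-u(x')|^2+\big[(\chi(x)-\chi(x'))^2+(\eta(x)-\eta(x'))^2\big]\,\mathrm{Re}\big(u(x)\overline{u(x')}\big)
\]
(suppressing $y$). This follows by expanding both sides and using $\chi^2+\eta^2=1$ at $x$ and at $x'$. Note that
\[
(\chi-\chi')^2+(\eta-\eta')^2=2-2\cos(\Theta-\Theta')=4\sin^2\tfrac{\Theta-\Theta'}2\le|\Theta(x,y)-\Theta(x',y)|^2 .
\]
Hence the defect $E(u)-E(\chi u)-E(\eta u)$ equals minus
\[
c_{n,s}\int\!\!\iint k(x,x',y)\,\mathrm{Re}\big(u(x,y)\overline{u(x',y)}\big)\,|x-x'|^{-n-2s}\,dx\,dx'\,dy,
\]
where $0\le k\le\min(1,\ell^2|x-x'|^2)$, since $\Theta$ is $\ell$-Lipschitz in $x$ because $\max\{|x-x_c|,|y|\}$ is $1$-Lipschitz. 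The crucial structural fact is that $k(x,x',y)=0$ unless $\Theta(x,y)\neq\Theta(x',y)$. This forces $(x,y)\in Q$ or $(x',y)\in Q$: outside $Q$ we have $\Theta=\pi/2$, and if both points lie outside $Q$, then both values equal $\pi/2$.

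\textbf{Bounding the $E$-defect.} Write $u(x')=u(x)+(u(x')-u(x))$, giving a diagonal term and a difference term.

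The diagonal term is bounded by $\int |u(x,y)|^2\,\widetilde W(x,y)\,dx\,dy$, where $\widetilde W(x,y)=\int k\,|x-x'|^{-n-2s}dx'$.
\begin{itemize}
\item For $(x,y)\in Q$ we have $\widetilde W\le C_\ell$, exactly as in Lemma~\ref{em-lem:cutoff}.
\item For $(x,y)\notin Q$, the integrand is nonzero only when $(x',y)\in Q$. Then $|y|\le4$ and $x'\in\bb_4(x_c)$, so $\widetilde W(x,y)\lesssim(1+|x-x_c|)^{-n-2s}$. That contribution would involve $|u|^2$ outside $Q$, which is not allowed.
\end{itemize}
So instead I symmetrize before splitting: by symmetry in $(x,x')$ I may restrict to the region where $(x',y)\in Q$, and expand around $u(x')$ rather than $u(x)$. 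Then the diagonal term is $\int_Q|u|^2\widetilde W\le C\|u\|_{L^2(Q)}^2$, using that $\widetilde W$ is uniformly bounded.

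The difference term is
\[
\int\!\!\iint k\,|u(x)-u(x')|\,|u(x')|\,|x-x'|^{-n-2s}
\le E(u)^{1/2}\Big(\int\!\!\iint k^2|u(x')|^2\,\mathbf 1_Q(x',y)\,|x-x'|^{-n-2s}\Big)^{1/2}
\]
by Cauchy--Schwarz, using $k^2\le k\le\min(1,\ell^2|h|^2)$. The second factor is $\le C\|u\|_{L^2(Q)}$. This gives \eqref{em-eq:splitE}. For $s=1$ the identity is local: $|\nabla(\chi u)|^2+|\nabla(\eta u)|^2=|\nabla u|^2+|\nabla_x\Theta|^2|u|^2$, and $\nabla_x\Theta$ is supported in $Q$. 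This case is immediate.

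\textbf{$F$ and membership in $V$.} The $F$-estimate is the same computation in the $y$-variable, for fixed $x$, multiplied by $|x|^{2b}$ and integrated. The role of $Q$ is identical, since $\Theta$ is $\ell$-Lipschitz in $y$ as well. The only new point is the weight: on the relevant set $x$ or $x'$ (here both equal $x$) lies in $\bb_4(x_c)$, so $|x|^{2b}\le(4+|x_c|)^{2b}$. This accounts for the dependence of $C$ on $|x_c|$. In the Cauchy--Schwarz step, one factor $|x|^{b}$ goes with the difference, producing $F(u)^{1/2}$, and the other is bounded by $(4+|x_c|)^{b}$. That $\chi u,\eta u\in V$ follows from the same identity, since the defect is finite, together with $|\chi u|,|\eta u|\le|u|\in L^B$ and $\|u\|_{L^2(Q)}\le|Q|^{1/2-1/B}\|u\|_B<\infty$.

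\textbf{Main obstacle.} I expect the main obstacle to be the careful symmetrization in the diagonal term, ensuring only $\|u\|_{L^2(Q)}$ and never $L^2$ mass outside $Q$ appears. It is handled by always expanding around the point lying in $Q$.
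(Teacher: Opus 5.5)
Your argument is correct and is essentially the paper's proof. It uses exact cancellation of the main term via $\chi^2+\eta^2=1$ and the Lipschitz bound $\min(1,\ell^2|h|^2)$ on the cutoff increments. It anchors the error at the point lying in $Q$, so that only $\|u\|_{L^2(Q)}$ appears, and then applies Cauchy--Schwarz against $E(u)^{1/2}$, respectively $F(u)^{1/2}$ with $|x|^{2b}\le(4+|x_c|)^{2b}$.

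The only difference is bookkeeping. You use the symmetric identity with defect $4\sin^2\frac{\Theta-\Theta'}{2}\,\mathrm{Re}(u\bar u')$ and then restrict by symmetry to $\{(x',y)\in Q\}$, whereas the paper orders pairs by $|x-x_c|<|x'-x_c|$ before splitting $\delta(fu)=A_f+\Delta_f$. One small correction: $k=4\sin^2\frac{\Theta-\Theta'}{2}\le 2$ rather than $\le1$, so use $k^2\le 2k$; this changes only constants.
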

	
	\begin{proof}
		Observe first that $\Theta$ is Lipschitz in $x$ and in $y$ separately with constant $\ell$, because $\max\{|x-x_c|,|y|\}$ is $1$-Lipschitz in each variable; consequently $f\in\{\chi,\eta\}$ satisfies $|f(x,y)-f(x',y)|\le\min(1,\ell|x-x'|)$ and $|f(x,y)-f(x,y')|\le\min(1,\ell|y-y'|)$. Moreover, $\Theta=\pi/2$, hence $\chi=0$ and $\eta=1$, whenever $\max\{|x-x_c|,|y|\}\ge4$, that is outside $Q$.
		
		We treat $E$ with $s<1$. Write $\mathcal K(z)=c_{n,s}|z|^{-n-2s}$ and let $\mathcal R:=\{(x,x')\in\R^n\times\R^n:|x-x_c|<|x'-x_c|\}$. For any measurable $v$ the integrand $|v(x,y)-v(x',y)|^2\mathcal K(x-x')$ is symmetric under $x\leftrightarrow x'$ and the set $\{|x-x_c|=|x'-x_c|\}$ is a null set, so
		\[
		E(v)=2\int_{\R^m}\!\iint_{\mathcal R}|v(x,y)-v(x',y)|^2\mathcal K(x-x')\dd x\dd x'\dd y=:2\,\|\delta v\|^2,
		\]
		where $\delta v(x,x',y):=v(x,y)-v(x',y)$ and $\|\cdot\|$ denotes the norm of $L^2(\mathcal R\times\R^m;\mathcal K\dd x\dd x'\dd y)$. For $f\in\{\chi,\eta\}$ and $(x,x')\in\mathcal R$ we split
		\[
		\delta(fu)=f(x',y)\big(u(x,y)-u(x',y)\big)+\big(f(x,y)-f(x',y)\big)u(x,y)=:A_f+\Delta_f .
		\]
		The reason for ordering the pair so that $x$ is the point closer to $x_c$ is the following. Since $\chi^2+\eta^2=1$ at the point $(x',y)$, we have the \emph{exact} identity $\sum_f|A_f|^2=|u(x,y)-u(x',y)|^2$, whence
		\begin{equation}\label{em-eq:Aid}
			\sum_f\|A_f\|^2=\|\delta u\|^2=\tfrac12E(u),\qquad\text{and in particular }\|A_f\|\le\big(\tfrac12E(u)\big)^{1/2}.
		\end{equation}
		On the other hand, if $\Delta_f(x,x',y)\neq0$ then $f(x,y)\neq f(x',y)$, so $\Theta(x,y)\neq\Theta(x',y)$ and at least one of the numbers $\max\{|x-x_c|,|y|\}$, $\max\{|x'-x_c|,|y|\}$ is smaller than $4$. If the second one is, then $|y|<4$ and $|x-x_c|<|x'-x_c|<4$, because $(x,x')\in\mathcal R$; if the first is, this is immediate. In either case $(x,y)\in Q$. Thus $\Delta_f$ is supported where $(x,y)\in Q$, and
		\[
		\|\Delta_f\|^2\le\int_Q|u(x,y)|^2\Big(\int_{\R^n}\min(1,\ell^2|x-x'|^2)\,\mathcal K(x-x')\dd x'\Big)\dd x\dd y= C_\ell\int_Q|u|^2 ,
		\]
		where $C_\ell=c_{n,s}\int\min(1,\ell^2|z|^2)|z|^{-n-2s}\dd z<\infty$. Now $E(fu)=2\|A_f+\Delta_f\|^2$ is finite (so $fu\in V$ as far as $E$ is concerned) and
		\[
		\Big|\sum_fE(fu)-E(u)\Big|=2\Big|\sum_f\big(2\operatorname{Re}\langle A_f,\Delta_f\rangle+\|\Delta_f\|^2\big)\Big|\le2\sum_f\big(2\|A_f\|\|\Delta_f\|+\|\Delta_f\|^2\big),
		\]
		where we used (\ref{em-eq:Aid}) in the form $\sum_f\|A_f\|^2=\frac12E(u)$ to cancel the main terms. Inserting the bounds for $\|A_f\|$ and $\|\Delta_f\|$ gives (\ref{em-eq:splitE}).
		
		For $s=1$ the product rule gives $\sum_f|\nabla_x(fu)|^2=\sum_f\big(f^2|\nabla_xu|^2+2fu\nabla_xf\cdot\nabla_xu+u^2|\nabla_xf|^2\big)=|\nabla_xu|^2+|\nabla_x\Theta|^2u^2$, because $\sum_f2f\nabla f=\nabla(\chi^2+\eta^2)=0$ and $|\nabla\chi|^2+|\nabla\eta|^2=|\nabla\Theta|^2$. Since $|\nabla_x\Theta|\le\ell$ and $\nabla_x\Theta=0$ outside $Q$, (\ref{em-eq:splitE}) follows.
		
		The estimate (\ref{em-eq:splitF}) is proved in the same way, now at fixed $x$ with the pair $(y,y')$ ordered by $|y|<|y'|$ and the kernel $c_{m,r}|y-y'|^{-m-2r}$, the whole expression being multiplied by $|x|^{2b}$. Indeed, if $f(x,y)\ne f(x,y')$ then $|x-x_c|<4$ (otherwise $f(x,\cdot)$ is constant), and at least one of $|y|,|y'|$ is smaller than $4$, hence the smaller one, $|y|$, is; so $\Delta_f$ is again supported where $(x,y)\in Q$. On $Q$ we have $|x|^{2b}\le(|x_c|+4)^{2b}$, which makes the constant depend on $|x_c|$. The case $r=1$ is the same product-rule computation, with the weight $|x|^{2b}$ in front.
	\end{proof}
	%====================================================================
	\subsection*{Compactness and the position of the concentration}
	%====================================================================
	
	Equipped with these localization tools, we now establish a fundamental local compactness property, ensuring that bounded sequences in $V$ are locally precompact in $L^2$ and do not suffer from wild oscillations.
	\begin{lemma}[Local compactness]\label{em-lem:loccomp}
		Let $(u_k)$ be bounded in $V$. Then there are a subsequence and $u_0\in V$ such that $u_k\rightharpoonup u_0$ weakly in $V$, $u_k\to u_0$ in $L^2(K)$ for every bounded set $K\subset\R^{n+m}$, and $u_k\to u_0$ almost everywhere.
	\end{lemma}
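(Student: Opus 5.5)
Weak convergence will come from the Hilbert structure of $V$, and strong local convergence from a localization combined with a Riesz--Kolmogorov (Fréchet--Kolmogorov) compactness argument. Since $V$ is a Hilbert space by Lemma~\ref{em-lem:V}, a bounded sequence has a weakly convergent subsequence $u_k\rightharpoonup u_0$ in $V$. Because $V\hookrightarrow L^B$ continuously, we also get $u_k\rightharpoonup u_0$ weakly in $L^B$, hence in $L^2(K)$ for bounded $K$. It therefore suffices to show that, for each ball $K=\bb_R(0)\times\bb_R(0)$, the restrictions $(u_k|_K)$ are precompact in $L^2(K)$. A diagonal argument over $R\in\N$ then gives a single subsequence converging in every $L^2(K)$, and a further diagonal extraction gives almost everywhere convergence. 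The limit in each $L^2(K)$ must coincide with $u_0$ by uniqueness of weak limits.

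For the precompactness, fix Lipschitz cutoffs $\theta(x)$ and $\psi(y)$, equal to $1$ on $\bb_R$ and supported in $\bb_{2R}$, and set $w_k=\theta\psi u_k$. By Lemma~\ref{em-lem:cutoff} together with (\ref{em-eq:cutholder}), $E(w_k)\le 2E(u_k)+C\|u_k\|_B^2$, which is bounded uniformly in $k$. The $y$-energy is the delicate part, because the weight $|x|^{2b}$ degenerates at $x=0$, so $G(u_k)$ is not controlled by $F(u_k)$. I would handle this by splitting $\theta=\theta_1+\theta_2$, with $\theta_1$ supported in $\{|x|<2\varepsilon\}$ and $\theta_2$ vanishing on $\{|x|<\varepsilon\}$.
\begin{itemize}
\item[(a)] For the piece near $x=0$, Hölder's inequality gives
$\|\theta_1\psi u_k\|_{L^2}\le |\bb_{2\varepsilon}\times\bb_{2R}|^{\frac12-\frac1B}\|u_k\|_B\le C\varepsilon^{n(\frac12-\frac1B)}$,
which is small uniformly in $k$.
\item[(b)] For the piece away from $x=0$, (\ref{em-eq:cutG}) and the remark after it give $G(\theta_2\psi u_k)\le 2\varepsilon^{-2b}F(u_k)+C\|u_k\|_B^2$. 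Combined with the bound on $E$, this yields $\int(|\xi|^{2s}+|\eta|^{2r})|\widehat{\theta_2\psi u_k}|^2\le C_\varepsilon$ by (\ref{em-eq:fourier}). The functions are also bounded in $L^2$, since they have compact support and are bounded in $L^B$.
\end{itemize}

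The translation estimates in Lemma~\ref{lemma0}(b) then give $\sup_k\|\tau_z(\theta_2\psi u_k)-\theta_2\psi u_k\|_2\lesssim C_\varepsilon(|z|^{\min(s,1)}+|z|^{\min(r,1)})\to 0$. Since the supports are uniformly bounded, the Riesz--Kolmogorov theorem shows that $(\theta_2\psi u_k)$ is precompact in $L^2$. Thus, for every $\varepsilon$, the family $(w_k)$ lies within $C\varepsilon^{n(\frac12-\frac1B)}$ of a precompact set, so it is totally bounded in $L^2$. Since $w_k=u_k$ on $K$, this gives the required precompactness, exactly as in the proof of Lemma~\ref{lemma6}.

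I expect the main obstacle to be the degeneracy of the weight at $\{x=0\}$. The plan handles it only through the $L^B$ bound and Hölder's inequality on a thin cylinder, which works because $B>2$ and needs no symmetry. The remaining care points are routine: for $r=1$ the gradient is defined only on $\{x\neq0\}$, which is harmless after the cutoff $\theta_2$; and $u_0\in V$ follows from the weak closedness of $V$, or directly from the weak limit in the Hilbert space.
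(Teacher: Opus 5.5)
Your proof is correct and follows essentially the same route as the paper. Both arguments control the thin cylinder $\{|x|<\varepsilon\}$ by H\"older's inequality and the uniform $L^B$ bound, and both use a cutoff vanishing near $x=0$ so that Lemma~\ref{em-lem:cutoff} turns $F$ into a bound on $G$ at cost $\varepsilon^{-2b}$; uniform translation estimates, Kolmogorov--Riesz, total boundedness and a diagonal argument then finish the proof. The only cosmetic difference is that you split the cutoff as $\theta=\theta_1+\theta_2$, whereas the paper applies H\"older directly to $u_k$ on $K_R\cap\{|x|<\varepsilon\}$. Note that the $E$-bound you need for $\theta_2\psi u_k$ (not just for $w_k$) also follows from \eqref{em-eq:cutE}, with a constant depending on $\varepsilon$ through the Lipschitz constant of $\theta_2$.
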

	
	\begin{proof}
		Let $\Lambda_0:=\sup_k\|u_k\|_V$. By Lemma~\ref{em-lem:V}, $(u_k)$ is bounded in $L^B$ and, being a bounded sequence in a Hilbert space, has a weakly convergent subsequence, $u_k\rightharpoonup u_0$. Since the embedding $V\hookrightarrow L^B$ is continuous, also $u_k\rightharpoonup u_0$ weakly in $L^B$.
		
		Fix $R\ge1$ and $K_R:=\bb_R\times \bb_R$. We claim that $\{u_k|_{K_R}\}$ is precompact in $L^2(K_R)$. Fix $\varepsilon\in(0,1)$. On $K_R\cap\{|x|<\varepsilon\}$, H\"older's inequality gives
		\begin{equation}\label{em-eq:smallx}
			\int_{K_R\cap\{|x|<\varepsilon\}}|u_k|^2\le|K_R\cap\{|x|<\varepsilon\}|^{1-2/B}\|u_k\|_B^2\le C(R)\,\varepsilon^{2nrs/D}\Lambda_0^2 ,
		\end{equation}
		which is small uniformly in $k$ (recall $1-2/B=2sr/D$). On the rest of $K_R$ we localize: choose Lipschitz $\theta:\R^n\to[0,1]$ with $\theta=1$ on $\{\varepsilon\le|x|\le R\}$ and $\operatorname{supp}\theta\subset\{\varepsilon/2\le|x|\le2R\}$, and Lipschitz $\psi:\R^m\to[0,1]$ with $\psi=1$ on $\bb_R$ and $\operatorname{supp}\psi\subset \bb_{2R}$, and put $w_k:=\theta\psi u_k$. By Lemma~\ref{em-lem:cutoff} and (\ref{em-eq:cutholder}),
		\[
		E(w_k)\le2\Lambda_0^2+2\|W_\theta\|_{B'}\|\psi^2\|_{B'}\|u_k\|_B^2\le C,\qquad
		G(w_k)\le2(\varepsilon/2)^{-2b}\Lambda_0^2+2\|\theta^2\|_{B'}\|W_\psi\|_{B'}\|u_k\|_B^2\le C,
		\]
		with $C$ independent of $k$. Moreover $w_k$ is supported in a fixed compact set and $\|w_k\|_2\le C$ by H\"older's inequality. Using (\ref{em-eq:fourier}) and $|e^{i\sigma}-1|^2\le\min(4,\sigma^2)\le4^{1-s}|\sigma|^{2s}$, we obtain for $h=(h_1,h_2)\in\R^n\times\R^m$
		\[
		\|w_k(\cdot+h)-w_k\|_2\le\|w_k(\cdot+h_1,\cdot)-w_k\|_2+\|w_k(\cdot,\cdot+h_2)-w_k\|_2\le2^{1-s}|h_1|^sE(w_k)^{1/2}+2^{1-r}|h_2|^rG(w_k)^{1/2}.
		\]
		The right-hand side tends to $0$ as $h\to0$ uniformly in $k$, so by the Kolmogorov--Riesz theorem $\{w_k\}$ is precompact in $L^2(\R^{n+m})$. Since $u_k=w_k$ on $K_R\cap\{|x|\ge\varepsilon\}$, (\ref{em-eq:smallx}) shows that $\{u_k|_{K_R}\}$ lies, for every $\varepsilon$, within $C(R)\varepsilon^{nrs/D}\Lambda_0$ of a precompact subset of $L^2(K_R)$. A set with this property for all $\varepsilon$ is totally bounded, which proves the claim.
		
		A diagonal argument over $R=1,2,3,\dots$ and over almost-everywhere convergence yields a subsequence and a function $\tilde u$ with $u_k\to\tilde u$ in $L^2(K_R)$ for all $R$ and almost everywhere. Testing against $\varphi\in C_c^\infty$ and using weak convergence in $L^B$ shows $\tilde u=u_0$.
	\end{proof}

	Finally, to prevent the minimizing sequence from escaping to infinity—which would result in a catastrophic loss of mass—we must guarantee that the spatial center of concentration remains uniformly bounded.
	\begin{lemma} \label{em-lem:center}
		Let $b>0$ and assume (H$_0$). For all $K,\delta>0$ there is $R_0=R_0(K,\delta)$ with the following property: if $u\in V$, $\Nn(u)=1$, $E(u)+F(u)\le K$ and $\int_{Q_1(x_0,y_0)}|u|^B\ge\delta$ for some unit box, then $|x_0|\le R_0$.
	\end{lemma}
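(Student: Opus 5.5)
We localize $u$ near the box with a cutoff and compare the unweighted energy of the localized piece, which (H$_0$) controls, with the weighted energy $F(u)$, which gains a factor $|x_0|^{-2b}$ there. Suppose $|x_0|\ge 4$. Fix Lipschitz cutoffs $\theta_0\in C_c(\R^n;[0,1])$ and $\psi_0\in C_c(\R^m;[0,1])$, equal to $1$ on $\bb_1$ and supported in $\bb_2$. Set $\theta(x)=\theta_0(x-x_0)$ and $\psi(y)=\psi_0(y-y_0)$, so that $\theta$ vanishes on $\{|x|<|x_0|/2\}$, and put $w:=\theta(x)\psi(y)u$. Then $w$ is compactly supported. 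Moreover $\|w\|_{B}\ge\|u\|_{L^B(Q_1(x_0,y_0))}\ge\delta^{1/B}$, and $\|w\|_B\le1$.

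By Lemma~\ref{em-lem:cutoff} with (\ref{em-eq:cutholder}), and since $\|u\|_B=1$ and the norms of $W_\theta,W_\psi,\theta^2,\psi^2$ are translation invariant, we get
\[
E(w)\le 2K+C_1,\qquad G(w)\le 2(|x_0|/2)^{-2b}K+C_2 .
\]
Here $C_1,C_2$ depend only on $\theta_0,\psi_0$ and the parameters. The term $C_2$ does not improve with $|x_0|$, and removing it is the main obstacle. I would handle it by scaling the $y$-cutoff. Take $\psi(y)=\psi_0((y-y_0)/\Lambda)$ with $\Lambda\ge1$. By the scaling relations stated after Lemma~\ref{em-lem:cutoff}, $\|W_\psi\|_{B'}=\Lambda^{-2r(1-t)}\|W_{\psi_0}\|_{B'}$. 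The factor $\|\theta^2\|_{B'}$ is fixed. This gives
\[
G(w)\le C\big(|x_0|^{-2b}K+\Lambda^{-2r(1-t)}\big),
\]
while $E(w)\le 2K+C\Lambda^{2rt}$, because $\|\psi^2\|_{B'}=\Lambda^{2rt}\|\psi_0^2\|_{B'}$. The lower bound $\|w\|_B\ge\delta^{1/B}$ persists since $\psi=1$ on $\bb_1(y_0)$.

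Next I apply (H$_0$), noting that $w$ is a compactly supported element of $V_0$. Its exponent $B_0$ differs from $B$, so I pass between them by Hölder's inequality on the support of $w$. That support has measure $\lesssim\Lambda^m$, which yields $\|w\|_B\le C\Lambda^{m(1/B-1/B_0)}\|w\|_{B_0}$. We have $B\le B_0$; if $B>B_0$ were ever needed one would interpolate with $\|w\|_{B}\le 1$ instead. Combining,
\[
\delta^{1/B}\le C\Lambda^{\kappa}(K+\Lambda^{2rt})^{\frac{1-t_0}{2}}\big(|x_0|^{-2b}K+\Lambda^{-2r(1-t)}\big)^{\frac{t_0}{2}}
\]
for an explicit $\kappa\ge0$.

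The delicate step is checking that the total power of $\Lambda$ in the $\Lambda$-only term is negative, so that taking $\Lambda$ large makes that term small. If it is not, I would instead choose $\Lambda$ as a suitable power of $|x_0|$ and balance the two terms. Either way, optimizing over $\Lambda$ should make the right-hand side tend to $0$ as $|x_0|\to\infty$, uniformly for $E+F\le K$. That contradicts the lower bound $\delta^{1/B}$ once $|x_0|$ exceeds some $R_0(K,\delta)$.

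If the exponent bookkeeping fails, the fallback is to exploit instead the hidden scaling of Lemma~\ref{em-lem:hidden} applied to $u$. This rescales the box so that the $x$-weight dominates, giving $F(u)\gtrsim |x_0|^{2b}G$ on the localized piece. Then (H$_0$) bounds $\delta$ by a negative power of $|x_0|$.
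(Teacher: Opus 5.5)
There is a genuine gap, in the step where you pass from $B$ to $B_0$.

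\textbf{Why your chain cannot close.} You apply H\"older on the whole support of $w$, which has measure $\sim\Lambda^m$, and this costs the factor $\Lambda^{m(1/B-1/B_0)}$. By the third identity in \eqref{em-eq:ids} and its $b=0$ analogue ($m/B=m/2-rt$, $m/B_0=m/2-rt_0$), this exponent equals $r(t_0-t)$. On the other hand, the cutoff-error parts of your bounds contribute $E(w)^{\frac{1-t_0}2}G(w)^{\frac{t_0}2}\sim\Lambda^{rt(1-t_0)-r(1-t)t_0}=\Lambda^{r(t-t_0)}$. The two exponents cancel exactly. Each bracket in your final inequality is at least its $\Lambda$-term, so the right-hand side is bounded below by $c\,\Lambda^{r(t_0-t)+r(t-t_0)}=c>0$ for every $\Lambda\ge1$ and every $x_0$. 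Hence neither sending $\Lambda\to\infty$ nor tying $\Lambda$ to a power of $|x_0|$ makes it tend to $0$, and no contradiction arises.

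The fallback via Lemma~\ref{em-lem:hidden} does not help either. That dilation leaves $\Nn,E,F$ unchanged and maps the unit box at $x_0$ to a box of $x$-radius $\tau^{-r}$ centered at $\tau^{-r}x_0$. So it does not change the position of the box relative to the weight.

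\textbf{The missing idea.} Do the H\"older step only on the unit box $Q=Q_1(x_0,y_0)$, where $w=u$ and whose measure is fixed. Since $B\le B_0$, you get $\|w\|_{B_0}\ge\|u\|_{L^{B_0}(Q)}\ge|Q|^{\frac1{B_0}-\frac1B}\|u\|_{L^B(Q)}\ge c_0\delta^{1/B}$, with no power of $\Lambda$.

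Even then, sending $\Lambda\to\infty$ alone still fails. For fixed $x_0$, the bound on $G(w)$ cannot drop below $C|x_0|^{-2b}K$, while the bound on $E(w)$ keeps growing like $\Lambda^{2rt}$. So you must balance, choosing $\Lambda=|x_0|^{b/(r(1-t))}$ (with $|x_0|-2$ in place of $|x_0|$). This gives $E(w)\lesssim|x_0|^{2bt/(1-t)}$ and $G(w)\lesssim|x_0|^{-2b}$. Then (H$_0$) yields $\|w\|_{B_0}^2\lesssim|x_0|^{-\theta_0}$ with $\theta_0=\frac{2b(t_0-t)}{1-t}=\frac{2b^2sm^2}{(rn+mb)(rn+ms)}>0$, since $b>0$. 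This contradicts $\|w\|_{B_0}\ge c_0\delta^{1/B}$ once $|x_0|$ is large.

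With this correction your argument becomes the paper's proof. The rest of your setup is fine: the cutoffs $\theta(x)=\theta_0(x-x_0)$ and $\psi(y)=\psi_0((y-y_0)/\Lambda)$, the scaling of $\|\psi^2\|_{B'}$ and $\|W_\psi\|_{B'}$, and the use of $\theta=0$ on $\{|x|<|x_0|/2\}$.
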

	
	\begin{proof}
		Suppose $M:=|x_0|-2\ge1$; we shall bound $M$ from above. Fix Lipschitz functions $\theta_0:\R^n\to[0,1]$ and $\psi_0:\R^m\to[0,1]$ with $\theta_0=1$ on $\bb_1$, $\operatorname{supp}\theta_0\subset \bb_2$, $\psi_0=1$ on $\bb_1$, $\operatorname{supp}\psi_0\subset \bb_2$, independent of everything else. Put
		\[
		\Lambda:=M^\kappa,\quad\kappa:=\frac{b}{r(1-t)},\qquad\theta(x):=\theta_0(x-x_0),\quad\psi(y):=\psi_0\Big(\frac{y-y_0}\Lambda\Big),\qquad v:=\theta\psi u .
		\]
		Since $\Lambda\ge1$ we have $v=u$ on $Q_1(x_0,y_0)$, and $\theta$ vanishes on $\{|x|<M\}$.
		
		By (\ref{em-eq:cutE}), (\ref{em-eq:cutholder}) and the scaling identities after Lemma~\ref{em-lem:cutoff} (using $\|u\|_B=1$), $E(v)\le2K+C_1\Lambda^{2rt}$. By (\ref{em-eq:cutG}) and the second bound in Lemma~\ref{em-lem:cutoff}, $G(v)\le2M^{-2b}K+C_2\Lambda^{-2r(1-t)}$, with $C_1,C_2$ depending only on $\theta_0,\psi_0,m,n,s,r,B$. The choice of $\kappa$ gives $\Lambda^{-2r(1-t)}=M^{-2b}$, and $\Lambda^{2rt}=M^{\kappa\cdot2rt}=M^{2bt/(1-t)}$. Since $M\ge1$ we obtain
		\[
		E(v)\le C\,M^{2bt/(1-t)},\qquad G(v)\le C\,M^{-2b}.
		\]
		In particular $v\in V_0$ is compactly supported, so (H$_0$) applies:
		\[
		\|v\|_{B_0}^2\le C_0^2E(v)^{1-t_0}G(v)^{t_0}\le C\,M^{\frac{2bt(1-t_0)}{1-t}-2bt_0}=C\,M^{-\theta_0},
		\]
		where a short computation with $\frac t{1-t}=\frac{sm}{rn+mb}$ and $1-t_0=\frac{rn}{rn+ms}$ gives
		\[
		\theta_0=\frac{2b^2sm^2}{(rn+mb)(rn+ms)}>0 .
		\]
		On the other hand, since $B\le B_0$, H\"older's inequality on the unit box $Q:=Q_1(x_0,y_0)$ gives $\|v\|_{B_0}\ge\|u\|_{L^{B_0}(Q)}\ge|Q|^{\frac1{B_0}-\frac1B}\|u\|_{L^B(Q)}\ge c_0\,\delta^{1/B}$, with $c_0>0$ depending only on $m,n,B,B_0$. Combining the two bounds yields $c_0\delta^{1/B}\le C^{1/2}M^{-\theta_0/2}$, that is $M\le(C^{1/2}c_0^{-1}\delta^{-1/B})^{2/\theta_0}$. Hence $|x_0|\le R_0:=\max\{3,\,2+(C^{1/2}c_0^{-1}\delta^{-1/B})^{2/\theta_0}\}$.
	\end{proof}
	%====================================================================
 
	%====================================================================
	\begin{proof}[Proof of Theorem~\ref{em-thm:main}]

	Let $(\tilde u_k)\subset V\setminus\{0\}$ with $J(\tilde u_k)\to J_*$. Fix once and for all $N_1\in\N$ such that $\bb_4\times \bb_4\subset\R^n\times\R^m$ can be covered by $N_1$ unit boxes $Q_1(\cdot,\cdot)$, and put $\delta_*:=1/(2N_1)\le\frac12$.
	
	\medskip
	\noindent {Step 0: normalization.} By Lemma~\ref{em-lem:norm} we may replace $\tilde u_k$ by $T_{c_k,\lambda_k,1,0}\tilde u_k$ and assume $\Nn(u_k)=1$ and $E(u_k)=F(u_k)=j_k:=J(\tilde u_k)\to J_*$. Next, Lemma~\ref{em-lem:conc} provides $\tau_k>0$ with $\Phi_{u_k}(\tau_k)=\delta_*$; replacing $u_k$ by $(u_k)_{\tau_k}$ and using Lemmas~\ref{em-lem:hidden} and~\ref{em-lem:conc} we keep $\Nn=1$ and $E=F=j_k$, and we now have
	\begin{equation}\label{em-eq:normalized}
		\sup_{x_0,y_0}\int_{Q_1(x_0,y_0)}|u_k|^B=\delta_*\qquad\text{for all }k.
	\end{equation}
	By (\ref{em-eq:normalized}) there are $(x_k,y_k)$ with $\int_{Q_1(x_k,y_k)}|u_k|^B\ge\delta_*/2$. Translating in $y$ (and also in $x$ if $b=0$) we may assume $y_k=0$ (and $x_k=0$ if $b=0$); translations preserve $\Nn,E,F$ and (\ref{em-eq:normalized}). If $b>0$, Lemma~\ref{em-lem:center} with $K:=\sup_k2j_k$ and $\delta:=\delta_*/2$ shows that $(x_k)$ is bounded, so after passing to a subsequence $x_k\to x_\infty$; if $b=0$ put $x_\infty:=0$. In both cases, set $c:=(x_\infty,0)$. All operations so far are elements of $\G$ composed with the dilation of Lemma~\ref{em-lem:hidden}, which is itself of the form $T_{c,\lambda,\mu,0}$, so $u_k=g_k\tilde u_k$ for some $g_k\in\G$. Since $(u_k)$ is bounded in $V$, Lemma~\ref{em-lem:loccomp} gives, after passing to a further subsequence, $u_k\rightharpoonup u_0$ in $V$, $u_k\to u_0$ in $L^2$ of bounded sets, and almost everywhere.
	
	\medskip
	\noindent {Step 1: $u_0\neq0$.} Suppose, to the contrary, that $u_0=0$. Then $u_k\to0$ in $L^2(\bb_4(x_\infty)\times \bb_4(0))$. Let $\gamma:=\gamma(\delta_*/2,\varrho)>0$ be the constant of Lemma~\ref{em-lem:elem}\,(iii) and set $\varepsilon_0:=1-(1+\gamma)^{-1/\varrho}\in(0,1)$; this number depends on $\delta_*$ and $\varrho$ only.
	
	Fix $L\in\N$ and, for $i=1,\dots,L$, let $\phi_i:[0,\infty)\to[0,\pi/2]$ be the piecewise linear function that vanishes on $[0,2+2(i-1)/L]$, equals $\pi/2$ on $[2+2i/L,\infty)$ and is linear in between; it is Lipschitz with constant $\pi L/4$, equals $0$ on $[0,2]$ and equals $\pi/2$ on $[4,\infty)$. Let $\chi_i,\eta_i$ and $Q=\bb_4(x_\infty)\times \bb_4(0)$ be as in Lemma~\ref{em-lem:split} (with $x_c=x_\infty$ and $\phi=\phi_i$). Put
	\[
	a_k^{(i)}:=\int\chi_i^B|u_k|^B,\qquad b_k^{(i)}:=\int\eta_i^B|u_k|^B,\qquad g_i:=1-\chi_i^B-\eta_i^B .
	\]
	By Lemma~\ref{em-lem:split}, since $E(u_k),F(u_k)$ are bounded and $\|u_k\|_{L^2(Q)}\to0$ (with the constant $C$ depending on $L$ and $x_\infty$ but not on $k$),
	\[
	E(u_k)=E(\chi_iu_k)+E(\eta_iu_k)+o(1),\qquad F(u_k)=F(\chi_iu_k)+F(\eta_iu_k)+o(1)\qquad(k\to\infty).
	\]
	Because $(E,F)\mapsto E^{1-t}F^t$ is uniformly continuous on bounded subsets of $[0,\infty)^2$, Lemma~\ref{em-lem:elem}\,(i) yields
	\[
	j_k=E(u_k)^{1-t}F(u_k)^t\ge E(\chi_iu_k)^{1-t}F(\chi_iu_k)^t+E(\eta_iu_k)^{1-t}F(\eta_iu_k)^t-o(1).
	\]
	By the definition of $J_*$ applied to $\chi_iu_k$ and $\eta_iu_k$, namely $E(fu_k)^{1-t}F(fu_k)^t\ge J_*\|fu_k\|_B^2$, and since $\|\chi_iu_k\|_B^2=(a_k^{(i)})^\varrho$, $\|\eta_iu_k\|_B^2=(b_k^{(i)})^\varrho$, we conclude that
	\begin{equation}\label{em-eq:keyineq}
		\frac{j_k}{J_*}\ge\big(a_k^{(i)}\big)^\varrho+\big(b_k^{(i)}\big)^\varrho-o(1).
	\end{equation}
	We now bound $a_k^{(i)}$ and $b_k^{(i)}$ from both sides, uniformly in $i$ and $L$. Both are at most $\Nn(u_k)=1$ since $0\le\chi_i,\eta_i\le1$. For $k$ large $|x_k-x_\infty|<1$, so $Q_1(x_k,0)\subset \bb_2(x_\infty)\times \bb_2(0)$, where $\chi_i=1$; hence $a_k^{(i)}\ge\int_{Q_1(x_k,0)}|u_k|^B\ge\delta_*/2$. For the outer piece, $\bb_4\times \bb_4$ is covered by $N_1$ unit boxes, each of mass at most $\delta_*$ by (\ref{em-eq:normalized}), so $\int_{Q}|u_k|^B\le N_1\delta_*=\frac12$; since $\eta_i=1$ outside $Q$, $b_k^{(i)}\ge\int_{Q^c}|u_k|^B\ge\frac12\ge\delta_*/2$. Therefore $a_k^{(i)},b_k^{(i)}\in[\delta_*/2,1]$ and Lemma~\ref{em-lem:elem}\,(iii) applies to (\ref{em-eq:keyineq}): since $j_k/J_*\to1$,
	\[
	1+o(1)\ge(1+\gamma)\big(a_k^{(i)}+b_k^{(i)}\big)^\varrho,\qquad\text{i.e.}\qquad a_k^{(i)}+b_k^{(i)}\le1-\varepsilon_0+o(1).
	\]
	Equivalently, $\int g_i|u_k|^B=1-a_k^{(i)}-b_k^{(i)}\ge\varepsilon_0-o(1)$ as $k\to\infty$, for each fixed $i\le L$. Now $g_i\ge0$ because $\cos^B\Theta+\sin^B\Theta\le\cos^2\Theta+\sin^2\Theta=1$ for $B>2$, and $g_i$ is supported in the set where $0<\Theta_i<\pi/2$, which is contained in the annulus $\{2+2(i-1)/L<\max\{|x-x_\infty|,|y|\}<2+2i/L\}$. These annuli are pairwise disjoint for different $i$ and contained in $\{|x-x_\infty|<4,|y|<4\}$. Summing over $i=1,\dots,L$,
	\[
	1\ge\int\sum_{i=1}^Lg_i|u_k|^B\ge L\varepsilon_0-o(1)\qquad(k\to\infty).
	\]
	Letting $k\to\infty$ gives $L\varepsilon_0\le1$. As $L$ is arbitrary and $\varepsilon_0$ does not depend on $L$, this is absurd. Hence $u_0\neq0$.
	
	\medskip
	\noindent {Step 2: $u_0$ is an optimizer and $u_k\to u_0$ strongly in $V$.} Put $v_k:=u_k-u_0$ and $\alpha:=\|u_0\|_B^B$. By Brezis--Lieb (applicable since $u_k\to u_0$ a.e. and $\sup\|u_k\|_B<\infty$),
	\[
	1=\Nn(u_k)=\alpha+\Nn(v_k)+o(1),
	\]
	so $\Nn(v_k)\to1-\alpha$, and $\alpha\in(0,1]$ by Fatou's lemma and Step~1. By Lemma~\ref{em-lem:V}, $E(u_k)=E(u_0)+E(v_k)+o(1)$ and $F(u_k)=F(u_0)+F(v_k)+o(1)$. Applying Lemma~\ref{em-lem:elem}\,(i) and the definition of $J_*$ to $u_0$ and $v_k$ (when $v_k=0$ the corresponding term is absent) we get
	\[
	j_k=E(u_k)^{1-t}F(u_k)^t\ge J(u_0)\|u_0\|_B^2+J(v_k)\|v_k\|_B^2-o(1)\ge J_*\big(\alpha^\varrho+\Nn(v_k)^\varrho\big)-o(1).
	\]
	Letting $k\to\infty$ and using $j_k\to J_*>0$ we obtain $\alpha^\varrho+(1-\alpha)^\varrho\le1$. By Lemma~\ref{em-lem:elem}\,(ii) this is impossible if $\alpha\in(0,1)$, so $\alpha=1$. Hence $\|u_0\|_B=1$ and $\|v_k\|_B\to0$.
	
	By the lower semicontinuity in Lemma~\ref{em-lem:V}, $E(u_0)\le\liminf E(u_k)=J_*$ and $F(u_0)\le J_*$. On the other hand, $J_*\le J(u_0)=E(u_0)^{1-t}F(u_0)^t$ because $\|u_0\|_B=1$. If one of the inequalities $E(u_0)\le J_*$, $F(u_0)\le J_*$ were strict, then, since $J_*>0$ and $0<t<1$, we would have $E(u_0)^{1-t}F(u_0)^t<J_*$, a contradiction. Hence $E(u_0)=F(u_0)=J_*=J(u_0)$, and $u_0$ is an optimizer. Finally, again by Lemma~\ref{em-lem:V},
	\[
	E(v_k)=E(u_k)-E(u_0)+o(1)=j_k-J_*+o(1)\to0,\qquad F(v_k)=F(u_k)-F(u_0)+o(1)\to0,
	\]
	that is $\|u_k-u_0\|_V\to0$.
 
	 We showed that, along a subsequence, $g_k\tilde u_k=u_k\to u_0$ in $V$ with $g_k\in\G$ and $J(u_0)=J_*$. This proves Theorem~\ref{em-thm:main}. 
	 	\end{proof}
	
 \begin{remark}
 	One can follow the ideas of \cite{amin} and obtain the best constant of \eqref{eq:main}. Indeed,
 	\[
 	C=\frac{d^{\frac1B-\frac12}}{\tau^{\frac\tau2}(1-\tau)^{\frac{1-\tau}2}}, 
 	\]
 	where
 	$d=\inf\sett{S(u)=\frac12\|u\|_V^2-\frac{1}{B}\norm{|x|^\beta u}_B^B,\;S'(u)=0,\;u\neq0}$.
 \end{remark}

	%%%%%%%%%%%%%%%%%%%%%%%%%%%%%%%%%%%%%%%%%%%%%

\end{document}